\newcommand{\R}{\mathbf R} %% permet de faire plus simplement un R avec double barre
\newcommand{\N}{\mathbf N}
\newcommand{\Z}{\mathbf Z}
\newtheorem{theorem}{Theorem}[section]   % enlever le % devant [section] pour avoir une autre numerotation...
\newtheorem{definition}[theorem]{Definition}
\newtheorem{example}[theorem]{Example}
\newtheorem{corollary}[theorem]{Corollary}
\newtheorem{proposition}[theorem]{Proposition}
\newtheorem{lemma} [theorem]{Lemma}
\newtheorem{remark}[theorem]{Remark}
\newtheorem{fact}[theorem]{Fact}
\begin{document}
\title{On the existence and stability of two-dimensional \\Lorentzian tori without conjugate points}
\author{Lilia Mehidi}
\maketitle
\begin{abstract}
Infinitely many new examples of compact Lorentzian surfaces {\em without conjugate points} are given. Further, we study the existence and the stability of this property among Lorentzian metrics with a Killing field. We obtain a new obstruction and prove that the Clifton- Pohl torus and some of our examples are as stable as possible. This shows that in constrast with the Riemannian Hopf theorem, the absence of conjugate points in the Lorentzian setting is neither "special" nor rigid.
\end{abstract}
\section{Introduction} 
The absence of conjugate points on Riemannian tori has rigid effects on the metric structure. A result by E. Hopf in 1948 for a two-dimensional torus \cite{2}, and by Burago and Ivanov in 1994 in any dimension \cite{7} states that any Riemannian torus with no conjugate points is necessarily flat. However, it appears that the Hopf theorem does not hold in the Lorentzian setting; in fact, Bavard and Mounoud proved in \cite{9} that the so called Clifton-Pohl torus (see Equation (\ref{Tore de Clifton Pohl})) has no conjugate points. 
The Clifton-Pohl torus and its few natural deformations (see below) are the only known examples of Lorentzian metrics on the torus without conjugate points. Recall that any compact connected Lorentzian surface is homeomorphic to the torus or the Klein bottle. In this work, we give infinitely many new examples of geometrically non-equivalent Lorentzian tori and Klein bottles without conjugate points; moreover, we prove that some of them (including the Clifton-Pohl torus) admit a large space of deformations among metrics without conjugate points.	\\

Given a non-flat torus $T$ with a non-trivial Killing field $K$, the flow of $K$ induces a free action of the group $S^1$ on $T$ (see \cite{10}, Theorem 3.25). The orbits of $K$ are therefore periodic with the same period. Given the Clifton-Pohl metric, a first attempt to obtain deformations of such a metric without conjugate points can be achieved in three different ways:  varying the period of the Killing field;  acting by an homothety on the torus; or  acting by a "twist" along a (closed) orbit of $K$. Although the variations above give non-isometric metrics on the torus without conjugate points, these examples are all locally "the same", having all the same universal cover (up to homothety).
Less trivial deformations of the Clifton-Pohl torus without conjugate points are obtained by Mounoud in \cite{11}, as metrics projectively equivalent to the Clifton-Pohl torus; this gives a $2$-dimensional family of Lorentzian tori without conjugate points, with non-isometric universal cover. \\
 
When $K$ is timelike or spacelike, a result of Gutierrez, Palomo and Romero in \cite{8} shows that if the surface does not have conjugate points, it must be flat. In this paper, we suppose that $K$ has a null orbit.  Let $(\widetilde{T},\widetilde{K})$ be the universal cover of a non-flat torus $T$ with Killing field $K$. The action  of $K$ on the torus given above allows to define global coordinates $(u,\theta)$ on the universal cover $\widetilde{T}$ on which $\widetilde{K}$ is given by $\partial_u$. In this way, one can define $\widetilde{\kappa}(\theta)$ as the sectional curvature defined on the space of leaves of $\widetilde{K}$ (which is a line).   We prove the following
\begin{theorem}\label{famille SPC, introduction}
Let $(T,K)$ be a Lorentzian torus with a Killing field, and let  $(\widetilde{T},\widetilde{K})$ be its universal cover. Suppose that\\
	(i) the null orbits of $\widetilde{K}$ are geodesically incomplete,\\
	(ii) there is only one critical orbit of $\widetilde{K}$ in each band of $\widetilde{T}$,\\
	(iii) the curvature $\widetilde{\kappa}$ is a monotone function between two consecutive critical orbits of $\widetilde{K}$,\\ 
	(iv) the reflections with respect to the critical orbits of $\widetilde{K}$ act on $\widetilde{T}$ by isometry, \\
	(v) the foliation orthogonal to $\widetilde{K}$ has only Reeb components.\\
	Then $T$ has no conjugate points.   
\end{theorem}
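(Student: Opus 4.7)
\hop\textbf{Proof plan.} The strategy is to exploit the Killing symmetry to reduce the Jacobi equation along each non-null geodesic to a scalar ODE with potential $\widetilde\kappa(\theta)$ on the orbit line of $\widetilde K$, and then to use the hypotheses (i)--(v) to rule out that any non-trivial solution has two zeros. In the coordinates $(u,\theta)$ with $\widetilde K = \partial_u$, the metric coefficients depend only on $\theta$, and the bands are the connected components of $\{g(\widetilde K,\widetilde K)\neq 0\}$ separated by the null orbits. Null geodesics of a Lorentzian surface carry no conjugate points (their transverse bundle is one-dimensional and null), so it suffices to consider non-null geodesics. For such a geodesic $\gamma$, the Clairaut quantity $C = g(\dot\gamma,\widetilde K)$ is conserved, and this reduces the geodesic equation to a first-order ODE in $\theta$ on intervals where $\dot\theta\neq 0$.

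\hop Writing an orthogonal Jacobi field as $J=\beta N$ with $N$ a unit normal to $\dot\gamma$, the Jacobi equation in dimension two collapses to the scalar equation $\ddot\beta + \varepsilon\,\widetilde\kappa(\theta(s))\,\beta = 0$ with $\varepsilon = g(\dot\gamma,\dot\gamma)=\pm 1$; conjugate points along $\gamma$ correspond exactly to second zeros of $\beta$. Using the Clairaut integral to change variables from the affine parameter $s$ to $\theta$ converts this into an ODE on a subinterval of the orbit line in which $\widetilde\kappa$ enters as an explicit potential.

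\hop The global argument then runs as follows. By (v) and (ii) the orthogonal foliation decomposes into Reeb components bounded by null orbits with exactly one interior critical orbit of $\widetilde K$, so every non-null geodesic either stays trapped between the critical orbit and one null orbit, or crosses the critical orbit once and exits toward the opposite null orbit. By (i) the null orbits are incomplete, which bounds the affine time spent in each half-band and hence bounds the $\theta$-range seen by the reparametrized Jacobi ODE. By (iv), the reflection isometry across each critical orbit sends Jacobi fields to Jacobi fields, so a solution $\beta$ on one side extends across to a solution on the full band, and the analysis reduces to a single half-band. On this half-band, (iii) provides the monotonicity of $\widetilde\kappa$ that feeds into a Sturm-type comparison with a solvable model equation, ruling out a second zero of $\beta$.

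\hop The main obstacle is this last step: one must organize the case analysis by causal character of $\gamma$ (the sign of $\varepsilon\,\widetilde\kappa$ decides whether the reduced Jacobi equation is oscillatory or hyperbolic on each subregion), verify that the reflection in (iv) really concatenates Jacobi fields smoothly across the critical orbit where the change of variable $s\leftrightarrow\theta$ degenerates, and extract from the combination of (i), (iii), (v) a quantitative bound on the $\theta$-range large enough to prevent the oscillatory regime from completing a half-period. Once these ingredients are aligned, the Sturm comparison closes the argument uniformly in the choice of geodesic.
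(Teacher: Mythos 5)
Your plan is built on the wrong target: you set out to ``rule out that any non-trivial solution has two zeros'' along a non-null geodesic, but this is false and cannot be proved. The universal cover $\widetilde{T}$ embeds as a \emph{proper} open subset of a geodesically complete extension $E^u_f$, and along the relevant geodesics of that extension every Jacobi field (in particular $\beta$, the normal component of $\widetilde K$, which vanishes at $t=0$ and again at $t=2\omega$) does have multiple zeros; the extension always contains conjugate points when the torus is non-flat. The actual content of the theorem is a \emph{localization} statement: between two consecutive zeros of $\beta$ the geodesic crosses a type~I band of the extension that is \emph{not} in the torus --- this is exactly what hypothesis (v) buys, since it forces every band of the torus to be of type~II --- so the portion of the geodesic lying in the torus is an open interval of length $2\omega$ (a ``domino'' containing a single zero of $\langle \widetilde K,\widetilde K\rangle$). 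Absence of conjugate points then reduces to showing that \emph{consecutive zeros of every solution are at distance at least $2\omega$}, i.e.\ that $\beta$ realizes the minimum gap. Your proposal never engages with the extension, never localizes to the domino, and consequently has no way to reconcile the existence of vanishing Jacobi fields with the conclusion.

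A second, related problem is the closing step. Because $\beta$ itself has consecutive zeros exactly $2\omega$ apart and the torus portion has length exactly $2\omega$, the required estimate has no slack: a ``Sturm-type comparison with a solvable model equation'' cannot deliver it, since any non-sharp comparison fails and a sharp one is precisely the statement to be proved. The paper instead uses a Beesack--Schwarz-type minimum-distance lemma: if the potential $\epsilon\kappa(t+\omega)$ is continuous, even, $2\omega$-periodic and non-increasing on $(0,\omega)$, then the \emph{even} solution (which here is $\beta(t+\omega)$) minimizes the distance between consecutive zeros over all solutions. The evenness and periodicity of the potential come from the reflection isometries of (iv) (which make $\kappa$ even and $2\omega$-periodic along $\gamma$ and $\beta$ odd and $2\omega$-antiperiodic), and the monotonicity on a half-period comes from (iii) together with (ii). Your outline gestures at all the right hypotheses but assigns them the wrong jobs --- in particular (i) is not used to ``bound the affine time'' but to guarantee that the zeros of $f$ are simple (so the null orbits are extended by saddles and the domino/extension structure is as described) --- and without the extension, the domino localization, and the sharp minimum-distance lemma the argument does not close.
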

Here, {\em an open band} is a connected component of the set $\{ \langle \widetilde{K},\widetilde{K} \rangle \neq 0\}$ in $\widetilde{T}$, and a critical orbit of $\widetilde{K}$ is an orbit corresponding to the critical points of the function $\langle \widetilde{K},\widetilde{K} \rangle$ (these orbits are geodesics). Many explicit new (analytic) examples of Lorentzian tori without conjugate points, as well as Klein bottles with the same property, will be deduced from Theorem \ref{famille SPC, introduction} in a very simple way. \\
The assumption (i) implies that the null orbits of $\widetilde{K}$ are isolated and that the norm of $\widetilde{K}$ changes sign when crossing such an orbit transversally (please see \cite{10}, Lemma 2.25). In particular, there exists only finitely many null orbits of $K$ in $T$. 	

\begin{remark}
Each of these examples can be deformed to families of metrics without conjugate points, in the same way as previously done with the Clifton-Pohl torus (changing the period of the orbits of $K$, changing by homothety, twist or by projective deformation). 
\end{remark}

The structure of compact Lorentzian surfaces with a one parameter group of isometries is already studied in \cite{10}. Although the property of being without conjugate points doesn't appear to be strong enough to expect a rigidity phenomenon in this subclass of Lorentzian surfaces, Theorem 5.29, \cite{10}, gives  obstructions for such tori to be without conjugate points. In particular, it follows from this theorem that a Lorentzian torus with a Killing field, without conjugate points, is either flat or non-homotopic to the flat metric. Assuming condition (i) in Theorem \ref{famille SPC, introduction} (which is in some sense generic), we give a new obstruction for those tori to be without conjugate points. It is easily seen that a geodesic parametrization of an incomplete null orbit of $K$ is given by $\frac{1}{\lambda} e^{\lambda t}$, where $K=\partial_t$, and $\lambda \neq 0$ depends on the given null orbit. We prove
 \begin{theorem}\label{Condition sur les lamdas, introduction}
 	Let $(T,K)$ be a Lorentzian torus without conjugate points, with Killing vector field~$K$. Assume that the null orbits of $K$ are incomplete. If $\lambda_1$ and $\lambda_2$ are the parameters related to any two consecutive null orbits of $K$ in the torus, then $$\lambda_1 = \lambda_2.$$
 \end{theorem}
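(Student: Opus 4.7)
The plan is to work in a band bounded by the two consecutive null orbits. Lifting to the universal cover and using the $K$-adapted coordinates $(u,\theta)$ with $\widetilde K=\partial_u$, the metric depends only on $\theta$ and, in such a band, can be brought to the normal form $g=2\,du\,d\theta-F(\theta)\,du^2$, where $F=-\langle\widetilde K,\widetilde K\rangle$ vanishes exactly on the two bounding null orbits $\theta=\theta_1,\theta_2$. A direct computation gives the Gaussian curvature $\widetilde\kappa(\theta)=-\tfrac12 F''(\theta)$. Solving the geodesic equation $\ddot u+\tfrac12 F'(\theta_i)\dot u^2=0$ along $\theta\equiv\theta_i$ shows that the affine parameter is $s=\frac{1}{\lambda_i}e^{\lambda_i u}$ precisely when $\lambda_i=\tfrac12 F'(\theta_i)$; so the $\lambda_i$ of the statement are the slopes of $F$ at its two consecutive zeros, and the desired identity is $F'(\theta_1)=F'(\theta_2)$, equivalently $\int_{\theta_1}^{\theta_2}\widetilde\kappa(\theta)\,d\theta=0$.

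Using the Clairaut constant $p=g(\dot\gamma,\widetilde K)$ and the energy $E=g(\dot\gamma,\dot\gamma)$, any geodesic in the band satisfies $\dot\theta^2=p^2+E\,F(\theta)$, while the transverse Jacobi equation along $\gamma$ reduces to a scalar Sturm--Liouville equation with coefficient $\widetilde\kappa(\theta)$. Near each boundary zero the linearization $F(\theta)\simeq 2\lambda_i(\theta-\theta_i)$ applies, and every integral controlling the geodesic or Jacobi dynamics (drift of $u$, crossing time, phase of a Jacobi solution) picks up a logarithmic contribution of weight $1/\lambda_i$ at that end.

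To finish, I would introduce a family of geodesics seeing both null orbits; the most natural candidate is the transverse null family ($E=0$, $p\neq 0$), whose members cross the band in finite affine time while spiralling infinitely around $\widetilde K$ as they approach either null orbit. The variation field of such a family is a Jacobi field whose asymptotic profile at each boundary is governed by $1/\lambda_i$. Using the periodicity of $u$ on the torus together with the no-conjugate-points hypothesis, one translates the absence of a second zero of this Jacobi field into an identity balancing the two boundary contributions, which forces $\lambda_1=\lambda_2$. The main obstacle is setting up this ``balance across the null orbits'' rigorously: the relevant Jacobi fields degenerate on the orbits themselves, so one must either construct a smooth extension across them (e.g.\ via a Fermi-type coordinate change adapted to the horizon) or work with a first-return map on a transversal to $\widetilde K$ and show that the only way it can remain free of conjugate points globally is for the two surface gravities to coincide.
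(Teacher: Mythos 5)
There are two genuine problems here. First, your reduction of the statement to $F'(\theta_1)=F'(\theta_2)$, ``equivalently $\int_{\theta_1}^{\theta_2}\widetilde\kappa\,d\theta=0$'', is not the right identity. If a single chart $2\,du\,d\theta-F(\theta)du^2$ really contained both bounding null orbits as simple zeros of $F$, then $F'(\theta_1)$ and $F'(\theta_2)$ would automatically have opposite signs, so your target identity could never hold. In fact no such chart exists: in a type II band (the only kind a torus without conjugate points can have, by Theorem \ref{Conditions nécessaires SPC}) the two bounding null orbits lie in \emph{different} null foliations, hence in different maximal ribbons, and the transition between the two ribbons reverses the sign of $\langle L,K\rangle$. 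Once this is taken into account, the intrinsic surface gravities ($\nabla_KK=\lambda K$) satisfy $\lambda_1=\lambda_2$ if and only if $F'(\theta_1)+F'(\theta_2)=0$, i.e.\ the slopes at consecutive zeros have equal \emph{absolute value}; this is exactly Theorem \ref{Fermé d'intérieur non vide} ($f'(x_n)+f'(x_{n+1})=0$) together with Remark \ref{Orbites incomplètes=zéros simpes} and the alternating signs $\eta_1+\eta_2=0$. A sanity check on the Clifton--Pohl torus ($f(x)=\sin 2x$): all four null orbits have the same surface gravity, while $\int_0^{\pi/2}\tfrac{1}{2}f''(x)\,dx=-2\neq 0$, so the curvature integral over a band does not vanish and is not the quantity controlled by the theorem.

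Second, and more seriously, the mechanism you propose for extracting the identity cannot work: you want to read the constraint off a Jacobi field along the transverse \emph{null} family, but no null geodesic on a Lorentzian surface has conjugate points (the transverse Jacobi equation degenerates), so the no-conjugate-points hypothesis imposes nothing along such geodesics. The step you flag as ``the main obstacle'' is in fact the entire content of the theorem, and it must be carried out on non-null geodesics. The paper's proof takes a geodesic tangent to $K$ with small Clairaut constant $C$; such a geodesic sits in the torus in each of two dominoes, one around each of two consecutive zeros of $\beta$, and the absence of conjugate points in the two dominoes yields the pair of inequalities $c(t_0)+c(t_1)\geq 0$ and $-c(t_0)-c(t_1)+2\frac{c'(2\omega)}{s'(2\omega)}s(t_0)\geq 0$ (Corollary \ref{Corollaire des gendarmes} and Proposition \ref{SPC}). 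Letting $C\to 0$, the geodesics converge to a non-degenerate geodesic perpendicular to $K$ through two consecutive saddle points, the correction term vanishes, and the two inequalities collapse to $\pm\bigl(c_\infty(0)+c_\infty(2\omega)\bigr)\geq 0$, forcing $c_\infty(0)+c_\infty(2\omega)=\frac{\beta'_\infty(p_i)+\beta'_\infty(p_{i+1})}{\beta'_\infty(p_{i+1})}=0$; the identity $f'(x_i)+f'(x_{i+1})=0$ then follows from $\epsilon\beta'=\tfrac{1}{2}f'\circ x\circ\gamma$ (Lemma \ref{Formule sur beta' en fonction de f}). Your proposal contains neither this two-sided inequality nor any substitute for it.
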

As an immediate corollary of this theorem, it appears that these tori are limits of Lorentzian tori admitting a Killing field, and containing conjugate points (see Corollary \ref{limite de tores APC} in this paper).
\paragraph{Strategy of the proof of Theorem \ref{famille SPC, introduction}:}
Let us recall that no null geodesic in a Lorentzian surface has conjugate points (see \cite[page 291]{6} for instance), so we restrict our attention to non-null geodesics.  Let $\gamma$ be a geodesic of an $n$-dimensional Lorentzian manifold $M$. A Jacobi field is a vector field along $\gamma$ satisfying a differential equation called the Jacobi equation. There are many equivalent definitions of conjugate points; the one we will be using in this paper is the following: a pair of conjugate points on $\gamma$ are points such that there exists a non-trivial Jacobi field along $\gamma$ vanishing at these points. When $\gamma$ is not lightlike, we can suppose that this vector field is orthogonal to $\gamma$; and when $M$ is a surface, this reduces to the differential equation in one variable
\begin{align}\label{Equation de Jacobi, introduction}
u^{''}+\epsilon \kappa u=0,
\end{align}
where $\kappa$ is the sectional curvature along $\gamma$, and $\epsilon=\pm 1$ is the sign of $\langle \dot{\gamma},\dot{\gamma} \rangle$. \\

A Clifton-Pohl torus, denoted by $T_{CP}$, is the quotient of the manifold $\R^2-\{0\}$ equipped with the metric
\begin{align}\label{Tore de Clifton Pohl}
g_{CP}=\frac{2dxdy}{x^2+y^2}\;\;\;\; \,(x,y)\in \R^2-\{0\},
\end{align}
by some non trivial homothety.
The proof that $T_{CP}$ has no conjugate points is done using a remarkable property: the universal cover of $T_{CP}$ is a proper open subset of an extension $\hat{\Sigma}$, introduced in \cite{9}.  This extension is geodesically complete; thus, some of the solutions of  (\ref{Equation de Jacobi, introduction}) defined over $\R$ vanish more than one time on $\widetilde{\gamma}$, the extension of a geodesic $\gamma$ of $\widetilde{T}_{CP}$ to $\hat{\Sigma}$, but there are never two such zeros in the Clifton-Pohl torus. This is obtained through an explicit resolution of the Jacobi equations.  \\

The possibility of extending the universal cover of a Lorentzian torus to  a maximal Lorentzian surface is not specific to the Clifton-Pohl torus.  Let $(T,K)$ be a Lorentzian torus with a non-trivial Killing field $K$,  and let $(\widetilde{T},\widetilde{K})$ be its universal cover.  There exists a maximal Lorentzian surface $(E,K_E)$ homeomorphic to $\R^2$ such that $(\widetilde{T},\widetilde{K})$ is isometrically embedded in $E$ (\cite[Theorem 3.25]{10}). 
This extension is unique when some further hypotheses are added  on it; we do not recall that here. We prove that 
\begin{theorem}\label{Complétude, introduction}
	The extension $E$ associated to a Lorentzian torus $(T,K)$ is geodesically complete. 
\end{theorem}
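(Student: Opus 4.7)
The idea is to exploit the two first integrals produced by the Killing field on any geodesic $\gamma$ of $E$: the energy $\langle\dot\gamma,\dot\gamma\rangle$ and the Killing momentum $p=\langle\dot\gamma,K_E\rangle$. I would work band by band, a \emph{band} being a connected component of $\{\langle K_E,K_E\rangle\neq 0\}$. In such a band one may introduce Killing coordinates $(u,\theta)$ with $K_E=\partial_u$; the metric coefficients then depend only on $\theta$, and the two conservation laws reduce the geodesic equation to an autonomous ODE of the form $\dot\theta^2=V(\theta)$ (for an effective potential $V$ built out of the metric coefficients, the energy, and $p$), together with a linear equation for $u$. Completeness of $\gamma$ then amounts to analysing the one-dimensional motion in $V$ and understanding what happens at the boundary of a band, i.e.\ along a null orbit of $K_E$.

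I would first deal with the orbits of $K_E$. Non-null orbits are obviously complete because the metric in Killing coordinates is $u$-independent. The null orbits are pregeodesics; by the description recalled just before Theorem \ref{Condition sur les lamdas, introduction}, an affine parametrization is $s=\tfrac{1}{\lambda}e^{\lambda t}$ where $K_E=\partial_t$, so completeness of such an orbit in $E$ amounts to both ends $s\to 0$ and $s\to\infty$ being covered. Here the maximality of $E$ (\cite[Theorem 3.25]{10}) is crucial: if one of these ends were missing, a standard local extension along the null orbit would produce a strictly larger Lorentzian surface containing $\widetilde T$, contradicting maximality. Next, for a non-null geodesic staying in a single band, the ODE $\dot\theta^2=V(\theta)$ is classical: either $\theta$ oscillates between two consecutive simple zeros of $V$ (clearly complete), or $\theta$ tends monotonically to an endpoint of the band, and a direct estimate of $\int d\theta/\sqrt{V(\theta)}$ using the behaviour of $\langle K_E,K_E\rangle$ near its zero determines whether $\gamma$ meets the corresponding null orbit at finite affine parameter. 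When it does, maximality of $E$ again supplies a neighborhood of the null orbit, so $\gamma$ crosses smoothly into the adjacent band.

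The main obstacle is to rule out \emph{accumulation of band crossings}: a priori, $\gamma$ could meet infinitely many null orbits in a bounded affine interval, which would destroy completeness even though every single crossing is admissible. To exclude this I would use the Killing symmetry to transport the crossing data from one null orbit to the next: at fixed energy and momentum, the affine parameter spent by $\gamma$ in traversing successive bands is controlled by the local invariants $\lambda$ of the null orbits (the same parameters that appear in Theorem \ref{Condition sur les lamdas, introduction}), and these can only take finitely many values because the quotient torus $T$ is compact and contains only finitely many null orbits of $K$. A geodesic of finite affine length can therefore cross only finitely many null orbits, and combining this with the band-by-band analysis above yields completeness of every maximal geodesic of $E$.
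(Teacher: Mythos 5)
Your overall strategy -- the two first integrals $\langle\dot\gamma,\dot\gamma\rangle=\epsilon$ and $\langle\dot\gamma,K\rangle=\epsilon C$, the reduction in a ribbon to $x'(t)^2=C^2-\epsilon f(x)$, a band-by-band analysis, and a uniform lower bound on the crossing time to exclude accumulation of crossings -- is essentially the one the paper follows. But there are two genuine gaps. First, you lean on \emph{maximality} of $E$ where it cannot do the work: a maximal Lorentzian surface need not be complete, and the completeness of the null orbits of $K$ (and of the null geodesics transverse to $K$) is not a consequence of maximality but of the explicit construction of $E^u_f$, which glues ribbons by generic reflections and adds saddle points precisely to complete these curves; the paper simply quotes this $L$-completeness and restricts attention to non-null geodesics. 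Likewise, when a non-null geodesic reaches a null orbit of $K$, what lets it continue is that the limit point lies in the interior of the surface $E$, not maximality.

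Second, and more seriously, your potential analysis only controls the transverse coordinate $\theta$ (the paper's $x$), not the geodesic itself. When $x\to x_0$ with $f(x_0)=0$ and $C\neq 0$, one root of the quadratic $f\,y'^2-2\epsilon C y'+\epsilon=0$ behaves like $2\epsilon C/f$, so $x$ can reach $x_0$ in \emph{finite} affine time while $y\to\infty$: the geodesic then does not converge to any point in the current chart, and one must show that in the coordinates of the \emph{adjacent} ribbon (related by $\psi(x,y)=(x,2G(x)+y)$ with $G'=-1/f$) it converges to an actual point and crosses the null orbit transversally. This is the technical heart of the paper's Lemma \ref{asymptote à K}: an ``observation'' classifying the unit vectors with prescribed Clairaut constant at points of the two ribbons, and a comparison geodesic $\alpha$ launched from the null orbit which is shown to coincide with $\gamma$. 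Your proposal asserts the crossing but supplies no argument for it, and this is exactly the step that fails to be automatic. Separately, the paper disposes of the case where $\beta=\langle K,N\rangle$ vanishes twice in one stroke -- such a geodesic is invariant under the composition of two reflection isometries of $E^u_f$, i.e.\ under a translation of its affine parameter, hence complete -- which removes the most delicate multi-ribbon trajectories from the case analysis; your accumulation argument would have to absorb them, and as stated the crossing times are controlled by $|C|$ and the (periodic) profile of $f$, not by the parameters $\lambda$ of the null orbits.
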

This makes these surfaces simple to deal with. When the torus is not flat, this extension always contains conjugate points (see \cite{10}, Proposition 5.28). So one has to prove that there are never two such points in the universal cover of the torus for the family given in Theorem \ref{famille SPC, introduction} above. In this paper, the existence of conjugate points is studied from the point of view of the oscillation theory of the Jacobi equation.  One of the solutions of (\ref{Equation de Jacobi, introduction}), denoted by $\beta$ in \cite{9}, is given by the normal component of $K$ on $\gamma$.   

When condition iv) is added, we develop a rather simple point of view from which one can conclude to the existence or not of conjugate points. In fact, given a non-null geodesic $\widetilde{\gamma}$ of the extension $E$, when $\beta$ vanishes twice on $\widetilde{\gamma}$ (these are the only geodesics that could carry conjugate points in this case), the geodesic is preserved by a translation $T=4\omega$ of the geodesic parameter, and the distance  between two consecutive zeros of $\beta$ is constant, equal to $T/2$. Furthermore, when the foliation orthogonal to $K$ contains only Reeb components (this is a necessary condition to the absence of conjugate points; Theorem 5.29, \cite{10}), one proves easily that $\widetilde{\gamma}$ is contained in the torus on an interval of type $]t_0,t_0+2\omega[$. So it becomes clear that the absence of conjugate points is equivalent to the fact that $\beta$ realises the minimum distance between two consecutive zeros of the solutions of the Jacobi equation. This is the point of view we use to prove Theorem \ref{famille SPC, introduction} above. The distance between these zeros is studied by use of techniques from differential equation theory, provided in \cite{1},\cite{3} and \cite{4}. The idea used is the fact that there exists a close connection between the oscillation problem of the equation (\ref{Equation de Jacobi, introduction}) and the eigenvalue problem $$u^{''} + \lambda \epsilon \kappa u =0,$$
with suitable boundary conditions. This proof is an example of the interplay between geometry, analysis and the theory of ordinary differential equations in the study of conjugate points.\\

In the Riemannian setting, it is known from Hadamard's theorem that non-positive curvature implies that there are no conjugate points; therefore, it is easy to get open sets of Riemannian metrics without conjugate points in the $C^2$ topology. In the Lorentzian general setting, the stability question is a little harder; in fact, the restriction on the sign of the sectional curvatures is no help since the Jacobi equation involves also the type $\epsilon$ of the geodesic. If we drop the assumption concerning the additional symmetries on the universal cover of the torus, we show that the absence of conjugate points for a Lorentzian torus admitting a Killing field can be expressed in terms of the positivity of some numeric function defined on an open subset of the torus. The method used has the disadvantage of not providing a geometric ingredient to construct metrics without conjugate points, but it suggests that one can expect a stability result among the metrics admitting a Killing field. 
Denote by $\mathcal{L}_K(T)$ the space of smooth Lorentzian metrics on the $2$-torus $T$ admitting a non-trivial Killing field.  We obtain
\begin{theorem}
A metric in $\mathcal{L}_K(T)$ close enough to the Clifton-Pohl metric, for the $C^{\infty}$ topology, and satisfying the condition in Theorem \ref{Condition sur les lamdas, introduction} above, has no conjugate points.
\end{theorem}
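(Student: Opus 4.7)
The plan hinges on the reformulation announced just before the statement: for any metric $g \in \mathcal{L}_K(T)$, the absence of conjugate points is equivalent to the positivity of a numeric function $F_g$ defined on an open subset $U$ of $T$. The function $F_g$ is constructed from the Jacobi equation $u''+\epsilon \kappa_g u=0$ along non-null geodesics of the extension $E_g$ provided by Theorem \ref{Complétude, introduction}, and its positivity encodes the property that the solution $\beta$ (the normal component of $K_g$ along the geodesic) realises the minimum distance between consecutive zeros among all solutions of the Jacobi equation. The strategy is then threefold: (i) $F_{g_{CP}} > 0$ strictly, as a consequence of the Bavard--Mounoud argument \cite{9}; (ii) $F_g$ depends continuously on $g$ in the $C^\infty$ topology; (iii) compactness of $T$ and of a suitable parameter space of geodesic directions turns (i) into a uniform lower bound, which is stable under small $C^\infty$-perturbation of $g$.

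First I would check robustness of the underlying structure: for $g \in \mathcal{L}_K(T)$ sufficiently $C^\infty$-close to $g_{CP}$, the Killing field $K_g$, its null orbits, and the complete extension $E_g$ all depend continuously (indeed smoothly) on $g$, via Theorem \ref{Complétude, introduction} and standard ODE dependence on parameters. The hypothesis $\lambda_1(g)=\lambda_2(g)$ is precisely what keeps $g$ in the conjugate-point-free regime: by Theorem \ref{Condition sur les lamdas, introduction}, dropping this equality would immediately force $(T,g)$ to have conjugate points, so the restriction is genuinely necessary; it also ensures that the combinatorial data (band decomposition of $\widetilde{T}$, number of null orbits) matches that of $g_{CP}$, so that the construction of $F_g$ proceeds as for the Clifton--Pohl case. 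Second, I would verify that $F_g$ itself is continuous in $g$: its construction reduces to solving the Jacobi ODE along geodesics of $E_g$, both of which vary continuously in the $C^\infty$ topology, yielding continuous dependence of the zero-spacing of $\beta$ and of competing Jacobi solutions. Combining these two steps with a uniform bound $F_{g_{CP}} \geq \delta > 0$ on the relevant parameter space gives $F_g > 0$ on all of $U$ for $g$ close enough, hence the absence of conjugate points.

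The principal obstacle is making the continuous dependence of $F_g$ on $g$ uniform over the set of non-null geodesics. Since $\widetilde{T}$ and $E_g$ are non-compact, one must use both the $K_g$-symmetry and the natural geodesic translation period $T = 4\omega_g$ inherited from the Clifton--Pohl periodicity in $\hat{\Sigma}$ to reduce to a compact parameter space, and then verify that this compact reduction itself varies continuously under perturbation. A secondary subtle point is to confirm that the positivity of $F_{g_{CP}}$ is strict and uniform rather than borderline: Clifton--Pohl geodesics do develop conjugate points in the completion $\hat{\Sigma}$, so one must quantify the gap by which the first such conjugate point lies outside the embedded image of $\widetilde{T}_{CP}$, which requires revisiting the explicit Jacobi solutions of \cite{9} to extract a quantitative estimate.
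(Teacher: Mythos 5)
Your overall strategy coincides with the paper's: a numeric function ($\mathcal{Z}$, defined in Section \ref{Section: A continuous function controlling conjugate points} as $(c(t_0)+c(t_1))/s(t_0)$ along geodesics tangent to $K$) whose non-negativity characterizes the absence of conjugate points, continuity of this function in $(g,p)$, strict positivity for the Clifton--Pohl metric, and a uniformity argument. But the two obstacles you flag are exactly where the paper has to work hardest, and your sketched resolutions would not go through as stated. First, the ``compact parameter space'' reduction via the translation period $4\omega_g$ fails near the critical orbits of $K$: as the tangency point $p$ approaches a critical orbit, $t_0$ and $t_1$ (hence $\omega$) blow up, $\mathcal{Z}$ is simply undefined there ($\Omega$ excludes $\nabla_K K=0$), and no continuous extension is available a priori. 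The paper replaces compactness by the dedicated asymptotic analysis of Lemma \ref{lemme géod près du bord}: it shows that the \emph{differences} $t_1^n-t_0^n$ and $t_{\zeta_1^n}-t_{\zeta_0^n}$ stay bounded, that $s_n$ blows up at the relevant times (forcing $c_n'\to 0$ there via the Wronskian $cs'-c's=1$), and then uses mean value estimates to force $c_n(t_0^n)+c_n(t_1^n)>0$ near the critical orbit. Second, strict positivity of $\mathcal{Z}(g_{CP},\cdot)$ is not ``a consequence of the Bavard--Mounoud argument'': absence of conjugate points only yields $\mathcal{Z}\ge 0$, and $\mathcal{Z}=0$ occurs precisely when the Jacobi equation along the corresponding invariant geodesic has only periodic solutions. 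Your fallback (quantitative estimates from the explicit solutions of \cite{9}) is viable for $T_{CP}$ specifically, but the paper instead rules out the all-periodic case by Sturm--Liouville eigenvalue bounds (Lemmas \ref{lemme inégalité 0, contrainte sur kappa}--\ref{lemme inégalité (2)} and Propositions \ref{proposition inégalité (1)}--\ref{proposition inégalité (2)}), giving checkable integral conditions on $f$ that hold for $f(x)=\sin(2x)$.

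A further point: you identify the hypothesis $\lambda_1=\lambda_2$ only as a necessary restriction plus a combinatorial normalization. Its positive role in the proof is different and essential: membership in $S\mathcal{L}^*_K(T)$ (equivalently $f'(x_n)+f'(x_{n+1})=0$) is what allows $\mathcal{Z}$ to be rewritten as $(c'(t_0)-c'(t_1))/s'(t_0)$ and thereby extended \emph{continuously} across the null orbits of $K$ (the second Proposition of Section \ref{Section: A continuous function controlling conjugate points}). Without that extension, the uniform positivity of $\mathcal{Z}$ for perturbed metrics cannot be propagated up to the band boundaries, and the neighborhood you construct would not cover geodesics tangent to $K$ near the null orbits. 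The combinatorial data (number, sign and type of bands), by contrast, is automatically preserved in a $C^r$-neighborhood by Corollary \ref{corollaire convergence de champs K_n} and does not require the $\lambda$ condition.
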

Actually, we prove a more general stability by deformation result in the last section of this paper. This stability result ensures the existence of Lorentzian metrics without conjugate points and admitting a Killing field, without the symmetries added in Theorem \ref{famille SPC, introduction}. Of course, an important hypothesis all along this paper is the existence of a Killing field. The existence of a metric without conjugate points which does not satisfy this additional assumption is an open question.
\\ 
 
The paper is organized as follows: in paragraph 2 we introduce the fundamental tools and notions from \cite{10} dealing with the classification of compact Lorentzian surfaces with a Killing field, and prove the geodesic completeness of the maximal extensions associated to such surfaces. Paragraph 3 is a study of the Jacobi equation regardless of geometry; we establish some lemmas about the distance between the zeros of the solutions of such an equation. These lemmas will be applied in paragraph 4 in the case of Lorentzian tori with a Killing field, in which we characterize Lorentzian tori without conjugate points in the way presented before. The obstruction in Theorem \ref{Condition sur les lamdas, introduction} will follow from this characterization. The last paragraph studies the stability character of the property of being without conjugate points, and prove that some of the examples obtained are as stable as possible in $\mathcal{L}_K(T)$.

\begin{center}
	Acknowledgements
\end{center}

The author would like to warmly thank her thesis advisor Christophe Bavard for his support and encouragements, and the long hours of discussion he devoted to her, without which this article would not have been completed.
\section{Universal extensions of Lorentzian tori with a Killing field}
\subsection{Structure of Lorentzian tori with a Killing field}

All the facts we will be compiling in this section, dealing with the geometry of Lorentzian tori admitting a Killing field, have been investigated in \cite{10}, in the more general setting of Lorentzian surfaces with a Killing field.  For convenience, we set up in the first place some non-trivial vocabulary and notations, and then give the relevant results from \cite{10} we will be using in this paper without proofs. Let $(X,K)$ be a Lorentzian surface with a Killing field $K$, which we assume to be complete. 
\begin{definition}(ribbons, bands and dominoes) \label{Définition ruban/domino}
Let $U$ be a subset of $X$ saturated by $K$. Suppose $K$ never vanishes on $U$; we say that $(U,K)$ is \\
(1) a ribbon if $U$ is open, simply connected  and if one of the null foliations in $U$ is everywhere transverse to $K$. \\
(2) a band if $U$ is homeomorphic to $[0,1] \times \R$, with $\langle K,K \rangle$ vanishing on the boundary and not vanishing in the interior of $U$. \\
(3) a domino if $U$ is open, simply connected, and $K$ has a unique null orbit in $U$.
\end{definition}
If $U$ denotes a ribbon in $X$ and $p \in U$, we can choose a null-geodesic $\gamma$ passing through $p$, maximal in $U$ and transverse to $K$.  On the saturation of the geodesic by the flow of $K$, which is equal to $U$ by connexity, the metric writes   $$2dxdy + f(x)dy^2,$$ where $L=\partial_x$ is a null vector field parametrized such that $\langle L , K \rangle =1$ (Clairaut's lemma shows that this quantity is indeed constant), and $K=\partial_y$.
The coordinate denoted by $x$, which is well defined up to translation and change of sign, will be called the "transverse coordinate", or simply the $x$-coordinate. Thus, the norm of $K$ is given by $f$ in the $x$-coordinate; it vanishes on the null orbits of $K$ contained in $U$ and transverse to $L$.  \\
When $\langle K , K \rangle (p) \neq 0$, there exists another null-geodesic transverse to $K$ and passing through $p$, giving rise to another formula for the metric on an open set $U'$ of $X$. On the intersection $U \cap U'$, the norm of $K$ doesn't vanish: we have by  Proposition 2.5 of  \cite{10} the existence of a generic reflection, i.e. a local isometry fixing a non-degenerate geodesic perpendicular to $K$ and sending $K$ to $-K$, and thus permuting the null foliations. The transition map is given on $U \cap U'$ by composing $(x,y) \mapsto (-x,-y)$ with the generic reflection $\phi$ given by $\phi(x,y)= (-x,2 G(-x) + y)$, where  $G$ is a primitive function of  $-1/f$.  
\begin{example}\textbf{An atlas for the Clifton-Pohl torus, $T_{CP}$}\\ 
Choosing the right parametrization of the null foliations, i.e. such that $\langle L , K \rangle = \langle L' , K \rangle=1$, we show that an atlas for the Clifton-Pohl torus is given by open sets $U_i=I_i \times \R$ $(i=0,1,2,3)$, with $I_i=]\frac{i\pi}{2},\frac{i\pi}{2}+\pi[$, each of them equipped with the metric $2dudv+sin(2u)dv^2$. A generic reflection on $U_i \cap U_{i+1}$ is given by $\phi(u,v)= (u,log|\frac{sin(2u)}{1-cos(2u)}|-v)$; it sends $K$ to $-K$ and $L$ to $-L'$. On two local charts $U_i$ and $U_{i+1}$ where the metric is given by $2dudv+sin(2u)dv^2$ and $2du'dv'+sin(-2u')dv'^2$ respectively, one gets a change of coordinates by composing $(u,v) \mapsto (-u,-v)$ with a generic reflection. 	
\end{example} 
The connected components of $X-\{ \langle K , K \rangle =0\}$ are bands (in fact, interior of bands). A band is said to be spacelike (resp. timelike) if $K$ is spacelike (resp. timelike) in this band. We have the following definition, resulting from Lemma 2.8 of \cite{10}:
\begin{definition}
A Lorentzian band $(B,K)$ is said to be:\\
(1) of type I: if the foliations defined by $K$ and $K^{\perp}$ are both suspensions. \\
(2) of type II: if the foliation of $K$ is a suspension and that of $K^{\perp}$  is a Reeb component. \\
(3) of type III: if the foliation of  $K$ is a Reeb component and that of $K^{\perp}$ is a suspension.
\end{definition} 
In the following figure, the foliation of $K$ is represented by continuous lines, the orthogonal foliation by dotted lines.
\begin{figure}[h!] 
	\labellist 
	\small\hair 2pt 
	\pinlabel {$\text{Type I band}$} at 125 -29 
	\pinlabel {$\text{Type II band}$} at 532 -29
	\pinlabel {$\text{Type III band}$} at 935 -29
%	\pinlabel $\gamma_{\infty}$ at 135 392 
%	\pinlabel $\zeta_{0}$  at 202 386 
%	\pinlabel $\zeta_{1}$ at 692 393 
%	\pinlabel $x_0$  at 306 386
%	\pinlabel $x_1$  at 606 386 
	%\pinlabel $A$ [r] at 125 153
	\endlabellist 
	\centering 
	\includegraphics[scale=0.30]{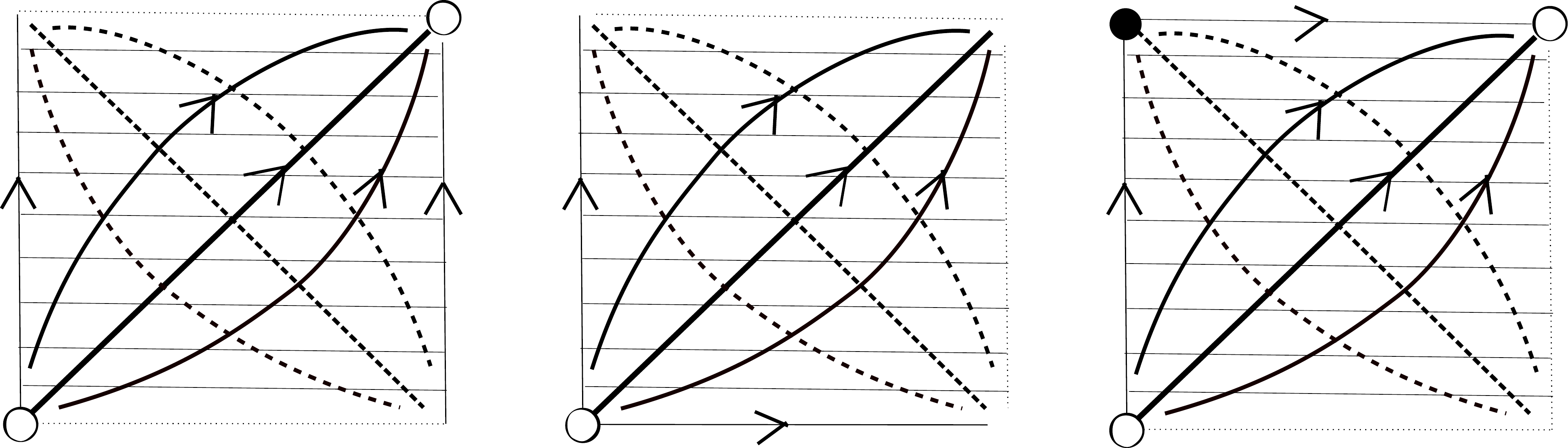} \caption{A type I, type II and type III band} 
	\label{fig:cobo}
\end{figure} 
\newpage
The Clifton-Pohl torus contains 4 maximal bands of type II. \\

Let $(T,K)$ be a non-flat torus with a Killing vector field $K$. The fact that the integral curves of $K$ are  closed (see the proof of Theorem 3.25, \cite{10}) ensures the existence of a closed curve everywhere transverse to $K$. It follows that $T$ does not contain type III bands. Besides, $T$ contains a finite number of type II bands. Now, consider a curve $\gamma$ in $\widetilde{T}$ made of broken null geodesics, everywhere transverse to $\widetilde{K}$, such that the bifurcation points are contained in the type II bands of the torus. This curve is parameterized by $| \langle \dot{\gamma},\widetilde{K} \rangle |=1$. Take a point $p$ on $\gamma$ and set $\langle \dot{\gamma}(p),\widetilde{K}(p) \rangle =1$; this defines an orientation on $\gamma$.  We notice that $\langle \dot{\gamma},\widetilde{K} \rangle$ changes sign at each bifurcation point.  
Denote by $\gamma_i$ the maximal geodesics contained in $\gamma$ with the induced orientation; each $\gamma_i$ is contained in a maximal ribbon in which we have coordinates $(x,y)$ constructed as above, such that $K=\partial_y, \langle \partial_x,\partial_x \rangle =0$, and $\langle \partial_x,\partial_y \rangle =(-1)^i$. These ribbons cover the torus. Define locally in each ribbon $\nu = dy \wedge dx$; it is easily seen that $\nu$ is a well defined volume form on $\widetilde{T}$. This gives rise to a submersion $\textbf{x}:\widetilde{T} \to \R$  defined (up to translation) by $i_K \nu = d\textbf{x}$, inducing a global diffeomorphism between the space of the leaves of $\widetilde{K}$, denoted by $\mathcal{E}_{(\R^2,\widetilde{K})}$ in \cite{10}, and $\R$, thus making it into a Haussdorf manifold of dimension $1$ (see Proposition 2.21 \cite{10}). The norm of $\widetilde{K}$ is then factorized into $\langle \widetilde{K} ,\widetilde{K} \rangle= f o \textbf{x}$, where $f$ is defined on $\mathcal{E}_{(\R^2,\widetilde{K})}$, identified with $\R$. The submersion $\textbf{x}$ coincides, up to translation and change of sign, with the $x$-coordinate of any local chart. The $f(x)$ function so obtained is periodic. \\

Now let $(p_{k})$ denote the sequence of zeros of $f$ in the $\textbf{x}$ coordinate, taken in an increasing order, such that the ribbons defined by $U_k = (I_k, (-1)^k 2dxdy + f(x) dy^2)$, where $I_k=]p_{k},p_{k+1}[$, are maximal in the torus (this set of zeros is in fact discrete, for type II bands don't accumulate). 
%Recall that in any orientable and time-orientable Lorentzian surface, there exist two null foliations. A ribbon is said to be maximal if the $p_n \in I_k$ belong to the same null-foliation of the covering space.
We glue the open sets $U_k$ and $U_{k+1}$ by means of the local isometries $\psi_k(x,y) = (x, 2 G(x) +y)$, where $G$ is a primitive function of $-1/f$ on $I_k \cap I_{k+1}$, thus obtaining the universal cover of the torus equipped with its Killing vector field  $\widetilde{K}$. This construction gives an atlas for the universal cover; we say that we have a structure modeled on $E^u_f$, or $E^u_f$-structure, on $(\widetilde{T},\widetilde{K})$ which, locally, is only determined by the function $f$. Such a structure exists on a connected and saturated Lorentzian surface each time the norm function of the Killing vector field factorizes in such a way (see Proposition 3.19, \cite{10}). \\

Conversely, if we are given a periodic function $f$, we can define in the same way a Lorentzian surface which is the universal cover of a torus. Observe that if the $I_i$'s are chosen so as each of them contains only one zero of $f$, we get a surface all of whose bands are of type II, for two consecutive zeros of $\langle K,K \rangle$ belong to different null foliations. If we choose the $I_i$ with an arbitrary number of zeros of $f$ in it, the torus shall contain type I bands also.  
\begin{definition}(Minimal number of bands)\label{Definition n=minimal number of bands}
We assume the connected components of $\{f \neq 0\}$ don't accumulate. Define $\mathfrak{n}$ to be the number of zeros of $f$ on a period. This number corresponds to the number of bands of the "smallest" quotient of the universal cover giving a torus with the same $E^u_f$ structure.  
\end{definition}  
Thus, for the Clifton-Pohl torus we have $\mathfrak{n}=2$, for it is itself a two-sheeted covering of a torus containing $2$ bands and having the same $E^u_f$ structure as $T_{CP}$. \\
We won't consider the case in which these connected components accumulate since the locally finite hypothesis is a neccessary condition for a torus admitting a Killing vector field to be without conjugate points (see Theorem 5.29, \cite{10}). \\
\\
The possibility of extending the universal cover of a Lorentzian torus into a Lorentzian surface all of whose null geodesics are complete (we say $L$-complete) will be used in an essential way in this paper; we call the Lorentzian surface so obtained the "universal extension" of the torus. We shall recall some essential properties of this surface. Let $I$ be a non-empty open interval of $\R$  and let $f: I \to \R$ be a smooth function. Denote by $R_f=(R,K)$ the surface $(R=I \times \R$, $2dxdy + f(x) dy^2)$, $(x,y) \in R$, with a Killing vector field $K= \partial_y$; it is called the "ribbon associated to $f$".  $R_{f^{-}}$ refers to the ribbon associated to $f^{-}$, defined for $-x \in I$ by $f^{-}(x)=f(-x)$. 
\begin{theorem}(Proposition 3.3, Theorem 3.25 \cite{10}).\label{Existence d'une extension maximale}
Let $(T,K)$ be a Lorentzian torus with a non-trivial Killing vector field $K$, $(\widetilde{T},\widetilde{K})$ its universal cover. Let $f: \R \to \R$ denote the function induced by $\langle \widetilde{K} , \widetilde{K} \rangle$  in the way set before. There exists a maximal Lorentzian surface $(E^u_f,K^u)$ homeomorphic to $\R^2$ such that $(\widetilde{T},\widetilde{K})$ is isometrically embedded in $E^u_f$, and $E^u_f$ is reflexive and $L$-complete. This extension is unique and every maximal ribbon contained in $(E^u_f,K^u)$ is isometric to either $R_f$ or $R_f^{-}$.
\end{theorem}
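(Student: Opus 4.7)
The plan is to construct $E^u_f$ by amalgamating \emph{full} maximal ribbons, starting from the ribbon atlas already available on $\widetilde{T}$, and then to verify the four required properties (embedding, topology, $L$-completeness, uniqueness) one at a time. Recall that $\widetilde{T}$ is covered by ribbons $U_k = I_k \times \R$ carrying the metric $(-1)^k 2dxdy + f(x)dy^2$, glued by $\psi_k(x,y) = (x, 2G_k(x) + y)$ on overlaps where $f \neq 0$, with $G_k$ a primitive of $-1/f$. Since $f$ is already globally defined on $\R$, I would replace each chart $U_k$ by the \emph{full} ribbon $R_k := \R \times \R$ with the same metric, and extend $\psi_k$ by the same formula on every connected component of $\{f \neq 0\}$ inside the overlap (on different components the primitive differs by a constant, so the extended identifications are inequivalent). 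I then iterate the construction: whenever a null orbit of $\widetilde{K}$ on the boundary of a ribbon is not yet saturated, attach a new full ribbon isometric to $R_f$ or $R_{f^-}$ via a generic reflection across that orbit, and continue until no free null orbit remains. This produces a Lorentzian surface $E^u_f$ equipped with a global Killing field $K^u$ extending $\widetilde{K}$, containing $\widetilde{T}$ isometrically, and stable under generic reflections (reflexivity) by construction.

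Next, I would establish the properties. For Hausdorffness I would check that the Clairaut invariant together with the transverse coordinate $\mathbf{x}$ (well defined because $i_{K^u} \nu = d\mathbf{x}$ extends through the gluings) separates points, preventing branch points at the boundary of each ribbon; the local finiteness of the zero set of $f$ (Definition \ref{Definition n=minimal number of bands}) ensures the iteration closes up cleanly. The transverse coordinate $\mathbf{x}$ becomes a global submersion $E^u_f \to \R$ whose fibres are the Killing orbits, each an embedded copy of $\R$; combined with simple connectedness of the amalgam, this gives a homeomorphism $E^u_f \cong \R^2$. For $L$-completeness I would argue on the two families of null geodesics inside each $R_f$: horizontal lines $y = $ const are parametrized affinely by $x \in \R$, hence complete; and a null orbit of $K$ sitting on $\{f = 0\}$ which appears incomplete in a single $R_f$ gets its missing half glued in by the generic reflection attaching the adjacent full ribbon, so the concatenated null orbit is geodesically complete in $E^u_f$.

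For uniqueness and the isometry type of maximal ribbons, I would proceed by analytic continuation. If $(E', K')$ is another such maximal $L$-complete reflexive extension of $\widetilde{T}$, then each of its maximal ribbons, being determined locally by the norm function of $K'$ along a transverse null geodesic and by the Clairaut normalization, must have transverse coordinate covering all of $\R$ (by $L$-completeness) and hence be isometric to $R_f$ or $R_{f^-}$. Starting from the inclusion $\widetilde{T} \hookrightarrow E'$, I would extend the isometry to $E^u_f \to E'$ ribbon by ribbon: on each ribbon the extension is determined by its values on a nonempty open set, and the generic reflection across each boundary null orbit, which is unique by Proposition~2.5 of \cite{10}, forces a canonical extension into the next ribbon. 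Maximality of $E^u_f$ then gives bijectivity.

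The main technical obstacle is the verification that the amalgamation remains Hausdorff and simply connected as one iterates across infinitely many bands; this requires a careful combinatorial bookkeeping of the pattern of full ribbons and generic reflections attached at each critical orbit, and ultimately relies on the fact that consecutive zeros of $f$ separate ribbons of opposite parity, so that the two primitives of $-1/f$ used in successive gluings are compatible and do not create a non-Hausdorff branch.
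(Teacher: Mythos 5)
First, a remark on what you are being compared against: the paper does not prove this statement at all. It is imported verbatim from \cite{10} (Proposition 3.3 and Theorem 3.25 there); the paper only recalls, in the two numbered items immediately following the statement, the two operations by which $E^u_f$ is built. Measured against that description, your sketch has a genuine gap: it reproduces only the first operation (amalgamating copies of the maximal ribbons $R_f$, $R_{f^-}$ along bands via generic reflections) and omits the second one entirely, namely the adjunction of \emph{saddles} --- simply connected pieces containing a zero of the Killing field, with metric given by Equation (\ref{Selle: expression explicite de la métrique}), glued in at each incomplete null orbit (Proposition 2.29 of \cite{10}). This omission is fatal for $L$-completeness. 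A null orbit of $K$ over a simple zero of $f$ is the whole line $\{x_0\}\times\R$ inside a single ribbon; in terms of the Killing parameter $t$ its affine parameter is $\frac{1}{\lambda}e^{\lambda t}$, which covers only a half-line, so the geodesic is incomplete at one end. Gluing the adjacent full ribbon does not repair this: a generic reflection is defined only on the saturation of a non-degenerate geodesic perpendicular to $K$, i.e.\ on an open band where $f\neq 0$, so the adjacent ribbon does not contain that null orbit at all, let alone "its missing half". The missing endpoint is a zero of $K^u$, which no ribbon can contain; one must attach a genuinely new two-dimensional piece on which $K^u = \frac{2}{\lambda}(u\partial_u - v\partial_v)$ vanishes at a point, and one must choose the generic reflections of the first operation compatibly with these saddles (as the paper notes after the theorem). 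Without the saddles the surface you build is neither $L$-complete nor maximal, so your uniqueness argument, which is otherwise in the right spirit, has nothing to characterize.

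Two further steps would fail as written. Gluing two full ribbons along \emph{every} connected component of $\{f\neq 0\}$ in their overlap, with inequivalent identifications on the different components, produces a surface with nontrivial fundamental group (two copies of $\R^2$ glued along $k$ disjoint open strips have $\pi_1$ free of rank $k-1$), hence not homeomorphic to $\R^2$; in the actual construction two distinct maximal ribbons of $E^u_f$ meet in at most one band and the amalgam has the combinatorics of a tree. And the transverse coordinate $\mathbf{x}$ does \emph{not} extend to a global submersion $E^u_f\to\R$: it does on $\widetilde{T}$, but the leaf space of $K^u$ on $E^u_f$ is a non-Hausdorff branched line (as the paper itself uses in the proof of Lemma \ref{asymptote à K}) and $K^u$ vanishes at the saddle points, so the proposed proof that $E^u_f\cong\R^2$ does not go through in the form you state it.
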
 
The Lorentzian surface $E^u_f$ satisfies, in addition, the following remarkable reflexivity property: 
\begin{proposition}(Lemma 3.10 \cite{10})\label{Extension des réflexions}
The generic reflections in any ribbon contained in $E^u_f$ extend to global isometries of $E^u_f$.
\end{proposition}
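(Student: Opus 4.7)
The plan is to combine the explicit coordinate description of generic reflections with the atlas and maximality properties of $E^u_f$ provided by Theorem~\ref{Existence d'une extension maximale}. In the coordinates of a ribbon $R\subset E^u_f$, a generic reflection acts on each component of $\{f\neq 0\}$ by $\sigma_c(x,y)=(x,\,2G(x)-y+c)$, where $G$ is a primitive of $-1/f$ and $c\in\R$ is a parameter. Since the Killing flow $\varphi_s:(x,y)\mapsto(x,y+s)$ is a global isometry of $E^u_f$ and $\sigma_c=\varphi_c\circ\sigma_0$, the task reduces to extending a single canonical reflection $\sigma_0$; every other $\sigma_c$ is then obtained from this extension by composition with a global Killing translation.

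The next step is to extend $\sigma_0$ across the atlas of ribbons. By Theorem~\ref{Existence d'une extension maximale}, $E^u_f$ is covered by maximal ribbons, each isometric to $R_f$ or $R_{f^-}$, with overlaps contained in $\{\langle K,K\rangle\neq 0\}$. As recalled in the text preceding Definition~\ref{Définition ruban/domino}, the transition maps on such overlaps are compositions of $(x,y)\mapsto(-x,-y)$ with a generic reflection. A direct computation in coordinates then shows that $\sigma_0$ read in the chart of one ribbon corresponds, via the transition, to a generic reflection of the adjacent ribbon, with the parameter on the adjacent ribbon uniquely determined by the requirement that the two formulas agree on the overlap. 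Iterating this produces a coherent local isometry defined on the open subset $\{\langle K,K\rangle\neq 0\}\subset E^u_f$ that extends $\sigma_0$.

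The main obstacle is to extend this local isometry across the null orbits of $K$, where the closed-form expression $\sigma_c$ is singular because $G$ diverges at zeros of $f$. To handle this, I would exploit the $L$-completeness of $E^u_f$: it guarantees that a neighborhood of any null orbit of $K$ is covered by a second ribbon chart of $E^u_f$ adapted to the other null foliation. Because $\sigma$ sends $K$ to $-K$ and permutes the two null foliations, its action is described in this alternate chart by a regular, non-singular formula of the same type $(x,y)\mapsto(x,\,2\tilde G(x)-y+c')$ with a primitive $\tilde G$ of the analogous function. Agreement of the two descriptions on their non-null overlap, together with the uniqueness part of Theorem~\ref{Existence d'une extension maximale}, ensures that the two pieces glue smoothly across the null orbit.

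Finally, since $E^u_f$ is homeomorphic to $\R^2$ and is covered by the ribbon charts considered above, the local isometries patch, without monodromy, into a global isometry $\bar\sigma:E^u_f\to E^u_f$ extending $\sigma$. The defining properties of a generic reflection (involutivity, sending $K^u$ to $-K^u$, and fixing a geodesic orthogonal to $K^u$) are preserved at every stage of the construction. The delicate point of this plan is the third step: passing the extension through the null orbits of $K$ forces one to abandon the explicit formula and to rely on $L$-completeness to produce a complementary chart in which the extension is manifestly smooth.
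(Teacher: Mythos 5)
The paper does not prove this statement: it is imported verbatim as Lemma 3.10 of \cite{10} (it is the ``reflexivity'' of $E^u_f$ already asserted in Theorem \ref{Existence d'une extension maximale}), so the only thing to assess is whether your argument would stand on its own. Your step 1 (reducing to one reflection modulo the Killing flow) and the general patching architecture are reasonable, but the step you yourself flag as delicate --- crossing the null orbits of $K^u$ --- is where the argument breaks down, for a concrete reason. In the second ribbon chart adapted to the other null foliation, a generic reflection is again of the form $(u,v)\mapsto(u,\,2\tilde G(u)-v+c')$ with $\tilde G$ a primitive of $-1/\tilde f$, and $\tilde f$ still vanishes on the null orbit in question; so there is no chart in which the reflection becomes ``manifestly smooth'' across $\{\langle K^u,K^u\rangle=0\}$. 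Worse, the extension does not fix the boundary orbits of the band at all: if $\ell=\{x=x_0\}$ bounds the band and $x_0$ is a simple zero of $f$, then $G(x)\to\pm\infty$ as $x\to x_0$, so the $\sigma$-images of points approaching $\ell$ escape to infinity along the flow of $K^u$ inside the ribbon. The correct global extension sends $\ell$ to the \emph{other} null orbit $\tilde\ell$ lying over the same point of the branched, non-Hausdorff leaf space, i.e.\ it exchanges the two branches $\gamma_\infty$ and $\widetilde\gamma_\infty$ that appear in the proof of Lemma \ref{asymptote à K}. That is a global identification between the two maximal ribbons containing the band, not a change of chart near $\ell$, and checking that it is compatible with the specific generic reflections used to glue the copies of $R_f$ and with the saddles inserted at the incomplete null orbits (Equation (\ref{Selle: expression explicite de la métrique}), where $K^u$ vanishes and no ribbon chart exists) is precisely the content of Lemma 3.10 of \cite{10}.

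A second, smaller gap: distinct bands are disjoint components of $\{\langle K^u,K^u\rangle\neq0\}$, so the overlaps of ribbon charts restricted to that set cannot determine the constants $c$ of the reflection band by band; choosing them coherently so that the resulting map extends continuously across the null orbits and the saddles is again part of what must be proved, not a consequence of the transition maps. As written, the proposal assumes the hard point rather than establishing it.
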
 
Let $(T,K)$ be a Lorentzian torus with Killing field $K$, and let $E^u_f$ be the extension of the universal cover, given in Theorem \ref{Existence d'une extension maximale} above. This Lorentzian surface is obtained using two operations. Roughly speaking, they consist in: \\
1. gluing copies of $R_f$, the maximal ribbon defined above, along the bands, using generic reflections. This operation extends the null-geodesics interior to the bands of the ribbons into complete geodesics;\\
2. adding saddles: a saddle is obtained in \cite{10}, Proposition 2.29, as the extension of a domino (whose unique null orbit of $K$ is incomplete) by a simply connected surface containing a unique zero of $K$. This extends the null orbits of $\widetilde{K}$ which are geodesically incomplete into complete geodesics. With a good choice of the generic reflections in the first operation, one can make this extension compatible with the surface we get in 1 (see the proof of Proposition 3.3, \cite{10}, for details). Denote by $U$ such a domino, and by $\widetilde{U}$ the extension. Write $U=I \times \R,\, 2dxdy + f(x) dy^2$ in the local coordinates, with $f(0)=0$ and $f^{'}(0)=\lambda$, $\lambda \neq 0$ (Remark \ref{Orbites incomplètes=zéros simpes}). The metric on $\widetilde{U}$ reads (Proposition 2.29, \cite{10})
\begin{align}\label{Selle: expression explicite de la métrique}
\frac{1}{\lambda} [v^2 h(uv) du^2 -2 (j(uv)+\frac{1}{j(uv)}) dudv + u^2 h(uv) dv^2]; \;\;\;uv \in I, v \in \R,
\end{align} 
where $x=uv; j,h \in C^{\infty}(I,\R)$, such that $j(x)= \int_{0}^{1} f^{'}(tx) dt$ and $h(x)=\int_{0}^{1} l^{'}(tx)dt,$ \,with $l(x)=j(x)-\frac{1}{j(x)}$, and the Killing field corresponds to $K = \frac{2}{\lambda}(u \partial_u - v \partial_v)$.
\\

We want to know how these objects, i.e. $R_f$ and the saddles, depend on the metric. Actually, we will see that if the Killing field depends smoothly on the metric, then these objects do, too.\\
Let $\widetilde{T}$ be a universal cover of $T$, and $\widetilde{K}$ the Killing field on it. Recall that the norm of $\widetilde{K}$ can be written  $\langle \widetilde{K}, \widetilde{K} \rangle =f o \textbf{x}$, where $\textbf{x} \in C^{\infty}(\widetilde{T},\R)$ is defined up to translation and change of sign, and $f \in C^{\infty}(\R,\R)$. We can make the construction of $\textbf{x}$ more geometric by looking at the proof of Proposition 2.21, \cite{10}. Fix $p \in \widetilde{T}$, and consider a positively oriented hyperbolic basis $(X,Y)$ in $\mathfrak{X}(\widetilde{T})$, i.e. $\langle X,X \rangle=\langle Y,Y \rangle=0$ and $\langle X,Y \rangle=1$. Define a volume form $\nu$ by setting $\nu(X,Y)=1$; $\nu$ does not depend on the choice of this basis. Define a 1-form $\omega :=i_{\widetilde{K}} \nu$; this form is closed, hence exact since $\widetilde{T}$ is simply connected, so there exists (a unique) $\textbf{x} \in C^{\infty}(\widetilde{T},\R)$ such that $\omega=d\textbf{x}$ and $\textbf{x}(p)=0$. It is easy to check that this function $\textbf{x}$ coincides with the one defined before. \\
%We write $(g,K) \in \mathcal{L}_K(T) \times $ for a metric $g$ in $\mathcal{L}_K(T)$ with Killing vector field $K$. 
The space $\mathfrak{X}(T)$ of smooth vector fields on $T$, together with $\mathcal{L}_K(T)$, are equipped with the $C^{r}$ topology. 
\begin{lemma}\label{Convergence des rubans}
Let $(g,K) \in \mathcal{L}_K(T) \times \mathfrak{X}(T)$ a non-flat metric on $T$, such that $\mathcal{L}_K g =0$, and let $(g_n,K_n) $ be a sequence in $ \mathcal{L}_K(T) \times \mathfrak{X}(T)$ such that $\forall n, \mathcal{L}_{K_n} g_n =0$ and $(g_n,K_n) \overset{C^{r}}{\to} (g,K)$. Then the sequence of ribbons $R_{f_n}$, where $f_n$ denotes the function induced by the norm of $K_n$, converges to $R_f$, where $f$ is induced by the norm of $K$, for the $C^r$ topology. 
\end{lemma}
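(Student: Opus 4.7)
The plan is to unwind the construction $(g, K) \mapsto f$ step by step and check that each step is continuous for the $C^r$ topology on compact subsets. First lift to the universal cover, and recall that the Lorentzian volume form $\nu_g$, characterized by $\nu_g(X, Y) = 1$ on any positively oriented hyperbolic basis, depends smoothly on $g$ as long as $g$ remains non-degenerate. Consequently $\nu_{g_n} \to \nu_g$ in $C^r$ on compacts, and the 1-form $\omega_n := i_{\widetilde{K}_n} \nu_{g_n}$ (which is closed, because $K_n$ being Killing preserves $\nu_{g_n}$) converges in $C^r$ on compacts to $\omega := i_{\widetilde{K}} \nu_g$ by linearity in both arguments.

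Next, fix a basepoint $\widetilde{p} \in \widetilde{T}$. By simple connectedness, $\omega_n = d\textbf{x}_n$ and $\omega = d\textbf{x}$ with $\textbf{x}_n(\widetilde{p}) = \textbf{x}(\widetilde{p}) = 0$, and the standard estimate for primitives of a $C^r$ form gives $\textbf{x}_n \to \textbf{x}$ in $C^{r+1}$ on compact sets. Choose a smooth curve $\sigma : J \to \widetilde{T}$ through $\widetilde{p}$ transverse to $\widetilde{K}$, hence also transverse to $\widetilde{K}_n$ for large $n$. Proposition 2.21 of \cite{10} identifies the leaf space of $\widetilde{K}$ with $\R$ via $\textbf{x}$, so $\textbf{x} \circ \sigma : J \to \R$ is a diffeomorphism onto an open interval; by the inverse function theorem applied uniformly in $n$, $(\textbf{x}_n \circ \sigma)^{-1} \to (\textbf{x} \circ \sigma)^{-1}$ in $C^{r+1}$ on every compact subinterval of the limit image, and the images $(\textbf{x}_n \circ \sigma)(J)$ exhaust $(\textbf{x} \circ \sigma)(J)$ on compact subintervals.

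The defining relation $\langle \widetilde{K}, \widetilde{K} \rangle_{\widetilde{g}} = f \circ \textbf{x}$, restricted to $\sigma$ and inverted, yields the explicit formula
\[
f(x) = \langle \widetilde{K}, \widetilde{K} \rangle_{\widetilde{g}}\bigl(\sigma\bigl((\textbf{x} \circ \sigma)^{-1}(x)\bigr)\bigr),
\]
and similarly for $f_n$. Each factor converges in $C^r$ on compacts, and composition of $C^r$ maps is continuous for the $C^r$ topology on compacts, so $f_n \to f$ in $C^r$ on every compact subinterval of the natural domain. The periods of $f_n$ and $f$ (given by $\int_c \omega_n$ and $\int_c \omega$ for $c$ a transverse generator of $\pi_1(T)$) converge, so this upgrades to $C^r$ convergence on all of $\R$; since the model ribbon $R_f$ is given by the explicit formula $2dxdy + f(x) dy^2$, this is exactly $R_{f_n} \to R_f$ in $C^r$. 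I expect the main obstacle to be the bookkeeping around $\sigma$: verifying that a single transversal works for the whole sequence and that the images $(\textbf{x}_n \circ \sigma)(J)$ stabilize uniformly on compact subintervals, so that one obtains genuine $C^r$ convergence on the correct domain rather than pointwise convergence on moving domains. Once this local uniformity is secured, the rest reduces to continuity of the elementary operations (integration, inversion, composition).
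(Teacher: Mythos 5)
Your proposal is correct and follows essentially the same route as the paper: construct the transverse coordinate via $i_{\widetilde{K}_n}\nu_n = d\textbf{x}_n$ with a fixed basepoint, deduce $\textbf{x}_n \to \textbf{x}$ on compacts, restrict to a curve transverse to all the $\widetilde{K}_n$ to invert $\textbf{x}$, write $f = \langle\widetilde{K},\widetilde{K}\rangle\circ\textbf{x}^{-1}$, and use periodicity to pass from compact subsets to all of $\R$. Your version merely spells out the intermediate continuity checks in more detail.
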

\begin{proof}
For all $n$, one can choose a Lorentzian volume form $\nu_n$ that defines the transverse coordinate $x_n$ by setting $i_{\widetilde{K}_n}  \nu_n = d  \textbf{x}_n$ and $\textbf{x}_n(p)=0$, such that the sequence 
$\nu_n$ converges to $\nu$. By definition $\textbf{x}_n$  converges $C^r$ to $\textbf{x}$ on every compact subset of $\widetilde{T}$. Now, take a curve $c$ everywhere transverse to $\widetilde{K}$ and $\widetilde{K}_n$ for $n$ large enough, cutting each leaf of $\widetilde{K}$ only one time. Define $I=\{c(t), t\in \R \}$ on which  $\textbf{x}$ and $\textbf{x}_n$ are diffeomorphisms. Writing $f=g(\widetilde{K}, \widetilde{K}) o \textbf{x}^{-1}$, we get the $C^r$ convergence of $f_n$ to $f$ on every compact subset of $\R$, hence everywhere, since they are periodic. This proves the lemma.  
\end{proof}
\begin{lemma}\label{Convergence des selles}
Let $(U,K)$, $U=I \times \R$, be a Lorentzian domino where the unique null orbit of $K$ (represented by $x=0$) is incomplete. Denote by $g$ the metric on $U$ and let $(g_n,K_n)$ be a sequence of metrics on $U$ such that $(g_n,K_n) \overset{C^{r}}{\to} (g,K)$, with $r \geq 2$. Then, there exists a neighborhood $J$ of $0$ such that the extension  of $V := J \times \R$ for the metric $g_n$, denoted by $\widetilde{V}_n$, converges $C^{r-2}$ to $\widetilde{V}$, the extension for the metric $g$.
\end{lemma}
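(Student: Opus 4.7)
The plan is to combine Lemma \ref{Convergence des rubans} with the explicit formula (\ref{Selle: expression explicite de la métrique}) for the saddle metric, and to track carefully how many derivatives are lost at each step. By Lemma \ref{Convergence des rubans}, the functions $f_n$ associated to $(g_n,K_n)$ converge to $f$ in $C^r$ on a neighborhood of $0$. Since $f(0)=0$ and $\lambda := f'(0) \neq 0$, for $n$ large $\lambda_n := f_n'(0)$ is nonzero and $1/\lambda_n \to 1/\lambda$; in particular, after shrinking the domain, each $f_n$ still defines a domino with a unique, incomplete, null orbit at (the same) point, so the construction of Proposition 2.29 of \cite{10} applies to $g_n$ for all $n$ large.

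The next step is to check that all ingredients of (\ref{Selle: expression explicite de la métrique}) depend continuously on $f$, with a controlled loss of derivatives. Set $j_n(x) := \int_0^1 f_n'(tx)\, dt$; differentiation under the integral gives $j_n^{(k)}(x) = \int_0^1 t^k f_n^{(k+1)}(tx)\, dt$ for $k \le r-1$, and the $C^r$ convergence $f_n \to f$ implies $j_n \to j$ in $C^{r-1}$ on every compact. Because $j(0) = \lambda \neq 0$, a further shrinking of $J$ to a neighborhood of $0$ ensures that $j$, and $j_n$ for $n$ large, stay bounded away from zero on $J$; hence $l_n := j_n - 1/j_n \to l$ in $C^{r-1}(J)$, and the same integration-under-the-sign argument applied to $l_n'$ yields $h_n \to h$ in $C^{r-2}(J)$.

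Finally, by Proposition 2.29 of \cite{10}, for every $n$ the extension $\widetilde{V}_n$ of $V = J \times \R$ is realized on the \emph{same} underlying manifold $\{(u,v)\in\R^2 : uv \in J\}$, equipped with the metric (\ref{Selle: expression explicite de la métrique}) in which $j,h,\lambda$ are replaced by $j_n,h_n,\lambda_n$. From the previous convergences, each coefficient
$v^2 h_n(uv)/\lambda_n$, $(j_n(uv)+1/j_n(uv))/\lambda_n$ and $u^2 h_n(uv)/\lambda_n$ tends to the corresponding coefficient of the limit metric in $C^{r-2}$ on every compact of $\widetilde{V}$, which is exactly the announced statement.

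No conceptual obstacle is expected: the argument is a bookkeeping exercise once the formula (\ref{Selle: expression explicite de la métrique}) is available. The points that require attention are the two derivative losses (one in passing from $j$ to $l'$, another in the integral definition of $h$), which together explain and saturate the hypothesis $r \ge 2$, and the uniformity of the neighborhood $J$ on which $j_n$ stays bounded away from zero, which follows from the $C^r$ convergence together with $j(0)=\lambda \neq 0$.
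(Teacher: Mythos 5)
Your proof follows essentially the same route as the paper's: reduce everything to the $C^r$ convergence of $f_n$ to $f$ via Lemma \ref{Convergence des rubans}, isolate a neighborhood $J$ of $0$ on which each $f_n$ has a single simple zero, and then read the conclusion off the explicit saddle formula (\ref{Selle: expression explicite de la métrique}); you additionally spell out the derivative count through $j_n$, $l_n$ and $h_n$, which the paper leaves implicit. The one detail you gloss over is the normalization of the transverse coordinate: the zero of $f_n$ need not sit at $x=0$ in the chart adapted to $g$, so (as the paper does via its Fact \ref{Fait sur la convergence des zéros uniques de fonctions sur un compact}) one must first choose points $p_n$ on the unique null orbit of $K_n$ converging to $p$ and set $x_n(p_n)=0$ before writing $f_n(0)=0$ and $\lambda_n=f_n'(0)$.
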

\begin{proof}
Before starting the proof, let us state the following fact: 
\begin{fact}\label{Fait sur la convergence des zéros uniques de fonctions sur un compact}
let $F_n$ be a sequence of functions defined on a compact manifold $M$ with values in $\R$, which converges uniformly to a function $F$. Suppose that $ F $ admits a unique zero in $ M $ -denote it by $ p $, and that $ F_n $ admits a unique zero $p_n$ in $ M $ for all $ n \in \N $. Then the sequence $ p_n $ converges to $ p $ in $ M $.
\end{fact}
Now, let $p$ be a point on the null orbit of $K$ in $U$; the transverse coordinate for the metric $g$ satisfies $x(p)=0$. On a neighborhood of this orbit, there is a unique null orbit of $K_n$ for all $n$ but a finite number (the zero of $f$ in $U$ is simple by Remark \ref{Orbites incomplètes=zéros simpes}), so let $p_n$ be a point on it such that $p_n$ converges to $p$ (this is possible by Fact \ref{Fait sur la convergence des zéros uniques de fonctions sur un compact} above). Denote by $x_n \in C^{\infty}(U,I_n)$ the transverse coordinate of $g_n$ such that $x_n(p_n)=0$. Since $I_n$ converges to $I$ and $f_n$ is $C^r$ close to $f$, we can find a neighborhood  $J$ of $0$ on which $f$ and $f_n$ are all defined for $n$ big enough, with $f_n$ having only one zero on $J$. The conclusion of the lemma follows then from Equation (\ref{Selle: expression explicite de la métrique}) that gives the explicit expressions of the extensions. 
\end{proof}
\begin{remark}
We point out that these lemmas state the convergence of the ribbons and the saddles as abstract objects related to the metric, i.e. depending only on $f$. We didn't say anything about the local coordinates $(x,y)$ and $(u,v)$. Actually, we will see in Paragraph \ref{Section: convergence des champs de Killing} that these local charts depend smoothly on the metric.
\end{remark}
\subsection{Completeness of the universal extensions of non-elementary tori}
We know from \cite{9} that the extension associated to the Clifton-Pohl torus is complete. In this section, we prove that this is true for any extension $E^u_f$, with $f$ periodic. Null-completeness being already obtained, what we have to show is that non-null geodesics are complete.   
\begin{theorem}\label{Complétude}
The extension $E^u_f$ associated to a Lorentzian torus $(T,K)$ is geodesically complete. 
\end{theorem}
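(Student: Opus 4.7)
Since $E^u_f$ is $L$-complete by Theorem~\ref{Existence d'une extension maximale}, it is enough to prove that non-null geodesics are complete. Let $\gamma:(\alpha,\beta)\to E^u_f$ be a maximal non-null geodesic and assume, for contradiction, that $\beta<+\infty$. The Killing field $K^u$ and the squared speed of $\gamma$ provide two first integrals preserved along $\gamma$, namely $c:=\langle\dot\gamma,K^u\rangle\in\R$ and $\epsilon:=\mathrm{sgn}\langle\dot\gamma,\dot\gamma\rangle\in\{\pm 1\}$. In a ribbon chart $(x,y)$ with $K^u=\partial_y$ and metric $2dxdy+f(x)dy^2$, a short calculation combines these two conservation laws into the autonomous first-order equation
\[
\dot x(t)^2 \;=\; c^2-\epsilon\, f\bigl(x(t)\bigr),
\]
and the geodesic equation for $x$ reduces to $\ddot x=-\tfrac{\epsilon}{2}\, f'(x)$, which is well defined also at the turning points $\dot x=0$. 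Since $f$ is periodic and bounded, the $x$-motion is a one-dimensional mechanical system with periodic potential $\epsilon f$ and total energy $c^2$.

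I would then estimate the affine length $\beta-\alpha$ by a case distinction on the shape of $c^2-\epsilon f$ on $\R$:
\begin{itemize}
\item If $c^2-\epsilon f$ is everywhere positive, $\dot x$ has a constant sign and $x(t)$ traverses infinitely many periods of $f$; each period contributes the positive time $T_0=\int_{0}^{P} dx/\sqrt{c^2-\epsilon f(x)}$, so $\beta-\alpha=+\infty$.
\item If $c^2-\epsilon f$ vanishes transversally, $x(t)$ oscillates between two consecutive turning points with finite positive period, and $\beta-\alpha=+\infty$ for the same reason.
\item If $c^2=\epsilon f(x_*)$ at a critical point $x_*$ of $f$, then $\gamma$ either coincides with the (already complete) critical orbit of $K^u$ through $x_*$, or is exponentially asymptotic to it, and its affine length is again infinite.
\end{itemize}
In each case we obtain $\beta=+\infty$, contradicting the assumption, \emph{provided} this ribbon-local picture glues consistently across the singular loci of the ribbon coordinate.

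These singular loci are of two kinds: null orbits of $K^u$ sitting on the common boundary between two ribbons, and the isolated zeros of $K^u$ at the centres of the saddles added in step~2 of the construction of $E^u_f$. Across a null orbit the two ribbons are identified by a generic reflection (Proposition~\ref{Extension des réflexions}), so the ODE $\dot x^2=c^2-\epsilon f$ retains the same form through the zero of $f$ and the analysis continues unchanged. For a saddle I would use the explicit expression (\ref{Selle: expression explicite de la métrique}) of the metric in the coordinates $(u,v)$: the metric is smooth and Lorentzian at $u=v=0$ (since $j(0)=\lambda\neq 0$), $K^u$ has there a hyperbolic zero, and only geodesics with $c=0$ can actually hit the zero; in this last special case the geodesic flow on the saddle can be integrated explicitly enough to bound the transit time from below.

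The main obstacle I expect is precisely this saddle analysis: checking that a geodesic cannot accumulate infinitely many ribbon-to-ribbon or ribbon-to-saddle transitions in finite affine time, and that a geodesic reaching the hyperbolic zero at the centre of a saddle does not do so in finite affine time either. Once these local estimates are secured, the periodicity of $f$ together with the first-integral argument above closes the proof of geodesic completeness in all cases.
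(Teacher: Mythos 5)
Your reduction to non-null geodesics, your two first integrals, and your monotone case (a) and asymptotic case (c) all match the paper's strategy (the latter two are essentially the paper's ``$I$ unbounded'' case and Proposition~\ref{Complétude dans une bande}). The genuine gap is in case (b), and it is exactly the point you flag but do not resolve. Your claim that ``across a null orbit the two ribbons are identified by a generic reflection, so the ODE $\dot x^2=c^2-\epsilon f$ retains the same form through the zero of $f$ and the analysis continues unchanged'' is false in the relevant direction of crossing. The $x$-equation is indeed regular at a turning point, but the second coordinate obeys $\dot y=(\epsilon C-\dot x)/f$, and after the geodesic turns around and comes back to a simple zero $p$ of $f$ it does so with $\dot x\to -\epsilon C\cdot\mathrm{sgn}(\dot x_{\mathrm{in}})$, so the numerator no longer cancels the zero of $f$: one gets $\dot y\sim 2C/f$, hence $y\to\infty$ logarithmically in \emph{finite} affine time. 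The geodesic therefore does not re-cross the null orbit $\{x=p\}$ of the ribbon it is in; it leaves every compact subset of that maximal ribbon, passes around the saddle, and continues in a \emph{different} ribbon with a different $x$-coordinate (this is precisely the content of the sign table in the ``Observation'' inside Lemma~\ref{asymptote à K}). Consequently the one-dimensional mechanical system in a fixed chart does not control the oscillating case: the finite period of the $x$-motion in one chart says nothing, by itself, about whether the geodesic survives the infinitely many ribbon changes, which is the whole difficulty.

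The paper closes this case by an idea absent from your proposal: the reflexivity of $E^u_f$ (Proposition~\ref{Extension des réflexions}). If $\beta$ vanishes twice, the generic reflections fixing the $K^\perp$-geodesics (or the point reflections at saddle points) through two consecutive zeros of $\beta$ are global isometries of $E^u_f$ preserving $\gamma$; their composition translates the affine parameter by $4\omega>0$ (Lemma~\ref{lemme géod périodique}), and an inextendible geodesic invariant under a nontrivial translation of its parameter is automatically defined on all of $\R$. This replaces, in one stroke, both your oscillation estimate and your unproved ``no accumulation of transitions in finite time'' claim. A second, smaller omission: in your case (c) you assert that a geodesic trapped in a band is asymptotic to a critical orbit; for $C\neq 0$ ruling out the alternative that it spirals into a \emph{null} orbit (again with $\dot y$ blowing up) requires the comparison-geodesic argument of Lemma~\ref{asymptote à K}, not just the first integrals. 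To salvage your route you would need to prove both of these statements; as written, the proposal establishes completeness only for geodesics that never leave a single ribbon chart.
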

A maximal geodesic $\gamma$ may have two different behaviors: either it leaves any maximal ribbon contained in $E^u_f$, or it remains in a maximal ribbon provided $t$ goes close enough to the limit. In the second case, we shall consider two behaviors again: set $I=\{x(\gamma(t))\}$; as $t$ ranges over the domain of $\gamma$, either $I$ is bounded, in which case the geodesic remains in a band as $t$ approaches the limit of the domain of $\gamma$, or $I$ is unbounded.\\
Denote by $T$ the unit vector field tangent to $\gamma$, and $N$ the vector field along $\gamma$ orthogonal to $T$, such that the basis $(T,N)$ is positively oriented. Set $$K=C T + \beta N.$$
Then $C=\epsilon \langle T,K \rangle$ is a constant called the Clairaut constant (see \cite{9}, p. 3), and $\beta=-\epsilon \langle K,N \rangle$ is a solution of the Jacobi equation.  Notice that if $K(p), p \in \gamma$, is not degenerate, $\beta(p)=0$ if and only if $\gamma$ is tangent to $K$ at $p$. 
\begin{lemma}\label{asymptote à K}
	Let $\gamma$ be a non-null geodesic such that $\beta$ vanishes at most one time. If $\gamma$ remains in a band for $t$ large enough, it asymptotically approaches a leaf of $K^u$; if $\gamma$ is not perpendicular to $K^u$, this leaf is either timelike or spacelike, depending on the type of $\gamma$.
\end{lemma}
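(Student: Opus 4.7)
The strategy is to reduce the statement to the behaviour of the transverse coordinate $x$ along $\gamma$. Since every band of $E^u_f$ is contained in a maximal ribbon, I would work in local coordinates $(x,y)$ on such a ribbon, in which the metric reads $2\,dx\,dy+f(x)\,dy^2$, $K^u=\partial_y$ and $\langle K^u,K^u\rangle=f(x)$. The hypothesis that $\gamma$ stays in a band means that for $t$ large, $x(\gamma(t))$ takes values in a fixed open interval $(a,b)$ with $f(a)=f(b)=0$ and $f$ of constant sign on $(a,b)$.

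The core of the argument is the identity $\dot x^2=\beta^2=C^2-\epsilon f(x)$ along a unit-speed $\gamma$. On the one hand, decomposing $K^u=CT+\beta N$ in the orthonormal frame $(T,N)$ gives $\langle K^u,K^u\rangle=\epsilon(C^2-\beta^2)$, so $\beta^2=C^2-\epsilon f(x)$. On the other hand, Clairaut's conservation $\dot x+f(x)\dot y=\epsilon C$ combined with the unit-speed equation $2\dot x\dot y+f(x)\dot y^2=\epsilon$ allows one to eliminate $\dot y$ and to obtain $\dot x^2=C^2-\epsilon f(x)$. Hence $\dot x=\pm\beta$ along $\gamma$.

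With $\dot x=\pm\beta$ in hand, the hypothesis that $\beta$ vanishes at most once makes $\dot x$ of constant sign for $t$ large; combined with $x(t)\in[a,b]$, this yields a monotone and bounded $x(t)$, hence $x(t)\to x_\infty\in[a,b]$ and $\dot x(t)\to 0$, so $\beta(t)\to 0$. Passing to the limit in $\beta^2=C^2-\epsilon f(x)$ then gives $f(x_\infty)=\epsilon C^2$. If $\gamma$ is not perpendicular to $K^u$, i.e.\ $C\neq 0$, this forces $x_\infty\in(a,b)$ (otherwise $f(x_\infty)$ would be $0$), so the leaf $\{x=x_\infty\}$ is a genuine non-null integral curve of $K^u$ whose norm $\epsilon C^2$ has the sign of $\epsilon$; thus the limiting leaf is spacelike when $\gamma$ is spacelike and timelike when $\gamma$ is timelike. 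The only delicate point is to verify the coordinate identity $\dot x=\pm\beta$; once that is established, the rest is straightforward monotone-convergence analysis together with the explicit value $f(x_\infty)=\epsilon C^2$.
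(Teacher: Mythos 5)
The reduction to the identity $\dot x^{2}=\beta^{2}=C^{2}-\epsilon f(x)$ and to the monotonicity of the transverse coordinate is exactly how the paper's proof begins, but your inference ``$x$ monotone and bounded, hence $\dot x\to 0$'' is a genuine gap, and it sits precisely where the real content of the lemma lies. First, this is not a theorem of calculus: a monotone bounded function need not have derivative tending to $0$, in particular when its domain is a bounded interval --- and here the maximal geodesic is a priori defined only on some $[0,b)$ with possibly $b<\infty$, since this lemma is an ingredient of the proof that $E^{u}_{f}$ is complete, so completeness cannot be assumed. From $\dot x^{2}=C^{2}-\epsilon f(x)$ and $x\to x_{\infty}$ you only get that $|\dot x|$ converges to $\sqrt{C^{2}-\epsilon f(x_{\infty})}$; nothing in your argument prevents $x_{\infty}$ from being a zero of $f$ (a boundary null orbit of the band) with $|\dot x|\to|C|\neq 0$. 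That scenario is compatible with all the first integrals: eliminating $f\dot y$ gives $\dot y=\epsilon/(\dot x+\epsilon C)$, which can blow up as $\dot x\to-\epsilon C$, so the geodesic could in principle spiral onto the null orbit with $y\to\infty$ while staying in the open band --- in which case the limiting leaf would be null and the second conclusion of the lemma false. Your proof simply asserts $\dot x\to 0$ at the one point where it could fail.

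Excluding that scenario is the bulk of the paper's proof. Assuming $f(x_{0})=0$ and $C\neq 0$, one constructs a comparison geodesic $\alpha$ issued from a point of the limiting null orbit with the same Clairaut constant; after observing that at a point of a non-degenerate orbit there are exactly two unit vectors $T$ with $\langle T,K^{u}\rangle=C$, and only one with the sign pattern forced on $\dot\gamma$, one concludes $\dot\gamma=\dot\alpha$ at a common point, hence $\gamma=\alpha$, and $\alpha$ crosses the null orbit transversally --- so $\gamma$ leaves the band, contradicting the hypothesis. Only then does one know $f(x_{0})\neq 0$; the paper then shows $\dot y$ converges, that $y\to\infty$ (the case where $(x,y)$ converges to a point being excluded by extending $\gamma$ through a convex neighbourhood, again using maximality of the domain --- another case your write-up skips), and deduces $\dot x\to 0$ from $dx/dy\to 0$. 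Your computation of $f(x_{\infty})=\epsilon C^{2}$ from $\beta\to 0$ is correct, but the claim $\beta\to 0$ is exactly what must be, and is not, proved.
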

\begin{proof}
	 Provided we go far enough out in $\gamma$, we may suppose that the geodesic is transverse to $K^u$ in the band. In the coordinates $(x,y)$, this amounts to saying that the derivative $x^{'}$ does not vanish on $\gamma$; the $x$-coordinate  is therefore strictly monotone and converges, since it is bounded, to a constant $x_0$. Moreover, the $y$-coordinate is strictly monotone on $\gamma$. If $y$  converges to $y_0$ on $\gamma$, it is easy to see that the geodesic can be extended beyond the point $p=(x_0,y_0)$. Indeed, following the proof of Lemma 8 p.130, \cite{6}, take a convex neighborhood $\mathcal{V}$ of $p$ (an open set is said to be convex provided it is a normal neighborhood of each of its points). The geodesic $\gamma$ is contained in $\mathcal{V}$ for $t \geq a$, for some $a>0$; set $q=\gamma(a)$. In particular, there is a unique geodesic segment $\alpha:[0,1] \to T$ joining $p$ and $q$, that lies entirely in $\mathcal{V}$. This geodesic coincides with $\gamma$ and extends it past $p$. \\
	It follows that the $y$-coordinate goes to infinity on $\gamma$ while the geodesic approaches the leaf of $K^u$ corresponding to $x_0$. Now, we want to prove that $x^{'}$ tends to $0$. We start with the following fact:
	\begin{fact}
		If the leaf of $K^u$ corresponding to $x_0$ is not a null orbit of $K^u$, i.e.$f(x_0) \neq 0$, then $x^{'}$ tends to $0$ on $\gamma$; in particular, $C^2 = \epsilon f(x_0)$.
	\end{fact}
		Indeed, assume that $f(x_0) \neq 0$; it appears from equations (\ref{Intégrales premières,2}) and (\ref{x' borné, juste ici}) below that both $x^{'}(t)$ and $y^{'}(t)$ converge as $x$ goes to $x_0$ on $\gamma$, with a finite limit. Now regard $x$ as a function of $y$ and write  $\frac{\mathrm{d}x}{\mathrm{d}y}=\frac{x^{'}(t)}{y^{'}(t)}$; this derivative converges on $\gamma$; denote its limit by $l \in \bar{\R}$. Of course this limit cannot be infinite; indeed, $x$ regarded as a function of $y$ is strictly monotone and tends to a constant while $y$ goes to infinity; so if $\frac{\mathrm{d}x}{\mathrm{d}y}$ converges to $l$, this limit is necessarily zero. The conclusion that $x^{'}(t)$ tends to zero is straightforward, and shows, using equation (\ref{x' borné, juste ici}), that the leaf of $K^u$ corresponding to $x_0$ has the same type as $\gamma$.
	
	\textbf{Case where $C \neq 0$:} 
	Without loss of generality we can assume  that  $\gamma$ is spacelike. We will show that  $f(x_0) \neq 0$, which will end the proof. We may suppose that $x^{'} >0$ on $\gamma$, by changing $K^u$ to $-K^u$ in the local chart, if necessary. Suppose, contrary to our claim, that $f(x_0)=0$,  and call $\gamma_{\infty}$ the corresponding light orbit of $K^u$. In the band containing $\gamma$ (for $t$ close enough to the limit), $\gamma_{\infty}$ and its image $\widetilde{\gamma}_{\infty}$ by a generic reflection, the space of the leaves of $K^u$ is a simple branched line (thus a non-Hausdorff space), in which the branched points correspond to the null orbits of $K^u$. The claim that $x$ goes to $x_0$ on $\gamma$, with $f(x_0)=0$, means that $\gamma$ approaches one of the two branched points, i.e. either $\gamma_{\infty}$ or $\widetilde{\gamma}_{\infty}$.   Let us state the following plain observation  
	
	\textbf{Observation:}\label{Observation: combien de vecteurs T satisfont  <T,K> =C}
		The band containing $\gamma$ is contained in two maximal ribbons; call $R_h$ the maximal ribbon containing $\gamma_{\infty}$ and $R_v$  the one containing $\widetilde{\gamma}_{\infty}$. Denote again by $(x,y)$ the coordinates on $R_h$, and $(u,v)$ the coordinates on $R_v$ such that $K^u=\partial_v, L^{'}= \partial_u,$ and $ \langle ~\partial_u,\partial_v \rangle = -1$. Let $p$ be a point in $\bar{R}_h \cap \bar{R}_v$; we have two cases: \\
		1) $\langle K^u(p),K^u(p) \rangle =0$: in this case, there is a unique $T \in T_p(T)$ such that $\langle K^u(p),T \rangle =~C$ and $\langle T,T \rangle =1$, for $C \neq 0$, and this vector is defined by $T=(C,\frac{1}{2C})$ if the null orbit of $K^u$ containing $p$ is in $R_h$,  and by $T=(-C,\frac{1}{2C})$ in the $(u,v)$-coordinates, if the orbit is in $R_v$.   \\
		2) $\langle K^u(p),K^u(p) \rangle \neq 0$: there are two vectors $T_1, T_2 \in T_p(T)$ such that $\langle K^u(p),T \rangle = C$ and $\langle T,T \rangle =1$; in the $(x,y) $ coordinates, they are given by 
		\begin{align}
	    T_1=&(\sqrt{C^2 -f(p)}, \frac{C-\sqrt{C^2 -f(p)}}{f(p)}),\\	T_2=&(-\sqrt{C^2 -f(p)},\frac{C+\sqrt{C^2 -f(p)}}{f(p)}),
		\end{align}
		and we have
	\begin{center}
			\begin{tabular}{|l|l|l|l|l|l|}
			\hline
			$C$ & $f(p)$ & $x_1^{'}$ & $y_1^{'}$ & $x_2^{'}$ & $y_2 ^{'}$\\
			\hline
			$+$ & $+$ & $+$ & $+$ & $-$ & $+$  \\
			\hline
			$+$ & $-$ & $+$ & $+$ & $-$ & $-$ \\
			\hline
			$-$ & $+$ & $+$ & $-$ & $-$ & $-$ \\
			\hline
			$-$ & $-$ & $+$ & $+$ & $-$ & $-$ \\
			\hline
		\end{tabular}
	\end{center}
	where $(x^{'}_i,y^{'}_i)$ are the coordinates of $T_i,\;\, i=1,2$. \\
        % and 
	  % \begin{align}
	 %   T_1&=(-\sqrt{C^2 -f(p)},\frac{C-\sqrt{C^2 -f(p)}}{f(p)}) \\ T_2&=(\sqrt{C^2 -f(p)},\frac{C+\sqrt{C^2 -f(p)}}{f(p)}), \textrm{\;\;if\;\;} p \in R_v.
	 %  \end{align}
	% On the other hand, when $C<0$, only one of these two vectors in the $(u,v)$ coordinates satisfies: $u^{'}>0$ and $v^{'}<0$.     
	
	 We distinguish two different behaviors of $\gamma$ according to the sign of $C$. First, assume that $C > 0$; in this case,  $x^{'}(t),y^{'}(t) >0$. Let $q$ be a point on $\gamma_{\infty}$, and denote by $\alpha$  the geodesic in $R_h$ tangent to $T \in T_q(T)$, such that $\langle T,K^u(q) \rangle =C$ and $\langle T,T \rangle =1$; it appears from the first part of the observation above that the two coordinates of $\alpha^{'}(s)$ at $q$ are positive, hence remain positive all along the geodesic. Now take $\tau \in \R$ such that for $t \geq \tau$, $\gamma$ is transverse to $K^u$ (the existence of $\tau$ is guaranteed by the assumption that $\beta$ does not vanish for $t$ large enough); denote by $x_{\tau}$ the coordinate of the orbit of $K^u$ intersecting $\gamma$ at $\gamma(\tau)$. From $C^2 = \beta^2 +f$, one gets $C^2 \geq \displaystyle \sup_{t \geq \tau} f(\gamma(t))$, hence $C^2 > \displaystyle \sup_{[x_{\tau},x_0]} f(x)$. It follows that $\alpha$, whose Clairaut's constant is also $C$, is defined and transverse to $K^u$ on the segment $[x_{\tau},x_0]$, using the previous fact. So by moving $q$ on $\gamma_{\infty}$ by the flow of $K^u$ if necessary, one can suppose that $\gamma$ and $\alpha$ intersect at a point $p$ of the orbit of $K^u$ having coordinate $x_{\tau}$.  At this point, we have $\langle \gamma^{'}(p),K^u(p) \rangle = \langle \alpha^{'}(p),K^u(p) \rangle = C$, and the coordinates of $\alpha^{'}(p)$ are both positive. Now using the observation above, we see that when $C>0$, only one of the two vectors satisfying $\langle T,K \rangle =C$ and $\langle T,T \rangle~=~1$, has the additional property that both of its coordinates are positive.   This clearly forces  $\alpha^{'}(p)= \gamma^{'}(p)$, hence $\gamma$ extends beyond the band by cutting $\gamma_{\infty}$ transversally. Of course this contradicts our assumption, and proves that actually, $f(x_0) >0$.  Now, to deal with the case $C <0$, write $\gamma^{'}(t) =(u^{'}(t),v^{'}(t))$ in the $(u,v)$ coordinates, with 
	 \begin{align}
	 u^{'}&=x^{'},\\
	 v^{'}&=\frac{1}{f(x)}(f(x) y^{'} +2x^{'}).
	 \end{align}
	 Write $\langle \gamma^{'}(t),\gamma^{'}(t) \rangle =y^{'}(2x^{'}+fy^{'})=1$. When $f<0$ (resp. $f>0$), we have $x^{'} >0$ and $y^{'}>0$ (resp. $y^{'} <0$), hence $2x^{'}+fy^{'} >0$ (resp. $2x^{'}+fy^{'} <0$). This gives $u^{'} >0$ and $v^{'} <0$ in both cases. Let $\widetilde{q}$ be a point on $\widetilde{\gamma}_{\infty}$, and $\widetilde{\alpha}$ the geodesic in $R_v$ tangent to $\widetilde{T} \in T_{\widetilde{q}}(T)$, such that $\langle \widetilde{T},K^u(\widetilde{q}) \rangle =C$ and $\langle \widetilde{T},\widetilde{T} \rangle =1$. Repeating the previous argument shows that $\gamma$ can be extended using $\widetilde{\alpha}$ beyond the band, which leads to a contradiction, and finishes the first part of the proof.
	 
	 \textbf{Case where $C=0$:} In this case, equation (\ref{x' borné, juste ici}) below reads $x^{'}(t)^2 = -\epsilon f(x)$. This yields $f(x_0)=0$ using again the previous fact, hence $x^{'} \to 0$, which completes the proof.  
\end{proof}
\begin{proposition}\label{Complétude dans une bande}
	Under the same conditions on $\beta$, a non-null geodesic that lies in a band after a certain while is complete.
\end{proposition}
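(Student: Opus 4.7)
The strategy is to derive the Clairaut first integrals for $\gamma$ in the ribbon coordinates and use them to bound $y'$. In a maximal ribbon containing the band in which $\gamma$ eventually lies, use coordinates $(x,y)$ with metric $2\,dx\,dy + f(x)\,dy^2$ and $K^u = \partial_y$. Write $\gamma(t) = (x(t),y(t))$, let $\epsilon = \langle \dot\gamma,\dot\gamma\rangle \in \{\pm 1\}$, and let $C$ be defined by $\epsilon C = \langle \dot\gamma,K^u\rangle = x' + f(x)y'$; this $C$ is constant since $K^u$ is Killing. Combining with the normalisation $\langle\dot\gamma,\dot\gamma\rangle = 2x'y' + f(x)(y')^2 = \epsilon$ gives the identity $(x')^2 = C^2 - \epsilon f(x)$ referred to by Lemma \ref{asymptote à K}.

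The main case is $C \neq 0$. Lemma \ref{asymptote à K} states that as $t$ approaches the upper endpoint $b$ of the maximal domain, $x(t)\to x_0$, $x'(t)\to 0$, $f(x_0) = \epsilon C^2 \neq 0$, and (as established in its proof by contradicting smooth extension across an interior point $(x_0,y_0)$) the $y$-coordinate tends to infinity. The Clairaut relation then gives
\begin{align*}
y'(t) \;=\; \frac{\epsilon C - x'(t)}{f(x(t))} \;\longrightarrow\; \frac{\epsilon}{C},
\end{align*}
a finite non-zero limit. Hence $|y'|$ is bounded on some interval $[t_0,b)$. If $b$ were finite, $|y(t) - y(t_0)|$ would remain bounded, contradicting $|y(t)|\to\infty$. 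So $b = \infty$, and the symmetric argument at the lower endpoint shows that $\gamma$ is complete.

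The degenerate case $C = 0$ does not genuinely arise under the hypothesis. Here $\gamma$ is everywhere perpendicular to $K^u$, so its image is a leaf of $(K^u)^\perp$, and $(x')^2 = -\epsilon f(x)$, giving $dt = dx/\sqrt{|f(x)|}$. If the null orbits bounding the band are incomplete, then by Remark \ref{Orbites incomplètes=zéros simpes} the function $f$ has simple zeros at the boundary, so this integral converges there; consequently $\gamma$ reaches a bounding null orbit in finite affine time. Since $E^u_f$ is a Lorentzian surface containing this orbit as a regular geodesic, $\gamma$ extends across it transversally, contradicting the hypothesis that $\gamma$ remains in the band past $t_0$. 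If instead the bounding null orbits are complete (higher-order zeros of $f$), the same integral diverges at the boundary and one gets $b = \infty$ directly.

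The main obstacle is that Lemma \ref{asymptote à K} alone only rules out convergence of $\gamma(t)$ to an interior point of the ribbon; to conclude completeness one must also control the speed of escape to infinity along the asymptotic $K^u$-leaf, which is exactly what the Clairaut identities provide in the non-perpendicular case. The cleanest way to dispose of the perpendicular case $C=0$ is via the rate-of-approach integral $\int dx/\sqrt{|f(x)|}$ at the band boundary, classified by the order of the zero of $f$ there.
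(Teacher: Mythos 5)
Your proof is correct and follows essentially the same route as the paper: the Clairaut first integrals $x'+f(x)y'=\epsilon C$ and $(x')^2=C^2-\epsilon f(x)$, the case split on $C$, boundedness of $y'$ combined with $y\to\infty$ when $C\neq 0$, and the order of the zero of $f$ (simple zero forcing exit through the added saddle versus divergence of $\int dx/\sqrt{-\epsilon f}$) when $C=0$. The only nitpick is the limit of $y'$, which is $1/C$ rather than $\epsilon/C$, but this is immaterial since only boundedness is used.
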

\begin{proof}
	According to the previous lemma, the geodesic asymptotically approaches  a leaf of $K^u$.  Write $2 dxdy + f(x) dy^2$ ($x \in I$) for the metric in local coordinates. The equations $\langle \gamma'(t) , \gamma'(t) \rangle = \epsilon$ and $\langle \gamma'(t) , K^u \rangle= \epsilon C$ in the $(x,y)$-coordinates read:
	\begin{align}\label{Intégrales premières,1}
	2 &x'(t) y'(t) + f(x) y'(t)^2 = \epsilon 
%	x'&(t) + f(x) y'(t) = \epsilon C
	\end{align} 
	\begin{align}\label{Intégrales premières,2}
	x'&(t) + f(x) y'(t) = \epsilon C
	\end{align}
	We get
	\begin{align}\label{y' borné, juste ici}
	f(x) y'(t)^2 - 2\epsilon C y'(t) + \epsilon =0, 	
	\end{align} 
	and \begin{align}\label{x' borné, juste ici}
x^{'}(t)^2= C^2 - \epsilon f(x). 	
	\end{align}
	This yields $$t(x_0) - t(x) = \int_{x}^{x_0} \frac{dx}{\sqrt{C^2-\epsilon f(x)}} .$$
	Suppose $C=0$; in this case, $x_0$ is not a simple zero of $f$, for if $x_0$ is a simple zero of $f$, the corresponding null orbit of $K^u$ is extended in $E^u_f$ by adding a saddle point (see \cite{10} for details), and $\gamma$ leaves the band through it. Thus the above integral goes to infinity. Now, if $C \neq 0$, $\gamma$ approaches a non-null orbit of $K^u$ so that $f(x)$ is bounded away from $0$ as $\gamma$ goes to this orbit; it follows from (\ref{y' borné, juste ici}) that $y'(t)$ is bounded. Since $y$ goes to infinity on $\gamma$, the latter is necessarily complete. 
\end{proof}
\begin{lemma} \label{Behavior of geodesics, paragraph completeness}
	Let $\gamma$ be a non-null geodesic not perpendicular to $K^u$. Assume $\gamma$ cuts a null orbit of $K^u$, denoted by $\mathfrak{l}$. Then \\  
	(i)	 $\gamma$ does not cross a type II band containing $\mathfrak{l}$;  \\
	(ii) if $\beta$ does not vanish, $\gamma$ lies in the maximal ribbon containing  $\mathfrak{l}$.
\end{lemma}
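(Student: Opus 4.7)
The plan is to analyze $\gamma$ in the coordinate charts of the maximal ribbons adjacent to $\mathfrak{l}$, using the first integrals from Proposition \ref{Complétude dans une bande} and the Observation in the proof of Lemma \ref{asymptote à K}. Let $R_h$ denote the maximal ribbon containing $\mathfrak{l}$, equipped with coordinates $(x,y)$ so that $K^u = \partial_y$ and $\mathfrak{l}$ corresponds to $x = x_*$ with $f(x_*) = 0$. At the crossing time $t_0$, equation (\ref{x' borné, juste ici}) gives $x'(t_0)^2 = C^2 \neq 0$; WLOG take $\gamma$ spacelike, $C > 0$, and $x'(t_0) = +|C|$, in which case case 1 of the Observation determines $\gamma'(t_0) = (C,\frac{1}{2C})$ in $R_h$'s coordinates.

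For part (i), I proceed by contradiction: suppose $\gamma$ crosses the type II band $B$, entering through $\mathfrak{l}$ and exiting at the opposite boundary null orbit $\mathfrak{l}'$ at some later time $t_1$. By the definition of a type II band (Lemma 2.8 of \cite{10}) together with the structure of $E^u_f$ after Theorem \ref{Existence d'une extension maximale}, $\mathfrak{l}$ and $\mathfrak{l}'$ lie in two distinct maximal ribbons $R_h$ and $R_v$ that overlap in the interior of $B$, with change of coordinates given by a generic reflection of the form $(x,y) \mapsto (u,v) = (x, 2G(x) - y)$ where $G' = -1/f$. Inside $B$, the first integral $x'(t)^2 = C^2 - \epsilon f(x(t))$ stays strictly positive, so $x'$ retains its sign along $\gamma$, giving $x'(t) > 0$ throughout; via the transition $u = x$, we get $u'(t) > 0$ in the overlap. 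On the other hand, applying the Observation at $\mathfrak{l}' \subset R_v$ forces the exit velocity to be $\gamma'(t_1) = (-C, \frac{1}{2C})$ in $(u,v)$-coordinates, whose first component is $u'(t_1) = -C < 0$. Since $u'$ is continuous at $t_1$ and cannot vanish inside $B$ (again because $u'^2 = C^2 - \epsilon f > 0$), this sign jump from $+C$ to $-C$ is impossible, and the contradiction proves (i).

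For part (ii), assume further that $\beta$ does not vanish on $\gamma$. On the side of $\mathfrak{l}$ where $\gamma$ enters the type II band $B$, part (i) already confines $\gamma$ to $B \subset R_h$ for all forward parameter time. On the opposite side, $\gamma$ starts inside $R_h$ and can only leave $R_h$ by cutting another null orbit $\mathfrak{l}^\sharp$ lying in the topological boundary of $R_h$ inside $E^u_f$; by the structure recalled after Theorem \ref{Existence d'une extension maximale}, such an $\mathfrak{l}^\sharp$ is precisely the non-$R_h$ boundary of another type II band accessible from $R_h$, and the sign argument of (i) applies verbatim to forbid $\gamma$ from reaching it. The assumption $\beta \neq 0$ is used here to exclude the asymptotic scenario of Lemma \ref{asymptote à K} (which only requires $\beta$ to vanish at most once), ensuring that any null orbit $\gamma$ approaches it actually cuts transversally rather than asymptotes to. Combined with Theorem \ref{Complétude}, this forces $\gamma$ to remain inside $R_h$ throughout its domain.

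The principal technical difficulty is the structural identification, drawn from the classification of bands in \cite{10}, of a type II band as the overlap of two distinct maximal ribbons glued along the band by a generic reflection: it is exactly this picture that makes the Observation from Lemma \ref{asymptote à K} applicable at both ends of the band (each in its own ribbon) with incompatible sign prescriptions. Once this structural description is in place, the sign analysis itself is elementary and both statements follow.
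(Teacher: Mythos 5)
Your route is genuinely different from the paper's, which disposes of (i) in two lines of foliation theory: by definition the foliation $K^{u\perp}$ of a type II band is a Reeb component, so any curve traversing the band must become tangent to one of its leaves; those leaves are geodesics, so $\gamma$ would coincide with one of them and be perpendicular to $K^u$, contrary to hypothesis. For (ii) the paper then notes that the only other way to leave the maximal ribbon is to cross a type III band, where the $K^u$-foliation itself is a Reeb component, forcing tangency to $K^u$, i.e.\ a zero of $\beta$. Your sign-tracking argument is close in spirit to the computation the paper performs inside the proof of Lemma \ref{asymptote à K} (Case $C \neq 0$), so the idea is legitimate; but as written it has two genuine gaps.

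First, the contradiction in your part (i) rests entirely on a sign convention that you have not made consistent. The Observation's formula $T=(-C,\tfrac{1}{2C})$ at a null orbit of $R_v$ presupposes coordinates with $K^u=+\partial_v$ and $\langle\partial_u,\partial_v\rangle=-1$; the transition you use, $(x,y)\mapsto(x,2G(x)-y)$, sends $\partial_y$ to $-\partial_v$, so in your chart $K^u=-\partial_v$ and the Clairaut condition at the exit point gives $u'(t_1)=+C$ --- no contradiction at all. (The transition compatible with the Observation's normalization is $(x,y)\mapsto(x,\,y-2G(x))$, as one reads off from the formula $v'=\frac{1}{f}(fy'+2x')$ in the proof of Lemma \ref{asymptote à K}; with that choice your argument does close.) Second, in part (ii) you misidentify the role of $\beta\neq 0$: the danger is not the asymptotic behaviour of Lemma \ref{asymptote à K} (a geodesic asymptotic to a leaf stays in its band, hence in $R_h$), but the possibility that $\beta$ vanishes inside a band, so that $x'$ passes through $0$ and reverses sign, after which the geodesic exits through the $R_v$-branch on the side it entered --- in the paper's language, it crosses a type III band. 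Your claim that part (i) already ``confines $\gamma$ to $B$ for all forward parameter time'' is false for exactly this reason, and you should also justify that $x'$ cannot vanish during an actual crossing (this follows from $x'^2=C^2-\epsilon f=\beta^2$ and the usual turning-point analysis, but it is asserted, not proved). Finally, do not invoke Theorem \ref{Complétude} here: that theorem is proved using the present lemma, so the appeal is circular, though it is also unnecessary.
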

\begin{proof}
	(i) By definition, the foliation of $K^{u \perp}$ in a type II band is a Reeb component; a geodesic that crosses a type II band is tangent to a leaf of $K^{u \perp}$, hence coincides with it, since the leaves of $K^{u \perp}$ are geodesics. \\
	(ii) If $\gamma$ crosses a type III band containing $\mathfrak{l}$, $\beta$ vanishes in that band. Indeed, the foliation of $K^u$ in a type  III band is a Reeb component, so a geodesic that crosses a type III band is tangent to $K^u$. Combining this with (i) gives (ii).
\end{proof}
We now go back to the proof of Theorem \ref{Complétude}:
\begin{proof}
When $f$ does not change sign, i.e. $K$ is either timelike or spacelike, the torus is obviously complete; indeed, in this case, $|\inf \langle K,K \rangle| > 0 $, and for any  non-null geodesic $\gamma : [0,b[ \to T$, equations (\ref{Intégrales premières,2}) and (\ref{x' borné, juste ici}) imply that the closure of the image of $\gamma^{'}$ in $T(T)$ is compact. The same reasoning can actually be adapted to null geodesics, so that $E^u_f= \widetilde{T}$. More generally, if $M$ is a compact Lorentzian manifold,  with a timelike (or spacelike)   Killing vector field, then $M$ is complete, like in the Riemannian case. So now, assume that $f$ changes sign.

\textbf{1$^{st}$ case: $\beta$ vanishes at least twice on $\gamma$ }\\
Under this assumption, $\gamma$ is preserved by a translation of the parameter $t$ of the geodesic; we call it periodic (see Lemma \ref{lemme géod périodique} for a proof); this gives a complete geodesic.\\
It appears from the proof of Lemma \ref{lemme géod périodique} that either $\gamma$ remains in a band or it leaves any maximal ribbon of $E^u_f$. The first situation occurs when $\beta$ vanishes twice in the interior of a band; in this case, $\gamma$ is invariant by the action $\phi^{\tau}$ of the flow of $K^u$, for a certain $\tau \in \R$. 

\textbf{$2^{nd}$ case: $\beta$ vanishes at most once on $\gamma$}\\
In this case, $\gamma$ remains in a maximal ribbon after a certain while, by Lemma \ref{Behavior of geodesics, paragraph completeness} above. If $\beta$ vanishes, set $t=0$ at this point; if it doesn't vanish, take an arbitrary point $p$ on $\gamma$ and set $t=0$ at this point. Let $I=\{ x(\gamma(t)), t>0 \}$. Suppose that $I$ is unbounded.
Since $f$ changes sign, fix a band in which $f$ has sign $\epsilon$; in this band $$t(x_1) - t(x_0) \geq \int_{x_0}^{x_1} \frac{1}{\sqrt{C^2}} dx =a_0. $$
Now $\gamma$ crosses infinitely many such bands, and since the time $\gamma$ takes to cross a band depends only on $C$ and $f$ on this band, the conclusion follows easily.\\

We are left with the case in which $I$ is bounded. In this case, $\gamma$ remains in a band for $t$ close enough to the limit; the completeness of $\gamma$ follows from the previous proposition. 
\end{proof}
\begin{remark}
This result holds for any extension $E_f^u$, with $f$ bounded. 
\end{remark}
\begin{proof}
	All we have to check is the completeness for a geodesic $\gamma$ that remains in a maximal ribbon, with unbounded $I$. We have from the previous proposition 
	$$t(x_1) - t(x_0) = \int_{x_0}^{x_1} \frac{dx}{\sqrt{C^2-\epsilon f(x)}}, \; x_1 > x_0 $$
	where $x_0$ is the coordinate of an orbit of $K^u$ cutting $\gamma$. 
	Since $f$ is bounded, we have
	$$t(x_1) - t(x_0) > \int_{x_0}^{x_1} \frac{dx}{\sqrt{C^2+M}},$$
	where $M$ is a majorant of $f$ on $\R$. Now, $I$ is assumed to be unbounded; therefore,  $x_1$ tends to infinity on $\gamma$, so that the above integral goes to infinity as $x_1 \to \infty$, which is the desired conclusion.		
\end{proof} 
\section{Study of the Jacobi equation}\label{Section équation de Jacobi}
Given a Lorentzian surface with a Killing vector field $K$, the Jacobi equation along a non-null geodesic $\gamma$ writes:
\begin{equation}\label{Jacobi}
u''+\epsilon \kappa u=0,
\end{equation}
where $\kappa(t)$ is the curvature along the geodesic $\gamma(t)$, $t \in I$. The restriction on the sign of the sectional curvature in the Riemannian setting leads to deep knowledge about the dynamics of the geodesic flow, through the behavior of the Jacobi fields. Unfortunately, this hypothesis makes no sense in the Lorentzian setting, since the quantity involved in the Jacobi equation in this case is $\epsilon \kappa$, where $\epsilon$ is the type of the geodesic.
So in this paragraph, we shall investigate some properties of the solutions of the differential equation (\ref{Jacobi}) under certain restrictive assumptions on the function $\kappa(t)$. The results will be applied in the case of a Lorentzian torus with a Killing vector field in the next paragraph. 
\begin{lemma}(see Remark 1.3 \cite{9}).\label{1zéro}
Given two independent solutions of the Jacobi equation, between two zeros of one solution, there is one and only one zero of the other. 
\end{lemma}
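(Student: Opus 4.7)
The plan is to apply Sturm's separation theorem, which holds for any second-order linear ODE in the ``normal'' form $u''+p(t)u=0$. The Jacobi equation \eqref{Jacobi} is exactly of this form (no first-order term), so if $u_1$ and $u_2$ are any two solutions, the Wronskian $W(t)=u_1(t)u_2'(t)-u_1'(t)u_2(t)$ is constant; linear independence of $u_1,u_2$ amounts precisely to $W$ being a nonzero constant.

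I would then pick two consecutive zeros $a<b$ of $u_1$ and evaluate the identity $W\equiv \mathrm{const}$ at $t=a$ and $t=b$, obtaining $-u_1'(a)u_2(a)=-u_1'(b)u_2(b)\neq 0$. By uniqueness for the Cauchy problem applied to \eqref{Jacobi}, neither $u_1'(a)$ nor $u_1'(b)$ can vanish (otherwise $u_1\equiv 0$); moreover, since $u_1$ keeps a constant sign on the open interval $(a,b)$, the derivatives $u_1'(a)$ and $u_1'(b)$ must have opposite signs. The Wronskian identity then forces $u_2(a)$ and $u_2(b)$ to be nonzero and of opposite sign as well, so the intermediate value theorem produces at least one zero of $u_2$ in $(a,b)$.

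For the uniqueness half, I would argue by contradiction: if $u_2$ had two zeros $c<d$ inside $(a,b)$, swapping the roles of $u_1$ and $u_2$ in the previous paragraph would force $u_1$ to vanish at some point of $(c,d)\subset(a,b)$, contradicting the choice of $a,b$ as \emph{consecutive} zeros of $u_1$. The argument is classical and I foresee no real obstacle; the only delicate point is the sign argument for $u_1'(a)$ and $u_1'(b)$, which relies on Cauchy uniqueness to rule out $u_1'(a)=0$ or $u_1'(b)=0$ and on the fact that $u_1$ does not change sign between two consecutive zeros.
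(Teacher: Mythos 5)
Your proof is correct: it is the classical Sturm separation argument via the constancy of the Wronskian, which is exactly what the paper intends here — the lemma is stated without proof, with a citation to Remark 1.3 of \cite{9}. The only point worth tightening is in the uniqueness step: before swapping the roles of $u_1$ and $u_2$ you should replace the two hypothetical zeros $c<d$ of $u_2$ in $(a,b)$ by a pair of \emph{consecutive} zeros of $u_2$ lying in $(a,b)$ (possible since zeros of a non-trivial solution are simple, hence isolated), because the sign argument for the derivative at the two endpoints requires the solution to keep a constant sign between them.
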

Now, denote by $s$ and $c$ the linearly independent solutions of (\ref{Jacobi}) satisfying 
$$\left\{
    \begin{array}{ll}
        s(0)=0 \\
       s'(0)=1
    \end{array}
\right.
\text{\;\;and\;\;}
\left\{
    \begin{array}{ll}
        c(0)=1 \\
       c'(0)=0
    \end{array}
\right.
$$
It's easy to check that $(cs^{'}-c^{'}s)(t)=(cs^{'}-c^{'}s)(0)=1$ for all $t \in I$.
Assume $\kappa(t)$ is periodic, of period $2\tau$. Denote by $A$ the endomorphism of the vector space $V$ of the solutions of (\ref{Jacobi}) (generated by $s$ and $c$), and given by  
\begin{align*}
A: V &\longrightarrow V \\
u(t) &\longmapsto u(t+2\tau)
\end{align*}
Equation (\ref{Jacobi}) has a periodic solution if and only if the endomorphism $A$ has eigenvalue $1$; in this case, the periodic solutions of the equation are exactly the eigenvectors associated with this eigenvalue.    \\
The zeros of a non-trivial solution $\phi$ of (\ref{Jacobi}) are simple, because $\phi(a)=\phi^{'}(a)=0$ for some $a$ implies $\phi =0$.
Now, let $\zeta_1 < \zeta_2$ be two consecutive zeros of a solution of (\ref{Jacobi}). By Lemma \ref{1zéro}, there is a unique zero $\zeta$ of $s$ in $[\zeta_1,\zeta_2]$. Denote by $P$ the set of all such pairs of zeros, for all the non-trivial solutions of (\ref{Jacobi}), and define 
$$P^{+}=\{(\zeta_1,\zeta_2) \in P, s^{'}(\zeta) >0 \} \text{\;\;and\;\;} P^{-}=\{(\zeta_1,\zeta_2) \in P, s^{'}(\zeta) <0 \}.$$ 
\begin{lemma}(Separation of zeros)\label{Le lemme des gendarmes}
Consider the differential equation (\ref{Jacobi}). Suppose we have the following conditions:\\
1) $\kappa(t)$ is even and periodic, of period $2\tau$. It is easy to check that under this condition, $s$ is odd and $c$ is even. \\
2) $s$ is periodic, of period $2\tau$. \\
Denote by $\alpha > 0$ a value of $s$. Define $$S_{\alpha}=\{t \in \R,\;\; |s(t)|=\alpha\}.$$ 
Suppose that $\alpha$ is such that $S_{\alpha}$ doesn't contain any zero of $s^{'}$; $S_{\alpha}$ is then a discrete set. 
Then:\\
- two consecutive zeros in $P^{+}$ of any solution of (\ref{Jacobi}) are separated by two distinct elements of $S_{\alpha}$ if and only if $c(t_0)+c(t_1) \geq 0$, where $t_0$, $t_1$ are the smallest positive consecutive reals in $S_{\alpha}$. \\
- two consecutive zeros in $P^{-}$ of any solution of (\ref{Jacobi}) are separated by two distinct elements of $S_{\alpha}$ if and only if $-c(t_0^{'})-c(t_1^{'})+ 2\frac{c^{'}(\tau)}{s^{'}(\tau)}s(t_0^{'}) \geq 0$, where $t_0^{'}$, $t_1^{'}$ are the biggest elements $<\tau$ in $S_{\alpha}$.
\end{lemma}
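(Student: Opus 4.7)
The plan is to parametrize all non-trivial solutions of (\ref{Jacobi}) by a single real parameter and translate the ``separated by two elements'' condition into one on that parameter. Write any non-trivial solution as $u = ac + bs$; away from the zeros of $s$, $u(t) = 0$ is equivalent to $\phi(t) := c(t)/s(t) = \mu$, where $\mu := -b/a$, and the degenerate case $a = 0$ (i.e.\ $u = bs$) is the limit $\mu \to \pm\infty$. The Wronskian identity $cs' - c's = 1$ gives $\phi' = -1/s^2 < 0$, so $\phi$ is strictly decreasing on each component of $\{s \neq 0\}$, providing a decreasing bijection onto $\R$ between two consecutive zeros of $s$. For the $P^+$ assertion, the $2\tau$-periodicity of $\kappa$ lets me assume the intermediate zero is $\zeta = 0$, so that $\zeta_1(\mu) \in (-\tau, 0)$ and $\zeta_2(\mu) \in (0, \tau)$ are monotone functions of $\mu$.

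For the $P^+$ case, by oddness of $s$ and evenness of $c$ the elements of $S_\alpha$ nearest $0$ in $(-\tau, \tau)$ are $-t_1 < -t_0 < 0 < t_0 < t_1$, with $\phi(\pm t_j) = \pm c(t_j)/\alpha$. Via the monotonicity of $\phi$, the statement ``$(\zeta_1(\mu), \zeta_2(\mu))$ contains two distinct elements of $S_\alpha$'' decomposes into the union of three threshold conditions on $\mu$: a symmetric central interval (yielding the pair $\{-t_0, t_0\}$) and two half-lines (yielding $\{-t_1, -t_0\}$ or $\{t_0, t_1\}$). Using Lemma \ref{1zéro} to recall that $c$ has a unique zero in $(0, \tau)$, a short case analysis on the signs of $c(t_0)$ and $c(t_1)$ shows the union covers all of $\R$ if and only if $c(t_0) + c(t_1) \geq 0$.

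For $P^-$ I would reduce to the $P^+$ case via the shift $\tilde t = t - \tau$: since $\kappa$ is even and $2\tau$-periodic it is also even about $\tau$, so the adapted basis $\tilde c, \tilde s$ (with $\tilde c(0) = 1$, $\tilde c'(0) = 0$, $\tilde s(0) = 0$, $\tilde s'(0) = 1$ in the $\tilde t$ coordinate) has $\tilde c$ even and $\tilde s$ odd. Applying the previous paragraph to the $(\tilde c, \tilde s)$-system yields $\tilde c(\tilde t_0) + \tilde c(\tilde t_1) \geq 0$, where $\tilde t_0 < \tilde t_1$ are the smallest positive elements of $\{|\tilde s| = |c(\tau)|\alpha\}$; the antisymmetry $s(\tau \pm h) = \mp s(\tau \mp h)$ (from oddness and $2\tau$-periodicity of $s$) identifies them as $\tilde t_0 = \tau - t_1$ and $\tilde t_1 = \tau - t_0$, matching the $t_0', t_1'$ of the statement. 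The main obstacle, and the source of the extra term distinguishing $P^-$ from $P^+$, is translating back to the original $(c,s)$ basis: the identity $c(\tau + \tilde t) = c(\tau)\tilde c(\tilde t) + c'(\tau)\tilde s(\tilde t)$ makes $\tilde c$ a non-trivial combination of $c$ and $s$ precisely because $c$ need not be even about $\tau$ (which would require $c'(\tau) = 0$). Evaluating $\tilde s(\tau - t_j) = -c(\tau)\alpha$ via the antisymmetry, and using $c(\tau) = 1/s'(\tau)$, the condition $\tilde c(\tilde t_0) + \tilde c(\tilde t_1) \geq 0$ rearranges to exactly $-c(t_0') - c(t_1') + 2(c'(\tau)/s'(\tau))\, s(t_0') \geq 0$.
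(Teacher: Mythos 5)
Your argument is correct and follows essentially the same route as the paper's proof: the reduction by $2\tau$-translation to the two intermediate zeros $\zeta=0$ and $\zeta=\tau$, the monotone dependence of a solution's second zero on its first (which the paper extracts from Lemma \ref{1zéro} via its opening ``observation'' and you from $(c/s)'=-1/s^{2}$), and the re-centering at $\tau$ through the change-of-basis identity $c(2\tau-t)=c(t)-2\frac{c^{'}(\tau)}{s^{'}(\tau)}s(t)$ all appear there. The only organizational difference is that you phrase the $P^{+}$ criterion as a covering of the parameter line $\mu=-b/a$ by three threshold intervals, whereas the paper checks the two extremal solutions $\phi_i(t)=c(t)+\frac{c(t_i)}{s(t_i)}s(t)$ vanishing at $-t_0$ and $-t_1$, which are precisely the endpoints of your thresholds.
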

\begin{proof}
We start with the following observation: consider two independent solutions of (\ref{Jacobi}) with $a_1$ and $a_2$, $b_1$ and $b_2$ the two respective zeros in $[-\tau,\tau]$ such that $a_1<b_1$ and $a_2<b_2$; Lemma \ref{1zéro} ensures the existence of a one-to-one correspondence between $]a_1,b_1[$ and $]a_2,b_2[$ that sends $t \in ]a_1,b_1[$ to the unique zero in $]a_2,b_2[$ of the solution (taken up to a multiplicative constant) vanishing at $t$. \\

Since we have an action of the operator $A$ on the space $V$ of the solutions of (\ref{Jacobi}), it suffices to show this when $\zeta=0$ and $\zeta = \tau$. 
Denote by $\phi_0$ and $\phi_1$ the two solutions  vanishing at $-t_0$ and $-t_1$ respectively, defined by  
 $$\phi_i(t)=c(t)+\frac{c(t_i)}{s(t_i)}s(t), \;\; i=0,1.$$    
By definition, $s(t_0)=s(t_1)$. An easy computation then gives for $i =0,1$: 
\\
. $\phi_i(-t_i)=0$\\
. $\phi_i(0)=1$\\
. $\phi_i(t_{1-i})= c(t_0)+c(t_1)$.\\
\\
When $c(t_0)+c(t_1) \geq 0$, $\phi_i(0)$ and $\phi_i(t_{1-i})$ are both non-negative so that $\phi_i$ doesn't vanish on $[0,t_{1-i}[$ by Lemma \ref{1zéro}. Denote by $z_0$ and $z_1$ the zeros of $\phi_0$ and $\phi_1$ respectively in $[0,\tau]$; we thus have $z_0 \geq t_1$ and $z_1 \geq t_0$, the last inequalities being reversed when $c(t_0)+c(t_1)<0$. Hence, using the above observation, we deduce that when $c(t_0)+c(t_1) \geq 0$ is satisfied, two consecutive zeros in $[-\tau,\tau]$ of a solution of the Jacobi equation are always separated by two elements of $S_{\alpha}$. Now, suppose that $c(t_0)+c(t_1)<0$, one can find, using the same observation as above, a solution of (\ref{Jacobi}) vanishing at $z_0$ and $z_1$ in $[-\tau,\tau]$, such that $-t_0 < z_0 < 0 $ and $0 < z_1 < t_1$. \\

In the same way, two consecutive zeros in $[0,2\tau]$ of a solution of the Jacobi equation are separated by two elements of $S_{\alpha}$ if and only if $$\psi(2\tau -t_1^{'})+\psi(2\tau-t_0^{'}) \geq 0,$$ where $\psi$ is the solution satisfying $\psi(\tau)=1, \psi^{'}(\tau)=0$. This solution is given by $\psi(t)= \frac{1}{c(\tau)}(c(t) -\frac{c^{'}(\tau)}{s^{'}(\tau)} s(t))$. Now an easy computation gives $$c(2\tau-t)=c(t)-2\frac{c^{'}(\tau)}{s^{'}(\tau)}s(t).$$
Indeed, $c(2\tau-t)$ is again a solution of (\ref{Jacobi}): write $c(2\tau-t)=\lambda_1 c(t) + \lambda_2 s(t)$. Setting successively $t=0$ and $t=\tau$ yields $\lambda_1=c(2\tau)=1/s^{'}(2\tau)=1$, and $\lambda_2=-2 c^{'}(\tau)/s^{'}(\tau)$.\\
Now, observe that $c(\tau) < 0$ for, if not, $c$ would vanish twice in $[0,\tau]$, and this is not possible by lemma \ref{1zéro}. 
This ends the proof. 
\end{proof} 
\textbf{Notation:} Fix $\alpha > 0$ a value of $s$, and consider an interval $I$ of $\R$. The property that two consecutive zeros in $I$ (if they exist) of any solution of (\ref{Jacobi}) are separated by two distinct elements of $S_{\alpha}$ will be referred to as the property $(\mathfrak{P})$. 
\begin{definition}
We say that an interval $I = ]x,y[$ of $\R$ is a domino associated to $\alpha$ if $x, y \in S_{\alpha}$ and $I$ contains only one element of $S_{\alpha}$.
%(2) A triplet associated  to $\alpha$ if $x, y \in S$ and $I$ contains two elements of $S$.
\end{definition}
Note that when $\alpha = |C|/\beta^{'}(0)$, where $C$ is the Clairaut constant related to $\gamma$, if $I$ is a domino of $\R$, the geodesic restricted to $I$ lies in a domino of the surface, in the sense of Definition \ref{Définition ruban/domino}.
\begin{corollary}\label{Corollaire des gendarmes}
Suppose the conditions of Lemma \ref{Le lemme des gendarmes} are satisfied, and that in addition, there exist exactly two elements of $S$ in $[0,\tau]$, hence between any two zeros of $s$. Keeping the same notations as in Lemma \ref{Le lemme des gendarmes}, we have:\\
- the domino containing $0$ has the property $(\mathfrak{P})$ if and only if $c(t_0)+~c(t_1)~\geq 0$,\\
- the domino containing $ \tau$ has the property $(\mathfrak{P})$ if and only if $-c(t_0)-~c(t_1)+~ 2\frac{c^{'}(\tau)}{s^{'}(\tau)}s(t_0) \geq~0$.
\end{corollary}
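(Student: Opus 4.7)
The plan is to deduce the corollary directly from Lemma~\ref{Le lemme des gendarmes} by making the dominos explicit under the additional assumption. First, since $s$ is odd and $2\tau$-periodic and $S_\alpha \cap [0,\tau] = \{t_0, t_1\}$, the full set $S_\alpha$ reads in increasing order
\[
\ldots,\, -t_1,\, -t_0,\, t_0,\, t_1,\, 2\tau - t_1,\, 2\tau - t_0,\, 2\tau + t_0,\, 2\tau + t_1,\, \ldots
\]
Thus the dominos containing $0$ are $]-t_1, t_0[$ and $]-t_0, t_1[$, swapped by the involution $t \mapsto -t$ under which (\ref{Jacobi}) is invariant (since $\kappa$ is even); similarly the dominos containing $\tau$ are $]t_0, 2\tau - t_1[$ and $]t_1, 2\tau - t_0[$, swapped by $t \mapsto 2\tau - t$ (also a symmetry of the equation, by evenness combined with $2\tau$-periodicity of $\kappa$). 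Hence it suffices to verify property $\mathfrak{P}$ on one representative in each case.

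Next I translate property $\mathfrak{P}$ into the lemma's language. By Lemma~\ref{1zéro}, any two consecutive zeros of a solution of (\ref{Jacobi}) straddle a unique zero of $s$. The zeros of $s$ sit at $k\tau$, $k \in \Z$, and the signs of $s'$ alternate at consecutive zeros, as one sees from the Wronskian identity $cs' - c's = 1$ together with $c(\tau) < 0$ (which follows from Lemma~\ref{1zéro} applied to the independent pair $(c,s)$, as already noted at the end of the proof of Lemma~\ref{Le lemme des gendarmes}). In particular $s'(0) > 0$ and $s'(\tau) < 0$. A pair of consecutive zeros of a solution lying inside a domino containing $0$ (resp.\ $\tau$) therefore straddles $0$ (resp.\ $\tau$) and belongs to $P^+$ (resp.\ $P^-$); conversely, by $2\tau$-periodicity of (\ref{Jacobi}), every pair in $P^+$ (resp.\ $P^-$) reduces, by an integer translation of the time parameter, to a pair straddling $0$ (resp.\ $\tau$) whose zeros sit inside a domino containing that point.

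Applying Lemma~\ref{Le lemme des gendarmes} now yields both equivalences at once: for the domino containing $0$ the condition becomes $c(t_0) + c(t_1) \ge 0$, and for the domino containing $\tau$ the second condition becomes $-c(t_0) - c(t_1) + 2\frac{c'(\tau)}{s'(\tau)} s(t_0) \ge 0$, upon identifying $t_0^{'} = t_0$ and $t_1^{'} = t_1$ (the two largest elements of $S_\alpha$ strictly less than $\tau$ under our assumption, and noting that $s(t_0) = s(t_1)$ so the last term does not depend on which of the two is labelled $t_0^{'}$). The only real obstacle is bookkeeping, namely making precise the identification between \emph{consecutive zeros of a solution lying in a domino} and \emph{a $P^\pm$ pair}, which is immediate from Lemma~\ref{1zéro} together with the $2\tau$-periodicity of (\ref{Jacobi}).
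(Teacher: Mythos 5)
Your proof is correct and follows the route the paper intends: the corollary is stated without proof as the direct specialization of Lemma \ref{Le lemme des gendarmes}, and your argument supplies exactly the missing bookkeeping (explicit description of $S_\alpha$ and the dominos, the symmetries $t\mapsto -t$ and $t\mapsto 2\tau-t$, and the identification of in-domino consecutive zeros with $P^{\pm}$ pairs via $s^{'}(0)>0$, $s^{'}(\tau)<0$). One sentence is literally overstated — not \emph{every} pair in $P^{+}$ translates into a pair sitting inside a domino containing $0$, only those that fail to be separated by two elements of $S_\alpha$ do — but this is the fact your argument actually uses, so the proof stands.
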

\begin{lemma}\label{Lemme beta distance minimale}
Consider the differential equation (\ref{Jacobi}). Suppose in addition to the conditions in Lemma \ref{Le lemme des gendarmes} that $\kappa$ is $\tau$ periodic and $s$ is $\tau$ antiperiodic, i.e. $$s(\tau+t)=-s(t), \forall t \in \R.$$ 
Then the domino containing $0$ has the property $(\mathfrak{P})$ if and only if $s$ realizes the minimum distance between the zeros of the solutions of the equation (\ref{Jacobi}).
\end{lemma}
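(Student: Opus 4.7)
The plan is to translate property $(\mathfrak{P})$, via Corollary \ref{Corollaire des gendarmes}, into the numerical inequality $c(t_0)+c(t_1)\geq 0$, and then exploit the additional antiperiodicity of $s$ to identify this inequality with $s$ realizing the minimum gap between consecutive zeros of solutions of (\ref{Jacobi}).

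The key preparatory step is to unpack the antiperiodicity. Combining $s(-t)=-s(t)$ with $s(t+\tau)=-s(t)$ yields $s(\tau-t)=s(t)$, so $s$ is symmetric about $\tau/2$; in particular $t_0+t_1=\tau$, and consecutive zeros of $s$ itself are spaced exactly $\tau$ apart, which pins down the candidate minimum distance. The Wronskian identity together with $s'(\tau)=-1$ forces $c(\tau)=-1$, and a short computation then yields $c(\tau-t)=-c(t)-c'(\tau)\,s(t)$, so that $c(t_0)+c(t_1)=c(t_0)+c(\tau-t_0)=-c'(\tau)\,s(t_0)=-c'(\tau)\,\alpha$. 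The crucial consequence is that the sign of $c(t_0)+c(t_1)$ is $\alpha$-independent: property $(\mathfrak{P})$ at the chosen level $\alpha$ is equivalent to property $(\mathfrak{P})$ at every level.

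For $(\Leftarrow)$, assume property $(\mathfrak{P})$ holds. The argument already inside the proof of Lemma \ref{Le lemme des gendarmes} shows that, under $c(t_0)+c(t_1)\geq 0$, the solutions $\phi_0,\phi_1$ vanishing respectively at $-t_0$ and $-t_1$ have next zeros satisfying $z_0\geq t_1$ and $z_1\geq t_0$, so their consecutive gaps are at least $t_0+t_1=\tau$. By the $\alpha$-independence above, this reasoning applies at every level, and since every $a\in(-\tau,0)$ equals $-t_0(\alpha)$ or $-t_1(\alpha)$ for the level $\alpha:=|s(a)|$, every solution has consecutive zeros at distance $\geq\tau$; as $s$ realizes the value $\tau$, it realizes the minimum. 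Conversely, for $(\Rightarrow)$, assume $s$ realizes the minimum. Applied to $\phi_0$ this gives $z_0\geq -t_0+\tau=t_1$; since $\phi_0(0)=1>0$ and $\phi_0$ keeps constant sign between its consecutive zeros $-t_0$ and $z_0$, evaluating at $t_1\in(-t_0,z_0]$ yields $\phi_0(t_1)=c(t_0)+c(t_1)\geq 0$, whence property $(\mathfrak{P})$ by Corollary \ref{Corollaire des gendarmes}.

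The only delicate ingredient I expect is the $\alpha$-independence of $c(t_0)+c(t_1)$ established in the second paragraph; once that is in hand, matching the $\alpha$-local inequality with the global minimum-distance statement becomes essentially formal, and the whole argument rides on the machinery already set up in Lemma \ref{Le lemme des gendarmes} and Corollary \ref{Corollaire des gendarmes}.
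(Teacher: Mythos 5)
Your argument is, in substance, the paper's own proof. The identity $c(\tau-t)=-c(t)-c'(\tau)s(t)$ that you derive from $c(\tau)=-1$ is exactly formula (\ref{Relation sur c, symétries ajoutées}) of the paper, since setting $t=\tau/2$ gives $-c'(\tau)=2c(\tau/2)/s(\tau/2)$; and your observation that $c(t_0)+c(t_1)=-c'(\tau)\alpha$ has a sign independent of the level $\alpha$ is the paper's reduction of the criterion to $c(\tau/2)\geq 0$. Your treatment of the direction ``minimality implies $(\mathfrak{P})$'' (evaluate $\phi_0$ at $t_1\leq z_0$ and read off $\phi_0(t_1)=c(t_0)+c(t_1)\geq 0$) is clean and correct.

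The one fragile spot is the other direction. The paper handles an arbitrary pair of consecutive zeros $z_0<z_1$ with $z_0\in\, ]-\tau,0[$ uniformly, by evaluating $\phi(t)=c(t)-\frac{c(z_0)}{s(z_0)}s(t)$ at $z_0+\tau$ and finding $\phi(z_0+\tau)=-2\frac{c(\tau/2)}{s(\tau/2)}s(z_0)$, with no reference to levels. You instead identify $z_0$ with $-t_0(\alpha)$ or $-t_1(\alpha)$ for $\alpha=|s(z_0)|$ and re-run Lemma \ref{Le lemme des gendarmes}; this breaks at the single critical level $\alpha=\max|s|$, i.e.\ for the solution vanishing at $-\tau/2$. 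There $S_{\alpha}$ contains the zero of $s'$, so the standing hypothesis of Lemma \ref{Le lemme des gendarmes} is violated; moreover $t_1(\alpha)=3\tau/2\neq\tau-t_0(\alpha)$, so your formula $c(t_0)+c(t_1)=-c'(\tau)\alpha$ fails at that level. The omission is easily repaired --- either by continuity of the first-return-zero map $a\mapsto z(a)$, or by checking directly that the solution vanishing at $-\tau/2$ satisfies $\phi(\tau/2)=2c(\tau/2)=-c'(\tau)s(\tau/2)\geq 0$ --- but as written your forward direction misses that one solution. (You, like the paper, also implicitly assume that $S_{\alpha}\cap\, ]0,\tau[$ consists of exactly two points for the levels you use, i.e.\ that $s$ is unimodal on $]0,\tau[$; this is harmless in the geometric setting where the lemma is applied.)
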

\begin{proof}
An easy computation gives 
\begin{align}\label{Relation sur c, symétries ajoutées}
c(\tau -t)=2\frac{c(\tau/2)}{s(\tau/2)}s(t) - c(t) \;\; \forall t \in \R.
\end{align}
In this case, we have $t_1=\tau -t_0$ so that $c(t_0)+c(t_1) \geq 0$ reads $c(\tau/2) \geq 0$. \\
Let $z_0$ and $z_1$ be two any consecutive zeros of a solution of equation (\ref{Jacobi}). If $\phi$ is a solution of (\ref{Jacobi}), $\phi(.+\tau)$ is also a solution, so one can assume that $z_0 \in ]-\tau,0[$. Write
$$\phi(t)=c(t)-\frac{c(z_0)}{s(z_0)}s(t),$$
we have  $\phi(z_0)=0$, $\phi (0) =1$, and $\phi (z_0 + \tau)=-2 \frac{c(\tau/2)}{s(\tau/2)}s(z_0)$, by formula (\ref{Relation sur c, symétries ajoutées}) above.\\
We see that $\phi(0)$ and $\phi(z_0+\tau)$ have the same sign if and only if $c(\tau/2) \geq 0$. In this case, $\phi$ doesn't vanish on $[0,z_0+\tau[$ by Lemma \ref{1zéro}, and $z_1 \geq z_0+\tau$. This ends the proof.
\end{proof}

Lemma 3 in \cite{3} gives a result on the zeros of the solutions of the differential equation 
\begin{align}\label{eq diff dist min}
u^{''}(x)+p(x)u(x)=0,
\end{align} 
under certain restrictive assumptions on the function $p(x)$. Under these assumptions, the even solution of the equation realizes the minimum distance between such zeros. We shall adapt the lemma to the case where the coefficient is a periodic function; the same proof works in this case. 
% Le lemme de la distance minimale de la solution paire 
\begin{lemma}\label{lemma dist min}
Let $p(x)$ have the following properties:\\
(a) \,$p$ is continuous;\\
(b) \,$p(-x)=p(x)$;\\
(c) \,the even solution of the differential equation (\ref{eq diff dist min}) vanishes for $\alpha >0$ and does not vanish for $-\alpha < x < \alpha$; \\
(d) \,$p(x)$ is non-increasing for  $0 < x < \alpha$;\\
(e) $p$ is periodic, with period $2 \alpha$.\\
Let $\alpha_1$, $\alpha_2$ be any pair of consecutive zeros of any non-trivial solution of (\ref{eq diff dist min}). Then $$\alpha_2 - \alpha_1 \geq 2\alpha.$$
\end{lemma}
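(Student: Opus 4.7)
My plan is to adapt Hartman's argument from Lemma 3 of \cite{3}, using the $2\alpha$-periodicity of $p$ to reduce any pair of consecutive zeros to a canonical position before applying the comparison with the even solution $c$.

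As a preliminary step, I would verify that $c$ is antiperiodic with antiperiod $2\alpha$. Indeed, (b) and (e) together give $p(2\alpha-x) = p(x)$, so $-c(2\alpha-x)$ is another solution of (\ref{eq diff dist min}); it agrees with $c(x)$ to first order at $x = \alpha$ (both values vanish by (c), and both derivatives equal $c'(\alpha)$), so uniqueness of the Cauchy problem gives $c(x+2\alpha) = -c(x)$ globally. The zeros of $c$ are therefore exactly the odd multiples of $\alpha$ and their consecutive spacing is exactly $2\alpha$, which already verifies the conclusion for $c$ itself.

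For the main argument I would take an arbitrary non-trivial solution $\phi$ with consecutive zeros $\alpha_1 < \alpha_2$ and reason by contradiction, supposing $\alpha_2 - \alpha_1 < 2\alpha$. The three symmetries of $p$ -- translation by multiples of $2\alpha$ (from (e)), the reflection $x \mapsto -x$ (from (b)), and the reflection $x \mapsto 2\alpha - x$ (from (b) and (e) combined) -- each send solutions to solutions and preserve the separation of zeros. Applying them, I can assume that the midpoint $m = (\alpha_1+\alpha_2)/2$ lies in $[0,\alpha]$. Setting $L = (\alpha_2-\alpha_1)/2 < \alpha$ and $\tilde\phi(x) = \phi(x+m)$, I obtain a solution of $\tilde\phi'' + p(x+m)\tilde\phi = 0$ vanishing exactly at $\pm L$ and nowhere in between, while $c$ itself does not vanish on $(-L, L)$ since $L < \alpha$.

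The remaining step is to import Hartman's comparison from \cite{3}: using the Prüfer representation $\tilde\phi = \rho\sin\theta$, $\tilde\phi' = \rho\cos\theta$, so that $\theta'(x) = \cos^2\theta + p(x+m)\sin^2\theta$, one shows that the monotonicity hypothesis (d) combined with the evenness of $p$ forces the $\pi$-rotation time of $\theta$ to be minimized exactly when the angle reaches $\pi/2$ at the maximum of $p$, which is precisely the case for $c$. This minimal time equals $2\alpha$, contradicting $2L < 2\alpha$. I expect this last step to be the main technical obstacle: a direct pointwise Sturm comparison between $p(x+m)$ and $p(x)$ on $[-L,L]$ fails for general $m \in (0,\alpha)$, so the argument of \cite{3} relies on a finer rearrangement-type or Prüfer-angle calculation that crucially uses the fact that $p$ attains its maximum exactly on $2\alpha\Z$ and its minimum exactly on $\alpha + 2\alpha\Z$.
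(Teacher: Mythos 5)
Your preliminary reductions are correct and worth keeping: the antiperiodicity $c(x+2\alpha)=-c(x)$, hence the fact that the zeros of the even solution are exactly the odd multiples of $\alpha$, does follow from uniqueness of the Cauchy problem at $x=\alpha$ as you argue, and the dihedral group generated by $x\mapsto -x$ and $x\mapsto 2\alpha-x$ does allow you to normalize the midpoint of a pair of consecutive zeros into $[0,\alpha]$. But the proof stops exactly where the content of the lemma begins. After the normalization you have a solution of $u''+p(x+m)u=0$ vanishing at $\pm L$ with $L<\alpha$, and, as you yourself observe, $p(x+m)\geq p(x)$ fails pointwise on $(-L,L)$ for general $m\in(0,\alpha)$, so no Sturm comparison with $c$ is available in this position. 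At that point you appeal to a ``finer rearrangement-type or Pr\"ufer-angle calculation'' which you describe only by restating the desired conclusion (that the $\pi$-rotation time of the Pr\"ufer angle is minimized for the even solution). That assertion \emph{is} the lemma; nothing in the proposal establishes it, so the argument is circular at its core.

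There are two further concrete problems with deferring this step to \cite{3}. First, your normalization is not the one under which the comparison of \cite{3} is set up: the two-sided estimate naturally splits at a critical point $x_0$ of the solution (where $\phi'(x_0)=0$, normalized to $x_0\ge 0$ by the reflection $x\mapsto -x$), and on the right half one has the genuine pointwise inequality $p(x)\le p(x-x_0)$ for $x\ge x_0$ (since $0\le x-x_0\le x$ and $p$ is non-increasing on the positive axis), giving $\alpha_2-x_0\ge\alpha$ by a clean Sturm/Wronskian argument; it is the left half-estimate $x_0-\alpha_1\ge\alpha$ where no pointwise comparison holds and where the real work lies. Your midpoint normalization obscures this splitting and does not reduce the problem to a configuration that \cite{3} treats. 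Second, and more importantly, a $2\alpha$-periodic even $p$ that is non-increasing only on $(0,\alpha)$ is \emph{not} non-increasing on $(0,\infty)$, so Lemma 3 of \cite{3} cannot be invoked as a black box at all: one must go through its proof and check that monotonicity is only ever used on a half-period. That verification is precisely what the paper's Note following the statement is claiming (``the same proof works in this case; it is enough to suppose this property on $]0,\alpha[$''), and it is the one thing a complete proof of this lemma must actually supply. Your proposal neither performs that check nor replaces it with an independent argument.
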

\textbf{Note:} in \cite{3}, the condition (d) reads "$p(x)$ is non-increasing for $0 < x < +\infty$". When $p(x)$ is assumed to be periodic of period $2 \alpha$, it is enough to suppose this property on $]0,\alpha[$. 
\section{Conjugate points of Lorentzian tori with a Killing vector field}\label{Section tores SPC et géodésiques périodiques}
\subsection{Conjugate points from the Killing vector field}
Given a Lorentzian torus with Killing  field $K$, modeled on $E^u_f$, we have the following result from \cite{10}:
\begin{theorem}(Theorem 5.29 \cite{10}).\label{Conditions nécessaires SPC}
Let $f \in C^{\infty}(\R,\R)$ be a non-constant periodic function, and let $T$ be a torus modeled on $E_f^u$. If $T$ has no conjugate points, then: \\
(1) the set of connected components of $\{f \neq 0 \}$ is locally finite,\\
(2) $f$ changes sign between bands with a common boundary,\\
(3) $f'$ changes sign once in a band,\\
(4) each component defines a type II band in the torus.
\end{theorem}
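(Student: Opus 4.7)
The plan is to prove each conclusion by contraposition: assuming one of (1)--(4) fails, I exhibit a non-null geodesic of $T$ carrying a pair of conjugate points. The central analytic object is the Jacobi field $\beta(t) = -\epsilon\langle K, N\rangle$ along a non-null geodesic $\gamma$. In ribbon coordinates $(x,y)$ with metric $2dxdy + f(x)dy^2$ one has the key identity
$$x'(t)^2 \;=\; C^2 - \epsilon f(x) \;=\; \beta(t)^2,$$
so the zeros of $\beta$ coincide with the turning points of the $x$-coordinate. A geodesic whose $x$-coordinate oscillates between two consecutive turning points thus carries two zeros of $\beta$; provided the whole arc between them lies in the universal cover $\widetilde{T}$ (and not only in the maximal extension $E^u_f$), the corresponding points project to a pair of conjugate points on $T$.

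For (1)--(3), the common strategy is to produce a \emph{trapped geodesic} by choosing the Clairaut constant $C$ so that $\epsilon C^2$ lies strictly between two local extremal values of $f$; the corresponding geodesic has its $x$-coordinate confined between two turning points in the interior of a band, stays away from null orbits of $K$, and therefore lies in $\widetilde{T}$. For (3), Rolle's theorem already forces $f'$ to vanish in each band, so the only failure case is that $f'$ changes sign at least three times; choosing $\epsilon C^2$ between two local extrema then traps a geodesic on which $\beta$ vanishes twice. For (1), an accumulation of zeros of $f$ at some $x_0$ yields arbitrarily small bands near $x_0$ on which a trap is available; a Sturm-type oscillation estimate applied to the trapped geodesic forces a full oscillation in $x$, hence two zeros of $\beta$. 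For (2), a zero of $f$ at which $f$ does not change sign has even order; taking $|C|$ small one traps a geodesic oscillating across this zero between turning points on either side, again producing two zeros of $\beta$.

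For (4), tori admit no type III bands since the orbits of $K$ are closed, so the failure case reduces to the existence of a type I band. There, both foliations by $K$ and by $K^\perp$ are suspensions; since $K$ is Killing on a Lorentzian surface, the orthogonal foliation $K^\perp$ is automatically geodesic, and its leaves are non-null geodesics with Clairaut constant $C=0$ running from one boundary null orbit of the band to the other. Combining the reflection isometries extending to $E^u_f$ (Proposition~\ref{Extension des réflexions}) with the $y$-periodicity induced by the closed orbits of $K$ on $T$, one constructs a closed non-null geodesic of $T$ tangent to $K^\perp$; along such a geodesic the Jacobi equation is periodic and $\beta$ is a non-trivial periodic solution, so it must vanish at least twice per period, producing a pair of conjugate points.

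The main obstacle is (4): the arguments for (1)--(3) are essentially Sturm oscillation estimates inside a single band with uniform analytic control, while (4) requires a genuinely global foliation argument together with a careful descent from the extension $E^u_f$ to the compact quotient $T$. A secondary delicate point appears in (1), where the trapping interval shrinks to a point as the zeros of $f$ accumulate; one must quantify the oscillation period of the trapped geodesic, most naturally by applying a Sturm comparison to a suitable rescaling of the Jacobi equation, so as to guarantee that the second zero of $\beta$ occurs before the geodesic leaves the chosen small band.
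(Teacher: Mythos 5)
First, a point of reference: the paper does not prove this statement at all --- it is Theorem 5.29 of \cite{10}, quoted and used as a black box throughout. So there is no proof here to compare yours against; I can only assess your proposal on its own terms, using the structure theory that the paper does develop.

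Your argument for (4) contains a genuine error. You propose to produce conjugate points along a closed non-null geodesic tangent to $K^\perp$ crossing a type I band, using periodicity of $\beta$. But along a geodesic perpendicular to $K$ one has $C=0$ and $\beta^2=-\epsilon f(x)$, so the zeros of $\beta$ occur exactly where the geodesic meets a null orbit of $K$; and such a geodesic reaches a null orbit only at the saddle point compactifying that orbit in the extension $E^u_f$ (its $y$-coordinate blows up in finite time), which is \emph{not} a point of the torus. Indeed the paper explicitly recalls, citing \cite[Proposition 1.8]{9}, that geodesics perpendicular to $K$ never carry conjugate points in the torus, so no argument along these lines can succeed. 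The mechanism that actually detects a type I band is a geodesic \emph{tangent} to $K$ (Lemma \ref{Behavior of geodesics}(ii)): between two consecutive zeros of $\beta$ such a geodesic crosses a type I band of $E^u_f$, so if that band lies in the torus both zeros do too, whereas if all bands are type II the second zero falls outside $\widetilde{T}$.

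The same issue --- whether the arc of the geodesic between the two zeros of $\beta$ lies in $\widetilde{T}$ rather than only in $E^u_f$ --- undermines your trapping argument for (1). A non-null geodesic transverse to $K$ leaves the torus as soon as its $x$-coordinate exits the maximal ribbon $I_k\times\R$ containing it (it does not cross a type II band: in the chart of the neighbouring ribbon its $y$-coordinate diverges), and a geodesic trapped between two peaks of $\epsilon f$ chosen from an accumulating family of bands necessarily spans at least three bands, hence at least two null orbits of $K$, and will in general escape $\widetilde{T}$ before its second turning point unless the intervening bands are type I. You do flag a delicate point in (1), but you locate it in the wrong place (the size of the trapping interval, rather than the descent from $E^u_f$ to $T$, which is precisely the phenomenon the whole paper is organized around). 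Your arguments for (2) and (3) are essentially sound, because there the trapped arc stays either inside a single band or inside a small neighbourhood of a single null orbit, both of which sit in one maximal ribbon of the torus; you should still require $\epsilon C^2$ to be a regular value of $f$ so that the turning points are reached in finite time rather than approached asymptotically (compare Corollary \ref{Corrolaire: deux comportements distincts, périodique ou asymptote}).
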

Recall that a critical orbit of $K$ is an orbit corresponding to a critical point of $\langle K , K \rangle$. These leaves are geodesics (they are exactly the set of leaves of $K$ on which $\nabla_K K = 0$).
\begin{lemma}\label{orbites critiques SPC}
On a torus such that condition (3) is satisfied, the critical orbits of $K$ are without conjugate points.
\end{lemma}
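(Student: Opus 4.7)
The plan is to show that the Jacobi equation along a critical orbit has constant non-positive coefficient, so that no non-trivial solution has two zeros. First I would fix the setup: by condition (3) of Theorem \ref{Conditions nécessaires SPC}, $f'$ has a unique zero $x_c$ in each band, necessarily in its interior (since the zeros of $f$ are simple), so $f(x_c)\neq 0$. The critical orbit $\gamma$ is the integral curve of $K$ contained in $\{x=x_c\}$; as recalled just before the lemma, it is a geodesic. Parametrizing by arclength yields $\dot\gamma=\lambda K$ with $\lambda$ a nonzero constant and $\lambda^2 f(x_c)=\epsilon=\operatorname{sign}(f(x_c))$.

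Next I would compute the curvature along $\gamma$. In the ribbon coordinates $(x,y)$ where the metric reads $2\,dx\,dy+f(x)dy^2$, a short Christoffel computation yields the Gaussian curvature $\kappa(x)=\tfrac{1}{2}f''(x)$, depending only on $x$ (a sanity check against the Clifton--Pohl local model $f(u)=\sin(2u)$, for which $\kappa=-2\sin(2u)$, confirms the sign). Along $\gamma$ this is the constant $\tfrac{1}{2}f''(x_c)$. Since $f$ has a definite sign on the band and $f'$ changes sign exactly once at $x_c$, the point $x_c$ is a strict extremum of $f$---a maximum when $f>0$ and a minimum when $f<0$---so in either case $f(x_c)f''(x_c)\leq 0$, whence
\[
\epsilon\,\kappa(x_c)=\operatorname{sign}(f(x_c))\cdot\tfrac{1}{2}f''(x_c)\leq 0.
\]

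Finally, the Jacobi equation along $\gamma$ reads $u''+\epsilon\kappa(x_c)u=0$ with a non-positive constant coefficient, so its non-trivial solutions are affine or of the form $A\cosh(\mu t)+B\sinh(\mu t)$; each vanishes at most once. Hence no two points of $\gamma$ are conjugate. The only delicate step is the sign bookkeeping---matching the paper's conventions for $\kappa$, for the form of the Jacobi equation, and for $\epsilon$---but it is routine and I do not anticipate a deeper obstacle.
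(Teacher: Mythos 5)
Your proof is correct and follows essentially the same route as the paper: $\kappa=f''(x)/2$ is constant along a critical orbit, condition (3) forces the interior extremum of $f$ to satisfy $f(x_c)f''(x_c)\leq 0$, hence $\epsilon\kappa\leq 0$ and the constant-coefficient Jacobi equation has no nontrivial solution vanishing twice. One small caveat: condition (3) states that $f'$ changes sign \emph{once} in a band, not that it has a unique zero, so there may be further critical orbits at non-extremal critical points of $f$; at such points $f''=0$, so $\kappa=0$ and the same conclusion holds (the paper treats this case explicitly).
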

\begin{proof}
The curvature is given by $\kappa=f^{''}(x)/2$ in the $x$-coordinate. From hypothesis (3), it follows that a local maximum of $f$ is necessarily positive and a local minimum is negative. This yields the following:
\begin{fact}\label{Fait sur le signe de k en les orbites critiques} The timelike critical orbit of $K$ corresponding to a local minimum of $f$ lies in a region of non-negative curvature, and the spacelike critical orbit (corresponding to a local maximum of $f$) lies in a region of non-positive curvature. On the other critical orbits of $K$, we have $\kappa =0$.
\end{fact}
It is then straightforward that equation (\ref{Jacobi}) does not have a solution vanishing twice on such geodesics. 
\end{proof}
As pointed out in the introduction of this paper, we restrict our attention to non-null geodesics. The geodesics perpendicular to $K$ are already known to be without conjugate points in the torus (see \cite[Proposition 1.8]{9}); this also holds for the geodesics such that $\beta$ does not vanish, by Lemma \ref{1zéro}. We are left with the case in which $\gamma$ is not perpendicular to $K$ and $\beta$ vanishes without being identically zero. We begin by investigating the conjugate points produced by $\beta$. \\

The geodesics are always thought of as being in the extension $E^u_f$. 

\begin{lemma}\label{Behavior of geodesics}
Let $\gamma$ be a non-null geodesic not perpendicular to $K^u$.\\
Assume $\gamma$ is not a critical geodesic of $K^u$. If the torus satisfies conditions (2)-(3) of Theorem \ref{Conditions nécessaires SPC} above, we have:\\
(i) $\beta$ does not vanish more than one time in a band. \\
(ii) a geodesic $\gamma$ tangent to $K^u$ in the torus leaves the band in which $\beta$ vanishes and crosses the type I neighboring band in $E^u_f$. So if the torus contains only type II bands, the geodesic lies in $\widetilde{T}$ exactly in a domino. 
\end{lemma}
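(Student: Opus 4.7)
The whole lemma rests on the identity
\[
\beta(t)^{2} \;=\; C^{2} - \epsilon\, f(x(t)) \;=\; (x'(t))^{2},
\]
valid on any ribbon of $E^{u}_{f}$: the first equality comes from the decomposition $K=CT+\beta N$ together with $\langle N,N\rangle = -\epsilon$, and the second is equation (\ref{x' borné, juste ici}). In particular, the zeros of $\beta$ along $\gamma$ are exactly the points where $\gamma$ is tangent to $K^{u}$, and at any such zero $\epsilon f(x) = C^{2}\geq 0$.

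\emph{For (i):} I would first rule out $\beta$ vanishing in a band $B$ of ``wrong'' causal type for $\gamma$ (i.e.\ $\epsilon f<0$ on the interior of $B$): there $\epsilon f = C^{2}$ could only hold on the null-orbit boundary where $f=0$, forcing $C=0$ and $\gamma$ tangent to a null orbit, contradicting non-nullness and the fact that $\gamma$ is not perpendicular to $K^{u}$. So $B$ has matching causal type. By condition (3), $f'$ has a unique interior zero $x_{0}\in B$, so $\epsilon f$ has a unique interior extremum there and is strictly monotone on each side. Reading $(x')^{2} + \epsilon f(x) = C^{2}$ as a 1D energy conservation with potential $\epsilon f$, the trajectory of $x(t)$ is confined to one connected component of $\{\epsilon f \leq C^{2}\}\cap \bar B$, which is an interval of the form $[p,a]$ or $[a,p]$ with $p$ a null-orbit boundary of $B$ and $a$ a unique turning point where $\epsilon f(a)=C^{2}$. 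Hence $\beta$ vanishes at most once in $B$.

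\emph{For (ii):} After its turning point, $\gamma$ returns to the null-orbit boundary of its component in $B$ and crosses it transversely, since there $(x')^{2}=C^{2}>0$ (with $C\neq 0$ by the same argument as above). By condition (2), the neighbouring band $B'$ has $\epsilon f<0$ inside, so $(x')^{2}>C^{2}$ throughout $B'$, $x'$ keeps its sign, and $\gamma$ traverses $B'$ entirely to its opposite null boundary. To identify $B'$ as a type I band in $E^{u}_{f}$, I would invoke the construction of the extension (Theorem \ref{Existence d'une extension maximale} and Proposition \ref{Extension des réflexions}): $E^{u}_{f}$ is built by gluing maximal ribbons $R_{f}$ and $R_{f^{-}}$ via generic reflections, and at the null orbit separating $B$ and $B'$ a second ribbon is glued to the one covering $B'$ in $\widetilde T$ via a generic reflection, which permutes the two null foliations. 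Consequently on $B'$ both null foliations are transverse to $K^{u}$, both are suspensions, and $B'$ is of type I. Finally, when the torus has only type II bands, the lift $B'_{0}\subset\widetilde T$ of the neighbouring torus band is type II (so $K^{\perp}$ is a Reeb component there) and the ``second half'' of the type I band $B'$ of $E^{u}_{f}$ lies outside $\widetilde T$. Thus the portion of $\gamma$ that stays in $\widetilde T$ is contained in $B\cup \ell\cup B'_{0}$, where $\ell$ is the null orbit separating $B$ and $B'_{0}$; this open, simply connected subset contains exactly the orbit $\ell$, hence is a domino in the sense of Definition \ref{Définition ruban/domino}.

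\emph{Main obstacle.} The delicate technical step is the type-I identification of $B'$ in $E^{u}_{f}$: it requires unwinding the extension construction carefully to see that the gluing via a generic reflection at the separating null orbit endows $B'$ with two transverse suspension null foliations, turning the Reeb structure seen in $\widetilde T$ into a suspension structure in $E^{u}_{f}$. Once this is in hand, the 1D-Hamiltonian confinement for (i), the transverse crossing into $B'$, and the domino conclusion for the type II case follow directly.
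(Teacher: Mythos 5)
Part (i) of your proposal is correct and is essentially the paper's own argument: both rest on the identity $\epsilon \langle K,K\rangle = C^2-\beta^2$ together with the fact that condition (3) gives a single interior extremum of $\epsilon f$ in the band, so that a second zero of $\beta$ would force $\epsilon\langle K,K\rangle$ to be constant along $\gamma$, i.e.\ $\gamma$ a critical orbit. (One small point worth making explicit: a zero of $\beta$ at the critical orbit itself would make $\gamma$ tangent to a geodesic leaf, hence equal to it, which is excluded; this is what guarantees $C^2<\sup_B \epsilon f$ and hence a genuine turning point.)

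Part (ii) contains two genuine gaps. First, the claim that $\gamma$ ``returns to the null-orbit boundary and crosses it transversely, since $(x')^2=C^2>0$'' is not sufficient. From (\ref{Intégrales premières,2}) one has $y'=(\epsilon C-x')/f$, and as $x\to p_k$ with $f\to 0$ this stays bounded for exactly one of the two possible signs of $x'$; in the other direction from the tangency point the geodesic accumulates on the null orbit with $y\to\infty$ inside the ribbon and re-enters $E^u_f$ near the saddle through a \emph{different} null orbit, landing in a different ribbon. So the monotone traversal of $\{p_{k-1}<x<p_k\}$ inside the fixed ribbon $R$ only describes one of the two exits from $B$; the other requires either the generic-reflection symmetry of Proposition \ref{Extension des réflexions} or a separate case analysis (this is the content of the Observation inside the proof of Lemma \ref{asymptote à K}). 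Second, your identification of the neighbouring band as type I is based on the wrong criterion: the type of a band is defined by whether the foliations of $K$ and $K^{\perp}$ are suspensions or Reeb components, not by transversality or suspension properties of the two null foliations (and ``both null foliations transverse to $K^u$'' fails on the boundary orbits); you yourself flag this step as unresolved. The paper avoids the direct identification altogether and argues by elimination: $\gamma$ cannot remain in $B$ (Lemma \ref{asymptote à K} plus the argument of (i)), so it leaves $B$ on both sides; it cannot cross a type III neighbouring band, since there the $K^u$-foliation is a Reeb component so crossing forces a tangency to $K^u$, whereas condition (2) gives $\epsilon f<0$ hence $\beta^2=C^2-\epsilon f>0$ in the neighbouring band; it cannot remain in that band either (Lemma \ref{asymptote à K} again, wrong causal type); and it cannot cross a type II band without coinciding with a leaf of $K^{\perp}$, which is excluded. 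Hence the band crossed is type I. This elimination also yields the domino statement at once: in a torus with only type II bands the geodesic cannot cross the neighbouring band of $\widetilde T$, so the portion of $\gamma$ lying in $\widetilde T$ is confined to the two bands adjacent to the single null orbit it meets, i.e.\ to a domino in the sense of Definition \ref{Définition ruban/domino}. You should replace your direct identification by this elimination argument.
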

\begin{proof}
(i) Let $p$ be a point where $\gamma$ is tangent to $K$ in the torus, and  call $B$ the band containing $p$.  Define $U:= \{z \in B, z \textrm{\; is an extremum of\;} \langle K,K \rangle \}$; condition (3) on $f$ implies that this set is connected, hence splits the band into two connected components.  In the interior of the band, $K$ has type $\epsilon$, so the function $\epsilon \langle K,K \rangle$ is positive on $B$. Suppose $\beta$ vanishes another time in $B$. It appears from the formula $\epsilon \langle K,K \rangle =C^2-\beta^2$ that $p$ is a local maximum of $\epsilon \langle K,K \rangle$ on $\gamma$, so if $q$ is the closest point to $p$ in which $\beta$ vanishes, $p$ and $q$ are necessarily in the same connected component, and in addition, $\epsilon \langle K,K \rangle$ is decreasing from $p$ to $q$. Now since $\langle K,K \rangle(p) =\langle K,K \rangle (q)=\epsilon C^2$, this means that $\epsilon \langle K,K \rangle$ is constant between the two points, thus $\gamma$ is a critical orbit of $K$, contrary to our assumption. \\
(ii) If $\gamma$ remains in $B$, it asymptotically approaches an orbit of $K$ contained in $B$ on which the norm of $K$ is the same as on the point where $\gamma$ is tangent to $K$ (see Lemma \ref{asymptote à K}); the same argument above shows that this is impossible. It follows that $\gamma$ leaves the band from both sides. Now, since $f$ changes sign between two consecutive bands, $\gamma$ cannot cross the type III neighboring band. Repeated application of Lemma \ref{asymptote à K} shows that $\gamma$  cannot remain in that  band either. In addition, $\gamma$ cannot cross a type II band unless it is perpendicular to $K$. So $\gamma$ crosses the type I neighboring band.  
\end{proof}

\begin{corollary}
If $(T,K)$ is a torus satisfying the hypotheses in Theorem \ref{Conditions nécessaires SPC}, then the Killing vector field doesn't produce conjugate points in the torus.
\end{corollary}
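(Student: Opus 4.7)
The corollary asserts that the Jacobi field $\beta$ (the normal component of $K^u$ along $\gamma$) does not vanish twice on any non-null geodesic segment lying in $\widetilde{T}$, provided the torus satisfies the four necessary conditions of Theorem \ref{Conditions nécessaires SPC}. My plan is to leverage the two preceding results --- Lemma \ref{orbites critiques SPC} on critical orbits and Lemma \ref{Behavior of geodesics} on the behavior of geodesics with $\beta$ vanishing --- together with \cite[Proposition 1.8]{9}, and reduce the statement to a short case analysis on the type of the geodesic.

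First I would isolate the cases where nothing needs to be shown. If $\gamma$ is perpendicular to $K^u$, then the Clairaut constant $C$ vanishes and the conclusion is already in \cite[Proposition 1.8]{9}; if $\gamma$ is a critical orbit of $K$, the argument of Lemma \ref{orbites critiques SPC} applies directly; and if $\beta$ never vanishes on $\gamma$, it trivially cannot contribute a pair of conjugate points. The only remaining situation is therefore a non-null $\gamma$, not perpendicular to $K^u$, not a critical orbit, with $\beta(p)=0$ at some $p \in \widetilde{T}$.

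For this remaining case, I would invoke hypothesis (4) of Theorem \ref{Conditions nécessaires SPC}: every band of the torus is of type II. Lemma \ref{Behavior of geodesics}(ii) then tells us that $\gamma$ leaves the band $B$ containing $p$ on both sides and enters a type I neighboring band of $E^u_f$. Since hypothesis (4) prevents any type I band from being contained in $\widetilde{T}$, the portion $\gamma \cap \widetilde{T}$ must in fact be confined to the single band $B$. Lemma \ref{Behavior of geodesics}(i) then asserts that $\beta$ vanishes at most once in $B$, so $\beta$ has exactly one zero on $\gamma$ inside $\widetilde{T}$, and no pair of conjugate points can be produced by the Killing field.

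The only point I expect to deserve genuine care is the confinement of $\gamma \cap \widetilde{T}$ to a single band: one must observe that the type I neighboring band produced by Lemma \ref{Behavior of geodesics}(ii) really lies outside $\widetilde{T}$ under the sole assumption that the torus has only type II bands. Once this observation is in place, the corollary is a direct concatenation of the lemmas preceding it, and no new computation is needed.
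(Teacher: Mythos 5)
Your route is the paper's: reduce to a non-null geodesic tangent to $K$ that is neither perpendicular to $K$ nor a critical orbit, then combine Lemma \ref{Behavior of geodesics} with hypothesis (4) of Theorem \ref{Conditions nécessaires SPC} to conclude that two zeros of $\beta$ are never both in the torus. The one step you flag as delicate is also the one you state incorrectly: $\gamma \cap \widetilde{T}$ is \emph{not} confined to the single band $B$ containing the tangency point. The reflection about $t=0$ is an isometry of $E^u_f$ that exchanges the two null foliations, hence does not preserve $\widetilde{T}$; on one side of the tangency the geodesic does leave $\widetilde{T}$ as soon as it crosses the boundary null orbit of $B$ (entering the type I copy), but on the other side it first traverses the neighbouring type II band of $\widetilde{T}$ before exiting. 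This is precisely why Lemma \ref{Behavior of geodesics}(ii) (and later Proposition \ref{SPC}) asserts that the geodesic lies in $\widetilde{T}$ in a \emph{domino} --- the band $B$, one boundary null orbit, and part of the adjacent band --- rather than in a band. The corollary survives this correction, but it needs one extra word: on the adjacent band $\epsilon\langle K,K\rangle<0$, so $\beta^2=C^2-\epsilon\langle K,K\rangle> C^2>0$ and $\beta$ cannot vanish there; equivalently, the next zero of $\beta$ occurs only after the geodesic has crossed the type I band of $E^u_f$, which hypothesis (4) places outside $\widetilde{T}$. That last sentence is essentially the paper's entire proof, so apart from this repairable imprecision your argument coincides with it.
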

\begin{proof}
Let $\gamma$ be a geodesic, lifted and then extended to $E^u _f$, such that $\beta$ vanishes at least twice. We have to prove that we cannot have two zeros of $\beta$ in the torus.  This follows from the fact that the geodesic crosses a type I band between two such zeros as a consequence of Lemma \ref{Behavior of geodesics} above.  When the torus is assumed to have only type II bands, these zeros are never both in the torus. 
\end{proof}
\subsection{Invariant geodesics and conjugate points}\label{Section pour définir Z}
We begin by stating a lemma that will be used at the end of this section. Choose an orientation on the torus, and let $p \in T$ be a point such that $\langle K(p),K(p) \rangle =0$. If the orbit of $K$ containing $p$ belongs to the first line of the null cone bordering the negative cone, fix a null vector field $L$ in the maximal ribbon of $\widetilde{T}$ containing $p$ such that $\langle L,K \rangle =-1$; otherwise, take $L$ such that $\langle L,K \rangle =1$. This way, we get a local coordinate $x$ in the ribbon containing $p$, which we globalize to $E^u_f$ in a unique way. 
\begin{lemma}\label{Formule sur beta' en fonction de f}
	Let $\gamma$ a geodesic in $E^u_f$, then  
	$$\epsilon \beta^{'}(t) = \frac{1}{2} f^{'}(x(\gamma(t)).$$
\end{lemma}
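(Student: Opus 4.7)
The plan is to work in the ribbon coordinates $(x,y)$ on a chart containing $\gamma$, where $K = \partial_y$ and the metric reads $2\delta\,dx\,dy + f(x)\,dy^2$, with $\delta = \langle L, K\rangle \in \{\pm 1\}$ fixed by the choice of $L$ described just before the statement. Writing $\gamma(t) = (x(t), y(t))$, my goal is to prove the two small identities
\begin{equation*}
\beta(t) = -\dot x(t), \qquad \ddot x(t) = -\tfrac{\epsilon}{2}\, f'(x(\gamma(t))),
\end{equation*}
from which $\epsilon\beta'(t) = \tfrac12 f'(x(\gamma(t)))$ follows immediately by differentiating the first and substituting the second.

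For the first identity, I would reinterpret $\beta$ as a volume pairing. Since $(T,N)$ is a positively oriented pseudo-orthonormal frame along $\gamma$, the global volume form $\nu$ on $E^u_f$ --- which in the ribbon chart is $\nu = dy \wedge dx$, precisely the form used in Paragraph~2 to define the submersion $\mathbf{x}$ --- satisfies $\nu(T,N) = 1$. Decomposing $K = CT + \beta N$ then gives $\beta = \nu(T,K)$, and pairing $\nu = dy\wedge dx$ with $T = \dot x\,\partial_x + \dot y\,\partial_y$ and $K = \partial_y$ yields $\beta = -\dot x$. The only nontrivial point here is matching the paper's sign conventions (orientation, and direction of $L$) so that the sign comes out as $-$ rather than $+$; everything else is a one-line evaluation.

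For the second identity, I would simply read off $\ddot x$ from the geodesic equation. The only Christoffel symbols entering $\ddot x$ are $\Gamma^1_{22} = \tfrac12 f f'$ and $\Gamma^1_{12} = \tfrac12 \delta f'$ (because $g_{11} = 0$ and only the $g_{22} = f(x)$ entry depends on the coordinates), so
\begin{equation*}
\ddot x + \tfrac12 f'(x)\bigl(f(x)\dot y^2 + 2\delta\,\dot x\dot y\bigr) = 0.
\end{equation*}
The bracketed quantity is exactly the unit-speed first integral $\langle \dot\gamma, \dot\gamma\rangle = \epsilon$, so $\ddot x = -\tfrac{\epsilon}{2} f'(x)$ and we are done.

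I do not expect a genuine obstacle: the auxiliary sign $\delta$ disappears automatically through the unit-speed relation, so the formula is insensitive to which null foliation $L$ belongs to. A quick sanity check comes from differentiating the earlier identity $\beta^2 = C^2 - \epsilon f(x)$ (itself a consequence of $K = CT + \beta N$ together with $\langle N,N\rangle = -\epsilon$): this gives $2\beta\beta' = -\epsilon f'(x)\dot x$, which, combined with $\beta = -\dot x$, recovers the claim wherever $\beta \neq 0$, and extends to its zeros by continuity.
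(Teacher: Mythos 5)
Your proof is correct, but it takes a genuinely different route from the paper's. The paper works intrinsically in the frame $(T,N)$: writing $L=\lambda_1 T+\lambda_2 N$ and using $\nabla_T K=\beta' N$, $\nabla_N K=\beta' T$ (the latter from the antisymmetry of $X\mapsto\nabla_X K$ for a Killing field), it shows that $L$ is an eigenvector of $\nabla K$ with eigenvalue $\pm\epsilon\beta'$, whence $\tfrac12 L\langle K,K\rangle=\langle\nabla_L K,K\rangle=\epsilon\beta'\,\langle L,K\rangle$, and $L\langle K,K\rangle=f'(x)$ since $L=\partial_x$; the consistency of the sign across the two null foliations is then checked by passing to the reflected ribbon. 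You instead compute in coordinates: the identity $\beta=\nu(T,K)=-\dot x$ (using the globally defined form $\nu=dy\wedge dx$ from Section 2 and $\nu(T,N)=1$) together with the geodesic equation $\ddot x=-\tfrac{\epsilon}{2}f'(x)$, where the auxiliary sign $\delta=\langle L,K\rangle$ cancels against the unit-speed first integral. Both arguments are sound and of comparable length. The paper's version is frame-based and avoids Christoffel symbols, and the statement $\nabla_L K=\epsilon\beta' L$ is conceptually sharper; your version is more elementary, makes the independence of the choice of null foliation transparent (so no separate check in the reflected ribbon is needed), and produces the useful byproduct $\beta=-\dot x$, which refines the relation $\beta^2=\dot x^2$ implicit in the paper's equation $x'(t)^2=C^2-\epsilon f(x)$. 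You correctly identify that the only delicate point is the global sign, and your resolution --- anchoring it to the globally defined volume form $\nu$ and the positive orientation of $(T,N)$ --- is exactly what the paper's orientation conventions in the paragraph preceding the lemma are designed to guarantee.
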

\begin{proof}
	We can assume that the orbit of $K$ containing $p$ belongs to the first line of the null cone bordering the negative cone. In the ribbon containing $p$, call it $U$, set $K = C T + \beta N$ and $L=\lambda_1 T + \lambda_2 N$; $\lambda_1$ and $\lambda_2$ are never vanishing functions on $\gamma$. In fact, the choice of $L$ leads to $\lambda_1 = -\epsilon \lambda_2$. We have $\nabla_T K = \beta^{'} N$ and $ \nabla_N K = \beta^{'} T$. An easy computation then gives $\nabla_L K = \epsilon \beta^{'} L$, yielding   $$\frac{1}{2} \nabla_L \langle K , K \rangle = \epsilon \beta^{'}.$$ 
	Now in the ribbon obtained from $U$ by a generic reflection, the local coordinate is defined by the null vector field $L^{'}$ such that $\langle L^{'},K \rangle=1$, and $L^{'}$ belongs to the other null line field. We can check that this gives the same formula, finishing the proof. 
\end{proof}

\begin{corollary}\label{Asymptote à une orbite critique de K}
	Let $(T,K)$ be a torus modeled on $E^u_f$, with $f$ a non-constant periodic function such that $f^{'}$ changes sign once in a band. Then a non-null geodesic $\gamma$ that lies in a band provided $|t|$ is large enough, asymptotically approaches a critical orbit of $K^u$, and this orbit is either timelike or spacelike, depending on the type of $\gamma$.
\end{corollary}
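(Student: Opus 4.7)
The plan is to combine Lemma \ref{asymptote à K} with the second-order equation satisfied by the transverse coordinate $x(t)$ along $\gamma$. Under the hypothesis that $f'$ changes sign once in each band, this equation forces the asymptotic leaf produced by Lemma \ref{asymptote à K} to sit at the unique equilibrium of the effective potential $\tfrac{\epsilon}{2}f(x)$, namely the critical orbit of $K^u$.

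First I would place myself, for $|t|$ large, in the $(x,y)$ coordinates of the maximal ribbon containing $\gamma$, where the metric reads $2\,dx\,dy + f(x)\,dy^2$. The Clairaut integral (equation (\ref{x' borné, juste ici})) gives $x'(t)^2 = C^2 - \epsilon f(x(t))$, and differentiating where $x'\neq 0$ yields $x''(t) = -\tfrac{\epsilon}{2}\,f'(x(t))$. If $\gamma$ is itself a critical orbit of $K^u$ the conclusion is trivial. Otherwise, the argument used to prove Lemma \ref{Behavior of geodesics}(i) (whose only real input is condition (3)) gives that $\beta$, equivalently $x'$, vanishes at most once in the band $B$ eventually containing $\gamma$. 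Thus for $|t|$ large $x'(t)\neq 0$, so $x(t)$ is strictly monotone and bounded in $B$, converging to some $x_0$. I would rule out $x_0\in\partial B$ (where $f=0$) by noting that then $x'(t)^2\to C^2>0$, which together with monotonicity would force $\gamma$ to exit $B$ in finite time. Hence $f(x_0)\neq 0$, and monotonicity of $x$ together with the Clairaut relation force $x'(t)\to 0$ and $\epsilon f(x_0)=C^2$.

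The new input is then to combine $x'(t)\to 0$ with $x''(t)\to -\tfrac{\epsilon}{2}\,f'(x_0)$. If $f'(x_0)\neq 0$, then $x''$ is eventually of constant sign with $|x''|$ bounded below by a positive constant, so $x'(t)=x'(t_0)+\int_{t_0}^{t}x''(s)\,ds$ diverges, contradicting $x'(t)\to 0$. Therefore $f'(x_0)=0$, and by condition (3) the only critical point of $f$ in $B$ is that of the critical orbit of $K^u$, so $\{x=x_0\}$ is precisely this critical orbit. Finally, $\epsilon f(x_0)=C^2>0$ (the case $C=0$ is incompatible with the hypothesis, since then $x'(t)^2=-\epsilon f(x(t))$ would stay bounded below in the interior of $B$, forcing $\gamma$ to exit $B$ in finite time), so $f(x_0)$ has the sign of $\epsilon$, showing that the critical orbit has the same causal type as $\gamma$.

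The main obstacle is the third step, the passage from the asymptotic behaviour of $x$ to the algebraic condition $f'(x_0)=0$ via the standard integration argument on $x''$; the other steps are essentially packaging of Lemma \ref{asymptote à K} together with condition (3).
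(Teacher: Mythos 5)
Your proof is correct and takes essentially the same route as the paper: both reduce to Lemma \ref{asymptote à K} to get $x\to x_0$ with $f(x_0)\neq 0$ and $x'\to 0$, and then conclude $f'(x_0)=0$ from the fact that a quantity tending to $0$ whose derivative converges must have vanishing derivative limit. Your relation $x''=-\tfrac{\epsilon}{2}f'(x)$ is exactly Lemma \ref{Formule sur beta' en fonction de f} rewritten through $x'^2=\beta^2$, and your integration argument on $x''$ is the paper's ``$\beta\to 0$ and $\epsilon\beta'$ converges, hence the limit is zero'' in the $x$-coordinate.
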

\begin{proof} 
	Provided $|t|$ is large enough, $\gamma$ is transverse to $K$ (see Lemma \ref{Behavior of geodesics}, (i)), so an application of Lemma \ref{asymptote à K} shows that $\gamma$ asymptotically approaches a leaf of $K^u$ whose type is the same as $\gamma$'s, i.e. $x^{'}$ goes to $0$ on $\gamma$, hence 
	\begin{align}\label{beta tend vers 0, ici}
	 \displaystyle \lim_{t \rightarrow \infty} \beta (t) =0,
	\end{align}
	since $x^{'}(t)^2=\beta(t)^2$. On the other hand, $\epsilon \beta^{'}(t)$ converges by Lemma \ref{Formule sur beta' en fonction de f} above; combining this with (\ref{beta tend vers 0, ici}), we see that this limit is necessarily zero.
\end{proof}
Let $\gamma$ be a non-null geodesic such that $\beta$ vanishes without being identically zero, paramatrized so that $\beta(0)=0$. 
\begin{lemma}\label{lemme géod périodique}
Assume $\beta$ vanishes twice and let $\omega$ be the half-distance between two consecutive zeros (measured in the parameter $t$ of the geodesic). Then $\beta$ is odd and periodic, of period $4\omega$ (in particular, the distance between two consecutive zeros of $\beta$ is always $2\omega$). Furthermore, $\kappa$ is a $4\omega$-periodic and even function. 
\end{lemma}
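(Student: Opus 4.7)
The strategy is to exhibit, at each zero of $\beta$, a global isometric involution of the extension $E^u_f$ that preserves $\gamma$ as a set while reversing its parameter, and then to read off the claimed symmetries of $\beta$ and $\kappa$ from this involution together with the Jacobi equation.

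Fix $t_0 \in \R$ with $\beta(t_0)=0$. Since we are in the case $\gamma$ is not perpendicular to $K$, the Clairaut constant $C$ is nonzero, so $K(\gamma(t_0)) = C\dot\gamma(t_0)$ is a nonzero, non-null vector (same type as $\gamma$); in particular $\dot\gamma(t_0)$ is perpendicular to the line $\dot\gamma(t_0)^\perp$ and this line is non-degenerate. The geodesic $\delta$ through $\gamma(t_0)$ in the direction orthogonal to $\dot\gamma(t_0)$ is therefore a non-degenerate geodesic perpendicular to $K^u$, so by the discussion before Proposition~\ref{Extension des réflexions} a generic reflection $R$ at $\delta$ is defined locally, and by Proposition~\ref{Extension des réflexions} it extends to a global isometry of $E^u_f$ satisfying $R_*K^u = -K^u$ and $R|_\delta = \mathrm{id}$.

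Since $R$ fixes $\gamma(t_0)$ and sends $\dot\gamma(t_0)=C^{-1}K^u(\gamma(t_0))$ to $-\dot\gamma(t_0)$, the geodesic $t \mapsto R(\gamma(t))$ has the same initial position and opposite initial velocity as $\gamma$ at $t=t_0$; by uniqueness, $R(\gamma(t))=\gamma(2t_0-t)$ for all $t$ in the domain of $\gamma$. Because $R$ is an isometry, the sectional curvature satisfies $\kappa(\gamma(2t_0-t)) = \kappa(\gamma(t))$, i.e.\ $\kappa$ (as a function of $t$) is symmetric about $t_0$. The Jacobi equation $\beta''+\epsilon\kappa\beta=0$ then has even coefficient about $t_0$, so $t \mapsto \beta(2t_0-t)$ is again a solution; it vanishes at $t_0$ like $\beta$ and has derivative $-\beta'(t_0)$ there, hence equals $-\beta(t)$. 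Thus $\beta$ is odd about $t_0$.

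Now apply this at the two consecutive zeros of $\beta$, which after the chosen parametrization lie at $t_0=0$ and $t_1=2\omega$ (the distance $2\omega$ being by definition the spacing between them). The two involutions $R_0,R_1$ associated to these points both preserve $\gamma$, each reversing the parameter about its fixed point; their composition $R_1\circ R_0$ therefore acts on $\gamma$ by the translation $t\mapsto t+4\omega$. Since $R_1\circ R_0$ is an isometry, $\kappa(\gamma(t+4\omega))=\kappa(\gamma(t))$, so $\kappa$ is $4\omega$-periodic (and even, by the symmetry about $0$ obtained above). Combining the two oddness relations $\beta(-t)=-\beta(t)$ and $\beta(2\omega+s)=-\beta(2\omega-s)$ yields $\beta(t+4\omega)=\beta(t)$, proving that $\beta$ is odd and $4\omega$-periodic. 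In particular consecutive zeros of $\beta$ are always exactly $2\omega$ apart.

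The only potentially delicate point is the existence of the \emph{global} reflections $R_0,R_1$ on $E^u_f$, but this is exactly the content of Proposition~\ref{Extension des réflexions}; once these global symmetries are in hand, the rest is an elementary argument with the Jacobi ODE and the group generated by two involutions on $\R$.
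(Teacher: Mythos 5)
Your argument for the case you actually treat is essentially the paper's: at each zero of $\beta$ you take the generic reflection fixing the non-degenerate orthogonal geodesic through that point, extend it to a global isometry of $E^u_f$ via Proposition \ref{Extension des réflexions}, observe that it preserves $\gamma$ while reversing the parameter, and compose two such reflections to obtain the translation by $4\omega$. Your derivation of the oddness of $\beta$ through uniqueness for the Jacobi ODE with even coefficient is a harmless variant of the paper's more direct use of $R_* K^u=-K^u$.

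There is, however, a genuine gap: you silently restrict to the case where $\gamma$ is not perpendicular to $K^u$, i.e.\ $C\neq 0$ (your opening sentence simply asserts ``we are in the case $\gamma$ is not perpendicular to $K$''). The lemma is stated, and used, for arbitrary non-null geodesics on which $\beta$ vanishes twice; in particular it is applied to geodesics perpendicular to $K^u$ (immediately after the lemma the paper asserts that when $f$ has simple zeros all such geodesics are periodic, and the completeness proof needs this case as well). When $C=0$ one has $K^u=\beta N$ along $\gamma$, so the zeros of $\beta$ occur exactly where $K^u$ vanishes, i.e.\ at saddle points, and your construction breaks down there: $K^u(\gamma(t_0))=C\dot\gamma(t_0)=0$, so there is no non-degenerate leaf of $K^{u\perp}$ through $\gamma(t_0)$ to reflect in, and no generic reflection based at that point. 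The paper handles this by choosing two generic reflections, each fixing a leaf of $K^{u\perp}$ passing through the saddle point, whose composition is the point reflection at that point, and then running the same composition argument. You need to supply this second case (or explicitly flag that your proof only covers $C\neq 0$).
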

\begin{proof}
The proof is based on a powerful geometric ingredient of Lorentzian tori with a Killing vector field: the generic reflection that fixes a non-degenerate  leaf of $K^{\perp}$, defined on the saturation of the leaf by the flow of $K$, extends to a  global isometry of the extension (see Proposition \ref{Extension des réflexions}). A point where $\beta$ vanishes is either a saddle point or corresponds to a point where $\gamma$ is tangent to $K$, depending on whether the geodesic is orthogonal to $K$ or not. The behavior being slightly different in each case, we choose to consider them separately. Suppose $\gamma$ is not a leaf of $K^{\perp}$; the reflection that fixes the geodesic perpendicular to $K$ that passes through $\gamma(0)$ is an isometry that preserves $\gamma$ (actually, it reverses the orientation), sending $K$ to $-K$; it sends  $\gamma(t)$ to $\gamma(-t)$ yielding for all $t$,\; $\beta(-t)=-\beta(t)$. Composing two reflections that correspond to two consecutive zeros of $\beta$ gives an isometry preserving $\gamma$, which is actually a translation of the parameter $t$ by $4 \omega$. It sends $\gamma(t)$ to $\gamma(t+4\omega)$, and gives for all $t$, $$\beta(t+4 \omega)=\beta(t),$$
as expected. Now suppose that $\gamma$ is perpendicular to $K$; $\beta$ vanishes at the saddle points. Let $p$ be a saddle point on $\gamma$.
According to \cite{10}, we might choose two generic reflections each one fixing a leaf of $K^{\perp}$ passing through $p$, such that their composition is the reflection with respect to $p$. It is not hard to see that the obtained reflection is an isometry preserving $\gamma$; it again reverses its orientation and sends $K$ to $-K$. Composing two such reflections as before (corresponding to two saddle points), the same conclusions follow.  \\
The same arguments provide the proof of the statement on $\kappa$.
\end{proof} 
%If we drop the additional assumption on $f$, i.e. that "$f$ has simple zeros", this conclusion still holds for those geodesics that are not perpendicular to $K$, with $\beta$ vanishing twice.
% for the others, this holds if the geodesic asymptotically approaches a null orbit of $K$ which is incomplete in the torus, and fails if the latter is complete.
\begin{definition}\label{déf periodic geod}
A geodesic of $E^u_f$ with $\beta$ a non-zero periodic function is called "$\beta$-periodic" or an "invariant geodesic", for they are invariant by an isometry of the extension.
\end{definition}
The term "$\beta$-periodic" will be adopted below because it seems to be more suggestive of the type of resulting Jacobi equation. We simply say "periodic geodesic" instead of "$\beta$-periodic". 

If we suppose that $f$ has simple zeros, the geodesics perpendicular to $K$ are all periodic.
\begin{corollary}\label{Corrolaire: deux comportements distincts, périodique ou asymptote}
Let $\gamma$ be a non-null geodesic tangent to $K$ in the torus; we distinguish two different behaviors of $\gamma$ in $E^u_f$: either $\gamma$ is periodic, or it asymptotically approaches a critical orbit of $K$ with infinite $t$; in this case, the geodesic is tangent to $K$ only one time.
\end{corollary}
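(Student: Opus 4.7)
My plan is to split the argument according to the number of zeros of $\beta$ along $\gamma$ in the extension $E^u_f$. If $\beta$ vanishes at least twice, Lemma \ref{lemme géod périodique} applies directly and yields that $\beta$ is $4\omega$-periodic, so $\gamma$ is periodic in the sense of Definition \ref{déf periodic geod}; this is the first alternative of the corollary. The substantive case is therefore when $\beta$ vanishes at exactly one value of the parameter, which I take to be $t=0$; then I must show that $\gamma$ asymptotically approaches a critical orbit of $K^u$ as $|t|\to\infty$, the tangency count being automatic from the single-zero hypothesis.

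The essential ingredients are the Clairaut first integral $x'(t)^2=C^2-\epsilon f(x(t))=\beta(t)^2$ from equation (\ref{x' borné, juste ici}) and the identity $\epsilon\beta'=\tfrac{1}{2}f'(x)$ from Lemma \ref{Formule sur beta' en fonction de f}. Since $\beta$ has constant non-zero sign on each of $(-\infty,0)$ and $(0,\infty)$, the coordinate $x\circ\gamma$ is strictly monotonic on each half-line; without loss of generality assume it is increasing on $(0,\infty)$, with limit $x_\infty\in(x_0,+\infty]$, where $x_0=x(\gamma(0))$. I would first rule out $x_\infty=+\infty$: if $P$ denotes a period of $f$, then $f(x_0+P)=f(x_0)=\epsilon C^2$, so whenever $x\circ\gamma$ attains the value $x_0+P$ at some $t_1$ one has $\beta(t_1)^2=C^2-\epsilon f(x_0+P)=0$. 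If $x\to+\infty$ then by the intermediate value theorem such a $t_1>0$ exists, giving a second zero of $\beta$ and contradicting the single-zero hypothesis.

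Hence $x_\infty$ is finite, so $x'(t)\to 0$ and $\beta(t)\to 0$. Lemma \ref{Formule sur beta' en fonction de f} then gives $\beta'(t)\to\tfrac{1}{2}\epsilon f'(x_\infty)$; since $\beta$ is bounded while $\beta'$ has a limit, this limit must be $0$, yielding $f'(x_\infty)=0$. Thus $x_\infty$ corresponds to a critical orbit of $K^u$. Geodesic completeness (Theorem \ref{Complétude}) ensures $\gamma$ is defined for all $t\in\R$, and finite-time attainment of $x_\infty$ would again produce a second zero of $\beta$, so the approach is genuinely asymptotic. The symmetric argument on $(-\infty,0)$ gives the asymptote as $t\to-\infty$, and the uniqueness of the zero of $\beta$ is precisely the statement that $\gamma$ is tangent to $K$ only once.

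The main obstacle I anticipate is the exclusion of $x\to+\infty$ in the second step: one might worry that the time integral $\int dx/|\beta|$ diverges near the translated critical value $x_0+P$ and somehow allows $x\circ\gamma$ to escape to $+\infty$ without actually attaining $x_0+P$. This cannot happen, because $x\circ\gamma$ is continuous and monotonic, so by the intermediate value theorem the value $x_0+P$ is attained at a finite parameter; at that instant $\beta$ vanishes, delivering the contradiction. All other steps reduce to routine manipulations of the two first integrals and Lemma \ref{Formule sur beta' en fonction de f}.
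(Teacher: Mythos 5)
Your overall strategy is the same as the paper's (split on the number of zeros of $\beta$, invoke Lemma \ref{lemme géod périodique} for the periodic case, and use the first integrals $x'^2=C^2-\epsilon f(x)=\beta^2$ together with the periodicity of $f$ and Lemma \ref{Formule sur beta' en fonction de f} for the asymptotic case), and the periodicity argument ruling out $x\to+\infty$ is the right idea. But there is a genuine gap in the step where you treat ``the coordinate $x\circ\gamma$'' as a single globally defined monotone function along all of $\gamma|_{(0,\infty)}$, governed by one periodic $f$ on one copy of $\R$. The extension $E^u_f$ is not a single ribbon: each maximal ribbon is a copy of $R_f$ or $R_{f^-}$ with its own transverse coordinate, and a non-null geodesic generically \emph{leaves} the maximal ribbon it starts in when it meets a null orbit of $K^u$ on the ``wrong'' side. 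Concretely, $y'=1/(\epsilon x'+C)$, and at a zero of $f$ one has $x'=\pm C$; for one of the two signs $\epsilon x'+C\to 0$, so $y\to\pm\infty$ in finite time and the geodesic exits the ribbon through the other null foliation rather than crossing the orbit $\{x=x_*\}$ transversally (this is exactly the dichotomy in the Observation inside the proof of Lemma \ref{asymptote à K}). For the geodesic tangent to $K$ at $t=0$ this happens in at least one time direction, so on that side $x\circ\gamma$ simply ceases to be defined before reaching $x_0+P$, and your intermediate-value argument does not apply as written. The obstacle you flagged (divergence of the time integral near $x_0+P$) is not the real difficulty; the coordinate issue is.

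The gap is repairable with tools already in the paper, which is essentially what the paper's own proof does: on $(0,\infty)$ the function $\beta$ never vanishes, so Lemma \ref{Behavior of geodesics, paragraph completeness}(ii) confines $\gamma|_{(0,\infty)}$ to the single maximal ribbon containing the first null orbit it cuts (and if it cuts none, it stays in the initial band, hence again in one ribbon); that ribbon is isometric to $R_f$ or $R_{f^-}$, both with the same period, and only then do your computations with a single periodic $f$ become legitimate. The paper instead phrases this as the claim that $\gamma$ keeps cutting the leaves of $K^u$ transversally and remains in the ribbon containing $\mathfrak{l}$ as long as $C^2>\sup_B\epsilon\langle K^u,K^u\rangle$, and concludes band by band; your formulation, once the confinement to one ribbon is justified, is a slightly more economical way to reach the same dichotomy. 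The remaining steps (the limit $x_\infty$ is finite, $x'\to 0$, $\beta'\to\frac{1}{2}\epsilon f'(x_\infty)$ forces $f'(x_\infty)=0$) are correct and reproduce Corollary \ref{Asymptote à une orbite critique de K}.
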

\begin{proof}
If $\gamma$ cuts a null orbit of $K^u$ denoted by $\mathfrak{l}$, we claim that $\gamma$ crosses a type I band $B$ containing $\mathfrak{l}$ if and only if $C^2 > \sup_B \epsilon \langle K^u,K^u \rangle$. Indeed, if $\gamma$ crosses a type I band, it is everywhere transverse to $K^u$ in that band, for if $\beta$ vanishes, either $\gamma$ remains in the band or it crosses a type III band containing $\mathfrak{l}$. Writing $C^2 - \beta^2 = \epsilon \langle K^u,K^u \rangle$, we see that $C^2 > \sup_B \epsilon \langle K^u,K^u \rangle$. Now, assume this inequality holds, then the same argument as in the proof of Lemma \ref{Behavior of geodesics}, (iii), shows that $\gamma$ crosses the type I band $B$.  In fact, as long as this inequality holds, $\gamma$ remains in the ribbon containing $\mathfrak{l}$, by cutting the leaves of $K^u$ transversally.\\ 
Now, if $\gamma$ enters a band where  $C^2$ is reached by $\epsilon \langle K^u, K^u \rangle$ in that band, using Corollary \ref{Asymptote à une orbite critique de K} and the previous lemma, we distinguish two different cases: \\
\;\;\;\; a) $C^2$ is not a critical value of $f$; in this case, $\gamma$ meets tangentially the closest orbit of $K^u$ to the boundary of the band meeting $\gamma$, on which $C^2 = \epsilon \langle K^u,K^u \rangle$, so that $\beta$ vanishes a second time. The geodesic is thus periodic. \\
\;\;\;\; b) $C^2$ is a critical value of $f$, which amounts to saying that the corresponding orbit of $K^u$ is a geodesic. In this case, if the closest orbit of $K^u$ to the boundary meeting $\gamma$ is not a geodesic, then $\gamma$ behaves as in a); otherwise, it approaches the orbit asymptotically, with an infinite $t$. 
\end{proof}
\begin{remark}
	This corollary holds for every torus modeled on $E^u_f$ such that $f$ is a non-constant periodic function. When assumptions (2)-(3) in Theorem \ref{Conditions nécessaires SPC} hold, the periodic geodesics leave every maximal ribbon in $E^u_f$, by crossing a type I band between two zeros of $\beta$. 
\end{remark}  
\begin{definition}
Define $\mathcal{L}^*_K(T)$ as the subset of $\mathcal{L}_K(T)$ such that the function $f$ induced by the norm of $K$ has simple zeros and satisfies conditions (3)-(4) of Theorem \ref{Conditions nécessaires SPC}. In particular, the assumption "$f$ has simple zeros" implies conditions (1)-(2) of Theorem \ref{Conditions nécessaires SPC}.
\end{definition}
Given a torus in $\mathcal{L}^*_K(T)$, using the results in section \ref{Section équation de Jacobi}, we shall give a necessary and sufficient condition for geodesics (other than the critical orbits of $K$) tangent to $K$ in the torus to be without conjugate points in it.\\
Recall that when assumptions (2)-(3) in Theorem \ref{Conditions nécessaires SPC} hold, if $\gamma$ is a geodesic tangent to $K$ at $p \in T$, and not a critical orbit of $K$, then $p$ is the unique point of $T$ where $\gamma$ is tangent to $K$.
\begin{proposition}(Characterization of geodesics without conjugate points) \label{SPC}
Let $(T,K)$ be a torus modeled on $E^u_f$, with $f$ satisfying the properties in Theorem \ref{Conditions nécessaires SPC}. Let $\gamma$ be a geodesic tangent to $K$ in the torus; assume $\gamma$ is not a critical orbit of $K$ and set $t=0$ at this point. Then $\gamma$ is without conjugate points in the torus if and only if $c(t_0)+c(t_1) \geq 0$, where $t_0$, $t_1$ are the smallest positive consecutive reals such that the norm of $K$ vanishes on $\gamma$.  
\end{proposition}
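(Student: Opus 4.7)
The plan is to combine the geometric structure of $\gamma$ with the algebraic oscillation criterion of Corollary~\ref{Corollaire des gendarmes}. By Corollary~\ref{Corrolaire: deux comportements distincts, périodique ou asymptote}, a non-null $\gamma$ tangent to $K$ at a non-critical point is either $\beta$-periodic in $E^u_f$ or asymptotic to a non-null critical orbit of $K$; in the asymptotic case $\gamma$ never meets a null orbit of $K$, so $t_0,t_1$ do not exist and the statement is vacuous. I therefore restrict to the $\beta$-periodic case, where Lemma~\ref{lemme géod périodique} gives that $\beta$ is odd and $4\omega$-periodic while $\kappa$ is even and $4\omega$-periodic, with $2\omega$ the common distance between consecutive zeros of $\beta$. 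Setting $\tau:=2\omega$ and using $\beta(0)=0$ in~(\ref{Jacobi}) yields $\beta=\beta'(0)\,s$; the identity $\epsilon\langle K,K\rangle=C^2-\beta^2$ then shows that $\gamma$ meets a null orbit of $K$ precisely when $|s(t)|=\alpha$ with $\alpha:=|C|/|\beta'(0)|$, so $t_0<t_1$ are the two smallest positive elements of $S_\alpha$. The assumption that $\gamma$ is not a critical orbit ensures $\beta'(0)\neq 0$ via Lemma~\ref{Formule sur beta' en fonction de f}, and conditions (3)-(4) of Theorem~\ref{Conditions nécessaires SPC} guarantee that $|s|$ attains $\alpha$ exactly twice on $[0,\tau]$.

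Next I identify the relevant parameter interval for conjugate points in the torus. By Lemma~\ref{Behavior of geodesics}(ii), since $T$ has only type II bands, $\gamma$ lies in $\widetilde{T}$ exactly inside a single geometric domino containing $\gamma(0)$; via the correspondence recorded just before Corollary~\ref{Corollaire des gendarmes}, this translates in the $t$-parameter to an algebraic domino $I$ associated to $\alpha$ and containing~$0$. The two candidates are $(-t_0,t_1)$ and $(-t_1,t_0)$, exchanged by the reflection $t\mapsto -t$; since $c$ is even, both produce the same condition, and I take $I=(-t_0,t_1)$, whose unique interior element of $S_\alpha$ is $t_0$. Because $\widetilde{T}\subset E^u_f$ isometrically and the parameter range of the lift $\widetilde{\gamma}\subset\widetilde{T}$ is exactly $I$, the parameter range of $\gamma$ in the torus coincides with $I$; hence conjugate points of $\gamma$ in $T$ correspond exactly to pairs of zeros of nontrivial solutions of~(\ref{Jacobi}) lying entirely in $I$.

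The final step is Corollary~\ref{Corollaire des gendarmes}: the domino $I$ containing $0$ has property~$(\mathfrak{P})$ if and only if $c(t_0)+c(t_1)\geq 0$. Since $I$ has only the single interior point $t_0$ of $S_\alpha$, no pair of consecutive zeros inside $I$ can ever be separated by two distinct elements of $S_\alpha$, so property~$(\mathfrak{P})$ reduces to the statement that no two consecutive zeros of any nontrivial solution of~(\ref{Jacobi}) both lie inside $I$, i.e.\ that $\gamma$ has no conjugate points in $T$. When $c(t_0)+c(t_1)<0$, the construction used inside the proof of Lemma~\ref{Le lemme des gendarmes} furnishes an explicit solution of~(\ref{Jacobi}) with consecutive zeros $z_0\in(-t_0,0)$ and $z_1\in(0,t_1)$, both inside $I$, giving the converse direction. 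The main subtlety I expect is the identification of the $t$-domain of $\gamma$ in $\widetilde{T}$ with an algebraic domino, which relies crucially on Lemma~\ref{Behavior of geodesics}(ii) and thus on the \textquotedblleft only type II bands\textquotedblright{} assumption.
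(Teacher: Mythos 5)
Your treatment of the periodic case follows the paper's own route (reduce to the algebraic domino containing $0$ for $\alpha=|C|/|\beta'(0)|$, apply Lemma \ref{Le lemme des gendarmes} and Corollary \ref{Corollaire des gendarmes} for the forward direction, and use the explicit solution with zeros $z_0\in(-t_0,0)$, $z_1\in(0,t_1)$ for the converse), and that part is sound. The genuine gap is your dismissal of the asymptotic case as vacuous. When $\gamma$ falls under case b) of Corollary \ref{Corrolaire: deux comportements distincts, périodique ou asymptote}, it is not true that $\gamma$ never meets a null orbit of $K$: by Lemma \ref{Behavior of geodesics}(ii) the geodesic must leave the band in which it is tangent to $K$ (it cannot approach an orbit of that band asymptotically, since the norm of $K$ on such an orbit would equal $\epsilon C^2$ and the unimodality of $f$ on the band would force $\gamma$ to be a critical orbit), so it crosses the two null orbits bounding the adjacent type I band of $E^u_f$ before entering the next band of the same sign, where it then approaches a critical orbit with infinite $t$. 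Hence $t_0$ and $t_1$ do exist in this case as well --- the paper says exactly this at the start of its proof --- and the equivalence still has to be established there.

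Moreover, in the asymptotic case you cannot simply quote Corollary \ref{Corollaire des gendarmes}: its hypotheses, inherited from Lemma \ref{Le lemme des gendarmes}, require $\kappa$ and $s$ to be periodic, whereas along such a geodesic $\beta=\beta'(0)\,s$ tends to $0$ at infinity without vanishing again and $\kappa$ is even but not periodic. What survives is the first half of the proof of Lemma \ref{Le lemme des gendarmes}: $s$ odd, $c$ even, and $s(t_0)=s(t_1)$ (since $\beta^2=C^2$ at both times and $\beta$ does not vanish on $[t_0,t_1]$) still yield $\phi_i(t_{1-i})=c(t_0)+c(t_1)$ for the solutions $\phi_i$ vanishing at $-t_i$, and the separation argument on the domino containing $0$ then goes through unchanged. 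This is precisely the adaptation the paper invokes and leaves to the reader; your write-up skips it by declaring the case empty, which it is not.
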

\begin{proof}
The two reals $t_0$ and $t_1$ do exist by Lemma \ref{Behavior of geodesics}. The geodesic lies in the torus in a domino, either for $t \in [-t_1, t_0]$ or for $t \in [-t_0,t_1]$, $t_0$ and $t_1$ being as in the statement above. Assume $\gamma$ is periodic; if $c(t_0)+c(t_1) \geq 0$, the intervals $[-t_1, t_0]$ and $[-t_0,t_1]$ satisfy the property $(\mathfrak{P})$ for $\alpha= |C|$, by Lemma \ref{Le lemme des gendarmes}. All we have to show is that this implies that $\gamma$ is without conjugate points in the torus. Setting $\alpha= |C|$, $S_{\alpha}$ is the set of points in which the norm of $K$ vanishes. Assume $(\mathfrak{P})$ is satisfied on $[-t_1, t_0]$, i.e. two zeros of any solution of (\ref{Jacobi}) in $[-t_1, t_0]$ (or in $[-t_0, t_1]$) are separated by two zeros of $\langle K , K \rangle$.  Since $\gamma$ lies in $T^2$ in a domino, it contains only one zero of $\langle K , K \rangle$ in the torus. Therefore, two zeros of a solution of (\ref{Jacobi}) cannot be both in the torus. Now, suppose that $c(t_0)+c(t_1)<0$; one can find by Lemma \ref{Le lemme des gendarmes} a Jacobi field along $\gamma$ vanishing at $z_0$ and $z_1$ in $[-2\omega,2\omega]$, such that $-t_0 < z_0 < 0 $ and $0 < z_1 < t_1$. \\
The case in which $\gamma$ asymptotically approaches a critical orbit of $K$ works in much the same way by proving, following Lemma \ref{Le lemme des gendarmes}, that the property ($\mathfrak{P}$) is again equivalent to having $c(t_0)+c(t_1) \geq 0$. The detailed verification is left to the reader. 
\end{proof} 
\begin{theorem}\label{Fermé d'intérieur non vide}
Let $(T, K)$ be a torus in $\mathcal{L}^*_K(T)$ modeled on $E^u_f$. Denote by $(x_n)_{n \in \Z}$ the sequence of zeros of $f$, the $x_n$'s being taken in increasing order. If the torus has no conjugate points, then $$f^{'}(x_n) + f^{'}(x_{n+1}) = 0,\;\; \forall n \in \Z.$$ 
\end{theorem}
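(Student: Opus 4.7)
The conclusion $f'(x_n)+f'(x_{n+1})=0$ is equivalent, by condition~(2) of Theorem~\ref{Conditions nécessaires SPC}, to the equality of absolute values $|\lambda_n|=|\lambda_{n+1}|$ (writing $\lambda_k:=f'(x_k)$), since $f$ changes sign between consecutive bands so that $f'(x_n)$ and $f'(x_{n+1})$ automatically have opposite signs. I would argue by contradiction: suppose $|\lambda_n|\neq|\lambda_{n+1}|$ for some index $n$, and exhibit a non-critical, non-null geodesic tangent to $K$ in the torus for which the criterion $c(t_0)+c(t_1)\geq 0$ of Proposition~\ref{SPC} fails, which carries conjugate points in $T$.

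The geodesics I would exploit are those of small Clairaut constant $C>0$ tangent to $K$ at a point $x_\ast(C)$ of a band adjacent to $B_n=(x_n,x_{n+1})$, of the type opposite to $K|_{B_n}$, crossing the full band $B_n$ between the first and second null-orbit crossings. For the family tangent near $x_n$, the first integral (\ref{x' borné, juste ici}) and a Taylor expansion of $f$ at the simple zero give
\[
t_0(C)\;\sim\;\frac{2C}{|\lambda_n|},\qquad t_1(C)\;\longrightarrow\;T\;:=\;\int_{x_n}^{x_{n+1}}\frac{dx}{\sqrt{|f(x)|}}\qquad (C\to 0),
\]
and the analogous family tangent near $x_{n+1}$ has $t_0(C)\sim 2C/|\lambda_{n+1}|$ with the same limit $T$ for $t_1(C)$. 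The crucial point is that \emph{both families traverse the same band} $B_n$ in the limit, so the limiting Jacobi data coincide; this common structure is what will let $|\lambda_n|$ and $|\lambda_{n+1}|$ be compared directly.

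The core of the argument is then a first-order expansion in $C$ of the even Jacobi solution $c_C$ at $t_1(C)$. Since $c_C(0)=1$, $c_C'(0)=0$ and $\kappa=f''/2$ is bounded, Taylor-expanding at $t=0$ gives $c_C(t_0(C))=1+O(C^2)$ and $c_C'(t_0(C))=-\epsilon\kappa(x_n)\,t_0(C)+O(C^3)$; propagating this Cauchy datum along the remainder of $\gamma_C$ in $B_n$ yields
\[
c_C(t_0(C))+c_C(t_1(C))\;=\;\bigl(1+c_0(T)\bigr)\;-\;\frac{2C\,\epsilon\,\kappa(x_n)\,s_0(T)}{|\lambda_n|}\;+\;o(C),
\]
where $c_0,s_0$ are the even/odd basis of Jacobi solutions along the limiting geodesic across $B_n$. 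The companion family tangent near $x_{n+1}$ produces an identical expansion with $|\lambda_n|$ replaced by $|\lambda_{n+1}|$ and $\kappa(x_n)$ by $\kappa(x_{n+1})$. The requirement that both be $\geq 0$ for all small $C>0$ becomes, in the critical case where the zeroth-order term vanishes, two linear inequalities in $C$ with slopes inversely proportional to $|\lambda_n|$ and $|\lambda_{n+1}|$; a suitable choice of $n$ — obtained by varying over the finitely many bands of the torus (condition~(4)) and invoking the sign-change properties of the null-to-null Jacobi propagation supplied by Section~\ref{Section équation de Jacobi} — forces a conflict unless $|\lambda_n|=|\lambda_{n+1}|$, yielding the desired equality.

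The main obstacle is the rigorous derivation of the first-order expansion of $c_C(t_1(C))$ in the singular limit $C\to 0$, where the geodesic parameter $t$ compresses near the saddle and the naïve expansion along $\gamma_C$ is not uniform. My strategy is to reparametrize the Jacobi equation~(\ref{Jacobi}) by the $x$-coordinate along $\gamma_C$: after this change of variables, (\ref{Jacobi}) becomes a Sturm--Liouville problem on the fixed segment $[x_n,x_{n+1}]$ with a coefficient that depends smoothly on $C$ up to $C=0$, and classical regular perturbation theory produces the desired expansion. Once the expansion is in hand, the contradiction with the hypothesis of absence of conjugate points is extracted directly from Proposition~\ref{SPC}.
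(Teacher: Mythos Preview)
Your proposal has the right limiting family of geodesics and the right instinct to pass to $C\to 0$, but there is a genuine error that makes the first-order machinery both unnecessary and ineffective. You assert that the two families (tangent near $x_n$ and tangent near $x_{n+1}$) produce ``an identical expansion'' with the \emph{same} zeroth-order term $1+c_0(T)$. This is false: the even solution $c_0$ is normalised at the saddle where $t_0\to 0$, so family~1 is based at the $x_n$-saddle while family~2 is based at the $x_{n+1}$-saddle, and these give different functions. Using the Wronskian $c_0s_0'-c_0's_0=1$ together with $s_0(T)=0$ (since $s_0$ is proportional to $\beta_\infty$, which vanishes at both saddles) one finds $c_0(T)=1/s_0'(T)=\beta_\infty'(0)/\beta_\infty'(T)$, and Lemma~\ref{Formule sur beta' en fonction de f} turns this into $f'(x_n)/f'(x_{n+1})$. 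Hence the zeroth-order terms of your two families are
\[
1+c_0^{(1)}(T)=\frac{f'(x_n)+f'(x_{n+1})}{f'(x_{n+1})},\qquad
1+c_0^{(2)}(T)=\frac{f'(x_n)+f'(x_{n+1})}{f'(x_{n})},
\]
which have \emph{opposite} signs whenever $f'(x_n)+f'(x_{n+1})\neq 0$ (the denominators have opposite signs by condition~(2)). So one of the two families already violates $c(t_0)+c(t_1)\geq 0$ for small $C$, and Proposition~\ref{SPC} gives conjugate points immediately. The contradiction lives entirely at zeroth order; your first-order expansion, and the delicate singular-perturbation argument you sketch to justify it, are never needed. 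In particular, your phrase ``in the critical case where the zeroth-order term vanishes'' is circular: the vanishing of that term \emph{is} the conclusion $f'(x_n)+f'(x_{n+1})=0$.

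The paper's own proof reaches the same squeeze by a slightly different route: rather than two families of geodesics, it uses a single periodic geodesic $\gamma$ with small $|C|$ and applies \emph{both} inequalities of Corollary~\ref{Corollaire des gendarmes} --- one for the domino containing $t=0$, one for the domino containing $t=2\omega$. In the limit $C\to 0$ (where $t_0\to 0$, $t_1\to 2\omega_\infty$, $s(t_0)\to 0$) these become $c_\infty(0)+c_\infty(2\omega_\infty)\geq 0$ and $-c_\infty(0)-c_\infty(2\omega_\infty)\geq 0$, forcing equality; the same Wronskian computation then yields $f'(x_i)+f'(x_{i+1})=0$. Once your zeroth-order terms are corrected, your two-family argument is essentially equivalent to this two-inequality argument.
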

\begin{proof}
Denote again by $(0),..,(\mathfrak{n})$ the distinct bands of the torus, $(i)$ and $(i+1)$ having a common boundary and opposite signs. Denote by $x_i < x_{i+1}$ the zeros of $f$ in the band $(i)$. \\
Let $\gamma$ be a geodesic tangent to $K$ in a band $(i)$ of the torus. Set $t=0$ at this point. For $|C|$ small enough,  the geodesic crosses one type I band before $\beta$ vanishes again in a copy of the band $(i+2)$ of the ribbon containing $(i)$. The torus contains an isometric image of the geodesic $\gamma$ restricted to a domino containing the zero of $\beta$ in $(i)$, and an isometric image of the geodesic restricted to a domino containing the zero of $\beta$ in $(i+2)$. By Corollary \ref{Corollaire des gendarmes} and Lemma \ref{SPC}, these geodesics are without conjugate points in the torus  if and only if $c(t_0)+c(t_1) \geq 0$ and $-c(t_0)-c(t_1)+ 2\frac{c^{'}(2\omega)}{s^{'}(2\omega)}s(t_0) \geq 0$. Consider a sequence of geodesics $\gamma_n$ approaching a non-degenerate geodesic $\gamma_{\infty}$ perpendicular to $K$. Then, $|C|$ tends to $0$, $t_0^n \to 0$, and $t_1^n \to 2 \omega_{\infty}$, where $\omega_{\infty}$ is associated to $\gamma_{\infty}$.  We admit that $c(t_0)+c(t_1)$ depends continuously on $\gamma$ (this will be proved in section 5). Letting $|C|$ go to $0$, we have $$c_n(t_0^n)+c_n(t_1^n) \to c_{\infty}(0)+c_{\infty}(2\omega_{\infty})$$ $$\text{\;\;and\;\;} -c_n(t_0^n)-c(t_1^n)+ 2\frac{c_n^{'}(2\omega_n)}{s_n^{'}(2\omega_n)}s_n(t_0^n) \to - c_{\infty}(0)-c_{\infty}(2\omega_{\infty}),$$
where the solution $c_{\infty}$ associated to $\gamma_{\infty}$ is then evaluated at two consecutive saddle points of $\gamma_{\infty}$. Denote them by $p_i$ and $p_{i+1}$. 
On the other hand, $$c_{\infty}(0)+c_{\infty}(2\omega)=\frac{1}{s^{'}_{\infty}(0)} + \frac{1}{s^{'}_{\infty}(2\omega)}=\frac{\beta^{'}_{\infty}(p_i)+\beta^{'}_{\infty}(p_{i+1})}{\beta^{'}_{\infty}(p_{i+1})}.$$
We see that if the torus is without conjugate points, $c_{\infty}(0)+c_{\infty}(2\omega)$, and then $\beta^{'}_{\infty}(p_i)+\beta^{'}_{\infty}(p_{i+1})$ should be zero. Using Lemma \ref{Formule sur beta' en fonction de f}, this yields $f^{'}(x_i) + f^{'}(x_{i+1}) = 0$, for all $i$.
\end{proof}

\begin{remark}\label{Orbites incomplètes=zéros simpes}
	The geodesic parametrization of a null orbit of $K$ is incomplete if and only if it corresponds to a simple zero of $f$.
\end{remark}
\begin{remark}
	 When the geodesic is incomplete, it is easy to see that a geodesic	 parametrization is given by $-2 (\eta f^{'})^{-1} e^{-\frac{1}{2} \eta f^{'} t}$, where $K=\partial_t$, and $\eta=\pm 1$ depends on the null orbit of $K$ and the choice of the $x$-coordinate (once the coordinate is fixed, two consecutive null orbits of $K$ in the torus give $(\eta_1,\eta_2)$, such that $\eta_1+\eta_2=0$); therefore,  Theorem \ref{Fermé d'intérieur non vide}  above can  be stated in a more geometric way, as in Theorem \ref{Condition sur les lamdas, introduction}   in the introduction of this paper.
\end{remark}
\begin{corollary}\label{limite de tores APC}
	$T_{CP}$ can be obtained as a limit of Lorentzian tori with conjugate points, and admitting a Killing field.
\end{corollary}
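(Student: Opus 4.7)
The strategy is to produce an explicit one-parameter deformation of the Clifton--Pohl metric through tori in $\mathcal{L}_K^*(T)$ that violates the necessary condition of Theorem \ref{Fermé d'intérieur non vide}, and to invoke the contrapositive of that theorem. First I would recall from the explicit atlas given in the example that in its $E^u_f$-coordinates the Clifton--Pohl torus is modeled on $f_{CP}(x)=\sin(2x)$, with period $\pi$, $\mathfrak{n}=2$, simple zeros at $x=0$ and $x=\pi/2$, and $f'_{CP}(0)+f'_{CP}(\pi/2)=2-2=0$, which is the equality predicted by Theorem \ref{Fermé d'intérieur non vide}.

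Next, I would fix a smooth $\pi$-periodic bump-type function $h\in C^\infty(\R,\R)$ whose support modulo $\pi$ is contained in a small neighborhood of $0$, with $h(0)=0$ and $h'(0)>0$, and set $f_\epsilon=f_{CP}+\epsilon h$ for small $\epsilon>0$. By an implicit-function argument $f_\epsilon$ is smooth and $\pi$-periodic, has exactly two simple zeros $x_{1,\epsilon}$ and $x_{2,\epsilon}=\pi/2$ per period (close to $0$ and $\pi/2$), and
\[
f'_\epsilon(x_{1,\epsilon})+f'_\epsilon(x_{2,\epsilon})=\epsilon h'(0)+O(\epsilon^2)\neq 0
\]
for $\epsilon$ small enough. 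For such $\epsilon$ the derivative $f'_\epsilon$ still changes sign exactly once per band and each component of $\{f_\epsilon\neq 0\}$ defines a type~II band, so the associated torus $T_\epsilon$ (constructed from $f_\epsilon$ as in Section~2 with the same quotient action as $T_{CP}$) lies in $\mathcal{L}_K^*(T)$.

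I would then apply the contrapositive of Theorem \ref{Fermé d'intérieur non vide}: since the parameter condition is violated for $T_\epsilon$, the torus $T_\epsilon$ has conjugate points. It remains to verify $g_\epsilon\to g_{CP}$ in $\mathcal{L}_K(T)$, which follows from the $E^u_f$-construction: locally the metric reads $(-1)^k 2\,dx\,dy+f_\epsilon(x)\,dy^2$ with $f_\epsilon\to f_{CP}$ in $C^\infty$, and the gluing isometries $\psi_k(x,y)=(x,2G_\epsilon(x)+y)$ depend smoothly on $\epsilon$ away from the (simple) zeros of $f_\epsilon$. Convergence of the full metrics on $T$ in $C^\infty$ then follows in the spirit of Lemma \ref{Convergence des rubans}. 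Combining these three ingredients exhibits $T_{CP}$ as the limit of the tori with conjugate points $T_\epsilon$.

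The only point that I expect to require some care is the bookkeeping ensuring that for all small $\epsilon$ the zeros of $f_\epsilon$ stay simple, that the condition $f_\epsilon'$ changes sign once per band persists, and that $x_{1,\epsilon},x_{2,\epsilon}$ are indeed the two consecutive zeros appearing in Theorem \ref{Fermé d'intérieur non vide} (rather than two zeros lying in a covering). All of this is controlled by the fact that the perturbation $\epsilon h$ is small in $C^\infty$ and localized, so these properties are stable.
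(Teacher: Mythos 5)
Your proposal is correct and follows essentially the route the paper intends: Corollary \ref{limite de tores APC} is presented as an immediate consequence of Theorem \ref{Fermé d'intérieur non vide} (the paper supplies no separate proof), and your localized perturbation $f_\epsilon=\sin(2x)+\epsilon h$, which keeps the tori in $\mathcal{L}^*_K(T)$ while breaking the equality $f'_\epsilon(x_{1,\epsilon})+f'_\epsilon(x_{2,\epsilon})=0$, combined with the contrapositive of that theorem, is exactly the intended argument. The stability and convergence bookkeeping you flag at the end is routine and consistent with the $E^u_f$-constructions of Section 2.
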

\begin{remark}
		In \cite{9} it is shown that the space of Lorentzian tori without conjugate points is a closed subset of $\mathcal{L}(T)$ endowed with the $C^{\infty}$ topology, and that the Clifton-Pohl torus is on the boundary of this set. In the proof, the construction of the sequence of such metrics converging to $T_{CP}$ killed the Killing vector field. 
\end{remark}
\subsection{A class of Lorentzian tori without conjugate points}\label{les exemples de tores SPC}
The local geometry of a Lorentzian torus with a Killing vector field is determined by the function $f$ induced by the norm of $\widetilde{K}$ in the $x$-coordinate of the universal cover. In privileging this point of view, we obtain a family of Lorentzian tori without conjugate points admitting a Killing vector field with pairwise non-isometric universal cover; these examples include the Clifton-Pohl torus and quadratic variations of it. \\ 
Our examples are in $\mathcal{L}^*_K(T)$. In addition, we  assume that $f$ has two zeros in its smallest period, and that $\exists a \in \R, f(a+t)=f(a-t)$ (in particular, $\mathfrak{n}=2$). We then get additional isometries in $E^u_f$, known as non-generic reflections in \cite{10}, that lead to additional symmetries on the $\beta$ solution. In this case, given a non-null geodesic $\gamma$, not perpendicular to $K$, the behavior in b), Corollary \ref{Corrolaire: deux comportements distincts, périodique ou asymptote}, does not appear, and $\beta$ vanishes (without being identically zero) if and only if the geodesic is periodic. With the additional assumption on $f$, $\kappa$ has two symmetries, one of which about $t=0$ and the other about  $t=\omega$, hence $\kappa$ is $2 \omega$ periodic, and $\beta$ satisfies the following: 
$$\beta(2\omega -t)=\beta(t) \;\;\forall\, t \in \R.$$
In this case, Lemma \ref{Lemme beta distance minimale} together with Proposition \ref{SPC} imply that the absence of conjugate points in the torus for such geodesics is equivalent to the fact that $\beta$ realizes the minimum distance between the zeros of the solutions of the Jacobi equation. 
A large class of Lorentzian tori with no conjugate points is given in the following theorem:   
% la famille de tores SPC avec symétries
\begin{theorem}\label{famille SPC}
Let $f$ be a periodic function that satisfies the following properties:\\
i) $f$ has simple zeros,\\
ii) $f^{'}$ changes sign one time in a band,\\
iii) $f^{'}.f^{'''} \leq 0$,\\
iv) $\exists a \in \R, f(a+t)=f(a-t)$,\\
v) $f$ has two zeros in the smallest period of $f$.\\
Then, a torus modeled on $E^u_f$ and all of whose bands are of type II has no conjugate points.   
\end{theorem}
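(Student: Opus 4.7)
The plan is to reduce the statement to a distance-between-zeros estimate for the Jacobi equation along the periodic geodesics tangent to $K$, and then to apply Lemma \ref{lemma dist min}. The other geodesic types are already covered: non-null geodesics on which $\beta$ does not vanish are free of conjugate points by Lemma \ref{1zéro}; geodesics perpendicular to $K$ by \cite[Proposition~1.8]{9}; and critical orbits of $K$ by Lemma \ref{orbites critiques SPC}. Hence only non-null geodesics tangent to $K$ at a non-critical point need to be treated, and the remarks preceding the statement (invoking (iv)--(v)) show that any such $\gamma$ is automatically periodic. Since every band is of type II, the two consecutive zeros of $\beta$ along $\widetilde\gamma$ never both sit in the torus, so Proposition \ref{SPC} together with Lemma \ref{Lemme beta distance minimale} reduces ``no conjugate points on $\gamma$'' to the statement that $\beta$ realizes the minimum distance between consecutive zeros of solutions of the Jacobi equation.

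Set $t=0$ at the tangency point. The symmetries recalled above the statement, namely $\kappa(-t)=\kappa(t)$, $\kappa(t+2\omega)=\kappa(t)$, $\beta(-t)=-\beta(t)$ and $\beta(2\omega-t)=\beta(t)$, suggest the change of variable $\tilde t := t-\omega$, $\tilde\beta(\tilde t):=\beta(\omega+\tilde t)$, $\tilde\kappa(\tilde t):=\kappa(\omega+\tilde t)$. A direct check then shows $\tilde\kappa$ is even and $2\omega$-periodic, while $\tilde\beta$ is an \emph{even} solution of $v''+\epsilon\tilde\kappa v=0$ with simple zeros at $\pm\omega$ and no zero in $(-\omega,\omega)$. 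Taking $p=\epsilon\tilde\kappa$ and $\alpha=\omega$ in Lemma \ref{lemma dist min}, conditions (a), (b), (c), (e) are immediate, and the whole argument hinges on verifying the monotonicity condition (d): that $\epsilon\tilde\kappa$ is non-increasing on $(0,\omega)$.

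This is the step I expect to be the main obstacle, and the only place where hypothesis (iii) is used. Since $\tilde\kappa(\tilde t)=\tfrac12 f''(x(\omega+\tilde t))$, one has $\tfrac{d}{d\tilde t}\tilde\kappa = \tfrac12 f'''(x)\,x'(\omega+\tilde t)$. From $x'^2=\beta^2$ the function $x$ is strictly monotone on $(0,2\omega)$; the non-generic reflection supplied by (iv) fixes the geodesic perpendicular to $K$ through $\gamma(\omega)$ and forces $x(\omega)=a$. Choosing the orientation so that $x'>0$ (the other case is symmetric), $x$ sweeps the interval $(a,x(2\omega))$ as $\tilde t$ runs over $(0,\omega)$. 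By (ii), $a$ is the unique critical point of $f$ in the type I band crossed by $\gamma$, and by (v) together with the symmetry exchanging the two type II bands adjacent to it, $C^2$ is strictly smaller than the extremum of $\epsilon f$ in the far type II band, so that $x(2\omega)$ lies strictly before the next critical point of $f$; hence $f'$ has constant sign on $(a,x(2\omega))$. A short case analysis on whether $a$ is a minimum of $f$ (when $\epsilon=+1$) or a maximum (when $\epsilon=-1$) shows that this sign equals $\epsilon$, so (iii) gives $\epsilon f'''\leq 0$ on this interval, and combined with $x'>0$ this yields $\epsilon\tfrac{d}{d\tilde t}\tilde\kappa\leq 0$, which is exactly (d).

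With (d) verified, Lemma \ref{lemma dist min} yields that any two consecutive zeros of any solution are separated by at least $2\omega$, so $\tilde\beta$---equivalently $\beta$---realizes the minimum distance. By Lemma \ref{Lemme beta distance minimale} and Proposition \ref{SPC}, the geodesic $\gamma$ has no conjugate points in the torus. The delicate bookkeeping in the third paragraph---handling the two signs of $\epsilon$ and the two possible orientations of $x'$ uniformly, and ruling out Corollary \ref{Corrolaire: deux comportements distincts, périodique ou asymptote}(b) using (v)---is where care will be needed.
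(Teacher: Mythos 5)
Your proposal is correct and follows essentially the same route as the paper's proof: reduce to the periodic geodesics tangent to $K$, recenter the Jacobi equation at $t=\omega$ so that $\beta(\cdot+\omega)$ becomes the even solution, and verify the hypotheses of Lemma \ref{lemma dist min} — with the monotonicity condition (d) obtained exactly as you do, from (ii) (constant sign of $f'$ between the two consecutive extremal orbits) combined with (iii) and the monotonicity of $x(t)$ on $]0,2\omega[$. The only differences are notational (a change of variable $\tilde t=t-\omega$ versus the paper's $p(t)=\epsilon\kappa(t+\omega)$) and your somewhat more explicit sign bookkeeping.
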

\begin{proof}
Let $\gamma$ be a non-null geodesic, and assume that $\beta$ vanishes. Either $\beta$ is identically zero, in which case the geodesic is a critical orbit of $K$ without conjugate points (Lemma \ref{orbites critiques SPC}), or it isn't identically zero, then $\gamma$ is periodic. The geodesic therefore has no conjugate points in the torus if and only if $\beta$  realizes the minimum distance between the zeros of the non-trivial solutions  of the Jacobi equation. Put $p(t)=\epsilon \kappa(t+\omega)$ and consider the differential equation  
\begin{align}\label{Ici seulement}
u^{''}(t)+\epsilon \kappa(t+\omega) u(t)=0
\end{align}
We have the following:\\
i)\,$p$ is even; this is a consequence of $\kappa$ being even and $2 \omega$ periodic. Indeed, write  $p(-t)=\epsilon \kappa (-t+\omega)=\epsilon \kappa(t-\omega)=\epsilon \kappa(t+\omega)=p(t)$;\\
ii)\,the even solution is given by $\beta(t+\omega)$;\\ 
iii)\,$p(t)$ is decreasing for $0 < t < \omega$.\\
To see this, denote $x_1 < x_2$ two  consecutive critical points of $f$ which correspond to extremal values. Condition (ii) implies that $f^{'}$ does not change sign on $]x_1,x_2[$. Since the curvature is given by $f^{''}(x)/2$ in the $x$ coordinate, the condition $f^{'}.f^{'''} \leq 0$ implies that on a ribbon, $\kappa$ is either  decreasing or increasing between the two orbits of $K$ corresponding to $x_1$ and $x_2$, depending on whether  $f$ reaches its minimal value on $x_1$ or $x_2$.   
Now the geodesic is transverse to $K$ over $]0,2\omega[$ and lies in a ribbon of the Lorentzian surface $E^u_f$. Since $\mathfrak{n}=2$, the geodesic crosses only one extremal orbit of $K$ between $0$ and $2 \omega$, the one corresponding to $t=\omega$. We then get that $\kappa(t)$ is decreasing (resp. increasing) for $\omega < t < 2 \omega$ if the geodesic is spacelike (resp. timelike), implying condition (iii). \\
Given Lemma \ref{lemma dist min}, the even solution $\beta(t+\omega)$ then realizes the minimum distance between the zeros of the non-trivial solutions  of equation (\ref{Ici seulement}). Therefore, when the torus has only type II bands, these geodesics have no conjugate points in the torus. This completes the proof of Theorem \ref{famille SPC}.  
\end{proof}
%\textbf{Note:} The case $f'.f'''=0$ corresponds to tori with constant curvature. Using the second half of lemma \ref{lemma dist min}, we show that given a function $f$ defined on $\R$, periodic and satisfying the following conditions:\\
%1) $f'.f''' > 0$;\\
%2) $f$ is symmetric with respect to the points where $f^{'}$ changes sign.\\
%Then, there exists a real $\theta$ such that a torus modeled on $E^u_{f+\theta}$ and all of whose bands are of type II, has conjugate points. \\ 

%This family obtained in Theorem \ref{famille SPC} contains the Clifton-Pohl torus and its quadratic variations. We now define what we mean by a quadratic variation: \\
\subsection{Some examples of Lorentzian tori without conjugate points}
\begin{definition}
We call a quadratic variation of the Clifton-Pohl torus a metric of the form
$$g=\frac{2dxdy}{Q(x,y)},$$
where $Q$ is a positive definite quadratic form of determinant $1$.
\end{definition}
\begin{proposition}
The quadratic variations of the Clifton-Pohl torus are contained in the family obtained in Theorem \ref{famille SPC}.
\end{proposition}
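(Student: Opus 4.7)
The plan is to write down the metric in polar-like coordinates, read off the norm function $f$ of the natural Killing field in the transverse coordinate, and verify the five conditions of Theorem \ref{famille SPC} directly. Since $Q$ is homogeneous of degree $2$, the metric $g = 2dx\,dy/Q$ is invariant under the scalings $(x,y) \mapsto (\lambda x,\lambda y)$, so $K = x\partial_x + y\partial_y$ is a Killing field. The linear transformations preserving $2dx\,dy$ are the diagonal rescalings $(x,y) \mapsto (tx, y/t)$ (and the swap $(x,y)\mapsto(y,x)$), and they also preserve $K$. Writing $Q = ax^2 + 2bxy + cy^2$ with $ac-b^2=1$, the rescaling with $t=(a/c)^{1/4}$ reduces $Q$ to the form $Q = \mu(x^2+y^2) + 2bxy$ with $\mu := \sqrt{1+b^2}$, so that $\mu^2 - b^2 = 1$.

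Next I pass to polar coordinates $x = r\cos\theta$, $y = r\sin\theta$. Then $K = \partial_u$ with $u = \log r$, and a direct calculation gives
\begin{align*}
g = \frac{\sin(2\theta)\,du^2 + 2\cos(2\theta)\,du\,d\theta - \sin(2\theta)\,d\theta^2}{Q(\theta)}, \qquad Q(\theta) := \mu + b\sin(2\theta).
\end{align*}
Solving $\phi'(\theta) = -\tan\theta$ and substituting $\eta = u + \phi(\theta)$ kills the $d\theta^2$ term and puts the metric in the ribbon form $2d\eta\,d\xi + f(\xi)\,d\eta^2$, where the transverse coordinate satisfies $d\xi/d\theta = 1/Q(\theta)$ and $f(\xi) = \sin(2\theta)/Q(\theta)$. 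Conditions (i) and (v) of Theorem \ref{famille SPC} are then immediate, since $\sin(2\theta)$ has simple zeros exactly at $\theta = k\pi/2$ and is $\pi$-periodic, yielding two simple zeros of $f$ per fundamental period in $\xi$.

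The heart of the proof is the chain-rule computation needed for (ii) and (iii). Writing $c_\theta := \cos(2\theta)$ and $s_\theta := \sin(2\theta)$, successive differentiations yield
\begin{align*}
f'(\xi) = \frac{2\mu c_\theta}{Q(\theta)}, \qquad f''(\xi) = \frac{-4\mu(\mu s_\theta + b)}{Q(\theta)}, \qquad f'''(\xi) = \frac{-8\mu c_\theta}{Q(\theta)} = -4f'(\xi),
\end{align*}
where the final identity uses $\mu^2 - b^2 = 1$ (equivalently, $\det Q = 1$) in a crucial cancellation. This immediately gives $f'\cdot f''' = -4(f')^2 \le 0$, establishing condition (iii), and shows that $f'$ has a unique zero at $\theta = \pi/4 + k\pi/2$ between two consecutive zeros of $f$, giving (ii).

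For condition (iv), the map $\sigma:(x,y)\mapsto(1/x,1/y)$ is an isometry of the normalized metric sending $K$ to $-K$ and fixing the non-degenerate geodesic $\{x=y=\pm 1\}$ perpendicular to $K$; equivalently, one checks directly that $F(\theta) = s_\theta/Q(\theta)$ and $Q(\theta)$ are both invariant under $\theta \mapsto \pi/2 - \theta$, so the substitution $\xi = \int d\theta/Q(\theta)$ translates this into $f(\xi_0 + t) = f(\xi_0 - t)$ with $\xi_0 = \xi(\pi/4)$. The requirement that every band be of type II is inherited from the Clifton--Pohl case, since the null foliations remain $\{x=\mathrm{const}\}$ and $\{y=\mathrm{const}\}$ and the band topology is unchanged. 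The main obstacle is the collapse $f''' = -4 f'$: this analytic identity, which encodes the constraint $\det Q = 1$, is precisely what places the quadratic variations inside the family of Theorem \ref{famille SPC}.
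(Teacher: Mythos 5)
Your proof is correct, and it ultimately rests on the same structural identity as the paper's — namely $f'''=-4f'$, i.e.\ $f$ solves a linear equation $f''+4f=\mathrm{const}$ forced by $\det Q=1$ — but it gets there by a genuinely different route. The paper normalizes $Q$ by a shear to $x^2+(y+ax)^2$ and then obtains the ODE in one line from the curvature identity $\kappa=-2\langle K,K\rangle-2a$ combined with $\kappa=f''/2$ and $\langle K,K\rangle=f$, which gives $f(x)=\sin(2x)-2a\cos^2(x)$ explicitly; the verification of hypotheses (i)--(v) of Theorem \ref{famille SPC} is then left essentially implicit, since $f$ is a shifted sinusoid. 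You instead normalize $Q$ to the symmetric form $\mu(x^2+y^2)+2bxy$ with $\mu^2-b^2=1$, pass to polar coordinates, build the ribbon coordinates by hand, and differentiate $f(\xi)=\sin(2\theta)/Q(\theta)$ three times; the cancellation via $\mu^2-b^2=1$ plays exactly the role of the paper's curvature identity. Your version is more computational but also more self-contained, since it checks all five conditions explicitly rather than relying on the reader to inspect the closed form of $f$. One small inaccuracy: the fixed-point set of $\sigma(x,y)=(1/x,1/y)$ is the four points $(\pm1,\pm1)$, not a geodesic (its differential there is $-\mathrm{Id}$), so $\sigma$ is not a reflection fixing a geodesic perpendicular to $K$; this aside is harmless, though, because the symmetry of condition (iv) is correctly established by your direct observation that both $\sin(2\theta)$ and $Q(\theta)$ are invariant under $\theta\mapsto\pi/2-\theta$, which is what you actually use.
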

\begin{proof}
Considering $-g$ instead of $g$ if necessary, we can assume that the quadratic forms are given by
$$Q(x,y)=u x^2 + 2v x  y + w y^2, \textrm{\;\;with\;\;} u>0, w>0, uw-v^2=1.\\$$	
Write $Q(x,y)=(\sqrt{u}x+\frac{v}{\sqrt{u}} y)^2 +(\frac{1}{\sqrt{u}}y)^2$. Applying the change of variables $x^{'}=\sqrt{u}x, y'=\frac{1}{\sqrt{u}}y$, we reduce to the metrics of the form 
$$g=\frac{2dxdy}{x^2+(y+ax)^2 },$$  
where $a\in \R$. These metrics admit a Killing vector field given by $K=x\partial_x + y \partial_y$. \\
We have now $$\kappa=-2K.K - 2a.$$
Hence, $f$ is a solution of the differential equation $$f^{''}(x)+4f(x)-4a=0,$$ 
and it is given by $f(x)= \sin(2x) -2 a\cos^2(x)$.\\
The Clifton-Pohl torus and its quadratic variations are then contained in the family obtained before, and are therefore without conjugate points.
\end{proof}
Other examples of Lorentzian tori without conjugate points, belonging to this family, are given at the end of this paper.
\section{Deformation of a Lorentzian torus without conjugate points}\label{Section stabilité par déformation}
Recall that a torus in  $\mathcal{L}_K(T)$ without conjugate points satisfies conditions (1)-(4) in Theorem \ref{Conditions nécessaires SPC}. So we defined $\mathcal{L}^*_K(T)$ to be the subset of Lorentzian tori in $\mathcal{L}_K(T)$ containing only type II bands (condition (4)), and modeled on $E^u_f$ such that $f$ has simple zeros and satisfies the third condition of the theorem (the condition on the zeros implies (1)-(2)). A torus without conjugate points in  $\mathcal{L}^*_K(T)$ satisfies in addition  the property in Theorem \ref{Fermé d'intérieur non vide}, which is a kind of "pointwise symmetry" on $f$; in the sequel we denote by $S\mathcal{L}^*_K(T)$ the set of Lorentzian metrics in $\mathcal{L}^*_K(T)$ satisfying this condition.\\
The family introduced in Theorem \ref{famille SPC} gives examples of metrics in $\mathcal{L}^*_K(T)$ without conjugate points; we wish to know if this property is stable by deformation in $S\mathcal{L}^*_K(T)$. Our main concern in this section will be to obtain stability criteria for these metrics, i.e. conditions on $f$ from which conclusions may be drawn as to the stability character of the property.    

This section contains 5 paragraphs. In Paragraph \ref{Section: convergence des champs de Killing}, we prove that for metrics in $\mathcal{L}_K(T)$, the Killing field depends smoothly on the metric. This allows us to define in Paragraph \ref{Section: A continuous function controlling conjugate points} a continuous function depending on the metric, that controls the conjugate points. Recall that the universal cover of a Lorentzian torus admitting a Killing field appears as an open subset of a bigger surface which contains conjugate points. For the stability question, a special interest will be given in paragraphs \ref{Section: Digression: Jacobi equations all of whose solutions are periodic} and \ref{Application: Invariant geodesics with non-periodic Jacobi vector field} to the case in which conjugate points are on the boundary. We will give sufficient conditions on the metric to avoid this situation, and obtain in the last paragraph a stability result.
\subsection{On the Killing vector field of a Lorentzian metric on $T$}\label{Section: convergence des champs de Killing} 
The space of smooth Lorentzian metrics on a torus is equipped with the $ C^{r} $ topology; since the torus is compact, this space is metrisable. If $K$ is a Killing field for a metric $ g $, then $ \lambda K $, $ \lambda \in \R^{*} $, is also a Killing vector field for $g$. Actually, if a (non-flat) metric on a torus admits a Killing field, then the latter is unique up to a multiplicative constant; this was proved in \cite{10}. To fix one, one fixes an orientation $\nu$ on the torus and a vector field $J$ everywhere transverse to $K$, together with a point $ p $ on $ T $ in which $ g(K,K)(p) \neq 0 $, and takes $\lambda_0 K$ such that $g(\lambda_0 K,\lambda_0 K)(p)=\eta$, $\eta=\pm 1$, and $\nu(K,J) > 0 $ (the latter is possible, for the foliation of $K$ is orientable). We simply denote it by $K$. This determines $ K $ on the orbit of $K$ containing $ p $, and on a null geodesic passing through this point (Clairaut's constant). This way, one  determines $K$ on the saturation of the geodesic by the flow of $ K $, i.e. on a ribbon; doing it on the ribbons in turn, $ K $ is determined on the whole torus. In the following, $K$ is fixed this way. \\
 
Now consider a sequence of Lorentzian metrics $g_n \in \mathcal{L}_K(T)$ which converges to a non-flat metric $ g \in \mathcal{L}_K(T)$ in the $C^r$ topology, $r \geq 4$; denote by $K$ the Killing vector field of $g$, $K_n$ that of $g_n$ for all $ n $. A natural question is to see whether $ K_n $ converges to $ K $. For convenience,  we start by examining the foliations associated to these vector fields. We want to provide the space of foliations of the torus, denoted by $\mathcal{F}(T)$, with a "natural" topology. The line bundle over a torus being a trivial bundle, we choose a trivialization and we associate to a foliation its tangent field, which is then a smooth map from $T$ into $P^1$. One identifies $ \mathcal{F}(T) $ with its image in $C^{r}(T,P^1)$ equipped with the $C^{r}$ topology, and gets a topology on $\mathcal{F}(T)$; this topology is independent of the choice of the trivialization. 

\begin{proposition}\label{Convergence champs Killing}
If $g_n$ is a sequence of Lorentzian metrics in $\mathcal{L}_K(T)$ converging to a non-flat metric $g \in \mathcal{L}_K(T)$ for the $C^r$ topology, $r \geq 4$, then the sequence of foliations associated to $K_n$ converges to that of $K$, for the $C^{r-3}$ topology. Then, if $p$ is a point on $T$ where $g(K,K)(p) =\eta$, $\eta=\pm 1$, then for $ n $ sufficiently large, $ K_n(p) $ has type $\eta$;\; fix \;$ K_n $ by setting\; $ g_n(K_n,K_n)(p) = \eta $, such that $K_n(p)$ converges to $K(p)$. The sequence $ K_n $ so obtained converges to $ K $ for the $C^{r-4}$ topology.
\end{proposition}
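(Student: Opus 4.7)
The plan is to combine two ingredients. First, the curvature of a non-flat Lorentzian surface detects the Killing direction: since $K$ preserves $\kappa$, we have $d\kappa(K)=0$, and on the non-empty open set $U:=\{p\in T:d\kappa_g(p)\neq 0\}$ the Killing direction is $\ker d\kappa_g$. Second, a Killing field is uniquely determined by its $1$-jet at a single point, as the pair $(K,\nabla K)$ satisfies a first-order linear ODE whose coefficients are built from $g$ and its Riemann tensor. The first ingredient yields the convergence of foliations, the second lifts it to convergence of vector fields.

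For the foliation statement, fix the orientation and, for each $h\in\mathcal{L}_K(T)$, let $\nu_h$ be the corresponding Lorentzian volume form. Define an auxiliary vector field $\hat K_h$ by
\begin{equation*}
i_{\hat K_h}\nu_h \;=\; d\kappa_h.
\end{equation*}
Then $d\kappa_h(\hat K_h)=\nu_h(\hat K_h,\hat K_h)=0$, so $\hat K_h$ is collinear with $K_h$ on $\{d\kappa_h\neq 0\}$. Since computing $\kappa_h$ loses two derivatives and differentiating it one more, $h\mapsto\hat K_h$ is continuous from $(\mathcal{L}_K(T),C^r)$ to $(\mathfrak{X}(T),C^{r-3})$. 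On $U$ the projectivized direction of $\hat K_{g_n}$ therefore converges in $C^{r-3}$ to that of $K$. The complement $T\setminus U$ is $K$-invariant and, under the non-accumulation hypotheses on the bands, is a finite union of closed Killing orbits on which the smooth Killing foliation is still defined; passing to projective coordinates, the direction map $T\to P^1$ extends continuously across them, and combining the $C^{r-3}$ convergence on $U$ with the compactness of $T$ and the finiteness of the critical set yields $C^{r-3}$-convergence of the foliations.

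For the vector field statement, use the normalization at $p$. Since $g_n(K,K)(p)\to\eta$, the constraint $g_n(K_n,K_n)(p)=\eta$ fixes the magnitude of $K_n(p)$ for $n$ large, and combined with the converging direction at $p$ and the orientation condition $\nu_n(K_n,J)(p)>0$ this determines $K_n(p)$ uniquely, with $K_n(p)\to K(p)$. To propagate, I would differentiate the Killing equation $\mathcal{L}_{K_n}g_n=0$ into the prolongation identity $\nabla^{g_n}\nabla^{g_n}K_n=R^{g_n}(\cdot,K_n)\cdot$, which becomes a first-order linear ODE for the pair $(K_n,\nabla^{g_n}K_n)$ along smooth curves, with coefficients of regularity $C^{r-2}$ in $g_n$. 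The initial value of $\nabla^{g_n}K_n(p)$ is read off from the $C^{r-3}$-convergent foliation via the first-order Taylor expansion of $K_n$ near $p$ (one further derivative), and Gronwall-type estimates then produce uniform convergence of $(K_n,\nabla^{g_n}K_n)$ to $(K,\nabla^g K)$ along any smooth curve from $p$ to a variable endpoint. This gives $K_n\to K$ in $C^{r-4}$ on all of $T$, after accounting for the one-derivative loss in the initial data and the regularity of the ODE coefficients.

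The main obstacle is the extension of the foliation convergence across the critical set of $\kappa$: where $\hat K$ vanishes identically, the projective direction map is defined only by continuous extension, and controlling this extension uniformly in $n$ is delicate. A robust way to bypass this is to use the prolongation ODE from the outset: integrating from $p$ along a net of smooth curves covering $T$ gives $K_n\to K$ in $C^{r-4}$ globally, and the $C^{r-3}$-convergence of the foliations follows a posteriori by taking the direction of the resulting $K_n$. This strategy separates cleanly the two loss mechanisms — three derivatives in detecting the direction from $d\kappa$, and four in reconstructing the field from the prolongation — matching the statement.
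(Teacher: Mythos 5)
Your first ingredient coincides with the paper's: on the open set $U$ where $\kappa$ is a submersion, the foliation of $K$ is $\ker d\kappa$, and the paper's Fact \ref{Fait1: Convergence des feuilletages} uses the same curvature-defined vector field (written in local coordinates rather than via $i_{\hat K}\nu = d\kappa$) to obtain $C^{r-3}$ convergence of the foliations on compact subsets of $U$. From there the arguments diverge, and yours has two genuine gaps. The first is the passage from the foliation to the field: you propose to read off $\nabla^{g_n}K_n(p)$ from ``the first-order Taylor expansion of $K_n$ near $p$'', but that presupposes $C^1$ convergence of $K_n$ near $p$, which is precisely what is to be proved --- the foliation convergence only controls the projectivized direction. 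To close this you must reconstruct the scale of $K_n$: writing $K_n=h_n\bar K_n$ with $\bar K_n$ a normalized field tangent to the foliation, the Killing equation $g_n(\nabla_X K_n,X)=0$ forces a first-order linear ODE on $h_n$, and only after solving it (with the normalization at $p$ as initial condition) do you control the $1$-jet of $K_n$ at $p$. This is the content of the paper's Fact \ref{Fait2: convergence de K_n à partir des feuilletages} and is not a formality; it is where the field, as opposed to its direction, is actually recovered on the component of $p$ in $V\cap U$.

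The second gap is the critical set of $\kappa$. Your primary route leaves the extension of the foliation convergence across $\{d\kappa=0\}$ unresolved (and the description of $T\setminus U$ as a finite union of closed orbits is not justified for a merely non-flat metric), while your proposed bypass --- reconstruct $K_n$ globally by the prolongation ODE and take its direction --- would only give $C^{r-4}$ convergence of the foliations, strictly weaker than the $C^{r-3}$ asserted in the statement. The paper circumvents this with a different transport mechanism: once $K_n\to K$ on a piece of $V\cap U$, it uses the commutation $exp_{\psi^{t}(p)}\circ\psi^{t}_{*}=\psi^{t}\circ exp_p$ to carry whole orbits of $K_n$ along null geodesics, combined with the $C^{r-1}$ convergence of the exponential maps and the $C^{r-3}$ convergence of the null fields $L_n$; this yields $C^{r-3}$ convergence of the flows and $C^{r-4}$ convergence of $K_n$ everywhere, without ever discussing the critical set of $\kappa$. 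Your prolongation identity $\nabla\nabla K=R(\cdot,K)\cdot$ is a legitimate alternative transport mechanism and could plausibly replace this step, but only once the initial $1$-jet is controlled as above and once the foliation statement is obtained independently at the $C^{r-3}$ level; as written, both points are open.
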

In the following, $g \in \mathcal{L}_K(T)$ is a non-flat metric; we denote by $U$ the open set in which $\kappa$, the sectional curvature associated to $g$, is a submersion. On this set, $\kappa$ defines the foliation associated to $K$. The open set of $T$ not containing the null orbits of $K$ will be denoted by $V$. 
%The proposition will follow from the next three lemmas. Let $U$ denote the open set on which $ \kappa $ is a submersion. The general proof falls into two parts: we first show the result on a compact set contained in $U$; the result outside this set will follow using the previous observation.
\begin{fact}\label{Fait1: Convergence des feuilletages}
Consider a sequence of metrics $g_n \in \mathcal{L}_K(T)$ converging to $g$ for the $C^r$ topology; then the sequence of foliations associated to $K_n$ converges to that of $K$ for the $C^{r-3}$ topology, on every compact subset of $U$.	
\end{fact}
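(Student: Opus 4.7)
The plan is to characterise the foliation of $K$ in a metric-dependent way that behaves well under $C^r$-convergence. The natural candidate on $U$ is the kernel distribution of $d\kappa$: since $K$ is Killing for $g$, its flow preserves the sectional curvature, so $K(\kappa)=0$ on $T$. On $U$ the $1$-form $d\kappa$ is nowhere zero, so $\ker d\kappa$ is a smooth line field containing the nowhere-vanishing vector field $K$, hence coinciding with the tangent distribution to the foliation of $K$ on $U$. The same reasoning applied to each $(g_n,K_n)$ shows that the foliation of $K_n$ coincides with $\ker d\kappa_n$ on the open set $U_n$ where $d\kappa_n$ does not vanish.

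Next, I would invoke the standard regularity dependence of curvature on the metric: in local coordinates, $\kappa = R_{1212}/(g_{11}g_{22}-g_{12}^2)$ is a rational, with non-vanishing denominator, expression in the components of $g$ and their first two derivatives. Consequently $g\mapsto \kappa$ is continuous from $C^r$ to $C^{r-2}$, and $g\mapsto d\kappa$ is continuous from $C^r$ to $C^{r-3}$. From $g_n\to g$ in $C^r$ (with $r\ge 4$) one therefore obtains $\kappa_n\to\kappa$ in $C^{r-2}$ and $d\kappa_n\to d\kappa$ in $C^{r-3}$ on the whole of $T$.

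Fix a compact $W\subset U$. Then $d\kappa$ is bounded away from zero on $W$, and the $C^0$ convergence of $d\kappa_n$ gives $W\subset U_n$ for $n$ large enough. In local coordinates $(x^1,x^2)$ the field $X_n := -\partial_2\kappa_n\,\partial_{x^1}+\partial_1\kappa_n\,\partial_{x^2}$ is a nowhere-vanishing section of $\ker d\kappa_n$ on $W$, and $X_n\to X$ in $C^{r-3}$, with $X$ nowhere vanishing on $W$. Passing from such a section to its image in $P^1(TT)$ through a fixed smooth trivialisation is a smooth operation on the open set of nowhere-vanishing sections; hence the maps $W\to P^1$ induced by $X_n$ converge in $C^{r-3}$ to the one induced by $X$. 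Patching over a finite cover of $W$ by coordinate charts yields the desired convergence of foliations restricted to $W$.

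The main technical obstacle is simply the bookkeeping of derivatives: two derivatives are lost between $g$ and $\kappa$, and one more when differentiating, which is why $C^{r-3}$ is the best regularity one obtains by this approach (and accounts for the further loss to $C^{r-4}$ in Proposition \ref{Convergence champs Killing}, once the normalisation $g_n(K_n,K_n)(p)=\eta$ is imposed). Non-vanishing of $d\kappa_n$ on $W$ for large $n$, and the smooth dependence of the kernel on a non-vanishing $1$-form, are both uniform open-condition arguments on the compact set $W$, so they present no real difficulty.
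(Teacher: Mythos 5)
Your proposal is correct and follows essentially the same route as the paper: both characterise the foliation on $U$ as the kernel of $d\kappa$ (using that the flow of $K$ preserves the curvature), both use the same locally defined vector field $X_\kappa=(-\partial_t\kappa,\partial_s\kappa)$ tangent to the level sets, and both obtain the $C^{r-3}$ convergence from the loss of two derivatives in $g\mapsto\kappa$ plus one more in differentiating, together with the openness of the submersion condition on a compact subset of $U$. Your remarks on the non-vanishing of $d\kappa_n$ for large $n$ and the passage to $P^1$ just make explicit what the paper leaves implicit.
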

\begin{proof}  
Observe that the space of submersions over a compact variety $M$ is an open set in the space of $C^1$ functions over $M$. Let $B$ be a compact subset of $U$; provided we take $ n $ large enough, $ \kappa_n $ is also a submersion on $ B $ that defines the foliation associated to $ K_n $.\\
In smooth local coordinates $ (s,t) $, define $$ X _ {\kappa} (s, t): = \Big{(} - \frac{\partial \kappa}{\partial t}, \frac{\partial \kappa}{\partial s} \Big{)}, $$
a vector field associated to $ \kappa $. We have $ d \kappa (X_{\kappa}) = 0$, so $ X_{\kappa} $ defines the foliation of $ K $. It follows that if $ \kappa_n $ converges to $ \kappa $ for the $C^{k-2}$ topology, the foliations they define converge for the $C^{k-3}$ topology. Note that the curvature defines the foliation but not the Killing field; indeed, if $ X $ is such that $ d \kappa(X) = 0 $, then $ d \kappa(\phi X) = 0 $ for any function $ \phi $ on the torus. 
%This does not happen when expressing this on the metric using the Lie derivative. 	
\end{proof}
\begin{fact}\label{Fait2: convergence de K_n à partir des feuilletages}
Denote by $\mathcal{F}$ (resp. $\mathcal{F}_n$) the foliation associated to $K$ (resp. $K_n$). Suppose there exists $q \in V$ such that $K_n(q)$ converges to $K(q)$. Fix $K$ and $K_n$, for $n$ big enough, by setting $g(K,K)(q)=g_n(K_n,K_n)(q)=\eta$, $\eta=\pm 1$.  If $\mathcal{F}_n$ converges to $\mathcal{F}$ on $V$ for the $C^k$ topology, with $k \leq r$, then $K_n$ converges to $K$ on every compact subset of the connected component of $q$ contained in $V$. The convergence holds for the $C^{k}$ topology. 	
\end{fact}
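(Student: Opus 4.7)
The plan is to write $K = \phi X$ and $K_n = \phi_n X_n$, with $X$, $X_n$ converging fixed non-vanishing sections of the foliations, then to derive a local PDE for $\phi$ from the Killing equation and use continuous dependence of ODE solutions to propagate the convergence $\phi_n \to \phi$ along a chain of charts covering any given compact subset of $\mathcal{C}$, the connected component of $q$ in $V$.

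For the setup: the line field $T\mathcal{F}$ is trivialized on the torus by the nowhere-vanishing Killing field $K$, so I pick a smooth nowhere-vanishing vector field $X$ tangent to $\mathcal{F}$ on a neighborhood of $\mathcal{C}$; by $\mathcal{F}_n \to \mathcal{F}$ in $C^k$, for $n$ large one gets smooth $X_n$ tangent to $\mathcal{F}_n$ with $X_n \to X$ in $C^k$. Since $g(K,K)$ has constant sign $\eta$ on $\mathcal{C}$, $K$ does not vanish there, and likewise $K_n$ does not vanish on compacts of $\mathcal{C}$ for $n$ large. Hence I may write $K = \phi X$ and $K_n = \phi_n X_n$ with non-vanishing smooth scalars, and the hypothesis $K_n(q) \to K(q)$ gives $\phi_n(q) \to \phi(q)$.

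Near any $p_0 \in \mathcal{C}$, I straighten $X$ by local coordinates $(s,t)$ with $X = \partial_s$, choosing the transverse direction $\partial_t$ so that $g_{st}(p_0) \ne 0$ (possible on $V$ since $X$ is non-null, so $T\mathcal{F}$ and its $g$-orthogonal line field are distinct). Writing $K = \phi\,\partial_s$, the components $(s,s)$ and $(t,t)$ of $\mathcal{L}_K g = 0$ read $\partial_s(\phi^2 g_{ss}) = 0$ and $\phi\,\partial_s g_{tt} + 2 g_{st}\,\partial_t \phi = 0$. The first says $H(t) := \phi^2 g_{ss}(s,t)$ is independent of $s$; the second is a linear first-order ODE $\partial_t \phi = F\phi$ along $s = s_0$, with coefficient $F = -\partial_s g_{tt}/(2 g_{st})$ depending on first derivatives of $g$. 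Integrating from $\phi(p_0)$ determines $\phi$ along $s = s_0$, and then $\phi^2 g_{ss} = H(t)$ determines $\phi$ on the whole chart up to a sign fixed by continuity. The identical construction applied to $(g_n, X_n, \phi_n(p_0))$ produces $\phi_n$; since $g_n \to g$ in $C^r$ and $X_n \to X$ in $C^k$ with $k \le r$, standard continuous dependence of linear ODE solutions on coefficients and initial data yields $\phi_n \to \phi$ in $C^k$ on a neighborhood of $p_0$, provided $\phi_n(p_0) \to \phi(p_0)$.

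To reach an arbitrary compact $\mathcal{K} \subset \mathcal{C}$, I connect $\mathcal{K}$ to $q$ by a connected open set in $\mathcal{C}$ and cover it by a finite chain of straightening charts $U_1, \ldots, U_m$ with $q \in U_1$, $U_i \cap U_{i+1} \ne \emptyset$, and $\mathcal{K} \subset \bigcup_i U_i$; the local step applied at $q$ gives $C^k$-convergence on $U_1$, then selecting $q_1 \in U_1 \cap U_2$ gives $\phi_n(q_1) \to \phi(q_1)$ (in fact in $C^k$ near $q_1$), and reapplying the local step at $q_1$ propagates convergence to $U_2$; iterating reaches $\mathcal{K}$. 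The main delicate point will be tracking the sign of $\phi_n$ consistently through the chain, since $\phi^2 g_{ss} = H(t)$ only determines $\phi$ up to a sign; this is harmless here because $\phi$ and $\phi_n$ do not vanish on $\mathcal{C}$, so their signs relative to $X$ and $X_n$ are locally constant, and the matching $K_n(q) \to K(q)$ selects the correct branch at the origin of the chain, propagating unambiguously along it.
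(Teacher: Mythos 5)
Your proposal is essentially the paper's own argument: the paper likewise writes $K=h\bar K$ with $\bar K$ a converging, fixed normalization of a tangent field to the foliation, extracts a first-order linear ODE for the scalar factor from the Killing equation (your $(t,t)$-component of $\mathcal{L}_Kg=0$ is the paper's $g(\nabla_X K,X)=0$ in flow coordinates), invokes continuous dependence of ODE solutions on coefficients and initial data, and propagates from $q$ over the connected component. The only point you gloss over is the final regularity count: since the ODE coefficient involves one derivative of the ($n$-dependent) coordinates and of the metric, continuous dependence most directly gives $C^{k-1}$ convergence of the scalar factor, and the paper recovers the missing derivative by feeding the $C^{k-1}$ limit back into the first-order equation to get $C^{k-1}$ convergence of its gradient, hence $C^k$ convergence --- a bootstrap you would need to add.
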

\begin{proof}
Fix a Riemannian metric $S$ on $T$, and let $X$ (resp. $X_n$) be a vector field on $T$ tangent to $\mathcal{F}$ (resp. $\mathcal{F}_n$), such that $X(q)$ (resp. $X_n(q)$ is $\R^+$-collinear to $K(q)$ (resp. $K_n(q)$, and $S(X,X)=1$ (resp. $S(X_n,X_n)=1$. Since the foliations are orientable, the oriented foliations still converge, so that $X_n$ converges to $X$ for the $C^k$ topology. Next, we define $\bar{K}$ (resp. $\bar{K}_n$) to be the vector field on the connected component of $V$ containing $q$, $\R^+$-collinear to $X$ (resp. $X_n$) such that  $ g(\bar{K}, \bar{K}) = \epsilon = g_n(\bar{K}_n, \bar{K}_n) $, for $ n $ big enough. We have $\bar{K}_n \overset{C^k}{\to} \bar{K}$.  
Write $ K = h \bar{K}$ and $K_n = h_n \bar{K}_n, $ where $ h, $\,$ h_n> 0 $, for all $ n $. This yields $ h= \sqrt{\epsilon g(K,K)}$, which implies that $ h $ (resp. $h_n$) is invariant by the flow of $ K $ (resp. $K_n$).\\
Now, write
$$ g(\nabla_X K , X) = 0, \text{\;\;} \forall X. $$
This yields the following equation on $ h $ 
\begin{align}\label{Eq diff pour h}
(\nabla_X h) g( \bar{K} , X) + h \text{\;} g(\nabla_X \bar{K}, X) =0, \text{\;\;} \forall X.
\end{align}
%Let $ q $ be a point in $\mathring{B}$ such that $ K_n(q) \rightarrow K(q) $, 
Let $ c(s) $ be a curve containing $q$ and transverse to $K$, and define a $C^r$-diffeomorphism $\phi$ by setting $\phi(s,t)=F(t,c(s))$, where $F$ is the flow of $\bar{K}$. There exists an open neighborhood of $\R^2$ on which the flows of $\bar{K}_n$ are all defined, for $n$ big enough. So define in the same way a diffeomorphism $\phi_n$ by $\phi_n(s,t)=F_n(t,c(s))$, where $F_n$ is the flow of $\bar{K}_n$. \\
Since $\bar{K}_n$ is $C^k$ close to $\bar{K}$, for $n$ large enough,  $\phi_n$ is $C^k$ close to $\phi$. Set $X=\frac{\partial \phi}{\partial s}$ and $X_n=\frac{\partial \phi_n}{\partial s}$ in equation (\ref{Eq diff pour h}) above; combining  this with the fact that $h$ is invariant by $\bar{K}$,  $h\, o\, \phi$, written in the $(s,t)$ coordinates, satisfies the following differential equations:
\begin{align*}
\frac{\partial (h o \phi)}{\partial t}&  =0,\\
\frac{\partial (h o \phi)}{\partial s}& g(\bar{K} , X)\, o\, \phi + (h o \phi) g(\nabla_X \bar{K} , X) \, o \, \phi =0.
\end{align*}
This yields $h o \phi (s,t) = h o \phi (s,0)$ and  $h_n o \phi_n (s,t) = h_n o \phi_n (s,0)$, for all $t$. The continuity of solutions in the initial conditions and the coefficients of the equation permits now to assert that $h_n o \phi_n (s,0)$ is $C^{k-1}$ close to  $h o \phi (s,0)$, hence $h_n o \phi_n$ is $C^{k-1}$ close to  $h o \phi$, for $n$ big enough. Since $\phi_n \overset{C^k}{\to} \phi$, one easily shows that  actually $h_n$ converges to $h$ on a neighborhood of $q$, for the $C^{k-1}$ topology, and then on any compact subset of the connected component of $V$ containing $q$. Now, look at Equation (\ref{Eq diff pour h}), the $C^{k-1}$ convergence of $h_n$ and the $C^k$  convergence of $\bar{K}_n$ imply the $C^{k-1}$ convergence of the first order derivatives of $h_n$, hence the $C^k$ convergence of $h_n$. 
\end{proof}
\begin{fact}\label{Convergence des champs de lumière à partir des champs de Killing}
Let $g_n$ be a sequence of metrics in $\mathcal{L}_K(T)$ converging to $g$ for the $C^r$ topology; $K$ (resp. $K_n$) the Killing field of $g$ (resp. $g_n$) fixed as in the previous fact. Denote by $R$ a maximal ribbon in $\widetilde{T}$, and let $L$ be the null vector field on it defined by $g(L,K)=1$; define $L_n$ (for $n$ big enough)  to be the null vector field on $R$ such that $g_n(K_n,L_n)=1$. The $C^k$ convergence of $K_n$ to $K$ on $R$, with $k \leq r$, leads to the $C^k$ convergence of $L_n$ to $L$.   
\end{fact}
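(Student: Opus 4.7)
The plan is to express $L_n$ in a fixed smooth frame adapted to $(g,K)$ and to deduce $C^k$ convergence from an explicit closed-form solution of the defining system. It relies on the $C^k$ convergence of $K_n$ to $K$ established in Proposition \ref{Convergence champs Killing}, together with the $C^k$ convergence of the metric coefficients provided by Lemma \ref{Convergence des rubans}.

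First I would observe that, since $g(K,L)=1\neq 0$ everywhere on $R$, the pair $(K,L)$ is a smooth frame on $R$. The three functions
\[
u_n:=g_n(K_n,K_n),\qquad v_n:=g_n(K_n,L),\qquad w_n:=g_n(L,L)
\]
then converge, on every compact subset of $R$, in $C^k$ to $u:=g(K,K)$, $1$, and $0$ respectively. For $n$ large enough, $v_n$ is bounded away from $0$, so $(K_n,L)$ is again a smooth frame and I can write $L_n=\alpha_n K_n+\beta_n L$. The conditions $g_n(L_n,K_n)=1$ and $g_n(L_n,L_n)=0$ become the algebraic system
\[
\alpha_n u_n+\beta_n v_n=1,\qquad \alpha_n^2 u_n+2\alpha_n\beta_n v_n+\beta_n^2 w_n=0.
\]

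Next I would eliminate $\beta_n=(1-\alpha_n u_n)/v_n$ and substitute into the quadratic condition, which after simplification yields
\[
D_n u_n\alpha_n^2-2D_n\alpha_n-w_n=0,\qquad D_n:=v_n^2-u_n w_n.
\]
The two roots correspond to the two null directions of $g_n$ at each point of $R$. The root matching $L_n$ is the one that tends to $0$ (giving back $L$ in the limit); the other tends to $2/u$, which blows up on the null orbit of $K$ contained in $R$. Applying the quadratic formula and rationalizing removes the apparent singularity at $u_n=0$ and yields the closed form
\[
\alpha_n=-\frac{w_n}{\sqrt{D_n}\bigl(\sqrt{D_n}+v_n\bigr)}.
\]
Since $D_n\to 1$ and $v_n\to 1$ in $C^k$, the denominator is bounded away from zero on compacts and $\sqrt{D_n}$ is $C^k$-smooth in the data, so $\alpha_n\to 0$ in $C^k$ as $w_n\to 0$. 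The linear relation then gives $\beta_n\to 1$ in $C^k$, and $L_n=\alpha_n K_n+\beta_n L$ converges to $L$ in $C^k$ on $R$.

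The main obstacle, as I anticipate it, is not the algebra but the \emph{global} selection of the correct null direction on $R$: a direct use of the quadratic formula produces a factor $1/u_n$ which is singular along the null orbit of $K$ in $R$, where $u$ vanishes, so a naive argument would only yield $C^k$ convergence away from that orbit. The rationalization step is precisely what makes the selected branch smooth across the null orbit and thus valid uniformly on the whole ribbon.
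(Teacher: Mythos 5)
Your argument is correct, but it follows a genuinely different route from the paper's. The paper decomposes $L_n=s_n\bar L_n$ where $\bar L_n$ is the null vector field normalized to unit length with respect to an auxiliary Riemannian metric $S$: the convergence of the null \emph{direction} fields $\bar L_n\to\bar L$ is taken as immediate from the convergence of the metrics (the null cones converge), and the scale factor is then read off from $g_n(\bar L_n,K_n)=1/s_n$, whose convergence follows from the hypothesis on $K_n$ and the nonvanishing of $g(\bar L,K)$ on the ribbon. You instead work in the fixed frame $(K_n,L)$, write $L_n=\alpha_nK_n+\beta_nL$, and solve the defining algebraic system in closed form; your computation of the quadratic $D_nu_n\alpha_n^2-2D_n\alpha_n-w_n=0$ and of the rationalized root $\alpha_n=-w_n/\bigl(\sqrt{D_n}(\sqrt{D_n}+v_n)\bigr)$ checks out, and the limits $u_n\to u$, $v_n\to 1$, $w_n\to 0$, $D_n\to 1$ in $C^k$ on compacta give exactly the stated conclusion. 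What your version buys is that it is self-contained (the convergence of the null foliations, which the paper invokes ``by definition,'' drops out as a byproduct) and that it makes explicit why the relevant branch is globally smooth across the null orbits of $K$ inside $R$, where the unrationalized quadratic formula is singular; this is also the cleanest way to see that the globally defined null field with $g_n(K_n,L_n)=1$ is unique, since the other root blows up like $2/u$ there. What the paper's version buys is brevity and independence from any explicit frame computation. One small repair: the justification that $(K_n,L)$ is a frame should rest on $D_n=v_n^2-u_nw_n\to 1\neq 0$ (minus the Gram determinant of the pair with respect to $g_n$, whose nonvanishing forces linear independence for a nondegenerate metric), rather than on $v_n$ being bounded away from zero, which by itself does not exclude proportionality; since you already control $D_n$, this costs nothing.
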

\begin{proof}
Let $\bar{L}$ (resp. $\bar{L}_n$)  be the null vector field on $R$, $\R^+$-collinear to $L$ (resp. $L_n$), such that $S(\bar{L}_n, \bar{L}_n)=S(\bar{L},\bar{L})=1$, where $S$ is a Riemannian metric on $T$. By definition, the sequence of (oriented) foliations associated to $L_n$ converges to that of $L$, so that $\bar{L}_n \overset{C^r}{\to} \bar{L}$.  Write $L=s\bar{L}$ and $L_n=s_n\bar{L}_n$, $s$ and $s_n$ being non-vanishing functions. This yields
$$g(\bar{L},K) = \frac{1}{s} \text{\;\;and\;\;} g_n(\bar{L}_n,K_n) = \frac{1}{s_n}. $$
It follows that if $K$ is $C^k$ close to $K$, then $s_n$ is $C^k$ close to $s$, which finishes the proof.
\end{proof}
\begin{fact}
Let $g_n$ be a sequence of metrics in $\mathcal{L}_K(T)$ converging to $g$ for the $C^r$ topology; take $q \in V \cap U$ and fix $K$ and $K_n$ as in Fact \ref{Fait2: convergence de K_n à partir des feuilletages}. Denote by $R$ a maximal ribbon for the metric $g$, containing $q$, and define $L$ and $L_n$ on $R$ as in the previous fact, then $L_n$ is $C^{r-3}$ close to $L$ on any compact subset of $R$. 
%If $L_n$ is $C^k$-close to $L$, with $k \leq r$, in a compact subset of the connected component of $q$ in $V \cap U$, then $L_n$ is $C^k$-close to $L$ in any compact subset of the maximal ribbon containing $\alpha_0$. 
\end{fact}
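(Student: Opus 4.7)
By the previous fact, it is enough to establish that $K_n \to K$ in the $C^{r-3}$ topology on every compact subset of $R$, and then to apply Fact~\ref{Convergence des champs de lumière à partir des champs de Killing} to transfer the conclusion to $L_n$. Fact~\ref{Fait2: convergence de K_n à partir des feuilletages} already supplies this convergence on every compact subset of the connected component $V_q$ of $V$ containing $q$; but $R$ may properly contain $V_q$, since it can include null orbits of $K$ and, beyond them, further strips where $\langle K,K\rangle$ does not vanish. The plan is to extend the convergence across those null orbits by exploiting the global ribbon parametrization.

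On $R$ we have global coordinates $(x,y)$ with $K=\partial_y$ and $g=2\,dx\,dy+f(x)\,dy^{2}$, obtained by fixing a maximal null geodesic $\gamma$ of $g$ through $q$ transverse to $K$ and saturating it by the flow $\phi^{K}$ of $K$: explicitly $(x,y)\mapsto \phi_y^{K}(\gamma(x))$. For $n$ large the same construction applies to $g_n$: choose continuously in $g_n$ a null direction at $q$ close to $\dot\gamma(0)$, let $\gamma_n$ be the corresponding maximal null geodesic of $g_n$, and saturate $\gamma_n$ by the flow $\phi^{K_n}$ of $K_n$. Smooth dependence of the geodesic equation on the metric gives $\gamma_n \to \gamma$ in $C^{r-2}$ on compact subintervals; together with the $C^{r-3}$ convergence of $K_n$ to $K$ on a neighborhood of $q$ provided by Fact~\ref{Fait2: convergence de K_n à partir des feuilletages}, and the classical dependence of ODE flows on their coefficients, the parametrizations $(x,y)\mapsto \phi_y^{K_n}(\gamma_n(x))$ converge in $C^{r-3}$ to the one for $g$ on every compact subset of $R$. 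In these coordinates $K_n \equiv \partial_y$, whence $K_n \to K$ in $C^{r-3}$ on compact subsets of $R$, and Fact~\ref{Convergence des champs de lumière à partir des champs de Killing} concludes.

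The delicate point is the extension beyond $V_q$: $K$ never vanishes on $R$, so for $n$ large $K_n$ does not vanish on any fixed compact subset of $R$ either, and its flow is globally defined there regardless of whether one crosses null orbits of $K$. The main care is therefore to verify that the $C^{r-3}$ convergence of $K_n$ is propagated out of $V_q$ by the flow of $K_n$, which is what makes it possible to cover all of $R$, including the null orbits of $K$ and the further strips that $R$ may contain. Bookkeeping the regularity losses (one derivative lost going from the metric to the Christoffel symbols, another one going from $K_n$ to $\phi^{K_n}$) is routine once this geometric picture is set.
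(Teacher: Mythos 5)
There is a genuine gap at the heart of your argument: the propagation of the convergence of $K_n$ from the component $V_q$ of $V\cap U$ containing $q$ to the rest of the ribbon $R$. You propose to do this by showing that the parametrizations $(x,y)\mapsto \phi^{K_n}_y(\gamma_n(x))$ converge, invoking ``the classical dependence of ODE flows on their coefficients''. But that theorem requires the coefficients --- here the vector fields $K_n$ --- to converge on the region actually swept by the flow, and for $x$ with $\gamma(x)\notin V_q$ (i.e.\ beyond a null orbit of $K$, or where $\kappa$ fails to be a submersion) this is exactly what you are trying to prove. The convergence of $\gamma_n$ to $\gamma$ as curves, which does follow from the metric convergence alone, gives you nothing about $K_n$ along $\gamma_n$ outside $V_q$. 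Moreover, the flow of $K_n$ preserves each orbit of $K_n$, so it can never carry information transversally across the null orbits of $K$; the sentence in which you acknowledge that ``the main care is to verify that the convergence is propagated out of $V_q$ by the flow of $K_n$'' is precisely the step that cannot work as stated. The argument is circular.

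The paper resolves this by propagating $L_n$ rather than $K_n$, using a transport mechanism that depends only on the metrics: the geodesic flow $\Phi_n$ of $g_n$ converges in $C^{r-1}$ because it is governed by the Christoffel symbols of $g_n$ alone, and the null field $L$ is invariant under its own geodesic flow (its integral curves are affinely parametrized null geodesics, $\langle L,K\rangle$ being the Clairaut constant), so that $\Phi(t_0,L)=L\circ\Phi$ and likewise for $L_n$. Pushing $L_n$ from a saturated set $B\subset V_q\cap U$, where Facts \ref{Fait1: Convergence des feuilletages} and \ref{Fait2: convergence de K_n à partir des feuilletages} already give convergence, through the geodesic flow for a suitable time $t_0$ then covers a neighborhood of any $p\in R$ and yields $L_n\to L$ in $C^{r-3}$. (Only afterwards, in Proposition \ref{Convergence champs Killing}, is this used to recover the convergence of $K_n$ globally, via the commutation of the exponential map with the isometric flow.) If you insist on a ``$K_n$ first'' strategy, the honest way to cross the null orbits would be the linear prolongation ODE for Killing fields, $\nabla_{\dot c}K_n=A_n\dot c$, $\nabla_{\dot c}A_n=R_n(\dot c,K_n)$, integrated along $\gamma_n$ with initial data at $q$; its coefficients depend only on $g_n$ and $\gamma_n$, so it does converge --- but that is a different argument from the one you wrote, and you would still need to track the attendant derivative losses.
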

\begin{proof}
Combining Fact \ref{Fait1: Convergence des feuilletages} and Fact \ref{Fait2: convergence de K_n à partir des feuilletages}, we can say that $L_n$ converges to  $L$ for the $C^{r-3}$ topology on any compact subset of the connected component of $q$ in $V \cap U$.  	Denote by $B$ a set saturated by $K$, where this convergence holds. Now, in a local chart, the geodesics of $g$ are the integral curves of the vector field of $T T(T)$ defined by
$$Z= (Z^1,Z^2) = (y^k, - \sum_{i,j} \Gamma^k_{i,j}(x)y^i y^j).$$
The components of $Z$ involve the first derivatives of $g$, so if $g_n$ converges to $g$ for the $C^r$ topology, then $Z_n$ converges to $Z$ for the $C^{r-1}$ topology. Therefore, there exists an open neighborhood $\mathcal{U}$ of the zero section of $\R \times T(T)$ on which the flows $\Phi_n$ of $Z_n$ are all defined, for all $n$ but a finite number. Furthermore, the sequence $\Phi_n$ converges to $\Phi$ on any compact subset of $\mathcal{U}$ (for the $C^{r-1}$ topology). Let $p \in R$; for an appropriate $t_0 >0$, $\Phi_{t_0}$ is a diffeomorphism from an open set $T \mathcal{V}$ of $T(T)$, where $\mathcal{V}$ is an open set in $B$, into an open set $T \mathcal{W}$, where $\mathcal{W}$ is a neighborhood of $p$. The sequence of diffeomorphisms $\Phi_n(t_0,.)$ converges to $\Phi(t_0,.)$; combining this with the fact that $L_n$ tends to $ L $ on $ \mathcal{V} $, we get $\Phi_n(t_0,L_n) \overset{C^{r-3}}{\longrightarrow} \Phi(t_0,L)$. Now write $\Phi_n (t_0, L_n) = L_n o \Phi_n$ and $\Phi(t_0, L) = L o \Phi$. It follows that $L_n o \Phi_n \overset{C^{r-3}}{\longrightarrow} L o \Phi$, hence $ L_n \overset{C^{r-3}}{\longrightarrow} L $ in the neighborhood of $p$. 
\end{proof}
We go back to the proof of Proposition \ref{Convergence champs Killing}.
\begin{proof}
We start with the following observation. \\
\textbf{Observation:} Suppose $g$ has non-constant curvature $\kappa$, and consider $p \in T$ such that $\alpha:=\kappa(p)$ is a regular value of $\kappa$ and $ g(K,K)(p) \neq 0 $; set $g(K,K)(p)=\eta, \eta = \pm 1$, taking $\lambda_0 K$ if necessary. The set $\kappa^{-1}(\alpha)$ of isolated orbits of $K$ contains the orbit of $K$ containing $p$. Now let $\{\psi^{t}\}$ be a flow of $K$. Since the stages $\psi^{t}$ of the flow are isometries, the exponential map commutes with the flow; we write  
\begin{align}\label{relation commutativité}
exp_{\psi^{t}(p)} o\,\psi^{t}_* = \psi^{t} o \,exp_p,
\end{align}
This geometric ingredient allows, once an orbit of $K$ is given, to get all the orbits of $K$.  Indeed, fix $q \in T$; $p$ and $q$ can be joined by a curve made of (broken) null geodesics, cutting each orbit of $K$ transversally; call this curve $c$. Denote by $\gamma(t)$ the integral curve of $K$ containing $p$; put $\gamma(0)=p$, and $g(\dot{\gamma}(p),\dot{\gamma}(p))=1$. Let $L$ denote the null vector field along $\gamma$, tangent to the null foliation containing $c$, and such that $\langle L,K \rangle=1$. Now (\ref{relation commutativité}) gives:
$$ \psi^{t}(q)= exp_{\gamma(t)}(s L_{\gamma(t)}),$$
where $exp_p (sL(p))=q$, defining the integral curve of $K$ containing $q$. 
\\	

According to Fact \ref{Fait1: Convergence des feuilletages} and Fact \ref{Fait2: convergence de K_n à partir des feuilletages}, $K_n \overset{C^{r-3}}{\longrightarrow} K$ on every compact subset of the connected component of $p$ in $V \cap U$. Denote by $B$ an open set containing $p$ where this convergence holds. Furthermore, in every ribbon containing $p$, the sequence $L_n$ of null vector fields defined in a ribbon by $g_n(L_n,K_n)=1$, for $n$ large enough, converges to the null vector field $L$ defined in that ribbon by $g(L,K)=1$. \\
Now, we denote by $Exp$ (resp. $Exp_n$) the exponential map of $g$ (resp. $g_n$). Let $q$ be a point in a ribbon containing $p$, and let $t_0 >0$ such that $q^{'} = Exp(t_0 L(q)) \in B$. There exists a neighborhood $\mathcal{W}$ of the zero-section of a subset of $T(T)$, such that $\mathcal{W}=\{t v, t \in [0,t_0], v \in \mathcal{U}\}$, where $\mathcal{U}$ is a neighborhood of $L(q)$ in $T(T)$, in which the exponential maps of the metrics $g_n$ are defined for all $n$ but a finite number. Furthermore, $Exp_n$ converges $C^{k-1}$ to $Exp$ on every compact subset of $\mathcal{W}$. Now, according to the previous observation, if $z \in \pi(\mathcal{U})$, where $\pi: T(T) \to T$ is the natural projection, the integral curve of $K$ (resp. $K_n$) containing $z$ is given by $\psi^{t}$ (resp. $\psi_n^{t}$), such that:  
%The sequence $(Exp_n)_n$ converges to $Exp$ on every compact subset of $TV$. Now, if $q$ is a point outside $B$, according to the previous observation, the integral curve of $K$ (resp. $K_n$) containing $q$ is given by $\psi^{t}$ (resp. $\psi_n^{t}$), such that:
$$\psi^{t}(z)= Exp_{\psi^{t}(\phi(z))}(-t_0 L(\psi^{t} o \phi(z)))), \text{\;\;where\;\;} \phi(z)=Exp_z(t_0L(z)) \in B.$$
Since $L_n$ converges $C^{r-3}$ to $L$, and the flows of $K_n$ converge $C^{r-3}$ to the flow of $K$ in $B$, then  $\psi_n^{t}$ converges $C^{r-3}$ to $\psi^{t}$, leading to the $C^{r-4}$ convergence of $K_n$ to $K$ on $\pi(\mathcal{U})$. The result on the torus follows from the fact that any point on it can be brought to a point in $B$ by a broken null geodesic everywhere transverse to $K$. 
\end{proof}

\begin{corollary}\label{corollaire convergence de champs K_n}
Let $(T,K)$ be a torus in $\mathcal{L}^*_K(T)$; the number of bands, their signs and types are preserved in a neighborhood of the torus in $\mathcal{L}_K(T)$. 
\end{corollary}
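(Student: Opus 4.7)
The plan is to leverage the two convergence results already established, namely Lemma \ref{Convergence des rubans} and Proposition \ref{Convergence champs Killing}, so as to transfer information from the limit metric $g$ to the perturbations $g_n$. Given $g_n \to g$ in $C^r$ with $r$ large enough, Proposition \ref{Convergence champs Killing} supplies Killing fields $K_n$ with $K_n \to K$ in $C^{r-4}$, and Lemma \ref{Convergence des rubans} then yields $f_n \to f$ in $C^r$. Since the number of bands, their signs and their types are all encoded either in the zeros of $f_n$ or in the way the ribbons of $\widetilde{T}_n$ (all isometric to $R_{f_n}$ or $R_{f_n^-}$) are glued, it suffices to argue about $f_n$ and these gluings.

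Next I would exploit the defining properties of $\mathcal{L}^*_K(T)$: the function $f$ has only simple zeros and $f'$ changes sign exactly once on each band. Let $p_1 < \dots < p_\mathfrak{n}$ denote the zeros of $f$ on a period $[p_1, p_1 + T]$. A simple zero is $C^1$-robust, so by the implicit function theorem, for $n$ large, each $p_k$ is the limit of a unique zero $p_k^n$ of $f_n$; combined with Fact \ref{Fait sur la convergence des zéros uniques de fonctions sur un compact} applied to the complement of small neighborhoods of the $p_k$ in one period, no extra zero of $f_n$ can appear. Thus $f_n$ has exactly $\mathfrak{n}$ simple zeros in a period, which preserves the number of bands. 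The signs of $f_n$ on the intervals $(p_k^n, p_{k+1}^n)$ coincide with those of $f$ on $(p_k, p_{k+1})$ by uniform convergence, so the spacelike/timelike character of each band is preserved.

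The delicate point, where I expect the main effort, is the preservation of the Type II character of each band. Type III bands are automatically excluded because on a torus the orbits of $K_n$ are closed, hence the $K_n$-foliation is a suspension (see the proof of Theorem~3.25 in \cite{10}). What must be shown is that no band bifurcates from Type II to Type I. The Type II property of a band is equivalent to its two boundary null orbits belonging to two \emph{different} null foliations, i.e.\ to the alternation of the signs $(-1)^k$ in the ribbons $U_k = (I_k,\, (-1)^k 2dxdy + f(x)dy^2)$ appearing in the atlas of $\widetilde{T}$. The two null line fields of $g_n$ are the eigenspaces of a quadratic form depending $C^r$-continuously on $g_n$; they therefore converge in $C^{r-1}$ to those of $g$ on any compact avoiding the null orbits of $K$. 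Since the null orbits $p_k^n$ are close to $p_k$, the assignment of each $p_k^n$ to one of the two null line fields of $g_n$ is, for $n$ large, the same as the assignment of $p_k$ for $g$; the alternating sign pattern $(-1)^k$ in the ribbons of $\widetilde{T}_n$ therefore matches that of $\widetilde{T}$, and each band of $T_n$ is Type II. This concludes the argument.
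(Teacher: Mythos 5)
Your proposal is correct and follows essentially the same route as the paper: the count and signs of the bands are read off from the convergence of the Killing fields (Proposition \ref{Convergence champs Killing}) together with the robustness of the simple zeros of $f$, and the type II character is preserved because it is equivalent to the two boundary null orbits of a band lying in different null foliations, while type III bands are excluded on a torus. The paper states this more tersely, but your fleshed-out version of the zero-counting and of the continuity of the null line fields is exactly the intended argument.
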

\begin{proof}
For the number of bands and their signs, this is an easy consequence of the previous proposition. For the type  preservation, recall that the null-leaves of $ K $ on a band of type II belong to different null-foliations, while on a band of type I, they belong to the same null-foliation. Since we cannot have a type III band in the torus, the result follows. 
\end{proof}

\begin{corollary}\label{Convergence des cartes locales}
Let $g \in \mathcal{L}_K (T)$ a non-flat metric with Killing field $K$, and let $g_n \in \mathcal{L}_K (T)$ be a sequence of metrics, with Killing field $K_n$, converging to $g$ for the $C^r$ topology, $r \geq 4$. Let $R$ be a maximal ribbon in $\widetilde{T}$ for the metric $g$, and fix $p \in R$.  Define $\phi: R \to I \times \R, \phi(q) =(x(q),y(q))$, with $\phi(p)=(0,0)$; define in the same way $\phi_n$ on $R$ satisfying $\phi_n(p)=(0,0)$. Then, $\phi_n$ converges $C^{r-4}$ to $\phi$ on every compact subset of $R$.
\end{corollary}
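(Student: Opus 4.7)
The plan is to express the chart $\phi$ as the inverse of an explicit parametrization of $R$ built from the flow of $K$ and from a null geodesic, and then to combine the convergence results established earlier in this section.

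First I would introduce the null vector field $L$ on $R$ with $g(L,K)=1$ belonging to the null foliation transverse to $K$ that defines the $x$-coordinate, and denote by $\gamma: I \to R$ the maximal null geodesic of $g$ in $R$ starting at $p$ with initial velocity $L(p)$ (Clairaut then gives $g(\dot\gamma,K)\equiv 1$ all along). Writing $\psi^y$ for the flow of $K$, one has
\[
\Psi(x,y) := \psi^y\bigl(\gamma(x)\bigr), \qquad \phi = \Psi^{-1}.
\]
Corollary \ref{corollaire convergence de champs K_n} ensures that, for $n$ large, $R$ still lies inside a maximal ribbon of $g_n$, so one may construct $L_n$, $\gamma_n$, $\Psi_n(x,y):=\psi_n^y(\gamma_n(x))$ and $\phi_n := \Psi_n^{-1}$ in the same way, with $g_n(L_n,K_n)=1$ and $\dot\gamma_n(0)=L_n(p)$; the normalization $\phi_n(p)=(0,0)$ is then automatic from $\gamma_n(0)=p$ and $\psi_n^0=\mathrm{id}$.

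The next step combines three convergence statements. Proposition \ref{Convergence champs Killing} gives $K_n\to K$ in $C^{r-4}$ on $T$, whence the flows satisfy $\psi_n^y\to\psi^y$ in $C^{r-4}$ on compact subsets of $\R\times T$. Fact \ref{Convergence des champs de lumière à partir des champs de Killing} gives $L_n\to L$ in $C^{r-3}$ on compact subsets of $R$, hence $L_n(p)\to L(p)$. Finally, since $g_n\to g$ in $C^r$, the Christoffel symbols converge in $C^{r-1}$, so the geodesic spray on $TT$ converges in $C^{r-1}$ on compact subsets of its domain; combined with the convergence of the initial data, this yields $\gamma_n\to\gamma$ in $C^{r-1}$ on compact subintervals of the common interval of definition. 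Composing, $\Psi_n\to\Psi$ in $C^{r-4}$ on compact subsets of $I\times\R$, the limiting regularity being that of the flow of $K$.

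To conclude, I would invert. Since $\Psi$ is a diffeomorphism from $I\times\R$ onto $R$ by construction, for any compact $Q\subset R$ the preimage $\Psi^{-1}(Q)$ is compact; the $C^1$-closeness of $\Psi_n$ to $\Psi$, together with the inverse function theorem, makes $\Psi_n$ a diffeomorphism on a slightly larger compact set whose image still contains $Q$, for $n$ large. Continuity of inversion in the $C^{r-4}$-topology then yields $\phi_n\to\phi$ in $C^{r-4}$ on $Q$. The only delicate point, and the one I would spend most effort on, is precisely this last step, i.e.\ verifying that the image of $\Psi_n$ covers the chosen compact $Q\subset R$; this amounts to controlling the null geodesic $\gamma_n$ on a long enough interval and follows from the maximality of $\gamma$ in $R$ combined with the openness of the set of local diffeomorphisms among $C^1$ maps.
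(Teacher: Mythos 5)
Your argument is correct and reaches the stated conclusion, but it is mechanically different from the paper's proof. The paper does not invert a parametrization: it observes that both coordinates are primitives of explicit $1$-forms built algebraically from the data, namely $d\textbf{x}=i_{K}\nu$ (as in Lemma \ref{Convergence des rubans}) and $dy=i_{L}\nu$ for the volume form normalized by $\nu(K,L)=1$, so that the $C^{r-4}$ convergence $K_n\to K$ (Proposition \ref{Convergence champs Killing}) and $L_n\to L$ (Fact \ref{Convergence des champs de lumière à partir des champs de Killing}) immediately give convergence of $\nu_n$, hence of $dx_n$ and $dy_n$, hence of $x_n$ and $y_n$ after fixing the value at $p$ — no ODE for the geodesic, no flow, and no inversion. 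You instead realize $\phi^{-1}$ as $\Psi(x,y)=\psi^y(\gamma(x))$, which is a legitimate description of the chart (this is exactly how the ribbon coordinates are constructed in Section 2), and then you must pay for it twice: you need convergence of the null geodesics $\gamma_n\to\gamma$ from convergence of the sprays, and you need the inversion step, including the verification that $\Psi_n$ is injective on and surjective onto the relevant compact sets — the point you rightly identify as delicate. Both of these can be carried out by standard arguments (continuous dependence for ODEs, and $C^1$-closeness to a diffeomorphism on a compact set), so there is no gap; but the paper's route, by working with the coordinate functions directly rather than with their inverse, avoids the inversion entirely and is the shorter of the two. What your approach buys is a more concrete, dynamical picture of the chart and of why the regularity loss is governed by the Killing field rather than by the metric.
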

\begin{proof}
Proposition \ref{Convergence champs Killing} implies the $C^{r-4}$ convergence of the Killing fields, hence that of the transverse coordinates, using Lemma \ref{Convergence des rubans}. Now, define a volume form $\nu$ on $R$ by setting $\nu(K,L)=1$, where $L$ is the null vector field on $R$ satisfying $g(L,K)=1$. The coordinate $y \in C^{\infty}(R,\R)$ can  be defined by $i_{\widetilde{L}} \nu = dy$ and $y(p)=0$. In the same way, consider for $n$ sufficiently large a volume form $\nu_n$ on $R$ defined by $\nu_n(K_n,L_n)=1$, where $L_n$ is the null vector field on $R$ satisfying $g_n(L_n,K_n)=1$. The coordinate $y_n \in C^{\infty}(R,\R)$ is given by $i_{L_n} \nu_n =d y_n$, $y_n(p)=0$. The convergence of $\phi_n$ to $\phi$ follows from Proposition \ref{Convergence champs Killing} and Fact \ref{Convergence des champs de lumière à partir des champs de Killing}.
\end{proof}
\begin{remark}
The application $\phi$ (resp. $\phi_n$) defined above is a diffeomorphism on $R$ (resp. on every saturated subset of $R$). Take an open set $J$ of $I$, contained in $I_n$ for all $n$ but a finite number. The pullback of the metrics by these applications read $(\phi^{-1})_* g = 2dx dy + f(x) dy^2$ and $(\phi^{-1}_n)_* g_n = 2dxdy + f_n(x) dy^2$, $(x,y) \in I \times \R$, whose extensions to $\R^2$ that give the maximal ribbons in $E^u_f$ and $E^u_{f_n}$ respectively, are proved to be convergent in Lemma \ref{Convergence des rubans}.  
\end{remark}
\begin{lemma}\label{Convergence des cartes des selles}
Let $(U,K)$, $I=I \times \R$ be a Lorentzian domino where the unique null orbit of $K$ (represented by $x=0$) is incomplete. Denote by $g$ the metric on $U$ and let $(g_n,K_n)$ be a sequence of metrics on $U$ such that $(g_n,K_n) \overset{C^r}{\to} (g,K), r \geq 2$. Then, there exists a neighborhood $J$ of $0$ in which the coordinate neighborhoods for $g_n$ given by Equation (\ref{Selle: expression explicite de la métrique}) converges $C^{r-2}$ to that of $g$. 
\end{lemma}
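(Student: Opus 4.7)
The plan is to leverage the explicit expression for the saddle metric given in Equation~(\ref{Selle: expression explicite de la métrique}) together with the convergence results already established for the function $f$ encoding the norm of the Killing field.

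First, I would rerun (in the domino setting) the argument of Lemma~\ref{Convergence des rubans} to obtain $f_n \to f$ in the $C^r$ topology on a compact neighborhood of $0 \in I$, normalizing the transverse coordinate so that the unique nearby zero $x_n$ of $f_n$ becomes $x_n = 0$. Here I use that $0$ is a simple zero of $f$ by Remark~\ref{Orbites incomplètes=zéros simpes}, combined with Fact~\ref{Fait sur la convergence des zéros uniques de fonctions sur un compact} to produce the sequence $x_n \to 0$. After this normalization, $\lambda_n := f_n'(0) \to \lambda := f'(0) \neq 0$.

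Next, I would check the convergence of the auxiliary functions appearing in Equation~(\ref{Selle: expression explicite de la métrique}). From
\[
j_n(x) = \int_0^1 f_n'(tx)\,dt, \qquad l_n = j_n - \tfrac{1}{j_n}, \qquad h_n(x) = \int_0^1 l_n'(tx)\,dt,
\]
and $f_n \to f$ in $C^r$, differentiation under the integral sign yields $j_n \to j$ in $C^{r-1}$ on any compact neighborhood of $0$. Since $j(0) = \lambda \neq 0$, on a (possibly smaller) fixed neighborhood $J$ of $0$ the $j_n$ stay bounded away from $0$ uniformly in $n$, so $l_n \to l$ in $C^{r-1}$ and finally $h_n \to h$ in $C^{r-2}$. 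Combined with $\lambda_n \to \lambda$, formula~(\ref{Selle: expression explicite de la métrique}) shows that the matrix of metric coefficients in the $(u,v)$-chart on $J \times \R$ converges in $C^{r-2}$ to that of $g$, which is the statement we want.

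If one wants this convergence at the level of coordinate \emph{maps} into the extensions $\widetilde V$ and $\widetilde V_n$ (and not only of coefficient matrices), I would note that on $\{uv \neq 0\}$ the chart is given explicitly by $(u,v) \mapsto (uv, \tfrac{\lambda}{4}\log|u/v|)$ expressed in the $(x,y)$-coordinates of Corollary~\ref{Convergence des cartes locales}, because $x = uv$ and $K = (2/\lambda)(u\partial_u - v\partial_v)$. Convergence of these maps on the four open quadrants follows from $\lambda_n \to \lambda$ together with Corollary~\ref{Convergence des cartes locales}, and extension across the two new null orbits and the saddle point is then forced by the uniqueness of the saddle extension (Proposition~2.29 of~\cite{10}) applied to the $C^{r-2}$-close metric data. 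The main obstacle is simply the bookkeeping of regularity: one derivative is lost in passing from $f_n$ to $j_n$ and another from $l_n$ to $h_n$, which is exactly why one ends up with $C^{r-2}$ and not $C^r$; arranging that everything is defined on a single neighborhood $J$ independent of $n$ is then a uniformity issue handled by compactness and $C^0$-convergence.
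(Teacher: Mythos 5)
Your proposal is correct and follows essentially the same route as the paper: the paper's (very terse) proof likewise obtains the neighborhood $J$ from Lemma \ref{Convergence des selles}, invokes the convergence of the $(x,y)$-charts from Corollary \ref{Convergence des cartes locales}, and appeals to the explicit formulas defining the saddle coordinates, which is exactly what your computation with $j_n$, $l_n$, $h_n$ and the quadrant chart $(u,v)\mapsto(uv,\tfrac{\lambda}{4}\log|u/v|)$ spells out. Your explicit accounting of where the two derivatives are lost (one from $f_n$ to $j_n$, one from $l_n$ to $h_n$) is a welcome clarification that the paper leaves implicit.
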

\begin{proof}
The neighborhood $J$ is given by Lemma \ref{Convergence des selles}, and the convergence follows from the previous lemma, and Equations (5) and (8), \cite{10}, that define these coordinates.
\end{proof}
\subsection{A continuous function controlling conjugate points}\label{Section: A continuous function controlling conjugate points}
Define $$\Omega=\{(g,p) \in \mathcal{L}_K^*(T) \times T,\; g(K,K)(p) \neq 0 \text{\;\;and\;\;} \nabla_K K (p) \neq 0\}.$$
For $g \in \mathcal{L}_K^*(T)$, the points $p \in T$ such that $(g,p) \in \Omega$ are the points of $T$ which are neither on a null orbit of $K$ nor on a critical orbit. Take $(g,p) \in \Omega$ and consider the geodesic $\gamma_p$ in $T$ tangent to $K$ at $p$. Set $t=0$ at $p$ and define $c(t_0)+c(t_1)$ as in the previous section. 
%Note that this quantity does not depend on the geodesic parametrization of $\gamma_p$, so $\gamma^{'}(0)$ is always taken in $P^1$.  
Define a function $\mathcal{Z}$ on the open set $\Omega$ as follows: 
\begin{align*}
\mathcal{Z}: \Omega &\longrightarrow \R \\
\mathcal{Z}(g,p) & = \frac{c(t_0)+c(t_1)}{s(t_0)}
\end{align*}
where $t_0$, $t_1$ and $c$ are exactly as in the previous section.
\begin{lemma}
A metric $g$ in $\mathcal{L}^*_K(T)$ is without conjugate points if and only if $\mathcal{Z}(g,p) \geq 0$, $\forall p \in T$ such that $(g,p) \in \Omega$. 
\end{lemma}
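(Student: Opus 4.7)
The plan is to reduce the equivalence to the characterization already obtained in Proposition \ref{SPC}, after a careful accounting of which geodesics can carry conjugate points. First I would dispose of the unconditionally safe cases: null geodesics of a Lorentzian surface have no conjugate points (standard); non-null geodesics on which $\beta$ never vanishes have no conjugate points by Lemma \ref{1zéro}; geodesics with $\beta \equiv 0$ (those perpendicular to $K$) are covered by \cite[Proposition 1.8]{9}; and critical orbits of $K$ are handled by Lemma \ref{orbites critiques SPC}. Everything else is a non-null, non-critical geodesic on which $\beta$ vanishes without being identically zero.

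Next I would show that this remaining family is exactly $\{\gamma_p : (g,p) \in \Omega\}$. If $\gamma$ is such a geodesic and $\beta(q) = 0$ at some $q$, then $\gamma$ is tangent to $K$ at $q$; hence $K(q)$ is non-null, i.e. $g(K,K)(q) \neq 0$. Moreover $\nabla_K K(q) \neq 0$: otherwise the critical orbit of $K$ through $q$ would itself be a geodesic with the same tangent as $\gamma$ at $q$, contradicting the assumption that $\gamma$ is not critical by uniqueness of geodesics. Thus $(g,q) \in \Omega$ and $\gamma = \gamma_q$. The converse is immediate from the definition of $\Omega$.

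Setting $t = 0$ at the tangency point $p$, Proposition \ref{SPC} applies verbatim and gives: $\gamma_p$ is without conjugate points in $T$ iff $c(t_0) + c(t_1) \geq 0$, where $t_0 < t_1$ are the two smallest positive times at which $\langle K, K \rangle$ vanishes along $\gamma_p$. To pass from this inequality to $\mathcal{Z}(g,p) \geq 0$, I only need to check that $s(t_0) > 0$. The argument I have in mind is: $\beta$ and $s$ both solve the Jacobi equation with $\beta(0) = s(0) = 0$, so they are proportional, and the proportionality constant is nonzero because $\beta'(0) = \tfrac{1}{2}\epsilon f'(x(p))$ by Lemma \ref{Formule sur beta' en fonction de f}, which is nonzero since $\nabla_K K(p) \neq 0$. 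The identity $\epsilon \langle K, K \rangle = C^2 - \beta^2$ then forces $\beta$ (and hence $s$) to be nonzero on $(0,t_0]$, so the sign of $s(t_0)$ is the sign of $s'(0) = 1$.

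I do not expect a genuine obstacle here: the content is entirely a bookkeeping exercise combining the earlier results. The most delicate point is the second step, where one must verify that the condition ``$(g,p) \in \Omega$'' really captures every geodesic where conjugate points could appear and nothing else — i.e. that neither null orbits of $K$ nor critical orbits of $K$ produce ``hidden'' conjugate points that escape the statement. This is why the definition of $\Omega$ rules out precisely the two loci $\{g(K,K) = 0\}$ and $\{\nabla_K K = 0\}$, matching exactly the cases excluded by Proposition \ref{SPC}.
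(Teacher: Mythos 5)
Your argument is correct and is essentially the paper's proof, which consists of a single line citing Lemma \ref{orbites critiques SPC} and Proposition \ref{SPC}; your explicit case bookkeeping, the verification that the points of tangency of non-critical invariant geodesics are exactly the points with $(g,p)\in\Omega$, and the check that $s(t_0)>0$ (so that the sign of $\mathcal{Z}(g,p)$ agrees with that of $c(t_0)+c(t_1)$) make explicit what the paper leaves implicit. Two small repairs: the geodesics with $\beta\equiv 0$ are the critical (and null) orbits of $K$, not the geodesics perpendicular to $K$ (for the latter $\beta$ vanishes only at saddle points, which lie outside the torus, so they in fact fall under the ``$\beta$ does not vanish'' case inside $T$), and the non-vanishing of $\beta$ on the open interval $]0,t_0[$ does not follow from the identity $\epsilon\langle K,K\rangle=C^2-\beta^2$ alone but from Lemma \ref{Behavior of geodesics}(i).
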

\begin{proof}
This follows from Lemma \ref{orbites critiques SPC} and Proposition \ref{SPC}.	
\end{proof}
\begin{proposition}
If the space of Lorentzian metrics on $T$ is equipped with the $C^2$ topology, then $\mathcal{Z}$ is continuous on $\Omega$. 
\end{proposition}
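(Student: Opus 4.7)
The plan is to show that each of the quantities $c(t_0)$, $c(t_1)$, and $s(t_0)$ entering the definition $\mathcal{Z}(g,p) = (c(t_0)+c(t_1))/s(t_0)$ varies continuously with $(g,p) \in \Omega$, and that the denominator never vanishes. Fix $(g,p) \in \Omega$ and a sequence $(g_n,p_n) \to (g,p)$ in $\Omega$ with $g_n \to g$ in $C^2$. The first step is to establish continuous dependence of the geodesic $\gamma_{g,p}$ tangent to $K$ at $p$. Normalize the Killing fields $K_n$ and $K$ as in the construction preceding Proposition~\ref{Convergence champs Killing}; using an appropriate version of that proposition (together with the fact that $p \in V \cap U$, since $(g,p) \in \Omega$), one gets $K_n(p_n) \to K(p)$, which gives convergence of the initial data of the geodesic equation. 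Since $C^2$ convergence of metrics gives at least $C^0$ convergence of Christoffel symbols, continuous dependence of ODEs then yields $\gamma_n \to \gamma$ uniformly in $C^1$ on every fixed compact interval of the parameter.

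Next, I would show that the times $t_0(g,p) < t_1(g,p)$ depend continuously on $(g,p)$. Because $(g,p) \in \Omega$, the geodesic $\gamma$ is not a critical orbit of $K$, so by Lemma~\ref{Behavior of geodesics} it leaves the initial band and hits a null orbit of $K$ at time $t_0$; moreover the zero of $f$ is simple (the locally finite hypothesis holds on $\mathcal{L}^*_K(T)$), so the function $t \mapsto g(K,K)(\gamma(t))$ has a simple, transverse zero at $t_0$, and similarly at $t_1$. By the implicit function theorem applied to the family of functions $t \mapsto g_n(K_n,K_n)(\gamma_n(t))$, the smallest positive zeros $t_0^n$ and the next ones $t_1^n$ converge to $t_0$ and $t_1$ respectively.

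For the Jacobi solutions, the coefficient $\epsilon\kappa(t)$ of equation (\ref{Jacobi}) along $\gamma_n$ converges uniformly on compact intervals to the coefficient along $\gamma$: $C^2$ convergence of metrics gives $C^0$ convergence of the sectional curvature $\kappa$, and composing with $\gamma_n \to \gamma$ preserves uniform convergence. By continuous dependence of linear ODE solutions on their coefficient (both $s$ and $c$ are determined by fixed initial conditions), $s_n \to s$ and $c_n \to c$ uniformly on compact intervals; combined with $t_i^n \to t_i$ this yields $c_n(t_i^n) \to c(t_i)$ and $s_n(t_0^n) \to s(t_0)$. It remains to verify $s(t_0) \neq 0$ on $\Omega$. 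The solution $\beta$ of (\ref{Jacobi}) also satisfies $\beta(0)=0$, and Lemma~\ref{Formule sur beta' en fonction de f} together with the hypothesis $\nabla_K K(p)\neq 0$ gives $\beta'(0) \neq 0$, so $s = \beta/\beta'(0)$. At the null-orbit crossing $t_0$, the Clairaut relation $\epsilon\langle K,K\rangle = C^2 - \beta^2$ forces $\beta(t_0)^2 = C^2$; and $C = \epsilon\langle T(0), K(p)\rangle = \pm\sqrt{|\langle K,K\rangle(p)|} \neq 0$ since $(g,p) \in \Omega$. Thus $s(t_0) \neq 0$ and $\mathcal{Z}$ is continuous.

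The main obstacle is the continuous dependence of the Killing field in the $C^2$ topology: Proposition~\ref{Convergence champs Killing} is stated for $r \geq 4$, so one must either sharpen it or work directly with the weaker conclusion needed here (pointwise convergence of $K_n$ on $\Omega$), exploiting that $\mathcal{Z}$ only requires the Killing field at a single point to initialize the geodesic, not a high-order norm estimate.
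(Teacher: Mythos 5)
Your proposal is correct and follows essentially the same strategy as the paper: reduce continuity of $\mathcal{Z}$ to (a) convergence of the geodesics $\gamma_n\to\gamma$, (b) uniform convergence of the Jacobi coefficients $\epsilon\kappa_n\to\epsilon\kappa$ along them, hence of $s_n,c_n$ to $s,c$, and (c) convergence of the evaluation times $t_i^n\to t_i$. The only substantive divergence is in step (c): the paper works in the local charts of Corollary \ref{Convergence des cartes locales}, where the metric is $2dxdy+f_n(x)dy^2$, and gets $t_0^n\to t_0$ and $t_1^n\to t_1$ from the explicit integral formulas $t_0=\int_{z_0}^{x_0}(C^2-\epsilon f(x))^{-1/2}dx$ and its analogue across the type I band; you instead argue that $t\mapsto g(K,K)(\gamma(t))$ has a transverse zero at $t_0$ (which holds because $f$ has simple zeros and $\beta(t_0)=\pm C\neq 0$) and invoke persistence of transverse zeros under uniform convergence. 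Both work; the integral formula has the advantage of handling $t_1$ across the saddle without further discussion. Two of your additions are genuine improvements in rigor over the paper's write-up: you verify explicitly that the denominator $s(t_0)=\pm C/\beta'(0)$ is nonzero on $\Omega$ (the paper is silent on this), and you flag the regularity mismatch -- the statement asserts continuity for the $C^2$ topology while the convergence of Killing fields and local charts (Proposition \ref{Convergence champs Killing}, Corollary \ref{Convergence des cartes locales}) is only established for $r\geq 4$. That mismatch is present in the paper's own proof as well, which silently invokes Corollary \ref{Convergence des cartes locales}; so it is a shared caveat rather than a defect of your argument, and you are right that either the Killing-field convergence must be sharpened or the topology in the statement weakened.
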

\begin{proof}
%Before starting the proof, let us make the following remark: let $f_n$ be a sequence of functions defined on a compact manifold $M$ with values in $\R$, which converges uniformly to a function $f$. Suppose that $ f $ admits a unique zero in $ M $ -denote it by $ p $, and that $ f_n $ admits a unique zero $p_n$ in $ M $ for all $ n \in \N $. Then the sequence $ p_n $ converges to $ p $ in $ M $.
Let $(g_n,p_n) \rightarrow  (g,p) $ in $ \Omega $, where $g_n$ converges to $g$ for the $C^2$ topology. In $\R^2$ equipped with the metric $2dxdy + f(x) dy^2$ (resp. $2dxdy + f_n(x) dy^2$), let $\gamma$ (resp. $\gamma_n$) be the geodesic tangent to $\phi_* K$ (resp. $\phi_{n_*} K_n$) at $\phi(p)$ (resp. $\phi_n(p_n)$), where $\phi, \phi_n, \,n \in \N$, are the local charts defined in Corollary \ref{Convergence des cartes locales}, satisfying $\phi(p)=\phi_n(p)=0$. The geodesic sequence $ \gamma_{n}$ converges uniformly to $ \gamma$ on any compact set of $\R^2$. 
\\
Let us rewrite the Jacobi equations along $ \gamma_n $ and $ \gamma $:
\begin{align}\label{éq}
        u''(t)+\epsilon \kappa_n(t) u(t) &=0 \\
       u''(t)+ \epsilon \kappa(t) u(t) &=0
\end{align}
$\kappa_n$ converges uniformly to $\kappa$ on any compact set. The pairs of solutions $ (s, c) $ and $ (s_n, c_n) $ are given by the same initial conditions so $s_n$ and $ c_n $ converge uniformly to $s$ and $ c $ respectively, on any compact set containing $\gamma$. \\
%Recall that $ \mathfrak{Z} $ sends $ (g_n,v_n)$ to $ c(t_0^n) + c(t_1^n) $ and $(g,p)$ to $ c(t_0) + c(t_1) $, where $t_0^n$, $t_1^n$, $t_0$ and $t_1$ are defined as at the beginning of the proof. 
Lemma \ref{Behavior of geodesics} states that $\gamma$ restricted to the subset of $\R^2$ contained in $\widetilde{T}$ lies in a domino. This holds also for $\gamma_n$, for $n$ large enough. Denote by $x_0$ (resp. $x_0^n$) the zero of $f$ (resp. $f_n$) in it, and set $x=z_0$ (resp. $x=z_0^n$) at $p$ (resp. $p_n$). We have by use of Fact \ref{Fait sur la convergence des zéros uniques de fonctions sur un compact} that $x_0^n$ converges to $x_0$. Write $$t_0 = \int_{z_0}^{x_0} \frac {1} {\sqrt {C^2 - \epsilon f(x)}} dx.$$
This yields $t_0^n \to t_0$, hence the convergence of $c_n(t_0^n)$ to $c(t_0)$.  
To show that $ c_n (t_1^n) $ tends to $ c (t_1) $, we also show that $ t_1^n $ tends to $ t_1 $. Since a geodesic crosses a type I band between $ t_0 $ and $ t_1 $, we can write $$ t_1 = t_0 + \int_{x_0}^{x_1} \frac {1} {\sqrt {C^2 - \epsilon f(x)}} dx. $$
This yields $ t_1^n \to t_1 $ in the same way, finishing the proof. 
\end{proof}
Now assume that $T \in S\mathcal{L}^*_K(T)$, i.e. the function $f$ induced by the norm of $K$ satisfies the property
 $$f^{'}(x_n) + f^{'}(x_{n+1}) = 0,\;\forall n \in \Z, $$ 
 where $(x_n)_{n \in \Z}$ is the sequence of zeros of $f$, taken in an increasing order. Then, using $c s^{'}- c^{'} s=1$, one can check that the $\mathcal{Z}$ function can be written in the following way:
\begin{align*}
\mathcal{Z}: \Omega &\longrightarrow \R \\
\mathcal{Z}(g,p) & = \frac{c^{'}(t_0)-c^{'}(t_1)}{s^{'}(t_0)}
\end{align*}
%In the sequel we denote by $\dddot{\mathcal{L}_K^*}(T)$ the subset of Lorentzian metrics in $\mathcal{L}_K^*(T)$ satisfying this condition on $f$.\\
%In this case, we prove in the following that $\mathcal{Z}$ can be extended on the boundary set $$\{(g,p) \in s\mathcal{L}_K^*(T) \text{\,x\,} T,\;\; g(K,K)(p)=0\}$$ into a continuous function. 
\begin{proposition}
The functions $\mathcal{Z}$ can be extended to the set 	
\begin{align*}
\widetilde{\Omega}=&\{(g,p) \in S\mathcal{L}_K^*(T) \times T,\; g(K,K)(p) \geq 0 \textrm{\;\;and\;\;} \nabla_K K (p) \neq 0 \}\\ \amalg
& \{(g,p) \in S\mathcal{L}_K^*(T)\times T,\; g(K,K)(p) \leq 0 \textrm{\;\;and\;\;} \nabla_K K (p) \neq 0\},
\end{align*}
into a continuous function.
\end{proposition}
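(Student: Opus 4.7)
The plan is to work throughout with the alternative form
$$\mathcal{Z}(g,p) = \frac{c'(t_0) - c'(t_1)}{s'(t_0)},$$
which is valid on $\Omega$ by combining the Wronskian $cs'-c's=1$ with the defining symmetry $f'(x_n)+f'(x_{n+1})=0$ of $S\mathcal{L}_K^*(T)$. Unlike the original formula, this version has no $0/0$ indeterminacy at the boundary $\{g(K,K)(p)=0\}$, so to prove the proposition it is enough to compute the limit of this expression at each boundary point of $\Omega$ inside each of the two sheets.

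First I would fix $(g_0,p_0)\in\widetilde{\Omega}$ with $g_0(K_0,K_0)(p_0)=0$ on the spacelike sheet (the timelike sheet is treated symmetrically); then $p_0$ lies on an incomplete null orbit of $K_0$ between two type~II bands. Take any sequence $(g_n,p_n)\in\Omega$ with $g_n(K_n,K_n)(p_n)>0$ converging to $(g_0,p_0)$ in $C^r$, $r\ge 4$. The Clairaut constant $C_n:=\sqrt{g_n(K_n,K_n)(p_n)}$ then tends to $0$. Using the convergence of ribbon coordinates (Corollary \ref{Convergence des cartes locales}) together with the explicit formula $t_j^n=\int dx/\sqrt{C_n^2-\epsilon f_n(x)}$ and dominated convergence, I would deduce that $t_0^n\to 0$ and $t_1^n\to T_1:=\int_{x_{k+1}}^{x_{k+2}}dx/\sqrt{-f_0(x)}$, where $x_{k+1},x_{k+2}$ are the consecutive zeros of $f_0$ bordering the neighboring timelike band; the finiteness of $T_1$ relies on $f_0$ having only simple zeros.

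Next I would analyze the Jacobi equation $u''+\epsilon\kappa_n u=0$ along $\gamma_{p_n}$, whose coefficient $\kappa_n(t)=f_n''(x_n(t))/2$ is uniformly bounded on $[0,T_1+\delta]$ and converges uniformly on every subinterval bounded away from $0$ to the limit coefficient $\kappa_0(t)=f_0''(x_0(t))/2$, where $x_0(t)$ is the limit path defined by $t=\int_{x_{k+1}}^{x_0(t)}du/\sqrt{-f_0(u)}$. The shrinking interval $[0,t_0^n]$, on which the geodesic crosses the null orbit, contributes nothing in the $L^1$ limit provided one uses the saddle coordinates (Lemma \ref{Convergence des cartes des selles}) to control $\kappa_n$ uniformly across that crossing. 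Continuous dependence of ODE solutions on $L^1$ coefficients and on initial data then yields $C^1$ convergence of the fundamental pairs $(c_n,s_n)\to(c_0,s_0)$ on $[0,T_1+\delta]$.

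It follows that $c_n'(t_0^n)\to c_0'(0)=0$, $s_n'(t_0^n)\to s_0'(0)=1$ and $c_n'(t_1^n)\to c_0'(T_1)$, whence $\mathcal{Z}(g_n,p_n)\to-c_0'(T_1)$. Since this limit depends only on $(g_0,p_0)$ and the spacelike sheet, setting $\mathcal{Z}(g_0,p_0):=-c_0'(T_1)$ (and analogously on the timelike sheet, using the spacelike band on the other side of the null orbit) defines the desired extension; continuity at interior points is the previous proposition, and continuity at boundary points is exactly the computation just sketched. The main obstacle is the uniform control of $\kappa_n$ through the crossing at $t_0^n$, where the ribbon coordinates degenerate: the patching of ribbon and saddle coordinates via the convergences of Section 5.1 is the delicate technical point, after which the remainder is a routine application of classical ODE continuity.
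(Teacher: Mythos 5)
Your proposal is correct and reaches the paper's limit value ($-c_\infty'(2\omega)$ at the boundary, which is your $-c_0'(T_1)$ since the limit coefficient $\kappa_0(t)=f_0''(x_0(t))/2$ is exactly the curvature along the geodesic perpendicular to $K$ through the saddle point, parametrized by arc length, and $T_1=2\omega_\infty$), but the mechanism is genuinely different from the paper's. The paper exploits the fact that $\mathcal{Z}(g,\cdot)$ is constant along the orbits of $K$: it replaces $p_n$ by points $q_n$ on the same orbits chosen to converge to the \emph{saddle point}, where $K$ vanishes, so that the unit tangent vectors $K_n(q_n)/C_n$ converge despite $C_n\to 0$ and the geodesics $\sigma_n$ converge honestly (in the unit tangent bundle) to the non-degenerate geodesic $\sigma_\infty$ orthogonal to $K$; the convergence of the Jacobi data then follows from smooth dependence, patched through the saddle charts. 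You instead keep the degenerating family (whose members spiral off in the $y$-direction and do not converge as curves) and observe that the Jacobi coefficient depends only on the $x$-dynamics, which is governed by the first integral $x'^2=C_n^2-\epsilon f_n(x)$; pointwise convergence of $x_n(t)$ away from $t=0$, the uniform bound $|\kappa_n|\le\frac{1}{2}\sup|f_n''|$, and $L^1$-continuity of the ODE then do the rest. Your route is more elementary (no comparison sequence, no convergence of flows) at the cost of a slightly more careful ODE argument; the paper's route is more geometric and identifies the boundary object directly as the perpendicular geodesic through the saddle, which it reuses later. Two small corrections to your sketch: the ribbon coordinates do \emph{not} degenerate at the crossing at $t_0^n$ --- the geodesic crosses the null orbit at an interior point of a maximal ribbon of $E^u_f$ (generically far from the saddle point), where $\kappa=f''(x)/2$ is valid and uniformly bounded, so the appeal to the saddle charts of Lemma \ref{Convergence des cartes des selles} is unnecessary there; and for full continuity on $\widetilde{\Omega}$ you should also treat sequences of boundary points converging to a boundary point (convergence of the perpendicular geodesics and of their Jacobi equations as the metric varies), which is easier than the interior-to-boundary case you computed but still needs to be said.
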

\begin{proof}
Take $(g,p) \in S\mathcal{L}_K^*(T) \times T$ such that $g(K,K)(p)=0$. The null orbit of $K$ containing $p$ splits the domino containing it into two connected components (interior of bands), one timelike band and one spacelike band. Take $(p_n)_n$ a sequence of points in the torus converging to $p$, and contained in the timelike band. The sequence $(\gamma_n)_n$  of timelike geodesics tangent to $K$ at $p_n$, $\forall n$, converges to the null orbit of $K$ containing $p$. The function $\mathcal{Z}(g)$ being constant along the leaves of the Killing field for each metric $g$, we don't change the behavior of $\mathcal{Z}$ if we move the $\gamma_n$'s by the flow of $K$ in such a way that the sequence converges to a timelike geodesic $\gamma_{\infty}$ orthogonal to $K$ and crossing the spacelike neighboring band. We can do this by letting $p_n$ go to the saddle point of the null-orbit of $K$ containing $p$, along a leaf orthogonal to $K$ contained in $B$. This way, we see that $\mathcal{Z}(g,p_n)$ converges to $-c_{\infty}^{'}(2\omega)$, where $s_{\infty}$ and $c_{\infty}$ are the solutions of the Jacobi equation on $\gamma_{\infty}$, with initial conditions at the saddle point. This limit does not depend on the choice of $\gamma_{\infty}$; denote it by $\mathcal{Z}(g,p)^{-}$. In the same way, if $\gamma_n$'s are spacelike, we get another limit using this time a timelike geodesic orthogonal to $K$ and passing through the same saddle point. Now, fix a metric in $S\mathcal{L}_K^*(T)$ and denote by $(B_i)_{i \in I}$ the bands of the torus for the metric $g$; the boundary of a band $B_i$ is made of two null orbits of $K$, call them $\partial_1 B$ and $\partial_2 B$. Define for $p \in \amalg_{i \in I} B_i$:
$$
\widetilde{\mathcal{Z}}(g,p)=\left\{
\begin{array}{ll} 
\mathcal{Z}(g,p) \text{\;\;\;\; if \;} (g,p) \in \mathring{B}_i,\\
\mathcal{Z}(g,p)_1^{\epsilon} \text{\;\;\;\;if\;} (g,p) \in \partial_1 B,\\
\mathcal{Z}(g,p)_2^{\epsilon} \text{\;\;\;\;if\;} (g,p) \in \partial_2 B,
\end{array}
\right.
$$
for all $i \in I$, where $\epsilon={\pm 1}$, depending on the type of the band.  \\
This gives a well defined function $\widetilde{\mathcal{Z}}$ on the set 
\begin{align*}
\widetilde{\Omega}=&\{(g,p) \in S\mathcal{L}_K^*(T) \times T,\; g(K,K)(p) \geq 0 \textrm{\;\;and\;\;} \nabla_K K (p) \neq 0 \}\\ \amalg
& \{(g,p) \in S\mathcal{L}_K^*(T)\times T,\; g(K,K)(p) \leq 0 \textrm{\;\;and\;\;} \nabla_K K (p) \neq 0\},
\end{align*}	

Now, to prove that $\widetilde{\mathcal{Z}}$ is continuous, take $(g,p)  \in S\mathcal{L}_K^*(T) \times T$ such that $g(K,K)(p)=0$, and consider a sequence of metrics $g_n \in S\mathcal{L}_K^*(T)$ converging to $g$. Lemma \ref{Convergence des cartes des selles} and Lemma \ref{Convergence des selles} allow us to consider a coordinate neighborhood centered in $0$ and given by Equation (\ref{Selle: expression explicite de la métrique}), in which the saddle points of $g_n$ and that of $g$ are represented by the origin. Take a sequence $\gamma_n$ of timelike geodesics converging to the null orbit of $K$ containing $p$.  Denote by $p_n$ the points where $\gamma_n$ is tangent to $K_n$. One can find a sequence of points $q_n$ such that $\widetilde{\mathcal{Z}}(g_n,q_n)= \widetilde{\mathcal{Z}}(g_n,p_n)$, for all $n \in \N$, and the sequence of geodesics $\sigma_n$ tangent to  $K_n$ at $q_n$ converges to a timelike geodesic $\sigma_{\infty}$ orthogonal to $K$. We can achieve this in the following way: there exists a sequence $(t_n)_n$ such that the sequence of points $F^{t_n}(p_n)$, where $F^t$ is the flow of $K$,  converges to the origin. We use the convergence of the flows and define $q_n := F_n^{t_n}(p_n)$, where $F_n^t$ is the flow of $K_n$  (we choose the points $q_n$ to vary on a leaf orthogonal to $K$). This way, we obtain $c_n^{'}(t_0^n) \to c_{\infty}^{'}(0)=0$ and $s_n^{'}(t_0^n) \to s_{\infty}^{'}(0)=1$, where $s_{\infty}$ and $c_{\infty}$ are the solutions of the Jacobi equation on $\gamma_{\infty}$, with initial conditions at the origin. 	\\
Now, we want to prove that $c_n^{'}(t_1^n)$ converges to $c_{\infty}^{'}(2 \omega_{\infty})$. Take an interval $I$ in which $\sigma_{n}$ converges to $\sigma_{\infty}$. Take $\tau_0 \in I$; $\sigma_n$ and $\sigma_{\infty}$ may be seen as solutions of the differential equation which gives locally the geodesics of $g_n$ and $g$, with initial conditions given by $\sigma_{n}^{'}(\tau_0)$ and $\sigma_{\infty}^{'}(\tau_0)$ respectively. Let us consider a coordinate neighborhood $U$, given by Lemma \ref{Convergence des selles}, containing the saddle point that belongs to the null orbit of the type I neighboring band containing $\sigma_{\infty}^{'}(\tau_0)$. Choosing $\tau_0$ so that $\sigma_{\infty}(\tau_0), \sigma_{n}(\tau_0) \in U$ for $n$ large enough, we can assert that $\sigma_{n}$ (extended to $U$) converges to $\sigma_{\infty}$ in $U$. The result follows.     	
\end{proof}
From now on, the function $\widetilde{\mathcal{Z}}$ will be simply denoted by $\mathcal{Z}$.
\subsection{Digression: Jacobi equations all of whose solutions are periodic}\label{Section: Digression: Jacobi equations all of whose solutions are periodic}

In what follows, $g$ belongs to the family given in Theorem \ref{famille SPC}.
\begin{lemma}
	If a metric $g \in \mathcal{L}_K(T)$ belongs to the family given in Theorem \ref{famille SPC}, then $\mathcal{Z}$ vanishes on $(g,p), p \in T$, if and only if the Jacobi equation corresponding to $\gamma_g$, the $g$-geodesic tangent to $K$ at $p$, has only periodic solutions. 
\end{lemma}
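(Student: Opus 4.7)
The plan is to reduce the statement to a computation with the monodromy of the Jacobi equation, using the extra symmetries that come with the family of Theorem \ref{famille SPC}.

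First I would set up the symmetries of the Jacobi equation. Since $(g,p)\in\Omega$ excludes critical orbits, the geodesic $\gamma_g$ is tangent to $K$ only at $p$ (up to periodicity), so $\beta$ vanishes at $t=0$ without being identically zero, and $\gamma_g$ is periodic by Corollary \ref{Corrolaire: deux comportements distincts, périodique ou asymptote}. By Lemma \ref{lemme géod périodique}, $\beta$ is odd and $4\omega$-periodic, and $\kappa$ is even and $4\omega$-periodic. Because $g$ lies in the family of Theorem \ref{famille SPC}, one has in addition $\beta(2\omega-t)=\beta(t)$, which forces $\kappa$ to be $2\omega$-periodic. Since $\beta$ has a simple zero at $0$ (otherwise $\beta\equiv 0$), $s=\beta/\beta'(0)$, so $s$ inherits the same symmetries; in particular $s(2\omega-t)=s(t)$, which combined with oddness gives $s(t+2\omega)=-s(t)$, i.e.\ $s$ is $2\omega$-antiperiodic.

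Next I would compute the monodromy $A:u(t)\mapsto u(t+2\omega)$ in the basis $(c,s)$. From $s(2\omega)=0$ and $s'(2\omega)=-1$, together with the Wronskian identity $cs'-c's=1$ evaluated at $t=2\omega$, I get $c(2\omega)=-1$, so
\[
A=\begin{pmatrix} -1 & 0 \\ c'(2\omega) & -1 \end{pmatrix}.
\]
All solutions of (\ref{Jacobi}) are periodic if and only if $A$ has finite order, and since $A$ is lower-triangular with double eigenvalue $-1$, this happens if and only if $A=-I$, i.e.\ $c'(2\omega)=0$ (otherwise $A$ has a non-trivial Jordan block and $A^n$ is unbounded).

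Finally I would connect $\mathcal{Z}(g,p)$ to the quantity $c'(2\omega)$ via the symmetry formula of Lemma \ref{Lemme beta distance minimale} applied with $\tau=2\omega$:
\[
c(2\omega-t)=2\,\frac{c(\omega)}{s(\omega)}\,s(t)-c(t).
\]
Since $t_0+t_1=2\omega$ in the family, this gives $c(t_0)+c(t_1)=2\,\frac{c(\omega)}{s(\omega)}\,s(t_0)$, so
\[
\mathcal{Z}(g,p)=\frac{c(t_0)+c(t_1)}{s(t_0)}=\frac{2c(\omega)}{s(\omega)}.
\]
Note $s(\omega)\neq 0$ because $s$ has its first positive zero at $2\omega$. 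Differentiating the symmetry formula at $t=0$ gives $c'(2\omega)=-2c(\omega)/s(\omega)$. Hence $\mathcal{Z}(g,p)=0\iff c(\omega)=0\iff c'(2\omega)=0\iff A=-I$, which is exactly the condition that every solution of the Jacobi equation along $\gamma_g$ be periodic.

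The only mildly delicate step is the monodromy argument (verifying that in the non-semisimple case $c'(2\omega)\neq 0$ really does produce non-periodic solutions), but this is a standard Jordan-form computation; everything else is algebra with the symmetries already established in the paper.
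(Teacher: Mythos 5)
Your proposal is correct and follows essentially the same route as the paper: the extra symmetries give $\kappa$ $2\omega$-periodic and $s$ $2\omega$-antiperiodic, formula (\ref{Relation sur c, symétries ajoutées}) with $\tau=2\omega$ and $t_1=2\omega-t_0$ yields $\mathcal{Z}(g,p)=2c(\omega)/s(\omega)$, and the vanishing of this quantity is identified with the periodicity of all Jacobi fields. The only difference is that you make explicit the monodromy matrix and the Jordan-block argument showing that $c'(2\omega)\neq 0$ produces an unbounded (hence non-periodic) solution, a step the paper leaves implicit in the phrase ``equivalent to saying that $c$ is $2\omega$ anti-periodic''; just note that your auxiliary claim ``all solutions periodic iff $A$ has finite order'' is not true for arbitrary monodromies (e.g.\ an irrational rotation), but it is valid here precisely because both eigenvalues of $A$ are $-1$.
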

\begin{proof}
For such a metric, $\mathcal{Z}$ is defined for $p \in T$ such that $\gamma_g$ is an invariant geodesic, and the additional isometries of $g$ given by condition (iv) in Theorem \ref{famille SPC} imply that $\kappa(t)$ is $2 \omega$ periodic and that $\beta$ is $2\omega$ anti-periodic. In this case, formula (\ref{Relation sur c, symétries ajoutées}) in Lemma \ref{Lemme beta distance minimale} holds, so that $\mathcal{Z}$ vanishes if and only if $c(\omega)=0$, which is equivalent to saying that $c$ is $2 \omega$ anti-periodic (hence $4 \omega$ periodic).
\end{proof}

\begin{remark}\label{Hill equations remark}
An equation $$u^{''}+r(t)u=0, \;t\in \R,$$
where the coefficient $r(t)$ is $T$-periodic is called a Hill equation. When all the solutions are assumed to be $T$-antiperiodic, with one zero on $[0,T[$, the following inequality holds (see \cite{5}, Appendix B p. 230)\\
$$T \int_{0}^{T} r(t) dt \leq \pi^2,$$
with equality only for constant $r(t)$.
\end{remark}
Let $p, r : [a,b] \to \R$, where $p(x) >0$, and consider the equation
\begin{align}\label{Sturm-Liouville Equation}
(p y')^{'} + \lambda r y =0, a \leq x \leq b,
\end{align}
with boundary conditions
\begin{align}\label{Sturm-Liouville boundary conditions}
y(a)=y(b)=0.
\end{align}
%\label{Existence de la plus petit valeur propre positive, Richardson}(\cite{9}, p. 288)
According to \cite{1} (p. 288), there are two infinite sequences of parameter values $0 \leq \lambda_1 \leq \lambda_2 ...$, $0 \geq \lambda_{-1} \geq \lambda_{-2} ...$, each one of which has $+ \infty$ and $- \infty$ for its only point of accumulation, and for each parameter $\lambda_m$ (resp. $\lambda_{-m}$), a solution $y_m$  (resp. $y_{-m}$) satisfying (\ref{Sturm-Liouville boundary conditions}) exists. The number of zeros of $y_m$ in $[a,b]$ is $m+1$.

Let $\gamma$ be an invariant geodesic for the metric $g$, and consider the differential system
\begin{align}\label{Sturm-Liouville, Jacobi equation 1}
\left\{
\begin{array}{ll}
&u^{''} + \lambda \epsilon \kappa u = 0, \;\; t \in [0, 2 \omega] \\
&u(0)=u(2 \omega)=0,
\end{array}
\right.
\end{align}
where $\kappa$, the sectional curvature along $\gamma_g$, is (see paragraph \ref{les exemples de tores SPC})  symmetric with respect to $t=0$ and $t= \omega$. Denote by $\lambda_1$ the least positive eigenvalue of the differential system (\ref{Sturm-Liouville, Jacobi equation 1}), which exists by the statement above, and let $y_1$ be the corresponding eigenfunction. Since $\kappa$ is symmetric with respect to $t= \omega$, an easy computation gives either $y_1^{'}(\omega)=0$ or $y_1(\omega)=0$. Furthermore, $y_1$ does not vanish on $]0,2\omega[$, so actually $y_1^{'}(\omega)=0$. It follows that $y_1$ is a solution of the new system 
\begin{align}\label{Sturm-Liouville, Jacobi equation 2}
\left\{
\begin{array}{ll}
&u^{''} + \lambda \epsilon \kappa u = 0, \;\; t \in [0, \omega] \\
&u(0)=u^{'}( \omega)=0.
\end{array}
\right.
\end{align}
Now, if $\lambda$ is an eigenvalue of the system (\ref{Sturm-Liouville, Jacobi equation 2}), with corresponding eigenfunction $u$ defined on $[0,\omega]$, then, by reflecting $u$ about the line $t=\omega$, one gets a solution of the differential system  (\ref{Sturm-Liouville, Jacobi equation 1}), for $\kappa$ is symmetric with respect to $\omega$.\\
This proves that $\lambda_1$ is also the least positive eigenvalue of the differential system (\ref{Sturm-Liouville, Jacobi equation 2}).

The following fact follows from the proof of Lemma 3, \cite{3}.
\begin{fact}\label{Lambda_1=1}
Suppose that the coefficient $r$ in Equation (\ref{Sturm-Liouville Equation}) is symmetric with respect to $t_0= \frac{a+b}{2}$. If there exists a solution $y_0$ of $(p y')^{'}+ r y =0$ satisfying $y_0(a)=y_0(b)=0$, symmetric with respect to $t_0$, with $y_0$ not vanishing on $]0,2\omega[$, then the least positive eigenvalue of the system (\ref{Sturm-Liouville Equation})-(\ref{Sturm-Liouville boundary conditions}) is $\lambda_1=1$.
\end{fact}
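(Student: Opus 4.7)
The plan is to identify $\lambda = 1$ with the first positive eigenvalue of the Sturm--Liouville problem by a Sturmian zero-counting argument. The hypotheses directly give that $\lambda = 1$ is a positive eigenvalue of (\ref{Sturm-Liouville Equation})--(\ref{Sturm-Liouville boundary conditions}), witnessed by $y_0$. Hence $1 = \lambda_m$ for some integer $m \geq 1$, and it remains to show $m = 1$.

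For this I would invoke the Sturm oscillation theorem recalled from \cite{1}: the eigenfunction associated to the $m$-th positive eigenvalue $\lambda_m$ has exactly $m+1$ zeros in $[a,b]$ (the two endpoints plus $m-1$ interior zeros). By hypothesis, $y_0$ vanishes at $a$ and $b$ but at no point of $]a,b[$, so it has exactly two zeros in $[a,b]$. Applying the oscillation theorem to the eigenfunction of $\lambda = 1$, one obtains $m+1 = 2$, i.e.\ $m = 1$, so $\lambda_1 = 1$.

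The main subtlety is checking that $y_0$ is indeed (up to scalar multiple) the eigenfunction associated to the eigenvalue $\lambda = 1$, so that its zero count determines $m$ unambiguously. This is immediate from the fact that the eigenspace of a regular Sturm--Liouville problem with separated Dirichlet boundary conditions is one-dimensional: any two solutions of the ODE vanishing at $a$ are determined by their derivative at $a$ and are therefore proportional, so $y_0$ must coincide up to a constant with the eigenfunction $y_m$ provided by the theorem. The symmetry hypotheses on $r$ and $y_0$ about $t_0 = (a+b)/2$ are not logically necessary for the statement itself, but they are consistent with the geometric context, where $y_0$ will arise as the Jacobi field $\beta$ along an invariant geodesic and inherits its symmetries from the reflections fixing the critical orbits of $K$.
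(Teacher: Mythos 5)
Your argument is correct and is essentially the one the paper intends: the paper gives no written proof (it only cites the proof of Lemma~3 of Beesack--Schwarz), but the Richardson oscillation statement it recalls just above the Fact --- that the eigenfunction of $\lambda_m$ has exactly $m+1$ zeros in $[a,b]$ --- is exactly the zero-counting tool you use, together with the simplicity of the Dirichlet eigenspaces, to force $m=1$. Your side remark that the symmetry hypotheses are not logically needed for the conclusion (they matter only for the surrounding geometric application) is also accurate.
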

\begin{lemma}\label{lemme inégalité 0, contrainte sur kappa}
	When $g$ belongs to the family given in Theorem \ref{famille SPC}, the following inequality holds
	$$ \int_{0}^{2 \omega} \epsilon \kappa(t) dt \leq \frac{\pi^2}{2 \omega}.$$
	for all invariant geodesics. 
\end{lemma}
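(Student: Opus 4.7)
The plan is to exploit the fact, established in the preceding discussion, that the least positive eigenvalue of the Sturm--Liouville problem $u'' + \lambda \epsilon\kappa u = 0$ with $u(0) = u(2\omega) = 0$ is $\lambda_1 = 1$, with eigenfunction $\beta$. The idea is to apply the Rayleigh--quotient inequality that this provides to the natural test function $\sin(\pi t/(2\omega))$ and then convert the resulting weighted integral of $\epsilon\kappa$ into the unweighted integral via a trigonometric identity; a symmetry/monotonicity argument will control the remainder.

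First, from the variational characterization of $\lambda_1$ in the indefinite-weight Sturm--Liouville problem (together with the obvious case when the weighted integral is non-positive), one obtains for every $u \in H^1_0([0,2\omega])$
\[
\int_0^{2\omega} \epsilon\kappa(t)\, u(t)^2\, dt \;\le\; \int_0^{2\omega} u'(t)^2\, dt.
\]
Taking $u(t) = \sin(\pi t/(2\omega))$ gives $\int_0^{2\omega} u'^2\, dt = \pi^2/(4\omega)$, hence
\[
\int_0^{2\omega} \epsilon\kappa(t) \sin^2 \tfrac{\pi t}{2\omega}\, dt \;\le\; \frac{\pi^2}{4\omega}.
\]
Using the identity $2\sin^2(\pi t/(2\omega)) = 1 - \cos(\pi t/\omega)$, this rearranges to
\[
\int_0^{2\omega} \epsilon\kappa(t)\, dt \;\le\; \frac{\pi^2}{2\omega} \;+\; \int_0^{2\omega} \epsilon\kappa(t) \cos \tfrac{\pi t}{\omega}\, dt,
\]
so the lemma reduces to showing that the correction term $I := \int_0^{2\omega} \epsilon\kappa(t) \cos(\pi t/\omega)\, dt$ is non-positive.

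Establishing $I \le 0$ is the main obstacle, and it is where the specific structure of the family in Theorem~\ref{famille SPC} enters. Both $\epsilon\kappa(t)$ and $\cos(\pi t/\omega)$ are symmetric about $t = \omega$ (the former from the extra symmetries produced by condition (iv), the latter by direct computation), so $I = 2 \int_0^\omega \epsilon\kappa(t) \cos(\pi t/\omega)\, dt$. Moreover $\cos(\pi t/\omega)$ is antisymmetric about $t = \omega/2$, i.e.\ $\cos(\pi(\omega - t)/\omega) = -\cos(\pi t/\omega)$, which allows the folding
\[
2 \int_0^\omega \epsilon\kappa(t) \cos \tfrac{\pi t}{\omega}\, dt \;=\; \int_0^\omega \bigl(\epsilon\kappa(t) - \epsilon\kappa(\omega - t)\bigr)\cos \tfrac{\pi t}{\omega}\, dt.
\]
Finally, the proof of Theorem~\ref{famille SPC} shows that $\kappa$ is decreasing on $[\omega, 2\omega]$ in the spacelike case and increasing there in the timelike case, so in both cases $\epsilon\kappa$ is decreasing on $[\omega, 2\omega]$ and, by the symmetry about $\omega$, increasing on $[0, \omega]$. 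Thus for $t \in [0, \omega/2]$ one has $\epsilon\kappa(t) \le \epsilon\kappa(\omega - t)$ while $\cos(\pi t/\omega) \ge 0$, and for $t \in [\omega/2, \omega]$ the two inequalities reverse simultaneously; in either range the integrand is $\le 0$ pointwise. Hence $I \le 0$, which combined with the previous display gives the desired bound.
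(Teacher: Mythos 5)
Your proof is correct, but it takes a genuinely different route from the paper's. The paper works with the mixed boundary-value problem on $[0,\omega]$ (conditions $u(0)=u'(\omega)=0$), notes via Fact \ref{Lambda_1=1} applied to $\beta$ that its least positive eigenvalue is $1$, and then invokes the Breuer--Gottlieb upper bound (Theorem 6 of \cite{4}), $\lambda_1 \le \frac{(\pi/2)^2}{d^2\omega^2}$ with $d^2=\min_{t}\frac{1}{\omega-t}\int_t^\omega \epsilon\kappa$; condition (iii) of Theorem \ref{famille SPC} makes $t\mapsto\frac{1}{\omega-t}\int_t^\omega\epsilon\kappa$ increasing, so the minimum is attained at $t=0$ and equals $\frac{1}{\omega}\int_0^\omega\epsilon\kappa$, which yields $\int_0^\omega\epsilon\kappa\le\pi^2/(4\omega)$ and hence the lemma after doubling by the symmetry of $\kappa$ about $\omega$. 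You instead stay on $[0,2\omega]$ with Dirichlet conditions, use the variational characterization of the least positive eigenvalue of the indefinite-weight problem to get $\int\epsilon\kappa\,u^2\le\int u'^2$ for all $u\in H^1_0$, test with $\sin(\pi t/(2\omega))$, and dispose of the correction term $\int_0^{2\omega}\epsilon\kappa\cos(\pi t/\omega)\,dt$ by a folding argument that uses exactly the same two geometric ingredients as the paper (symmetry of $\kappa$ about $\omega$ and monotonicity of $\epsilon\kappa$ on $[0,\omega]$). The essential inputs are thus identical --- $\lambda_1=1$ coming from $\beta$, and condition (iii) --- but your version replaces the citation of \cite{4} by an explicit test-function computation, so it is more self-contained: in effect you reprove the relevant case of the Breuer--Gottlieb bound. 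The one step you should spell out is the min--max characterization of the \emph{least positive} eigenvalue for a sign-indefinite weight $\epsilon\kappa$ (namely $1/\lambda_1=\sup\{\int\epsilon\kappa u^2/\int u'^2\}$, valid because $\epsilon\kappa$ must be positive somewhere on an invariant geodesic); it is standard, and consistent with the Richardson framework the paper quotes from \cite{1}, but it is not the naive Rayleigh quotient of the definite case.
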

\begin{proof}
%Assume $\int_{0}^{\omega} \epsilon \kappa(t) dt \geq 0$, for otherwise, the inequality is trivial. 
Let $\lambda_1$ be the least positive eigenvalue of the system
\begin{align}\label{Système Jacobi Equation 1, local}
\left\{
\begin{array}{ll}
&u^{''}+ \lambda \epsilon \kappa u =0, \;\;\;t \in I=[0, \omega]  \\
&u(0)=u^{'}( \omega)=0.
\end{array}
\right.
\end{align} 
We have the following inequality from Theorem 6, \cite{4}
$$\lambda_1 \leq \frac{(\pi/2)^2}{d^2 \omega^2}, \text{\;\; where \;\;} d^2=\min_{t \in I} \frac{\int_{t}^{ \omega} \epsilon \kappa(t)dt}{ \omega -t}.$$
The function $\beta$ is a solution of the Jacobi equation that satisfies $$\beta(0)=\beta^{'}(\omega)=0, \;\;\; \beta^{'}>0 \text{\;on\;} [0,\omega[,$$
This implies using Fact \ref{Lambda_1=1} above that the least positive eigenvalue $\lambda_1$ of the system (\ref{Système Jacobi Equation 1, local}) is $\lambda_1=1$. 
Now set $$h(t)=\frac{1}{ \omega -t}\int_{t}^{ \omega} \epsilon \kappa(t)dt, \;\; t \in I$$
We have $$h^{'}(t) = \frac{1}{\omega-t}\Big{[}\frac{1}{\omega-t}\int_{t}^{\omega}\epsilon \kappa(t)dt-\epsilon \kappa(t)\Big{]}.$$
Using the mean value theorem, we show the existence of $\tau \in ]t,\omega[$, such that $\frac{1}{\omega-t}\int_{t}^{\omega}\epsilon \kappa(t)dt = \epsilon \kappa(\tau)$. This gives 
$$h^{'}(t)=\frac{1}{\omega-t}(\epsilon \kappa(\tau) - \epsilon \kappa(t)), \; \tau \in ]t,\omega[.$$
Condition (iii) in Theorem \ref{famille SPC} implies that $\epsilon k(t)$ is an increasing function on $[0,\omega]$, so that 
$$\min_{t\in I} \frac{\int_{t}^{\omega}\epsilon \kappa(t)dt}{\omega-t} = \frac{1}{\omega} \int_{0}^{\omega} \epsilon \kappa(t) dt. $$
\end{proof}
\begin{lemma}\label{lemme inégalité (1)}
	Consider a metric in the family given in Theorem \ref{famille SPC}; suppose that the Jacobi equation of a geodesic $\gamma$ has only periodic solutions, then 
	\begin{align}\label{première inégalité}
	\sup \big{\{} \frac{1}{\omega} \int_{\omega}^{2\omega} \epsilon \kappa(t) dt,\;-\epsilon \kappa(0) \big{\}} \geq \frac{\pi^2}{4\omega^2}.
	\end{align} 
	It follows that either $- \epsilon \kappa (0) \geq \frac{\pi^2}{4 \omega^2}$ or $\kappa$ is constant along $\gamma$.
\end{lemma}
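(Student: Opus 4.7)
The plan is to prove inequality (\ref{première inégalité}) by a Wronskian comparison against the constant-coefficient equation $v''+\mu v=0$ with $\mu:=\pi^2/(4\omega^2)$, and then deduce the dichotomy by combining with Hill's inequality. First, by the preceding lemma, the hypothesis "only periodic solutions" forces the even solution $c$ (normalized by $c(0)=1$, $c'(0)=0$) to be $2\omega$-antiperiodic, so that $c(\omega)=0$ and, by Lemma \ref{1zéro}, $c>0$ on $[0,\omega)$. Introduce $v(t)=\cos(\pi t/(2\omega))$, which shares the boundary data with $c$ and satisfies $v''+\mu v=0$; the Wronskian $W:=cv'-c'v$ vanishes at both endpoints while $W'=(\epsilon\kappa-\mu)\,cv$, yielding the key identity
\begin{align*}
\int_0^\omega(\mu-\epsilon\kappa(t))\,c(t)\,v(t)\,dt=0.
\end{align*}

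For (\ref{première inégalité}) I argue by contradiction, assuming both $-\epsilon\kappa(0)<\mu$ and $\tfrac{1}{\omega}\int_\omega^{2\omega}\epsilon\kappa<\mu$. The first assumption combined with the monotonicity of $\epsilon\kappa$ on $[0,\omega]$ (condition (iii) of Theorem \ref{famille SPC}, as exploited in its proof) gives $\epsilon\kappa+\mu>0$ throughout $[0,\omega]$. Differentiating yields $(cv)''=-(\epsilon\kappa+\mu)\,cv+2c'v'$, whose first term is strictly negative on $(0,\omega)$. Since $c$ is convex where $\epsilon\kappa<0$ and concave where $\epsilon\kappa>0$, the derivative $c'$ is nonnegative on some initial subinterval $[0,t_1]$ and nonpositive on $[t_1,\omega]$. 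On $[0,t_1]$ we have $c'v'\leq 0$, so $(cv)''<0$, and together with $(cv)'(0)=0$ this gives $(cv)'\leq 0$; on $[t_1,\omega]$, $(cv)'=c'v+cv'\leq 0$ termwise. Thus $cv$ is nonincreasing on $[0,\omega]$. Chebyshev's integral inequality applied to the two nonincreasing functions $\mu-\epsilon\kappa$ and $cv$ then gives
\begin{align*}
0\;=\;\omega\int_0^\omega(\mu-\epsilon\kappa)\,cv\,dt\;\geq\;\Bigl(\int_0^\omega(\mu-\epsilon\kappa)\,dt\Bigr)\Bigl(\int_0^\omega cv\,dt\Bigr),
\end{align*}
and since $\int_0^\omega cv>0$, this forces $\tfrac{1}{\omega}\int_0^\omega\epsilon\kappa\geq\mu$. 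Using the symmetry $\kappa(2\omega-t)=\kappa(t)$ to identify this average with $\tfrac{1}{\omega}\int_\omega^{2\omega}\epsilon\kappa$ contradicts the second assumption and establishes (\ref{première inégalité}).

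For the "it follows" conclusion, apply Hill's inequality (Remark \ref{Hill equations remark}) with $T=2\omega$: the hypothesis ensures all solutions of the Jacobi equation are $2\omega$-antiperiodic with exactly one zero on $[0,2\omega[$, so $\int_0^{2\omega}\epsilon\kappa\leq\pi^2/(2\omega)$, with equality iff $\epsilon\kappa$ is constant. By the same symmetry this reads $\tfrac{1}{\omega}\int_\omega^{2\omega}\epsilon\kappa\leq\mu$. Combined with (\ref{première inégalité}), one of the two sup-terms must reach $\mu$: either $-\epsilon\kappa(0)\geq\mu$ (the first alternative), or the average attains $\mu$, and Hill's equality case forces $\kappa$ to be constant along $\gamma$. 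The main technical obstacle is verifying that $cv$ is nonincreasing when $\epsilon\kappa(0)<0$ (so $c$ itself is not monotone), which is resolved by the sign-change analysis of $c'$ driven by the sign of $c''=-\epsilon\kappa c$.
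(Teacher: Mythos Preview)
Your proof is correct and takes a genuinely different route from the paper's. The paper works on the interval $[\omega,2\omega]$ and invokes an external Sturm--Liouville eigenvalue bound (Theorem~5 of Breuer--Gottlieb \cite{4}): it shows, via Fact~\ref{Lambda_1=1} and the symmetry of $c$ about $t=2\omega$, that the least positive eigenvalue of $u''+\lambda\epsilon\kappa u=0$ with $u(\omega)=u'(2\omega)=0$ equals $1$, and then reads off (\ref{première inégalité}) by computing $\max_{t}\frac{|\int_t^{2\omega}\epsilon\kappa|}{2\omega-t}$ using the monotonicity of $\epsilon\kappa$. You instead work on $[0,\omega]$ with an explicit comparison function $v(t)=\cos(\pi t/(2\omega))$: the Wronskian identity $\int_0^\omega(\mu-\epsilon\kappa)cv=0$ together with the monotonicity of both $\mu-\epsilon\kappa$ and $cv$ lets you apply Chebyshev's integral inequality to force $\tfrac{1}{\omega}\int_0^\omega\epsilon\kappa\geq\mu$ under the contradiction hypothesis. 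Your argument is more self-contained (no appeal to \cite{4}) and makes the role of the comparison equation $v''+\mu v=0$ transparent; the paper's approach, by contrast, slots directly into the eigenvalue framework already set up in Lemma~\ref{lemme inégalité 0, contrainte sur kappa} and reused in Lemma~\ref{lemme inégalité (2)}. Both proofs hinge on the same monotonicity of $\epsilon\kappa$ coming from condition~(iii) of Theorem~\ref{famille SPC}, and both deduce the final dichotomy in the same way from Remark~\ref{Hill equations remark}.
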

\begin{proof}
Let $\lambda_1$ be the least positive eigenvalue of the system
\begin{align}
\left\{
\begin{array}{ll}
&u^{''}+ \lambda \epsilon \kappa u =0, \;\;\;t \in J=[0, \omega]  \\
&u(\omega)=u^{'}(2 \omega)=0.
\end{array}
\right.
\end{align} 
The following inequality follows from Theorem 5, \cite{4}:
$$\lambda_1 \geq \frac{(\pi/2)^2}{D^2 \omega^2}, \text{\;\; where \;\;} \max_{t \in I} \frac{|\int_{t}^{2 \omega} \epsilon \kappa(t)dt|}{2 \omega -t}=D^2.$$
Suppose that the Jacobi equation has only periodic solutions, i.e. $c(\omega)=0$. In this case,  we have $c(\omega)=c(3 \omega)=0$, and $c(4\omega-t)=c(t)$. Thus, $c$ and the coefficient $\kappa$ are symmetric with respect to $t=2\omega$, with $c$ not vanishing on $]\omega,3 \omega[$. It follows that $\lambda_1=1$, by use of Fact \ref{Lambda_1=1} again.\\ Now	set as in the proof of the previous lemma $$h(t)=\frac{1}{2 \omega -t}\Big{|}\int_{t}^{2 \omega} \epsilon \kappa(t)dt\Big{|}, \;\; t \in I$$
Let $b \in I$ the smallest real such that $\int_{b}^{2\omega} \epsilon\kappa(t) dt=0$. %we have 
%\begin{align*}
%h^{'}(t)=\Big{\{}\begin{array}{cc}
%\frac{1}{2\omega-t}\Big{[}\frac{1}{2\omega-t}\int_{t}^{2\omega}\epsilon k(t)dt-\epsilon k(t)\Big{]}, & t \in [\omega,b]\\
%\frac{1}{2\omega-t}\Big{[}-\frac{1}{2\omega-t}\int_{t}^{2\omega}\epsilon k(t)dt+\epsilon k(t)\Big{]}, & t \in [b,2 \omega]
%\end{array}
%\end{align*}
Doing the same computation as before, we show the existence of $\tau \in ]t,2\omega[$ such that %$\frac{1}{2\omega -t}\int_{t}^{2 \omega} \epsilon \kappa(t) dt = \epsilon k(\tau)$. This gives 
\begin{align*}
h^{'}(t)=\Big{\{}\begin{array}{cc}
\frac{1}{2\omega-t}\Big{(}\epsilon \kappa(\tau)-\epsilon \kappa(t)\Big{)}, & t \in [\omega,b]\\
\frac{1}{2\omega-t}\Big{(}-\epsilon \kappa(\tau)+\epsilon \kappa(t)\Big{)}, & t \in [b,2 \omega]
\end{array}
\end{align*}
Condition (iii) in Theorem \ref{famille SPC} implies that  $ \epsilon \kappa(t)$ is a decreasing function on $[\omega,2\omega]$, so that 
$$\max_{t \in I} \frac{ |\int_{t}^{2\omega} \epsilon \kappa(t) dt|}{2\omega-t} =\sup \Big{\{} \frac{1}{\omega} \int_{\omega}^{2\omega} \epsilon \kappa(t) dt, \; - \epsilon \kappa(0) \Big{\}}.$$
The above inequality reads 
$$\sup \big{\{} \frac{1}{\omega} \int_{\omega}^{2\omega} \epsilon \kappa(t) dt,\;-\epsilon \kappa(0) \big{\}} \geq \frac{\pi^2}{4\omega^2}.$$
The last assertion is an easy consequence of Lemma \ref{lemme inégalité 0, contrainte sur kappa} and Remark \ref{Hill equations remark}. 
\end{proof}
\begin{corollary}
	Suppose that the curvature has a constant sign on an invariant geodesic $\gamma$ without being constant, then the Jacobi equation admits a non-periodic solution. 
\end{corollary}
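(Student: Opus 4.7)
The plan is to argue by contraposition: assuming that every solution of the Jacobi equation along $\gamma$ is periodic, I will show that $\kappa$ is either constant on $\gamma$ or changes sign there. The contrapositive is then exactly the statement of the corollary.

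The first ingredient is already in place: Lemma \ref{lemme inégalité (1)} delivers the dichotomy under the hypothesis that all solutions are periodic. Either $\kappa$ is already constant along $\gamma$ (which is one of the two conclusions we want), or $-\epsilon\kappa(0)\geq \pi^{2}/(4\omega^{2})$, and in particular $\epsilon\kappa(0)<0$. So I would focus on the second alternative and assume, for contradiction, that $\kappa$ has constant sign on $\gamma$. Since $\epsilon\kappa(0)<0$, this forces $\epsilon\kappa(t)\leq 0$ throughout the geodesic, with strict inequality at $t=0$ (and on a neighbourhood thereof by continuity).

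The main step is then a short convexity argument via the Killing solution $\beta$. By Lemma \ref{lemme géod périodique}, since $\gamma$ is an invariant geodesic, $\beta$ is a nontrivial $4\omega$-periodic solution of $u''+\epsilon\kappa\,u=0$, odd about $0$, with $\beta(0)=\beta(2\omega)=0$ and $\beta$ of constant sign on $(0,2\omega)$. Normalizing so that $\beta>0$ on $(0,2\omega)$, the Jacobi equation rewrites as $\beta''=-\epsilon\kappa\,\beta\geq 0$ on $[0,2\omega]$, so $\beta$ is convex there. But a convex function on $[0,2\omega]$ vanishing at both endpoints is nonpositive on the interval, contradicting $\beta>0$ on $(0,2\omega)$. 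This rules out the second alternative whenever $\kappa$ has constant sign, which finishes the proof.

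I do not anticipate any serious obstacle: the two auxiliary results (Lemma \ref{lemme inégalité (1)} and Lemma \ref{lemme géod périodique}) do the heavy lifting, and the Sturm-type convexity step is the only new ingredient. The mildly delicate point is simply to notice that the sign condition $\epsilon\kappa(0)<0$ from Lemma \ref{lemme inégalité (1)}, combined with constancy of the sign of $\kappa$, pins the whole function $\epsilon\kappa$ to be $\leq 0$, which is what makes the convexity argument go through.
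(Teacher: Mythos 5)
Your proof is correct, and it reaches the conclusion by a genuinely different final step than the paper. Both arguments pivot on Lemma \ref{lemme inégalité (1)}: all solutions periodic implies either $\kappa$ constant or $-\epsilon\kappa(0)\geq \pi^2/(4\omega^2)$. The paper then kills the second alternative geometrically: by Fact \ref{Fait sur le signe de k en les orbites critiques}, the geodesic crosses at $t=\omega$ the critical orbit of $K$ of type $-\epsilon$, where the curvature is zero or has sign $\epsilon$, so constancy of the sign of $\kappa$ forces $-\epsilon\kappa(0)\leq 0$, contradicting the inequality. You instead observe that $\epsilon\kappa(0)<0$ together with constancy of sign gives $\epsilon\kappa\leq 0$ on all of $\gamma$, and then run a Sturm-type convexity argument on $\beta$: being convex on $[0,2\omega]$ and vanishing at both endpoints, $\beta$ would have to be nonpositive there, contradicting the fact (Lemma \ref{lemme géod périodique}) that $0$ and $2\omega$ are consecutive simple zeros between which $\beta$ has constant sign. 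Your route is self-contained on the ODE side and does not invoke the sign of the curvature along critical orbits; in fact it is slightly more robust, since the paper's step ``$\kappa$ doesn't change sign, hence $-\epsilon\kappa(0)\leq 0$'' leans on $\epsilon\kappa(\omega)\geq 0$ and is a little delicate when $\kappa(\omega)=0$, whereas your convexity argument handles that case without comment. The paper's version is shorter given that the Fact about critical orbits is already established earlier. Either way, the statement is proved.
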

\begin{proof}
A geodesic of type $\epsilon$ cuts the critical orbit of $K$ of type $-\epsilon$, corresponding to $t=\omega$, on which the curvature is either zero or has sign $\epsilon$; since $\kappa$ doesn't change sign on $\gamma$, $- \epsilon \kappa(0) \leq 0$. Therefore, if the Jacobi equation has only periodic solutions, $\kappa$ is constant on $\gamma$ by use of the previous lemma. 	
\end{proof}
\begin{lemma}\label{lemme inégalité (2)}
	Suppose that the curvature vanishes exactly twice on the smallest period of $f$, and let $\gamma$ be a geodesic on which $\kappa(t) := \kappa o \gamma(t)$ changes sign. If the Jacobi equation associated to $\gamma$ admits only periodic solutions, then
	\begin{align}
	(\omega-\tau)^2 \epsilon \kappa(\omega) \geq \frac{\pi^2}{4},
	\end{align}
	where $0<\tau<\omega$ is the smallest positive real such that $\kappa(\tau)=0$ on $\gamma$. 
\end{lemma}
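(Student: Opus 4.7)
The plan is to exploit the hypothesis that all solutions of the Jacobi equation are periodic, equivalently $c(\omega)=0$, by combining the energy identity for $c$ on $[-\omega,\omega]$ with a sharp one-sided Poincaré inequality on the two sub-intervals where $\epsilon\kappa\ge 0$.

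First I will set up the sign picture of $\kappa$ along $\gamma$. Since $c$ is even with $c(0)=1$ and has no zero on $(-\omega,\omega)$ by Lemma \ref{1zéro}, the hypothesis $c(\omega)=0$ gives also $c(-\omega)=0$. The assumption that $\kappa$ has exactly two zeros per period, combined with the symmetries of $\kappa$ recalled in Section \ref{les exemples de tores SPC} and the monotonicity of $\kappa$ on $[0,\omega]$ coming from condition~(iii) of Theorem \ref{famille SPC}, forces the zeros of $\kappa$ in $[-\omega,\omega]$ to be exactly $\pm\tau$, with $\epsilon\kappa\le 0$ on $[-\tau,\tau]$ and $\epsilon\kappa\ge 0$ on $[-\omega,-\tau]\cup[\tau,\omega]$. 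Using the identity $\epsilon\beta'=(f'\circ x)/2$ of Lemma \ref{Formule sur beta' en fonction de f} together with $\beta'(\omega)=0$, $\gamma(\omega)$ sits on a critical orbit of $K$ of type opposite to $\epsilon$; Fact \ref{Fait sur le signe de k en les orbites critiques} then gives $\epsilon\kappa(\omega)\ge 0$, strict because $\kappa$ changes sign along $\gamma$. Thus $M:=\epsilon\kappa(\omega)$ is the positive maximum of $\epsilon\kappa$ on $[-\omega,\omega]$.

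The estimate itself is then short. Multiplying $c''+\epsilon\kappa\,c=0$ by $c$, integrating over $[-\omega,\omega]$, and using $c(\pm\omega)=0$ to kill the boundary term yields the energy identity
\begin{equation*}
\int_{-\omega}^{\omega}(c')^2\,dt=\int_{-\omega}^{\omega}\epsilon\kappa\,c^2\,dt.
\end{equation*}
Since $\epsilon\kappa\,c^2\le 0$ on $[-\tau,\tau]$ and $\epsilon\kappa\le M$ elsewhere, the right-hand side is bounded by $M\bigl(\int_{-\omega}^{-\tau}c^2\,dt+\int_\tau^\omega c^2\,dt\bigr)$. On each of $[\tau,\omega]$ and $[-\omega,-\tau]$, $c$ vanishes at the outer endpoint, so the sharp Poincaré inequality---the least eigenvalue $\pi^2/(4(\omega-\tau)^2)$ of $-u''=\lambda u$ with boundary conditions $u'(\tau)=u(\omega)=0$---gives $\int_\tau^\omega c^2\,dt\le \tfrac{4(\omega-\tau)^2}{\pi^2}\int_\tau^\omega (c')^2\,dt$, and likewise on $[-\omega,-\tau]$. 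Adding the two and majorizing by the integral of $(c')^2$ over the whole interval, then dividing by $\int(c')^2>0$, one obtains $1\le 4M(\omega-\tau)^2/\pi^2$, which is the stated inequality.

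The main obstacle is not the variational step, which is elementary, but the preliminary sign analysis: verifying that $\pm\tau$ are exactly the zeros of $\kappa$ on $[-\omega,\omega]$ and that $\epsilon\kappa(\omega)>0$, both of which require a careful combination of condition (iii) of Theorem \ref{famille SPC} with the symmetries of $\beta$ and $\kappa$ and with Fact \ref{Fait sur le signe de k en les orbites critiques}. The sharp constant $\pi^2/4$ then falls out directly from the one-sided Dirichlet/Neumann eigenvalue, which explains why the result is expressed in $(\omega-\tau)^2$ rather than in $\omega^2$ as in Lemma \ref{lemme inégalité (1)}.
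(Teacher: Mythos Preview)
Your argument is correct and is genuinely different from the paper's. The paper does not work with the energy identity for $c$ at all: it introduces the first positive zero $b$ of $c'$, observes $\tau<b<\omega$, passes to the auxiliary equation $(\kappa^{-1}y')'+\epsilon y=0$ satisfied by $c'$ on $[b,\omega]$, and then invokes the Breuer--Gottlieb eigenvalue bound (Theorem 5 of \cite{4}) for the Sturm--Liouville problem with conditions $y(b)=y'(\omega)=0$. This yields the intermediate inequality $(\omega-b)\int_b^\omega \epsilon\kappa\,dt\ge \pi^2/4$, which is then weakened using $b>\tau$ and $\epsilon\kappa\le\epsilon\kappa(\omega)$. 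Your route---energy identity on $[-\omega,\omega]$, discard the nonpositive contribution on $[-\tau,\tau]$, apply the sharp one-endpoint Poincar\'e inequality on each of $[\tau,\omega]$ and $[-\omega,-\tau]$---is more elementary and entirely self-contained, avoiding both the auxiliary equation for $c'$ and the external reference \cite{4}. The paper's argument has the slight advantage of producing the sharper intermediate estimate in terms of $b$ rather than $\tau$; yours has the advantage that the $\pi^2/4$ constant appears transparently as the first mixed Dirichlet--Neumann eigenvalue on an interval of length $\omega-\tau$, which makes the role of $(\omega-\tau)^2$ in the final inequality immediately clear.
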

\begin{proof}
	By assumption, $c(\omega)=0$. Let $b>0$ be the smallest positive real such that $c^{'}(b)=0$. If we assume $\kappa$ to vanish twice in the period of $f$, then the zeros of $\kappa$ are on both sides of a critical orbit of $K$ corresponding to an extremum of $f$. Indeed, since $\kappa(t)$ is symmetric with respect to $t=0$ and $t= \omega$, either both zeros of it are in the same band or both of them are on the critical orbits of $K$ where $f^{'}$ changes sign. The latter implies that $\kappa$ is everywhere positive or everywhere negative on the torus, so it cannot actually happen, unless the torus is flat. In addition, $\kappa$ changes sign while vanishing.  
	%Knowing that $\kappa$ has sign $-\epsilon$ on the critical orbit of $K$ of type $\epsilon$, if we assume $\kappa$ to vanish twice in the period of $f$, then the zeros of $\kappa$ are necessarily simple and they are in both sides of the critical orbits of $K$.  
 It follows that $\kappa$ has sign $-\epsilon$ on $[0,\tau[$ and $\epsilon$ on $]\tau,\omega]$, so that $c$ is convex on $[0,\tau]$ and concave on $[\tau,\omega]$, hence $\tau < b < \omega$. The curvature does not vanish on $[b,\omega]$; on this interval  it is easy to check that the differential equation 
	$$\big{(}\frac{1}{\kappa} y^{'}\big{)}^{'} + \epsilon y =0$$
	is satisfied by $u^{'}$, where $u$ is a solution of the Jacobi equation. Let $\lambda_1$ be the least positive eigenvalue of the system
	\begin{align*}
\big{(}\frac{1}{\kappa} y^{'}\big{)}^{'} +\lambda \epsilon y &=0, \;\;\;\;\;t \in I=[b, \omega], \\
	y(b)&=y^{'}(\omega)=0.
	\end{align*} 
	The assumption on $c$ leads to $c^{''}(\omega)=0$, so we have  $c^{''}(\omega)=c^{'}(b)=0$, and  $c^{''}$ does not vanish on $[b,\omega[$; it follows that $\lambda_1=1$, as in the proof of the previous lemma.\\
	We have (see Theorem 5, \cite{4}) 
	\begin{align}\label{inégalité ici}
\max_{t \in I} \frac{\omega-t}{\int_{t}^{\omega} \epsilon \kappa(t) dt} \geq \frac{\pi^2}{4(\int_{b}^{\omega} \epsilon \kappa(t) dt)^2}.
	\end{align}
	Now set $$h(t)=\frac{\omega -t}{\int_{t}^{\omega} \epsilon \kappa(t) dt}, \;\; t \in [b,\omega],$$
	The same argument as in the previous lemmas shows the existence of $\theta \in ]t,\omega[$, such that
	$$h^{'}(t)=(\omega -t)(-\epsilon \kappa(\theta) + \epsilon \kappa(t)) \frac{1}{(\int_{t}^{\omega} \epsilon \kappa(t) dt)^2},$$
	so that $h$ is a decreasing function on $[b,\omega]$, and (\ref{inégalité ici}) reads 
	$$(\omega-b) \int_{b}^{\omega} \epsilon \kappa(t) dt \geq \frac{\pi^2}{4}.$$
	Combining this with the fact that $\epsilon \kappa(t)$ reaches its maximum at $t=\omega$, and that $b>\tau$, we get the desired inequality:
	$$(\omega - \tau)^2 \epsilon \kappa(\omega) \geq \frac{\pi^2}{4}.$$	
\end{proof}
\subsection{Application: Invariant geodesics with non-periodic Jacobi vector field}\label{Application: Invariant geodesics with non-periodic Jacobi vector field}
In what follows, $f \in C^{\infty}(\R,\R)$ belongs to the family given in Theorem \ref{famille SPC}. 
\begin{proposition}\label{proposition inégalité (1)}
	Suppose $\kappa$ vanishes twice on the smallest period of $f$. If there  exists an invariant geodesic $\gamma_0$ on which $\kappa$ changes sign, such that 
	\begin{align}\label{inégalité (1)}
	-\epsilon f^{''}(x_0) \Big{(}\int_{x_0}^{x_1} \frac{dx}{\sqrt{M_{\epsilon} -\epsilon f(x)}}\Big{)}^2 < 2 \pi^2,
	\end{align}
	where  $x_0$, $x_1$ are the coordinates of two consecutive points where $\gamma_0$ is tangent to $K$, and $M_{\epsilon}=\sup \epsilon f(x)$, then, either $\kappa$ is constant on the band where $\epsilon f$ is negative, or for all the geodesics such that $C^2 \leq C_0^2=\epsilon f(x_0)$, where $C_0$ is the Clairaut constant of $\gamma_0$, the Jacobi equation admits a non-periodic solution.
\end{proposition}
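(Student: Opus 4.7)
The plan is to argue by contradiction using Lemma \ref{lemme inégalité (2)}. Assume that $\kappa$ is not constant on the band where $\epsilon f<0$, and that there exists an invariant geodesic $\gamma$ with Clairaut constant $C$, $C^{2}\le C_{0}^{2}$, whose Jacobi equation admits only periodic solutions. If $\kappa$ had constant sign along $\gamma$, the corollary after Lemma \ref{lemme inégalité (1)} would force $\kappa$ to be constant along $\gamma$; but $\gamma$ sweeps the full interval $[x_{0}^{\gamma},x_{1}^{\gamma}]$, which by Lemma \ref{Behavior of geodesics} contains both null orbits bounding the type I band, and hence the whole type I band. This would make $\kappa$ constant on the entire type I band, contradicting our hypothesis. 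Hence $\kappa$ changes sign along $\gamma$, and Lemma \ref{lemme inégalité (2)} gives
\begin{equation*}
(\omega_{\gamma}-\tau_{\gamma})^{2}\,\epsilon\,\kappa_{\gamma}(\omega_{\gamma})\;\ge\;\pi^{2}/4.
\end{equation*}

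Next I make both factors explicit. The point $\gamma(\omega_{\gamma})$ is the unique critical orbit of $K$ in the type I band crossed by $\gamma$; call its $x$-coordinate $x^{*}$. This point is independent of $\gamma$, so $\epsilon\,\kappa_{\gamma}(\omega_{\gamma})=\tfrac{1}{2}\,\epsilon f''(x^{*})$. Using the first integral $x'(t)^{2}=C^{2}-\epsilon f(x)$,
\begin{equation*}
\omega_{\gamma}-\tau_{\gamma}=\int_{x_{\gamma}^{\tau}}^{x^{*}}\frac{dx}{\sqrt{C^{2}-\epsilon f(x)}},
\end{equation*}
where $x_{\gamma}^{\tau}$ is the first zero of $f''$ along the trajectory after $x_{0}^{\gamma}$. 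The central step is then to bound the left-hand side of the first display above, uniformly in $C\le C_{0}$, by $\tfrac{1}{8}(-\epsilon f''(x_{0}^{(0)}))\bigl(\int_{x_{0}^{(0)}}^{x_{1}^{(0)}}(M_{\epsilon}-\epsilon f)^{-1/2}\,dx\bigr)^{2}$, so that hypothesis (\ref{inégalité (1)}) yields the strict inequality $<\pi^{2}/4$ and contradicts the previous display.

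To obtain this bound I would exploit three ingredients: first, the reflection symmetry (iv), which makes $(M_{\epsilon}-\epsilon f)^{-1/2}$ symmetric about $x^{*}$ and allows the hypothesis integral to split as $2\int_{x_{0}^{(0)}}^{x^{*}}(M_{\epsilon}-\epsilon f)^{-1/2}\,dx$; second, the monotonicity $f'\cdot f'''\le 0$ from (iii), which controls the variation of $f''$ on each half-band and ties $\epsilon f''(x^{*})$ to $-\epsilon f''(x_{0}^{(0)})$ through the intermediate values along the trajectory; and third, the inclusion $[x_{\gamma}^{\tau},x^{*}]\subset[x_{0}^{(0)},x^{*}]$ valid for $C\le C_{0}$, which must compensate for the ``wrong'' direction of the pointwise inequality $(C^{2}-\epsilon f)^{-1/2}\ge(M_{\epsilon}-\epsilon f)^{-1/2}$ by exploiting the shorter interval of integration on the side of $\omega_{\gamma}-\tau_{\gamma}$.

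The hardest step is precisely the transfer of information from $f''(x^{*})$, which appears in Lemma \ref{lemme inégalité (2)}, to $f''(x_{0}^{(0)})$, which appears in hypothesis (\ref{inégalité (1)}): these are values of $f''$ at distinct points in different bands. If a direct integral comparison turns out too weak, the alternative route I would take is to set up a Sturm--Liouville eigenvalue problem on $[x_{0}^{(0)},x_{1}^{(0)}]$ in the $x$-variable, with a weight involving $(M_{\epsilon}-\epsilon f)^{-1/2}$, and apply Theorems 5 and 6 of \cite{4} directly, mirroring the argument used in Lemmas \ref{lemme inégalité 0, contrainte sur kappa} through \ref{lemme inégalité (2)}.
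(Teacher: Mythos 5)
Your overall strategy---contradict a lower bound that must hold when all Jacobi solutions are periodic---is the right one, but you have picked the wrong lower bound, and the step you yourself flag as ``the hardest'' is not merely hard: it cannot be carried out. Hypothesis (\ref{inégalité (1)}) is shaped for Lemma \ref{lemme inégalité (1)}, not Lemma \ref{lemme inégalité (2)}. It controls $-\epsilon f''(x_0)$, the curvature at the \emph{tangency} point $t=0$, together with (a lower bound for) the full half-period $2\omega=\int_{x}^{d-x}(C^2-\epsilon f)^{-1/2}\,dt$; these are exactly the two quantities entering the criterion $-\epsilon\kappa(0)<\pi^2/(4\omega^2)$ of Lemma \ref{lemme inégalité (1)}. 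By contrast, Lemma \ref{lemme inégalité (2)} involves $\epsilon\kappa(\omega)=\tfrac12\epsilon f''(x_{cr}^{-\epsilon})$ at the critical orbit of the neighbouring type~I band, a quantity about which hypothesis (\ref{inégalité (1)}) says nothing. The structural condition $f'f'''\le 0$ only gives that $\epsilon f''$ is non-decreasing between $x_{cr}^{\epsilon}$ and $x_{cr}^{-\epsilon}$, i.e.\ a \emph{lower} bound $\epsilon f''(x_{cr}^{-\epsilon})\ge \epsilon f''(x_0)$, whereas your plan needs an \emph{upper} bound on $\epsilon f''(x_{cr}^{-\epsilon})$ in terms of $-\epsilon f''(x_0)$; no such bound is available, and supplying one is precisely the role of the separate hypothesis (\ref{inégalité (2)}) in Proposition \ref{proposition inégalité (2)}. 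Compounding this, the monotonicity in $C$ runs against you: $\omega_C-\tau_C$ \emph{increases} as $C$ decreases, so $(\omega_C-\tau_C)^2\epsilon\kappa(\omega_C)$ is largest exactly in the regime $C^2\le C_0^2$ of this proposition; Lemma \ref{lemme inégalité (2)} is the effective tool for $C^2\ge C_0^2$, which is Proposition \ref{proposition inégalité (2)}'s regime. The two propositions are complementary in precisely this way, and your proposal conflates them.

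The paper's argument instead targets the conclusion of Lemma \ref{lemme inégalité (1)}: for the geodesic tangent to $K$ at $x$ one has $-\epsilon\kappa(0)=-\tfrac12\epsilon f''(x)$ and $2\omega=\int_{x}^{d-x}(C^2-\epsilon f)^{-1/2}\,dt$ with $C^2=\epsilon f(x)$, so it suffices to show $-\tfrac12\epsilon f''(x)<\pi^2\big/\big(\int_x^{d-x}(C^2-\epsilon f)^{-1/2}\,dt\big)^2$ for all $x\in[x_0,0]$. This is done by introducing the auxiliary function
\begin{equation*}
h(x)=-\tfrac12\epsilon f''(x)-\pi^2\Big/\Big(\int_x^{d-x}\tfrac{dt}{\sqrt{M_\epsilon-\epsilon f(t)}}\Big)^2 ,
\end{equation*}
showing via $f'f'''\le 0$ and the sign of $f'$ on $[x_0,0]$ that $h$ is decreasing there, and feeding in the hypothesis, which is exactly $h(x_0)<0$. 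Your fallback suggestion (an ad hoc Sturm--Liouville problem in the $x$-variable) is too vague to assess and is not needed once the correct lemma is used. I recommend you rebuild the proof around Lemma \ref{lemme inégalité (1)} and the monotone comparison function $h$.
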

\begin{proof}
Let $z_0=0$ and $d$ be two consecutive zeros of $f$, such that $z_0$ and $d$ border the part of $f$ of sign $- \epsilon$. Denote by $x_{cr}^{\epsilon}$ the coordinate of a zero of $f^{'}$, such that either $x_0 \in  [x_{cr}^{\epsilon},0]$ or $x_0 \in [d,x_{cr}^{\epsilon}]$; for simplicity, assume $x_0 \in [x_{cr}^{\epsilon},0]$. We have $x_1=d-x_0$. 
Define $$h(x)=-\epsilon \frac{f^{''}(x)}{2} -\frac{\pi^2}{\Big{(}\int_{x}^{d-x}\frac{dt}{\sqrt{M_{\epsilon}-\epsilon f(t)}}\Big{)}^2},$$
where $x\in J=[x_0,0]$; $h$ is a derivable function since for $x \in J$, $M_{\epsilon}-\epsilon f(t) > 0$ for every $t \in [x,d-x]$.\\
A simple computation gives 
$$h^{'}(x)=-\epsilon \frac{f^{(3)}(x)}{2} - \frac{2\pi^2}{\sqrt{M_{\epsilon}-\epsilon f(x)}\Big{(}\int_{x}^{d-x} \frac{dt}{\sqrt{M_{\epsilon}-\epsilon f(t)}}\Big{)}^3}.$$
Multiply both sides by $f^{'}(x)$; this gives
$$f^{'}(x)h^{'}(x)=-\epsilon \frac{f^{(3)}(x)f^{'}(x)}{2} - D^2 f^{'}(x),$$ where $D^2=\frac{2\pi^2}{\sqrt{M_{\epsilon}-\epsilon f(x)}\Big{(}\int_{x}^{d-x} \frac{dt}{\sqrt{M_{\epsilon}-\epsilon f(t)}}\Big{)}^3}.$\\
By assumption, we have $f^{'}f^{(3)} \leq 0$; furthermore, $f^{'}$ has sign $-\epsilon$ on $J$; combining these two facts, we deduce that $h$ is a decreasing function on $J$. Now, for $C^2 \leq C_0^2$ \;i.e. \;$x \in [x_0,0]$, we have 
$$h(x_0) \geq -\epsilon \frac{f^{''}(x)}{2} -\frac{\pi^2}{(\int_{x}^{d-x} \frac{dt}{\sqrt{C_0^2-\epsilon f(t)}})^2},$$
so if we suppose that (\ref{inégalité (1)}) is true, then $$-\epsilon \frac{f^{''}(x)}{2} -\frac{\pi^2}{\Big{(}\int_{x}^{d-x} \frac{dt}{\sqrt{C^2-\epsilon f(t)}}\Big{)}^2} < 0, \;\; \forall x \in J.$$ 
Now, recall that $\kappa(t)=f^{''}(x(\gamma(t)))/2$, and $t(d-x)-t(x)=\int_{x}^{d-x} \frac{1}{\sqrt{C^2-\epsilon f(x)}} dx$. So the latter inequality is equivalent to $- \epsilon \kappa(0) < \frac{\pi^2}{4 \omega^2}$ on $\gamma_C$, which ends the proof by Lemma \ref{lemme inégalité (1)}.
\end{proof}
\begin{proposition}\label{proposition inégalité (2)}
	Suppose $\kappa$ vanishes twice on the smallest period of $f$. Assume there  exists an invariant geodesic $\gamma_0$ on which $\kappa$ changes sign, such that 
	\begin{align}\label{inégalité (2)}
	\epsilon f^{''}(x_{cr}^{-\epsilon}) \Big{(}\int_{\zeta_0}^{\zeta_1} \frac{dx}{\sqrt{\epsilon f(x_0) -\epsilon f(x)}}\Big{)}^2 < 2 \pi^2,
	\end{align}
	where \\
	i) $x_{cr}^{-\epsilon}$ is a critical point of $f$ corresponding to the critical orbit of $K$ of type $-\epsilon$;\\
	ii) $\zeta_0 <x_{cr}^{-\epsilon}< \zeta_1$ are the coordinates of two consecutive points where $f^{''}$ vanishes, from both sides of $x_{cr}^{-\epsilon}$;\\
	iii) $x_0$ the point where $\gamma_0$ is tangent to $K$.\\
	Then for all the geodesics with $C_0^2=\epsilon f(x_0) \leq C^2 < M_{\epsilon}$, where $C_0$ is the Clairaut constant of $\gamma_0$, the Jacobi equation admits a non-periodic solution.
\end{proposition}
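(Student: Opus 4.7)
The plan is to argue by contradiction, paralleling the proof of Proposition \ref{proposition inégalité (1)} with Lemma \ref{lemme inégalité (2)} replacing Lemma \ref{lemme inégalité (1)}. Suppose some geodesic $\gamma_C$ with Clairaut constant $C \in [C_0, \sqrt{M_\epsilon})$ has a Jacobi equation admitting only periodic solutions. Since $C^2 \geq C_0^2$ and $\gamma_C$ is tangent to $K$ at a point of $x$-coordinate $x_0^{(C)}$ with $\epsilon f(x_0^{(C)}) = C^2$, the range of the $x$-coordinate of $\gamma_C$ contains that of $\gamma_0$, hence contains both inflection points $\zeta_0, \zeta_1$; in particular $\kappa$ changes sign along $\gamma_C$, and Lemma \ref{lemme inégalité (2)} applies to give $(\omega_C - \tau_C)^2\, \epsilon \kappa(\omega_C) \geq \pi^2/4$.

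The crux of the argument is the translation of this geometric inequality into an inequality on $f$. At $t = \omega_C$ the geodesic passes through $x_{cr}^{-\epsilon}$, so $\epsilon \kappa(\omega_C) = \tfrac{1}{2}\epsilon f''(x_{cr}^{-\epsilon})$. Using the first integral $x'(t)^2 = C^2 - \epsilon f(x)$ from Proposition \ref{Complétude dans une bande}, together with the symmetry of $f$ about $x_{cr}^{-\epsilon}$ (a consequence of condition (iv) of Theorem \ref{famille SPC}), the time between the inflection $\zeta_0$ and the critical orbit is
\begin{equation*}
\omega_C - \tau_C \;=\; \int_{\zeta_0}^{x_{cr}^{-\epsilon}} \frac{dx}{\sqrt{C^2 - \epsilon f(x)}} \;=\; \frac{1}{2}\int_{\zeta_0}^{\zeta_1} \frac{dx}{\sqrt{C^2 - \epsilon f(x)}},
\end{equation*}
so Lemma \ref{lemme inégalité (2)} transforms into
\begin{equation*}
\epsilon f''(x_{cr}^{-\epsilon}) \Big(\int_{\zeta_0}^{\zeta_1} \frac{dx}{\sqrt{C^2 - \epsilon f(x)}}\Big)^2 \;\geq\; 2\pi^2.
\end{equation*}

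To finish, I exploit monotonicity in $C$. On $[\zeta_0, \zeta_1]$ one has $\epsilon f \leq C_0^2 \leq C^2$, so $\sqrt{C^2 - \epsilon f(x)} \geq \sqrt{C_0^2 - \epsilon f(x)}$ and the integral in the last display is non-increasing in $C$. Since $\epsilon f''(x_{cr}^{-\epsilon}) > 0$ (because $x_{cr}^{-\epsilon}$ is a non-degenerate extremum of $\epsilon f$, by condition (ii) of Theorem \ref{famille SPC}), multiplying preserves the direction of the inequality, yielding
\begin{equation*}
\epsilon f''(x_{cr}^{-\epsilon}) \Big(\int_{\zeta_0}^{\zeta_1} \frac{dx}{\sqrt{C_0^2 - \epsilon f(x)}}\Big)^2 \;\geq\; 2\pi^2,
\end{equation*}
in direct contradiction with hypothesis (\ref{inégalité (2)}). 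The only delicate step is the translation yielding the expression for $\omega_C - \tau_C$; everything else is routine once one notices that the hypothesis is formulated with $\epsilon f(x_0)$ under the radical precisely so that this single estimate on $\gamma_0$ controls, via the above monotonicity, all geodesics with larger Clairaut constant.
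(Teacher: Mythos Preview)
Your proof is correct and follows essentially the same approach as the paper: both arguments use Lemma~\ref{lemme inégalité (2)} as the key input, translate $(\omega_C-\tau_C)^2\,\epsilon\kappa(\omega_C)$ into the integral expression in $x$ via $x'(t)^2=C^2-\epsilon f(x)$ and $\kappa(\omega_C)=\tfrac12 f''(x_{cr}^{-\epsilon})$, and then exploit monotonicity in $C$ of the integral $\int_{\zeta_0}^{\zeta_1}(C^2-\epsilon f)^{-1/2}\,dx$. The paper states this more tersely (noting that $\omega_C-\tau_C$ and $\epsilon\kappa(\omega_C)$ are decreasing in $C$, the latter being in fact constant) while you spell out the translation and frame it by contradiction, but the substance is identical.
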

\begin{proof} 
Inequality (\ref{inégalité (2)}) implies that $(\omega_0 - \tau_0)^2 \epsilon \kappa(\omega_0) < \frac{\pi^2}{4}$ on $\gamma_0$, where $\tau_0$ is the same as in Lemma \ref{lemme inégalité (2)}, hence the Jacobi equation on $\gamma_0$ admits a non-periodic solution, by use of Lemma \ref{lemme inégalité (2)}.\\
This conclusion holds for $C^2 \geq C_0^2$, for $(\omega_C - \tau_C)$ and $\epsilon \kappa(\omega_C)$ (we put $C$ in index to say that it depends on the geodesic $\gamma_C$) are decreasing functions of $C$.
\end{proof}
\subsection{Stability by small deformation}\label{Section: Stability by small deformation}
\begin{lemma}\label{lemme géod près du bord}
	Let $g \in \mathcal{L}_K^*(T)$. Assume $\kappa$ vanishes $\mathfrak{n}$ times on the smallest period of $f$, hence twice between any two critical orbits of the Killing field of same type ($\mathfrak{n}$ is the number of zeros of $f$ in a period), and that these zeros are not on the extremums of $f$. Assume in addition that the zeros of $\kappa$ are simple zeros.  Let $\gamma_{\infty}$ be a critical orbit of $K$ corresponding to an extremum of $f$. Then there exists a neighborhood $V$ of the set $\{(g,\gamma_{\infty}(t), t \in \R)\}$ in $\mathcal{L}_K(T) \times T$, where $\mathcal{L}_K(T)$ is equipped with the $C^{\infty}$ topology, such that $\mathcal{Z} >0$ on $V \cap \Omega$. 
\end{lemma}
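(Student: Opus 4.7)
The strategy is to argue by contradiction: suppose there is a sequence $(g_n,p_n)\in\Omega$ with $(g_n,p_n)\to(g,p)$ for some $p\in\gamma_\infty$ and $\mathcal{Z}(g_n,p_n)\leq 0$. Since $\gamma_\infty$ is compact, ruling this out at each such $p$ suffices to produce the required neighborhood $V$. Let $\gamma_n$ denote the $g_n$-geodesic tangent to $K_n$ at $p_n$; it is an invariant geodesic with Clairaut constant $C_n^2=\epsilon g_n(K_n,K_n)(p_n)\to M_\epsilon:=\epsilon f(x_{cr}^{\epsilon})$. From the explicit integrals
\[
t_0^n=\int_{x_n(p_n)}^{x_{00}^n}\frac{dx}{\sqrt{C_n^2-\epsilon f_n(x)}},\qquad t_1^n-t_0^n=\int_{x_{00}^n}^{x_{01}^n}\frac{dx}{\sqrt{C_n^2+|\epsilon f_n(x)|}},
\]
one reads off that $t_0^n,t_1^n\to\infty$ (the singularity at the lower endpoint becomes non-integrable as $f_n'(x_n(p_n))\to 0$ while the quadratic term $f_n''(x_{cr}^{\epsilon})=2\kappa_\infty\neq 0$ survives) whereas $t_1^n-t_0^n$ stays bounded.

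The key sign input is that $\epsilon\kappa_\infty<0$ strictly, where $\kappa_\infty:=f''(x_{cr}^{\epsilon})/2$ is the constant value of $\kappa$ along $\gamma_\infty$: Fact \ref{Fait sur le signe de k en les orbites critiques} gives $\epsilon\kappa_\infty\leq 0$, and the strictness comes from the hypothesis that the zeros of $\kappa$ are simple and do not lie on the extrema of $f$. I would then fix $\delta,b>0$ with $\epsilon\kappa_n\leq -b^2$ throughout the tube $\{|x-x_{cr}^{\epsilon}|\leq\delta\}$ for $n$ large, and denote by $T_\delta^n$ the exit time of $\gamma_n$ from this tube; the same integral analysis yields $T_\delta^n\to\infty$ with $t_0^n-T_\delta^n$ bounded. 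On $[0,T_\delta^n]$ the Jacobi equation $u''+\epsilon\kappa_n u=0$ admits a Sturm--Wronskian comparison with $u''=b^2u$, forcing $c_n(t)\geq\cosh(bt)$ and $s_n(t)\geq\sinh(bt)/b$; so $c_n(T_\delta^n),s_n(T_\delta^n)\to\infty$ exponentially, and since $s_n$ is proportional to the periodic Jacobi field $\beta_n$, which does not vanish on $(0,2\omega_n)$, we also have $s_n(t_0^n)>0$.

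To handle the bounded transition region $[T_\delta^n,t_1^n]$, the plan is to renormalize: set $\widehat c_n(s):=c_n(T_\delta^n+s)/c_n(T_\delta^n)$ and similarly $\widehat s_n$. The reparametrized geodesics $\gamma_n(T_\delta^n+\cdot)$ converge on $[0,L]$, where $L:=\lim(t_1^n-T_\delta^n)$ (after passing to a subsequence), to a limit geodesic $\widehat\gamma_\infty$ of $g$ with Clairaut constant $C_\infty=\sqrt{M_\epsilon}$ crossing the null orbit and type I band; the coefficients of the Jacobi equation converge uniformly on this interval. The normalized initial data $(1,\,c_n'(T_\delta^n)/c_n(T_\delta^n))$ converge, the logarithmic derivative tending to the hyperbolic rate $a:=\sqrt{-\epsilon\kappa_\infty}$ by a standard asymptotic argument on the lengthening Sturm interval $[0,T_\delta^n]$. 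Continuous dependence of ODE solutions on coefficients and initial data then yields $\widehat c_n\to\widehat c_\infty$ uniformly on $[0,L]$, and a direct check using the structure of the limit equation (whose coefficient is bounded on $[0,L]$ and which admits exponential-type asymptotics near its backward end) shows $\widehat c_\infty>0$ throughout. Consequently $c_n(t_0^n),c_n(t_1^n)>0$ for $n$ large, so $\mathcal{Z}(g_n,p_n)>0$, contradicting the assumption.

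The main obstacle is the last paragraph's technical steps: identifying the limit geodesic $\widehat\gamma_\infty$, proving convergence of the normalized logarithmic derivative to the hyperbolic rate $a$, and verifying positivity of $\widehat c_\infty$ on the full interval $[0,L]$. Each of these combines Sturm--Liouville asymptotic analysis on an unboundedly lengthening interval with the explicit geometry of the extension $E^u_f$ near the null orbits, for which the convergence results for the local charts and Killing fields from Section 5.1 (Proposition \ref{Convergence champs Killing}, Corollary \ref{Convergence des cartes locales}, Lemma \ref{Convergence des cartes des selles}) provide the necessary uniform control.
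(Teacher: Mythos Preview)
Your overall setup is right (contradiction, sequence $(g_n,p_n)\to(g,p)$, $t_0^n\to\infty$ with $t_1^n-t_0^n$ bounded, $\epsilon\kappa_\infty<0$ strictly), and the Sturm comparison giving $c_n,s_n\to\infty$ exponentially on the tube is valid. But the renormalization step has a genuine gap that you yourself flag. Two specific problems: (i) the points $\gamma_n(T_\delta^n)$ escape to infinity in the $y$-direction, so to speak of a ``limit geodesic $\widehat\gamma_\infty$'' you must first translate by the flow of $K_n$; you do not mention this and it affects what the limit actually is. (ii) Even granting the limit, the claim that $\widehat c_\infty>0$ on all of $[0,L]$ is not a ``direct check'': the limit Jacobi equation along $\widehat\gamma_\infty$ has coefficient $\epsilon\kappa$ which becomes positive beyond $\zeta_0$ (and possibly remains so past $t_1$), so oscillation is not a priori excluded, and nothing you have written rules out $\widehat c_\infty(L)\leq 0$. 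The convergence of $c_n'(T_\delta^n)/c_n(T_\delta^n)$ to the exact hyperbolic rate $a$ is also unproven for fixed $\delta$ (you only get it in a $\delta$-dependent range), so the limiting initial data are not pinned down.

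The paper bypasses all of this with a much more elementary device: the Wronskian identity $c_ns_n'-c_n's_n=1$. If $c_n$ had a zero $\tau_n\leq t_{\zeta_1^n}$, then $c_n'(\tau_n)=-1/s_n(\tau_n)$; since $s_n=\beta_n/\beta_n'(0)$ and $\beta_n'(0)=\tfrac{\epsilon}{2}f_n'(x_n(p_n))\to 0$, one gets $s_n(\tau_n)\to\infty$ and hence $c_n'(\tau_n)\to 0$. On the convex segment $[0,t_{\zeta_0^n}]$ one has $c_n\geq 1$, and on the bounded remainder $[\theta_n,t_{\zeta_1^n}]$ the derivative $|c_n'|$ is dominated by $|c_n'(\tau_n)|\to 0$, so by the mean value theorem $c_n$ cannot drop below $0$. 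A short variant of the same Wronskian argument handles the cases where $t_1^n>t_{\zeta_1^n}$: there $c_n(t_1^n)$ may be slightly negative, but $|c_n(t_1^n)|$ is forced small while $c_n(t_0^n)>1$, so the sum stays positive. This avoids any limit Jacobi equation, any convergence of logarithmic derivatives, and any positivity claim on an auxiliary solution.
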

\begin{proof}
This amounts to saying that the geodesics near $\gamma_{\infty}$, for metrics close enough to $g$, where $g$ is the metric on the torus, are without conjugate points. Call $B$ the band containing $\gamma_{\infty}$, and fix $p$ a point on $\gamma_{\infty}$; it is sufficient to prove that there exists a neighborhood of $(g,p)$ in which $\mathcal{Z}$ is positive; the conclusion will follow from the compactness of $\gamma_{\infty}$.
So let $(g_n,p_n)$ be a sequence in $\Omega$ converging to $(g,p)$. For all $n$, denote by $\gamma_n$ the $g_n$-geodesic tangent to $K_n$ at $p_n$. Since $(g_n,p_n) \in \Omega$, we have $\forall n, C^2_n < \sup_B \epsilon g_n(K_n,K_n)$. Choose $p$ to be the origin on $T$ and denote by $x$ (resp. $x_n$) the transverse coordinate associated to $g$ (resp. $g_n$), with the fixed origin.  We can suppose that the points $p_n$ are all on the same side of the critical orbit of $K_n$ close to $\gamma_{\infty}$. Since the zeros of $\kappa$ are simple zeros, the curvature vanishes twice between two critical orbits of $K_n$ of same type for metrics sufficiently close to $g$. Set $x=x_0, x=x_1$ (resp. $x_0^n, x_1^n$) the smallest positive reals such that $g(K,K)$ (resp. $g_n(K_n,K_n)$) vanishes, and $x=\zeta_0, x=\zeta_1$ (resp. $\zeta_0^n, \zeta_1^n$) the smallest positive reals such that $\kappa$ (resp. $\kappa_n$) vanishes. When $C^2$ varies in $I_{\lambda}=[\sup(\epsilon f(\zeta_0),\epsilon f(\zeta_1))+\lambda,\sup_B \space \epsilon g(K,K)]$, $\lambda>0$ small enough, the $g$-geodesic  $\gamma_C$ cuts the orbits $x=\zeta_0$ and $x=\zeta_1$ of $K$. Now $f_n(\zeta_{0,1}^n)$ converges to $f(\zeta_{0,1})$, and $\sup_B \space \epsilon g_n(K_n,K_n)$ converges to $\sup_B \space \epsilon g(K,K)$; this ensures the existence of $\Lambda >0$ such that for $n$ big enough, we have $C^2_n > \Lambda + \sup(\epsilon f_n(\zeta_0^n),\epsilon f_n(\zeta_1^n))$. Set $t=0$ at the point where $\gamma_n$ is tangent to $K_n$ in the torus, and denote by $t_0^n, t_1^n$ the smallest positive reals such that $\beta_n^2(t_0^n) = \beta_n^2(t_1^n) =C^2_n$, and $t_{\zeta_0}^n, t_{\zeta_1}^n$ the smallest positive reals such that $\kappa_n( t_{\zeta_0^n}) = \kappa_n (t_{\zeta_1^n})=0$. There are four cases: 
\begin{figure}[h!] 
	\labellist 
	\small\hair 2pt 
	\pinlabel $\gamma$ at 17.25 385.5 
	\pinlabel $\gamma_{\infty}$ at 135 392 
	\pinlabel $\zeta_{0}$  at 202 386 
	\pinlabel $\zeta_{1}$ at 692 393 
	\pinlabel $x_0$  at 306 386
	\pinlabel $x_1$  at 606 386 
	%\pinlabel $A$ [r] at 125 153
	\endlabellist 
	\centering 
	\includegraphics[scale=0.235]{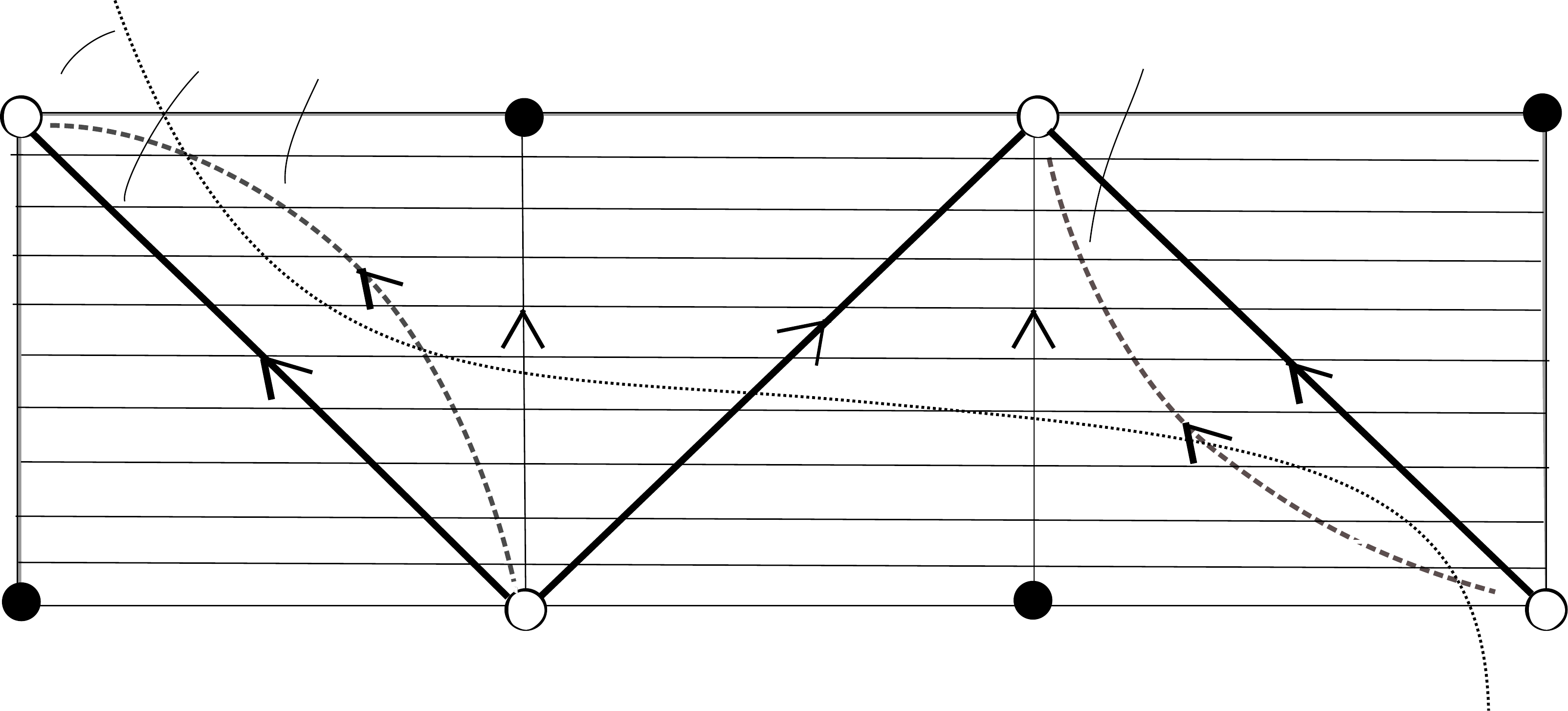} \caption{$\zeta_0 < x_0 < x_1 < \zeta_1$, i.e. \;\;$t_{\zeta_0^n} < t_0^n < \omega_n < t_1^n < t_{\zeta_1^n}$} 
	\label{fig:cobo}
\end{figure} 
\begin{figure}[h!] 
	\labellist 
	\small\hair 2pt 
	\pinlabel $\gamma$ at 12.75 387 
	\pinlabel $\gamma_{\infty}$ at 138.75 390 
	\pinlabel $\zeta_{0}$  at 202 387 
	\pinlabel $\zeta_{1}$ at 516 391 
	\pinlabel $x_0$  at 306 386
	\pinlabel $x_1$  at 606 386 
	%\pinlabel $A$ [r] at 125 153
	\endlabellist 
	\centering 
	\includegraphics[scale=0.235]{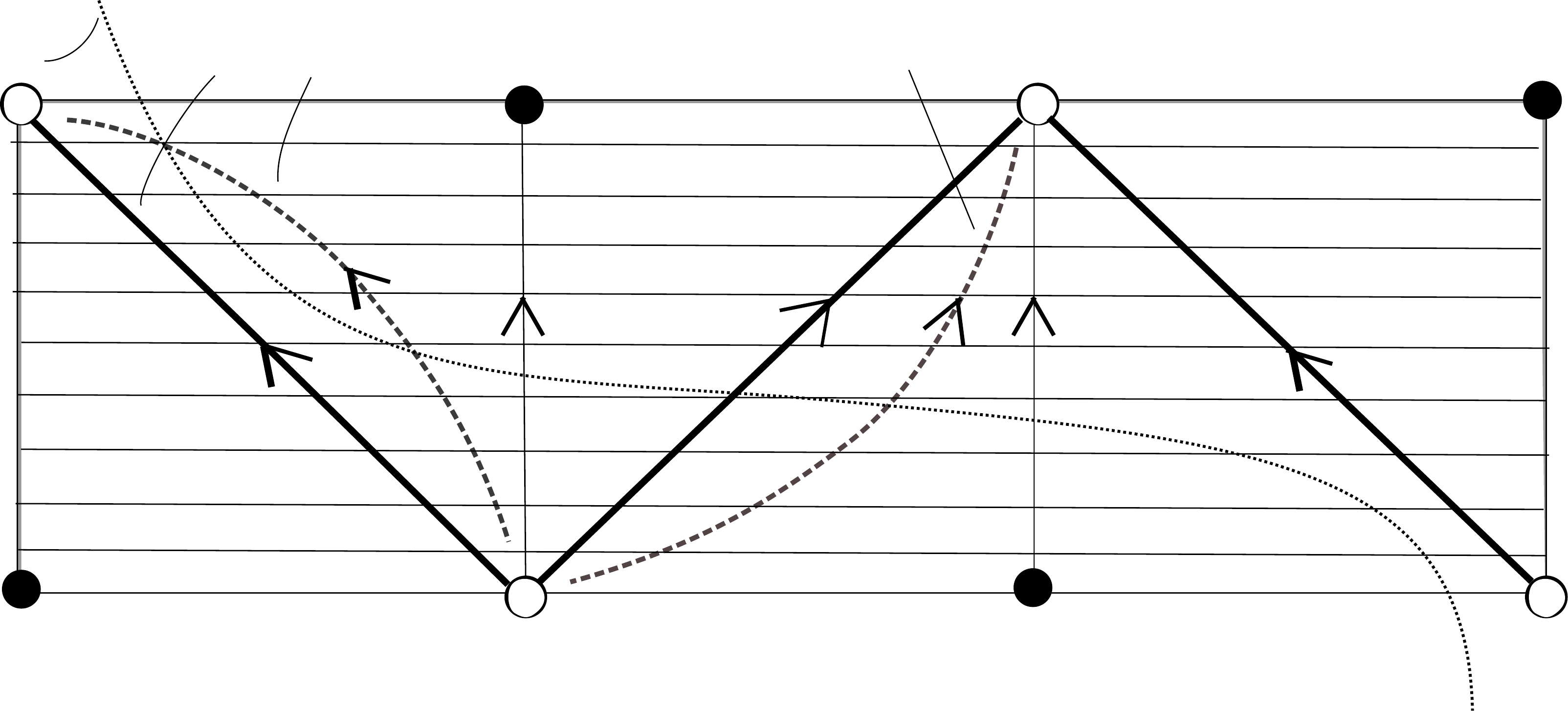} \caption{$\zeta_0 < x_0 < \zeta_1 < x_1$, i.e. \;\;$t_{\zeta_0^n} < t_0^n < \omega_n < t_{\zeta_1^n} < t_1^n$} 
	\label{fig:cobo}
\end{figure} 
\begin{figure}[h!] 
	\labellist 
	\small\hair 2pt 
	\pinlabel $\gamma$ at 13.5 382 
	\pinlabel $\gamma_{\infty}$ at 133.5 393 
	\pinlabel $\zeta_{0}$  at 407.25 388.5 
	\pinlabel $\zeta_{1}$ at 679 389.25 
	\pinlabel $x_0$  at 309 388
	\pinlabel $x_1$  at 611 388
	%\pinlabel $A$ [r] at 125 153
	\endlabellist 
	\centering 
	\includegraphics[scale=0.235]{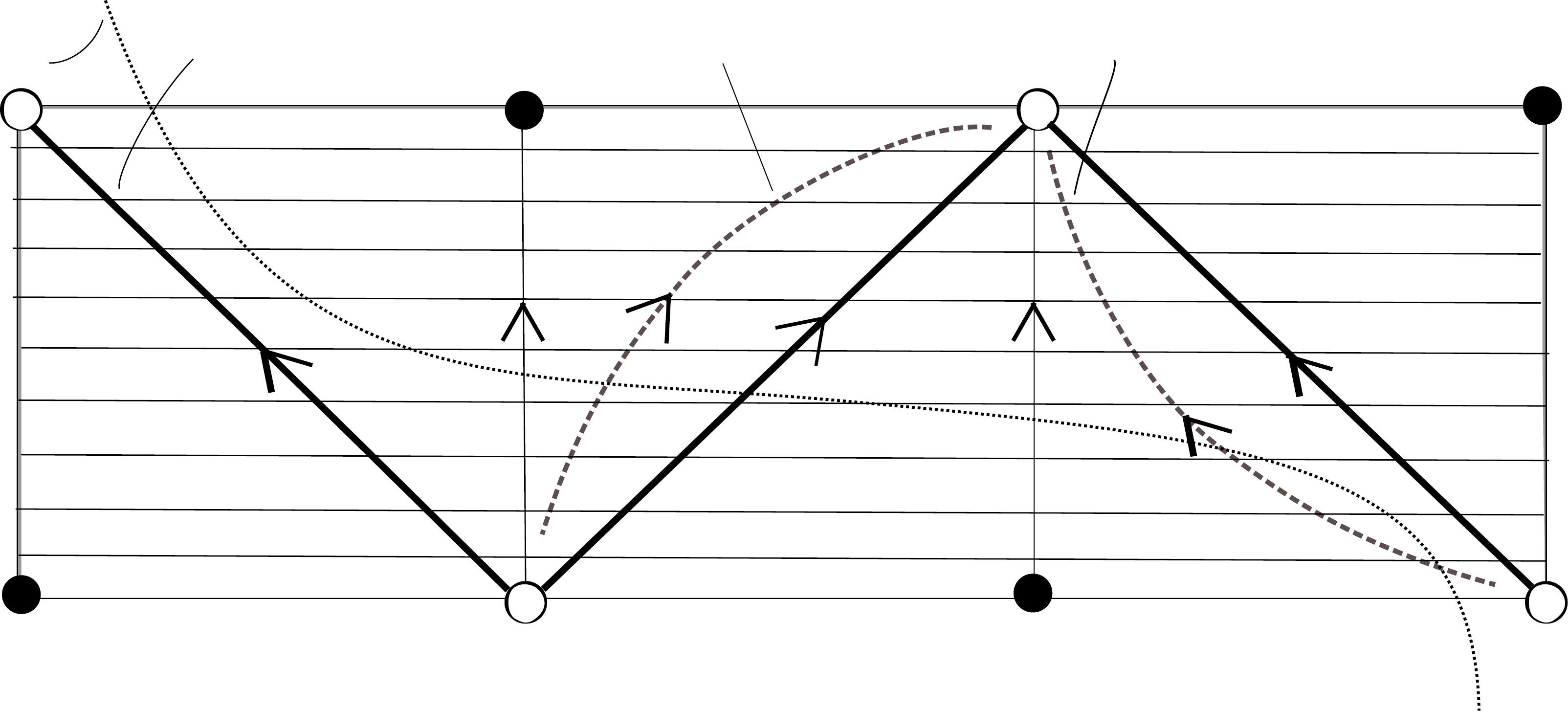} \caption{$x_0 < \zeta_0 < x_1 < \zeta_1$, i.e. \;\;$t_0^n < t_{\zeta_0^n} < \omega_n < t_1^n < t_{\zeta_1^n}$} 
	\label{fig:cobo}
\end{figure} 
\begin{figure}[h!] 
	\labellist 
	\small\hair 2pt 
	\pinlabel $\gamma$ at 12.75 382 
	\pinlabel $\gamma_{\infty}$ at 143.25 383.25 
	\pinlabel $\zeta_{0}$  at 470.25 386.25 
	\pinlabel $\zeta_{1}$ at 532.5 386.25 
	\pinlabel $x_0$  at 307.5 382
	\pinlabel $x_1$  at 611 382
	%\pinlabel $A$ [r] at 125 153
	\endlabellist 
	\centering 
	\includegraphics[scale=0.235]{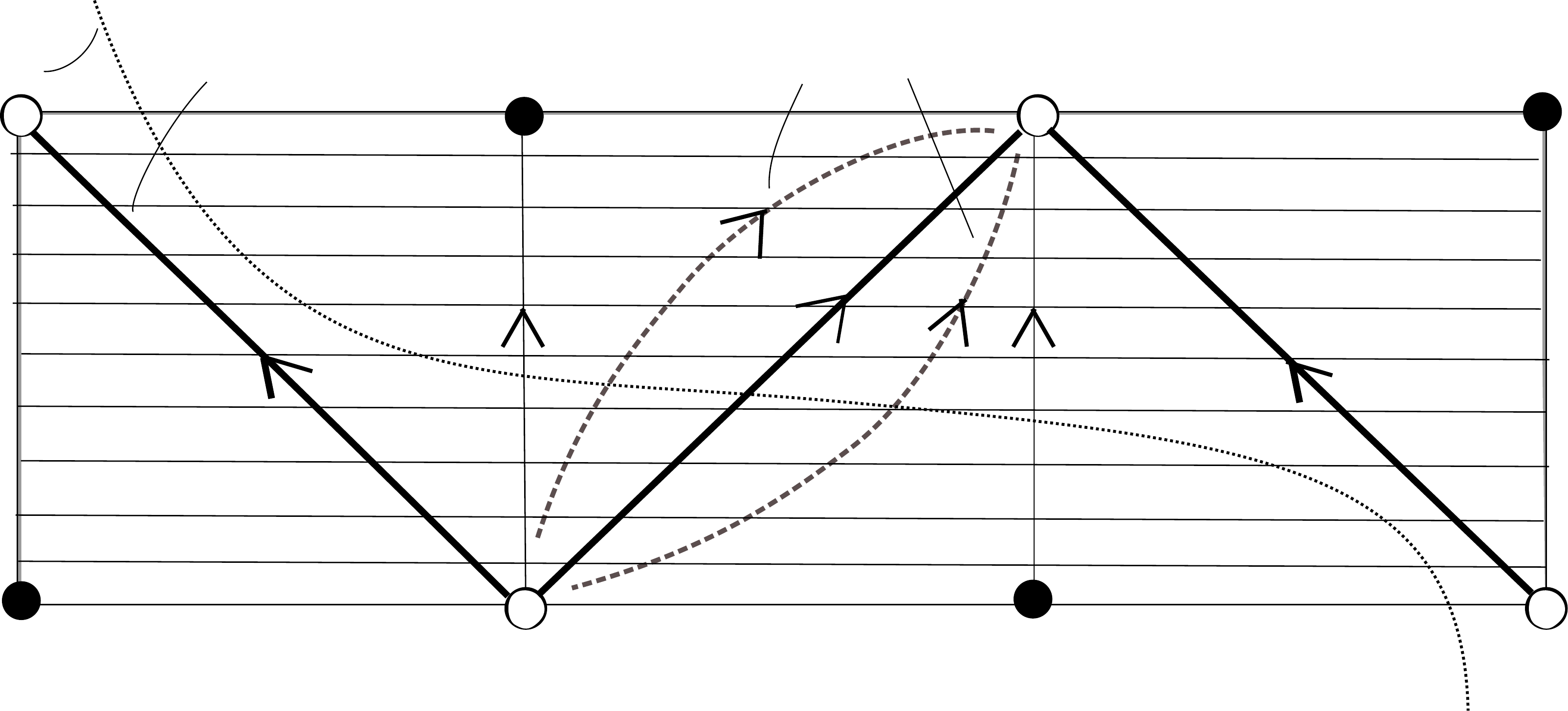} \caption{$x_0 \leq \zeta_0 < \zeta_1 \leq x_1$, i.e. \;\;$t_0^n \leq t_{\zeta_0^n} < \omega_n < t_{\zeta_1^n} \leq t_1^n$} 
	\label{fig:cobo}
\end{figure} 
\\
We first prove that the two sequences $(t_{\zeta_1^n} - t_{\zeta_0^n})_n$ and $(t_1^n - t_0^n)_n$ are bounded. 	
For this purpose, write $$t_{\zeta_1^n} - t_{\zeta_0^n} = \int_{\zeta_0^n}^{\zeta_1^n} \frac{1}{\sqrt{C^2_n - \epsilon f_n(x_n)}}\, dx_n.$$
Let $\mathcal{N}$ be a neighborhood of $(g,p)$ in $\mathcal{L}_K(T)$x$T$ in which we have $C^2_n > \Lambda + \sup(\epsilon f_n(\zeta_0^n),\epsilon f_n(\zeta_1^n))$.
For all $x \in [\zeta_0,\zeta_1]$, we have $$C^2 - \epsilon f(x) \geq \inf \{C^2 - \epsilon f(\zeta_0), C^2- \epsilon f(\zeta_1)\}=C^2 - \sup \{\epsilon f(\zeta_0), \epsilon f(\zeta_1)\},$$
hence, for metrics in $\mathcal{N}$, we get
\begin{align*}
t_{\zeta_1^n} - t_{\zeta_0^n} = \int_{\zeta_0^n}^{\zeta_1^n} \frac{1}{\sqrt{C^2_n - \epsilon f_n(x_n)}}\, dx_n &\leq  \int_{\zeta_0^n}^{\zeta_1^n} \frac{1}{\sqrt{C^2_n - \sup (\epsilon f_n(\zeta_0^n),\epsilon f_n(\zeta_1^n))}}\, dx_n\\
& \leq (\zeta_1^n - \zeta_0^n) \frac{1}{\sqrt{\Lambda}}. 
\end{align*}
Restricting to a subset of $\mathcal{N}$ if necessary, we can find a constant $A >0$ such that $t_{\zeta_1^n} - t_{\zeta_0^n} \leq A$ on this subset.  It follows that when $n$ tends to infinity, the difference $t_{\zeta_0^1} - t_{\zeta_0^n}$ is bounded, even if $t_{\zeta_0^n}$ and $t_{\zeta_1^n}$ go to infinity. Similarly, write 
$$ t_1^n - t_0^n = \int_{x_0^n}^{x_1^n} \frac{1}{\sqrt{C^2_n - \epsilon f_n(x_n)}}\, dx_n \leq (x_1^n - x_0^n) \frac{1}{\sqrt{C_n^2}}.$$
Using that $C_n$ is bounded away from zero establishes that $t_1^n - t_0^n$ is bounded, which is our assertion.  

Our next claim is that 	$c_n(t_{\zeta_1^n}) > 0$, provided $n$ is large enough. The solution $c_n$ is convex for $t \in [0,t_{\zeta_0^n}]$. Denote by $\tau_n$ and $\theta_n$ the smallest positive reals such that $c_n(\tau_n)=0$ and $c'_n(\theta_n)=0$. Suppose the claim were false. Then we could find a sequence $(n_i)_i \to \infty$ such that $\forall i$,\,  $\tau_{n_i} \leq \zeta_1^{n_i}$. We have $\beta_{n_i}(t)=\beta'_{n_i}(0) s_{n_i}(t)$; write  $\epsilon \beta^{'}(t)=\frac{1}{2}f^{'}(x(\gamma(t)))$, it appears that $\beta_{n_i}^{'}(0) \to 0$ when $i \to \infty$. Using $\langle K_n, K_n \rangle= \epsilon(C^2_n - \beta^2_n)$, we get 
$$s_{n_i}(t_{\zeta_0^{n_i}}) = \frac{C_{n_i}^2-\epsilon f_{n_i}(\zeta_0^{n_i})}{\beta'_{n_i}(0)}  \text{\;\;and\;\;} s_{n_i}(t_{\zeta_1^{n_i}}) = \frac{C^2_{n_i} - \epsilon f_{n_i}(\zeta_1^{n_i})}{\beta'_{n_i}(0)}$$ 
that tend to infinity when $i$ goes to infinity. It follows that  $s_{n_i}(\tau_{n_i}) \to \infty$; using $c_n s'_n - c'_n s_n =1$, this yields $c'_{n_i}(\tau_{n_i}) \to 0$. Now, since $\kappa_{n_i}$ has constant sign between $\theta_{n_i}$ and $\zeta_1^{n_i}$, $|c'_{n_i}(t)|$ reaches its maximal value at $t=\tau_{n_i}$; this gives for all $t \in [\theta_{n_i},\zeta_1^{n_i}]$, $|c'_{n_i}(t)| \leq |c'_{n_i}(\tau_{n_i})| $. From the mean value theorem, there exists $\mathfrak{a}_i \in [\theta_{n_i},\zeta_1^{n_i}]$  such that $c_{n_i}(\theta_{n_i}) - c_{n_i}(\zeta_1^{n_i})=c'_{n_i}(\mathfrak{a}_i)(\theta_{n_i} - \zeta_1^{n_i})$. Since $\theta_{n_i} - \zeta_1^{n_i}$ is uniformly bounded, we can make $c_{n_i}(\theta_{n_i}) - c_{n_i}(\zeta_1^{n_i})$ the smallest possible by setting $i$ large enough. Now from $c_{n_i}(\theta_{n_i}) > c_{n_i}(0)=1$, we see that provided $i$ is large enough, $c_{n_i}(\zeta_1^{n_i}) >0$, so $c_{n_i}$ does not vanish on $[0,\zeta_1^{n_i}]$, contrary to our assumption. This gives $c_n(t_0^n) >0$ for $n$ large enough, and in the cases (1) and (3), we also get  $c_n(t_1^n) >0$, making $c_n(t_0^n)+c_n(t_1^n)$ positive near the boundary.
\\ 
We are left with the cases (2) and (4). We already know that $c_n(t_{\zeta_1^n}) > 0$ near the boundary, and want to prove that $c_n (t_1^n) > 0$. We look at the set $A=\{c_n(t_1^n), c_n(t_1^n) < 0\}$. If  $A$ is finite, there is nothing to do; if it is infinite, denote its elements by $\{c_{n_i}(t_1^{n_i})\}$ and write $c_{n_i}(t_1^{n_i})-c_{n_i}(\tau_{n_i}) = c_{n_i}^{'}(\mathfrak{a}_i)(t_1^{n_i} - \tau_{n_i})$, where $\mathfrak{a}_i \in [\tau_{n_i}, t_1^{n_i}]$. We have $ 0 \geq c_{n_i}^{'}(t_1^{n_i})=\frac{c_{n_i}(t_1^{n_i})s_{n_i}^{'}(t_1^{n_i}) -1}{s_{n_i}(t_1^{n_i})} > -\frac{1}{s_{n_i}(t_1^{n_i})}$. Like before, $s_{n_i}(t_1^{n_i})$ goes to infinity when $i \to \infty$ , hence $c_{n_i}^{'}(t_1^{n_i}) \to 0$. Now, since $|c^{'}(t)| \leq |c_{n_i}^{'}(t_1^{n_i})|, \;\forall t \in [\tau_{n_i},t_1^{n_i}]$ and $t_1^{n_i} - \tau_{n_i}$ is bounded, we can make  $|c_{n_i}(t_1^{n_i})|$ the smallest possible. The proof is completed by observing that actually $c_n(t_0^n) >1$ for $n$ sufficiently large.  To do this, consider the set $B=\{c_n(t_0^n), t_0^n > \theta_n\}$. If $B$ is finite, the assertion follows, for $c_n$ is an increasing function on $[0,\theta_n]$. In the same manner, if $B$ is infinite, we exploit the fact that $c_n(t_0^n)$ is positive, and make it the closest possible to $c_n(\theta_{n_i})$. Since $c_n(\theta_{n_i}) >1$, the conclusion follows.   
 \end{proof}
\begin{theorem}\label{théorème de stabilité}
Let $(T,K)$ be a torus in the family given in Theorem \ref{famille SPC}. We exclude the case in which there are open sets of constant curvature. Assume $f$ satisfies the following conditions:\\
%(1) $f^{'}(x_n) + f^{'}(x_{n+1}) = 0,\;\forall n \in \Z $, where $(x_n)_{n \in \Z}$ is the sequence of zeros of $f$, taken in decreasing order; \\ 
(1) $\kappa$ has simple zeros; \\%In particular, this implies that $f^{'}$ has simple zeros, or equivalently, that $K$ admits one critical orbit in a band;\\
(2) There is only one critical orbit of $K$ in each band of the torus;\\
(3) For $\epsilon=\pm 1$, there exists $x_0$ in which $f$ has sign $\epsilon$, such that:	
$$-\epsilon f^{''}(x_0) \Big{(}\int_{x_0}^{x_1} \frac{dx}{\sqrt{M_{\epsilon} -\epsilon f(x)}}\Big{)}^2 < 2 \pi^2,$$
and
$$\epsilon f^{''}(x_{cr}^{-\epsilon}) \Big{(}\int_{\zeta_0}^{\zeta_1} \frac{dx}{\sqrt{\epsilon f(x_0) -\epsilon f(x)}}\Big{)}^2 < 2 \pi^2,$$
where $x_{cr}^{-\epsilon}$, $\zeta_0$, $\zeta_1$, $x_1$ and $M_{\epsilon}$ are as in propositions \ref{proposition inégalité (1)} and \ref{proposition inégalité (2)}.\\
Then, there is a neighborhood $\mathcal{N}$ of the torus in $\mathcal{L}_K(T)$ such that the metrics in $\mathcal{N} \cap S\mathcal{L}_K(T)$ have no conjugate points, whereas the others admit conjugate points. 

\end{theorem}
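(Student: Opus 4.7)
The plan is to leverage the continuous function $\mathcal{Z}$ constructed in Section \ref{Section: A continuous function controlling conjugate points} on the compact set $\widetilde{\Omega}$, together with a compactness argument. By Corollary \ref{corollaire convergence de champs K_n}, the number of bands, their signs and their types are preserved in a $C^\infty$-neighborhood of $g$ in $\mathcal{L}_K(T)$, so every metric sufficiently close to $g$ belongs to $\mathcal{L}^*_K(T)$ and $\mathcal{Z}$ is defined on the corresponding fiber of $\widetilde{\Omega}$. Moreover, by Proposition \ref{SPC} and Lemma \ref{orbites critiques SPC}, a metric $g' \in \mathcal{L}^*_K(T)$ has no conjugate points if and only if $\mathcal{Z}(g', \cdot) \geq 0$ on the fiber $\widetilde{\Omega} \cap (\{g'\} \times T)$. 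It therefore suffices to prove the strict inequality $\mathcal{Z}(g, p) > 0$ for every $p$ with $(g, p) \in \widetilde{\Omega}$: this fiber is compact in $T$, and continuity of $\mathcal{Z}$ will then yield a neighborhood $\mathcal{N}$ of $g$ on which $\mathcal{Z}(g', \cdot) > 0$ throughout its domain.

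To obtain strict positivity on the fiber over $g$, I would argue as follows. Since $g$ belongs to the family of Theorem \ref{famille SPC}, the torus has no conjugate points, hence $\mathcal{Z}(g, \cdot) \geq 0$ everywhere; the task is to exclude zeros. By the first lemma of Section \ref{Section: Digression: Jacobi equations all of whose solutions are periodic}, $\mathcal{Z}(g, p) = 0$ if and only if the Jacobi equation along the invariant geodesic $\gamma_p$ has only periodic solutions. Propositions \ref{proposition inégalité (1)} and \ref{proposition inégalité (2)} translate the two integral inequalities in condition (3) into the existence of a non-periodic Jacobi field along every invariant geodesic with Clairaut constant $C^2 \leq C_0^2$ and $C_0^2 \leq C^2 < M_\epsilon$ respectively, where $C_0^2 = \epsilon f(x_0)$ for the $x_0$ furnished by condition (3) (applied for each sign $\epsilon$); the alternative ``$\kappa$ is constant'' in Proposition \ref{proposition inégalité (1)} is ruled out by the standing hypothesis that the torus has no open set of constant curvature. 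Together these two ranges exhaust $(0, M_\epsilon)$, so $\mathcal{Z}(g, p) > 0$ for every $p$ in the interior of a band away from a critical orbit. Strict positivity at a null orbit (the boundary values $\mathcal{Z}^\pm$ of the extension) follows by continuity as $C^2 \to 0$, which is an interior value of the range handled by Proposition \ref{proposition inégalité (1)}. Finally, for $p$ close to a critical orbit of $K$, assumptions (1) and (2) are exactly the hypotheses of Lemma \ref{lemme géod près du bord}, which provides an open neighborhood in $\mathcal{L}_K(T) \times T$ of such an orbit on which $\mathcal{Z} > 0$; this handles the remaining Clairaut constants approaching $M_\epsilon$.

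With $\mathcal{Z}(g, \cdot) > 0$ on the compact fiber, continuity of $\mathcal{Z}$ on $\widetilde{\Omega}$ supplies a neighborhood $\mathcal{N}$ of $g$ in $\mathcal{L}_K(T)$ such that $\mathcal{Z}(g', p) > 0$ for every $(g', p) \in \widetilde{\Omega}$ with $g' \in \mathcal{N}$. For $g' \in \mathcal{N} \cap S\mathcal{L}^*_K(T)$, this positivity combined with Lemma \ref{orbites critiques SPC} (critical orbits carry no conjugate points) and Lemma \ref{Behavior of geodesics}, which reduces the question to invariant geodesics tangent to $K$, implies via Proposition \ref{SPC} that $g'$ has no conjugate points. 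Conversely, any $g' \in \mathcal{N} \cap \mathcal{L}^*_K(T)$ that does not belong to $S\mathcal{L}^*_K(T)$ violates the necessary condition $f'(x_n) + f'(x_{n+1}) = 0$ of Theorem \ref{Fermé d'intérieur non vide}, and therefore admits conjugate points.

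The main obstacle is the verification that the three ingredients producing positivity of $\mathcal{Z}$ on the fiber over $g$ genuinely cover every point of $\widetilde{\Omega} \cap (\{g\} \times T)$. Concretely, for each sign $\epsilon$ one needs the ranges $\{C^2 \leq C_0^2\}$, $\{C_0^2 \leq C^2 < M_\epsilon\}$, and the Clairaut-range corresponding to the neighborhood of the critical orbit provided by Lemma \ref{lemme géod près du bord} to exhaust $[0, M_\epsilon]$; assumption (3) of the theorem asks the two integral inequalities to be realised for the same $x_0$, which is exactly what forces the first two ranges to match up at $C_0^2$ and makes the cover complete.
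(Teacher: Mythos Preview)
Your approach coincides with the paper's: use Propositions~\ref{proposition inégalité (1)} and~\ref{proposition inégalité (2)} to obtain strict positivity of $\mathcal{Z}(g,\cdot)$ away from the critical orbits, use Lemma~\ref{lemme géod près du bord} near the critical orbits, and conclude by continuity; the converse is indeed Theorem~\ref{Fermé d'intérieur non vide}.

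There is, however, a small but genuine slip in the compactness step. The fiber $\widetilde{\Omega}\cap(\{g\}\times T)$ is \emph{not} compact: by definition $\widetilde{\Omega}$ excludes the points where $\nabla_K K=0$, i.e.\ the critical orbits, so this fiber is an open subset of (a disjoint union of two copies of) $T$. Consequently, ``$\mathcal{Z}(g,\cdot)>0$ on the compact fiber, hence by continuity on a product neighborhood'' does not go through as stated. The paper's fix, which your ingredients already support, is to split: choose a compact $F\subset T$ missing the critical orbits of $K$; continuity of $\mathcal{Z}$ and strict positivity on $\{g\}\times F$ give a neighborhood $V$ with $\mathcal{Z}>0$ on $(V\cap S\mathcal{L}_K^*(T))\times F$; then Lemma~\ref{lemme géod près du bord} furnishes a second neighborhood $V'$ with $\mathcal{Z}>0$ on $V'\times U$ for $U$ a neighborhood of the critical orbits; take $\mathcal{N}=V\cap V'$. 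You invoke Lemma~\ref{lemme géod près du bord} only to get positivity on the fiber over $g$, but you actually need (and the lemma actually gives) positivity on a full product neighborhood, precisely because the pure compactness argument fails there.

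A minor point: the hypotheses of Lemma~\ref{lemme géod près du bord} are not literally ``(1) and (2)''. One must also check that $\kappa$ vanishes exactly $\mathfrak{n}=2$ times per period and not on the extrema of $f$; this follows from $f'f'''\le 0$ together with (1) and the exclusion of open sets of constant curvature (cf.\ the proof of Lemma~\ref{lemme inégalité (2)}).
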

\begin{proof}
Denote by $g_0$ the metric on $T$; $g_0$ has no conjugate points. The assumptions on $g_0$ imply that a neighborhood of $g_0$ in $\mathcal{L}_K(T)$ can be taken in $\mathcal{L}_K^*(T)$. By propositions \ref{proposition inégalité (1)} and \ref{proposition inégalité (2)}, conditions (1) and (3) imply that the extended function $\mathcal{Z}(g_0)$ is strictly positive on the open set $$\widetilde{\Omega}(g_0)=\{p \in T, \; g_0(\nabla_{K_0} K_0,\nabla_{K_0} K_0)(p) \neq 0\}.$$ 
Therefore, there exists a neighborhood of any compact subset of $\widetilde{\Omega}(g_0)$ in $$\widetilde{\Omega}(g)=\{(g,p) \in S\mathcal{L}_K^*(T) \times T, \; g(\nabla_K K,\nabla_K K)(p) \neq 0\}$$ on which $\mathcal{Z}$ is positive. Thus, we get a neighborhood $V$ of $g_0$ in $\mathcal{L}_K^*(T)$ such that $\mathcal{Z}$ is positive on $SV \times F$, where $SV = V \cap S\mathcal{L}_K^*(T)$ and $F$ is a compact subset of $T$ not containing the critical orbits of $K_0$. We can make this compact subset the closest possible to the boundary. Now, the condition $f^{'}f^{'''} \leq 0$, combined with (1), implies that $\kappa$ has exactly two simple zeros in the smallest period of $f$. Besides, these zeros cannot be on the extremums of $f$ unless the torus is flat (see the proof of Lemma \ref{lemme inégalité (2)}). Therefore, Lemma \ref{lemme géod près du bord} provides a neighborhood $V^{'}$ of $g_0$ in $\mathcal{L}_K(T)$ such that $\mathcal{Z}$ is positive on $V^{'} \times U$, $U$ being a neighborhood of the critical orbits of $K_0$ in the torus. Taking $V \cap V^{'}$  achieves the proof.  
\end{proof}
Theorem \ref{théorème de stabilité} gives a way to obtain examples of Lorentzian metrics on $T$ with no conjugate points, that are stable by deformation in $S\mathcal{L}_K^*(T)$. Here are a few examples:

. The Clifton-Pohl torus, corresponding to $f(x)=sin(2x)$;\\
. $f(x)= \frac{sin(x)}{10+sin(x)}$;\\
. $f(x)=ln(2+sin x)$;\\
. $f(x)=cos(sin (x)) -3/4$;\\
. $f(x)=JacobiSD(x,1/2)$;\\
. $f(x)= JacobiSN(x,1/4)$.\\
The verification of hypothesis (3) of Theorem \ref{théorème de stabilité} is done numerically.
\begin{remark}
	The quadratic variations of the Clifton-Pohl torus are also stable by deformation in $S\mathcal{L}^*_K(T)$; this comes from an explicit resolution of the Jacobi equation.
\end{remark}

\end{document}